\titleformat*{\subsection}{\large\bfseries}
\numberwithin{equation}{section}
\pgfplotsset{compat=newest}
\pgfplotsset{plot coordinates/math parser=false}
\newlength\figureheight
\newlength\figurewidth
\DeclareMathOperator{\G}{G}
\numberwithin{equation}{section}
\newcommand{\mm}{{\underline m}}
\newcommand{\beq}{\begin{equation}}
\newcommand{\bEq}{\end{equation}}
\newcommand{\bx}{{\bf{x}}}
\newcommand{\al}{\alpha}
\newcommand{\be}{\begin{equation}}
\newcommand{\ee}{\end{equation}}
\newcommand{\e}{{\varepsilon}}
\newcommand{\fa}{{\mathfrak a}}
\newcommand{\fb}{{\mathfrak b}}
\renewcommand{\cal}{\mathcal}
\newcommand{\wh}{\widehat}
\newcommand{\wt}{\widetilde}
\newcommand{\ii}{\mathrm{i}} 
\newcommand{\dd}{\mathrm{d}}
\renewcommand{\epsilon}{\varepsilon}
\renewcommand{\leq}{\leqslant}
\renewcommand{\geq}{\geqslant}
\renewcommand{\le}{\leq}
\renewcommand{\ge}{\geq}
\newcommand{\E}{\mathbb{E}}
\newcommand{\R}{\mathbb{R}}
\newcommand{\C}{\mathbb{C}}
\newcommand{\N}{\mathbb{N}}
\newcommand{\norm}[1]{\lVert #1 \rVert}
\DeclareMathOperator{\diag}{diag}
\DeclareMathOperator{\tr}{Tr}
\DeclareMathOperator{\supp}{supp}
\DeclareMathOperator{\re}{Re}
\DeclareMathOperator{\im}{Im}
\DeclareMathOperator{\OO}{O}
\DeclareMathOperator{\oo}{o}
\DeclareMathOperator{\bA}{\mathbf{A}}
\DeclareMathOperator{\bC}{\mathbf{C}}
\DeclareMathOperator{\bv}{\mathbf{v}}
\DeclareMathOperator{\bu}{\mathbf{u}}
\DeclareMathOperator{\bbE}{\mathbb{E}}
\DeclareMathOperator{\bbN}{\mathbb{N}}
\DeclareMathOperator{\bbP}{\mathbb{P}}
\DeclareMathOperator{\sI}{\mathcal{I}}
\DeclareMathOperator{\sW}{\mathcal{W}}
\theoremstyle{plain} 
\newtheorem{theorem}{Theorem}[section]
\newtheorem*{theorem*}{Theorem}
\newtheorem{lemma}[theorem]{Lemma}
\newtheorem{assumption}[theorem]{Assumption}
\newtheorem*{lemma*}{Lemma}
\newtheorem*{corollary*}{Corollary}
\newtheorem{proposition}[theorem]{Proposition}
\newtheorem*{proposition*}{Proposition}
\newtheorem{definition}[theorem]{Definition}
\newtheorem*{definition*}{Definition}
\theoremstyle{remark}
\newtheorem*{example*}{Example}
\newtheorem{remark}[theorem]{Remark}
\newtheorem*{remark*}{Remark}
\newtheorem*{remarks*}{Remarks}
\renewcommand{\Im}{{\rm{Im}}}
\renewcommand{\Re}{{\rm{Re}}}
\newcommand{\uQ}{{\underline{\mathcal Q}}}
\newcommand{\urho}{{\underline{\rho}}}
\newcommand{\um}{\underline{m}}
\newcommand{\uG}{\underline{\mathcal{G}}}
\newcommand{\uR}{\underline{\mathcal{R}}}
\newcommand{\nc}{\normalcolor}
\def\env@dmatrix{\hskip -\arraycolsep
  \let\@ifnextchar\new@ifnextchar
  \extrarowheight=2ex
  \array{*\c@MaxMatrixCols{>{\displaystyle}c}}}
\newenvironment{bdmatrix}
  {\left[\env@dmatrix}
  {\endmatrix\right]}
\date{}
\title{Edge statistics of large dimensional deformed rectangular matrices}
\author[1,2]{Xiucai Ding \thanks{E-mail: xiucai.ding@duke.edu.}}
\author[3]{Fan Yang  \thanks{E-mail: fyang75@wharton.upenn.edu}}
\affil[1]{Department of Statistics, University of California, Davis}
\affil[2]{Department of Mathematics, Duke University}
\affil[3]{Department of Statistics, University of Pennsylvania}
\begin{document}

\maketitle
\begin{abstract}
We consider the edge statistics of large dimensional deformed rectangular matrices of the form $Y_t=Y+\sqrt{t}X,$ where $Y$ is a $p \times n$ deterministic signal matrix whose rank is comparable to $n$, $X$ is a $p\times n$ random noise matrix with centered i.i.d. entries with variance $n^{-1}$, and $t>0$ gives the noise level. 
This model is referred to as the interference-plus-noise matrix in the study of massive multiple-input multiple-output (MIMO) system, which belongs to the category of the so-called signal-plus-noise model. For the case $t=1$, the spectral statistics of this model have been studied to a certain extent in the literature \cite{DOZIER20071099,DOZIER2007678,VLM2012}. 
In this paper, we study the singular value and singular vector statistics of $Y_t$ around the right-most edge of the singular value spectrum in the harder regime $n^{-2/3}\ll t \ll 1$. 
This regime is harder than the $t=1$ case, because on one hand, the edge behavior of the empirical spectral distribution (ESD) of $YY^\top$ has a strong effect on the edge statistics of $Y_tY_t^\top$ since $t\ll 1$ is ``small", while on the other hand, the edge statistics of $Y_t$ is also not merely a perturbation of those of $Y$ since $t\gg n^{-2/3}$ is ``large".  
Under certain regularity assumptions on $Y,$ we prove the edge universality, eigenvalues rigidity and eigenvector delocalization for the matrices $Y_tY_t^\top$ and $Y_t^\top Y_t$. These results can be used to estimate and infer the massive MIMO system. To prove the main results, we analyze the edge behavior of the asymptotic ESD for  $Y_tY_t^\top$, and establish some sharp local laws on the resolvent of $Y_tY_t^\top$. These results can be of independent interest, and used as useful inputs for many other problems regarding the spectral statistics of $Y_t$.
\end{abstract}
\section{Introduction}

Large dimensional signal-plus-noise matrices are common objects in many scientific fields, such as signal processing \cite{AN17,212753}, image denoising \cite{NEJATI201628,8305076}, wireless communications \cite{VLM2012,VBL} and biology \cite{feng2009,YTCNX}. In these applications, researchers are interested in the estimation and inference of some deterministic matrix, known as the signal matrix, from its noisy observation. Specifically, we consider matrices of the form
\begin{equation}\label{eq_defnmodel}
Y_t=Y+\sqrt{t} X,
\end{equation} 
where $Y$ is a $p \times n$ deterministic signal matrix, $X$ is a white noise matrix whose entries $x_{ij}$ are i.i.d. random variables with mean zero and variance $n^{-1}$, and $t>0$ represents the noise level. In this paper, we consider the high dimensional setting where $p$ is comparable to $n.$ 

There have been a lot of theoretical studies of this model in the literature
by imposing various structural assumptions on $Y.$ Among them, the most popular one is perhaps the low rank structure assumption \cite{AN17,BDW,BENAYCHGEORGES2012120,MR3837103,ding2020,gavish-donoho-2017,RRN14, YMB}. Towards this direction, it is assumed that $Y$ is a low-rank deterministic or random matrix, and admits a singular value decomposition (SVD) 
\begin{equation}\label{eq_svdfory}
Y=\sum_{i=1}^r \sqrt{d_i} \bm{u}_i \bm{v}_i^\top,
\end{equation} 
where $\{\sqrt{d_i}\}$ are the singular values, and $\{\bm{u}_i\}$ and $\{\bm{v}_i\}$ are the left and right singular vectors, respectively. In the low-rank setting, $r$ is a fixed integer that does not change with $n$. This low-rank assumption is popular in many applications, including signal processing \cite{AN17, 212753}, imaging denoising \cite{NEJATI201628,8305076} and statistical genetics \cite{feng2009,YTCNX}. Based on it, many statistical methods have been proposed to estimate $Y$ from the noisy observation $Y_t$: the shrinkage estimation \cite{gavish-donoho-2017,RRN14}, the iterative thresholding procedure \cite{ding2020, YMB}, and the regularization methods \cite{JW, RSV,Yuetal14}, to name but a few.


Although the low-rank assumption is useful in many applications, it is not always feasible, especially in the applications driven by wireless communications and the massive or multicell multiuser MIMO systems \cite{BHKD,NKDA,VBL}, such as the subspace estimation \cite{VLM2012} and direction of arrival (DOA) estimation \cite{5502034}. In these applications, $Y$ is a large rank  interference matrix, where the rank $r$ in (\ref{eq_svdfory}) can be comparable to $n$, 
and $Y_t$ is called an \emph{interference-plus-noise matrix} \cite{VBL,7345553}; see Section \ref{sec_statapp} below for a more detailed discussion. 
Moreover, in modern statistical learning theory, a large-rank matrix $Y$ can provide deep insights into many optimization techniques. For example, it is necessary to take $r$ to be proportional to $n$ in order to obtain a minimax estimator on $Y$ using nuclear norm penalization and singular value thresholding \cite{donoho2014}. Furthermore, it is empirically observed that the mean square error of the minimax estimator of a large-rank $Y$ is closely related to the phase transition phenomenon in matrix completion \cite{Donoho8405}. Motivated by the above applications, 
it is natural to extend the low-rank assumption and study the signal-plus-noise model (\ref{eq_defnmodel}) for  a large-rank signal matrix $Y$.

%

From the perspective of Random Matrix Theory, the signal-plus-noise model (\ref{eq_defnmodel}) falls into the category of the so-called deformed random matrix models \cite{capfreeprob}, some of which haven been studied  in the literature, including the deformed Wigner matrices \cite{KY, knowles2014, LS2015}, deformed sample covariance matrices \cite{Anisotropic}, and separable covariance matrices \cite{dingyang2,yang20190}. In this paper, we call $Y_t$ a \emph{deformed rectangular matrix}. Under the low-rank assumption that $r$ is fixed, the  empirical spectral distribution (ESD) of $Y_tY_t^\top$ is mostly determined by the noise matrix $X$, while the signal matrix $Y$ will give rise to several outlier singular values (i.e., singular values that are detached from the bulk singular value spectrum) depending on the values of $d_i$'s \cite{BENAYCHGEORGES2012120, ding2020}. 
On the other hand, when $Y$ is a large-rank matrix, the ESD  of $Y_tY_t^\top$ will be governed by both the ESD of $YY^\top$ and the noise matrix $X$. 
One purpose of this paper is to extend some of the known results in the low-rank setting to large-rank deformed rectangular matrices, which in turn will provide useful insights into the applications of the interference-plus-noise matrix model \eqref{eq_defnmodel}.

Motivated by the successful applications of edge statistics in high-dimensional statistics, we shall focus on the eigenvalue and eigenvector statistics of $\cal Q_t:=Y_tY_t^\top$ and $\uQ_t:=Y_t^\top Y_t$ (or equivalently, the singular values and singular vectors of $Y_t$) near the right-most edge of the eigenvalue spectrum. 
Furthermore, we are interested in the regime $t\ll 1$, that is, $t\le n^{-\e}$ for some constant $\e>0$. In fact, the case $t\sim 1$ has been studied to certain extent in \cite{DOZIER20071099,DOZIER2007678,VLM2012}. Combining the results there with the arguments of this paper, one can reproduce all the main results of this paper. We remark that there is an important difference between the $t\sim 1$ case and the $t=\oo(1)$ case considered in this paper. In the case $t\sim1$, due to the large component $\sqrt{t}X$, the asymptotic ESD of $\cal Q_t$ will be regular and has a square root behavior around the right-most edge regardless of the edge behavior of $YY^\top$ to some extent. On the other hand, in the $t=\oo(1)$ case, the edge behavior of $YY^\top$ will have a strong effect on the asymptotic ESD of $\cal Q_t$, since $Y_t$ can be regarded as a perturbation of $Y$. Consequently, we need more assumptions on $Y$. In this paper, we shall follow the notion in \cite{edgedbm} for deformed Wigner matrices, and impose an $\eta_*$-regular condition on $Y$ (c.f. Definition \ref{assumption_edgebehavior}) for some scale parameter $0<\eta_* \ll 1$. This regularity assumption ensures a regular square root behavior of the ESD of $YY^\top$ around its right-most edge on the scale $\eta_*$. Moreover, we shall take $t$ such that $\eta_* \ll t^2 \ll 1$. Intuitively, $t=\sqrt{\eta_*}$ is the threshold where the noise component $\sqrt{t}X$ starts to dominate the behavior of the ESD of $\mathcal{Q}_t$. That is to say, when $t \ll \sqrt{\eta_*}$, the ESD of $Y$ around the right-most edge can be well approximated by that of $Y_t$ on scales that are  larger than $\eta_*$.

 We mention that in free probability theory \cite{capfreeprob}, the asymptotic ESD of $\mathcal{Q}_t$ is called \emph{the rectangular free convolution} with the Marchenko-Pastur (MP) law. In \cite{DOZIER20071099,DOZIER2007678,VLM2012}, it has been shown that the rectangular free convolution has a regular square root behavior near the right-most edge of the spectrum in the $t \sim 1$ case. However, the estimates  there diverge as $t\to 0$, and hence are not strong enough for our setting with $t=\oo(1)$. In this paper, we first establish some deterministic estimates of the rectangular free convolution based on a sophisticated analysis of the subordination function in (\ref{eq_defnzeta}). 
 In particular, we will show that for  $t \gg \sqrt{\eta_*}$, the rectangular free convolution still has a regular square root behavior near the right-most edge. 
Based on these estimates, we are able to prove sharp local laws on the resolvents of $\mathcal{Q}_t$ and $\uQ_t$. In the proof, we first establish the local laws for the so-called \emph{rectangular matrix Dyson Brownian motion}, which is a special case of $Y_t$ with a Gaussian random matrix $X$. Together with a self-consistent comparison argument, we can extend the local laws to deformed rectangular matrices with generally distributed $X$, assuming only certain moment conditions. Once we have obtained the local laws, we can prove some important results regarding the eigenvalue and eigenvector statistics of $\cal Q_t$ and $\uQ_t$, including the edge universality, eigenvalues rigidity and eigenvector delocalization.
Although we will not give the detailed proof in this paper, we believe that all the above results hold true even when $t \sim 1$, and the proof in fact will be much easier.   

Finally, we remark that the results of this paper can be a key input for many other problems regarding the spectral statistics of large-rank deformed rectangular matrices. For instance, in \cite{DY20201} we prove the Tracy-Widom fluctuation for the edge eigenvalues of a general class of Gram type random matrices, using the estimates of the rectangular free convolution and the local laws proved in this paper as key technical inputs. Furthermore, our results can be used to study the outlier eigenvalues and eigenvectors when $Y_t$ is perturbed by another low rank matrix, say $Y_0$. This kind of matrix model 
$Y_0+Y+\sqrt{t}X$
will be useful in the estimation and inference of the massive MIMO system; see Section \ref{sec_statapp} below for a more detailed discussion. 

The rest of the paper is organized as follows. In Section \ref{sec_mainresult}, we state our main results regarding the basic properties of the rectangular free convolution, the local laws, and the edge statistics of $\mathcal{Q}_t$ and $\uQ_t$. 
In Section \ref{sec anafree}, we analyze the rectangular free convolution and prove some useful deterministic estimates. Based on these estimates, in Sections \ref{sec largelocal} and \ref{sec localregular}, we prove the local laws for the resolvents of $\mathcal{Q}_t$ and $\uQ_t$ in the case with a Gaussian noise matrix $X$. Finally in Section \ref{sec_pfgen}, we extend these local laws to the case with generally distributed $X$, and prove the results on the edge statistics of $\mathcal{Q}_t$ and $\uQ_t$. 

\vspace{5pt} 

\noindent {\bf Convention.}  The fundamental large parameter is $n$ and we always assume that $p$ is comparable to and depends on $n$. We use $C$ to denote a generic large positive constant, whose value may change from one line to the next. Similarly, we use $\epsilon$, $\tau$, $\delta$, etc. to denote generic small positive constants. If a constant depends on a quantity $a$, we use $C(a)$ or $C_a$ to indicate this dependence. For two quantities $a_n$ and $b_n$ depending on $n$, the notation $a_n = \OO(b_n)$ means that $|a_n| \le C|b_n|$ for some constant $C>0$, and $a_n=\oo(b_n)$ means that $|a_n| \le c_n |b_n|$ for some positive sequence $c_n\downarrow 0$ as $n\to \infty$. We also use the notations $a_n \lesssim b_n$ if $a_n = \OO(b_n)$, and $a_n \sim b_n$ if $a_n = \OO(b_n)$ and $b_n = \OO(a_n)$. For a matrix $A$, we use $\|A\|:=\|A\|_{l^2 \to l^2}$ to denote the operator norm; 
for a vector $\mathbf v=(v_i)_{i=1}^n$, $\|\mathbf v\|\equiv \|\mathbf v\|_2$ stands for the Euclidean norm. 
For a matrix $A$ and a positive number $a$, we write $A=\OO(a)$ if $\|A\|=\OO(a)$. In this paper, we often write an identity matrix of any dimension as $I$ or $1$ without causing any confusions.

\section{Main results}\label{sec_mainresult}

%
%

\subsection{The model and rectangular free convolution}


We consider a class of deformed rectangular matrices of the form (\ref{eq_defnmodel}), where $Y$ is a $p\times n$ deterministic signal matrix of large rank, $X$ is a $p\times n$ random noise matrix whose entries $x_{ij}$ are real independent random variables satisfying
\begin{equation}\label{assm1}
\mathbb{E} x_{ij} =0, \ \quad \ \mathbb{E} \vert x_{ij} \vert^2  =n^{-1}, \quad 1 \leq i \leq p, \ 1 \leq j \leq n,
\ee 
and $t>0$ gives the noise level. Unlike \cite{DOZIER20071099,DOZIER2007678,VLM2012}, we have used $t$ instead of $\sigma^2$ to denote the variance of the noise, because in this paper we will consider a full range of scales for the noise level. 
Hence it is more instructive to take the noise variance to be a varying parameter. Following the random matrix literature (see e.g. \cite{erdos2017dynamical}), we choose it 
to be the time parameter $t$. In particular, one can also consider the dynamics of $Y_t$ as $t$ changes; see Section \ref{sec DBM} below for a more detailed discussion regarding this point of view. In this paper, we consider the high dimensional setting, where the aspect ratio $c_n:=p/n$ converges to a finite positive constant. Without loss of generality, by switching the roles of $Y_t$ and $Y_t^\top$ if necessary, we can assume that  
\begin{equation}\label{eq_defnc}
 \tau\le c_n \le 1, 
\end{equation}
for some small constant $0<\tau<1$. Let 
\be\label{SVD}Y=O_1WO_2^\top,  \ee
be a singular value decomposition of $Y$, where $W$ is a $p\times n$ rectangular diagonal matrix,
$$W= \begin{pmatrix} D , 0\end{pmatrix}, \quad D^2=\diag(d_1, \cdots, d_p),$$
with $\sqrt{d_1} \geq \sqrt{d_2} \geq \cdots \geq \sqrt{d_p}\ge 0$ being the singular values of $Y$. We assume that the ESD of $YY^\top$ has a regular square root behavior near the right edge. 
Following \cite{edgedbm}, we state the regularity condition in terms of the Stieltjes transform of $V:=WW^\top$,
\be\label{eq_defng} m_V(z):= \frac1p\tr \left(V-z \right)^{-1}= \frac1p\sum_{i=1}^p \frac1{d_i -z},\quad z\in \C_+:=\{z\in \C: \im z>0\}.\ee

\begin{definition}[$\eta_*$-regular]\label{assumption_edgebehavior}
Let $\eta_*$ be a parameter satisfying $\eta_*:=n^{-\phi_*}$ for some constant $0<\phi_* \le 2/3$. We say $V$ (or equivalently, $Y$, $W$ or $m_V$) is $\eta_*$-regular around the largest eigenvalue $d_1$ if there exist constants $c_V>0$ and $C_V>1$ such that the following properties hold for $\lambda_+:=d_1$ (here $\lambda_+$ is a standard notation for the right spectral edge in random matrix literature). 
\begin{itemize}
\item  For $z=E+\ii \eta$ with $\lambda_+ - c_V \le E \le \lambda_+ $ and $\eta_* + \sqrt{\eta_* |\lambda_+ - E|}\le \eta \le 10$, we have
\be\label{regular1}
\frac{1}{C_V} \sqrt{|\lambda_+ - E| + \eta} \le \im m_V(E+\ii \eta) \le C_V\sqrt{|\lambda_+ - E| + \eta} .
\ee
For $z=E+\ii \eta$ with $\lambda_+ \le E \le \lambda_+ + c_V$ and $\eta_* \le \eta \le 10$, we have
\be\label{regular2}
\frac{1}{C_V} \frac{\eta}{|\lambda_+ - E| + \eta} \le \im m_V(E+\ii \eta) \le C_V\frac{\eta}{|\lambda_+ - E| + \eta}.
\ee

\item We have $2c_V \le \lambda_+ \le C_V/2$.

\item We have $\norm{V} \le n^{C_V}$.
\end{itemize}
\end{definition}

\begin{remark}
The motivation for conditions \eqref{regular1} and \eqref{regular2} is as follows: if $m(z)$ is the Stieltjes transform of a density $\rho$ with square root behavior around $\lambda_+$, i.e. $\rho(x)\sim \sqrt{(\lambda_+-x)_+}$, then \eqref{regular1} and \eqref{regular2} hold for $\im m(z)$ with $\eta_*=0$. For a general $\eta_*>0$, \eqref{regular1} and \eqref{regular2} essentially mean that the empirical spectral density of $V$ behaves like a square root function near $\lambda_+$ on any scale larger than $\eta_*$. The condition $\eta\le 10$ in the assumption is purely for definiteness of presentation---we can replace 10 with any constant of order 1. 

Consider a large rank matrix $Y$ whose spectral density of singular values follows a continuous function, say $\rho$, on a scale $\eta_* \ll 1$. Then the square root behavior of $\rho$ appears naturally near the spectral edge, which is the point where the density becomes zero. The conditions \eqref{regular1} and \eqref{regular2} hold for many Gram type random matrix ensembles for some $ n^{-2/3} \ll \eta_* \ll 1$, such as sample covariance matrices \cite{pillai2014}, separable sample covariance matrices \cite{dingyang2,yang20190}, random Gram matrices \cite{Alt_Gram,AEK_Gram} and sparse sample covariance matrices \cite{hwang2019}.
\end{remark}


Recall $\mathcal{Q}_t=Y_t Y_t^\top.$ Let $\rho_{w,t}$ be the asymptotic spectral density of $\mathcal{Q}_t$ as $n\to \infty$ and $m_{w,t}$ be the corresponding Stieltjes transform, i.e.,  
\be\label{def mwt}m_{w,t}(z):=\int \frac{\rho_{w,t}(x)\dd x}{x-z}.\ee
Here the $w$ in the subscript refers to the matrix $W$, and we use it to remind ourselves that $\rho_{w,t}$ and $m_{w,t}$ only depend on the singular values of $Y$. It is known that  \cite{DOZIER20071099,DOZIER2007678,VLM2012} for any $t>0$, $m_{w,t}$ is the unique solution to
\begin{equation}\label{originalequation}
m_{w,t}=\frac{1}{p} \sum_{i=1}^p \frac{1}{d_i(1+c_n tm_{w,t})^{-1}-(1+c_ntm_{w,t})z+t(1-c_n)},
\end{equation} 
such that  $\im m_{w,t}>0$ for any $z \in \mathbb{C}_+$. 
Adopting notations from free probability theory \cite{capfreeprob}, we shall call $\rho_{w,t}$ the rectangular free convolution of $\rho_{w,0}$ with the Marchenko-Pastur (MP) law at time $t$. Let $\lambda_{+,t}$ be the right-most edge of $\rho_{w,t}$. 
In \cite{DOZIER20071099,DOZIER2007678,VLM2012}, it has been shown that $\rho_{w,t}$ has a regular square root behavior near $\lambda_{+,t}$ when $t\sim 1$. In the following lemma, we extend this result to the  $t=\oo(1)$ case.

\begin{lemma}\label{lem regularDBM}
Suppose (\ref{assm1}) and \eqref{eq_defnc} hold, and $V$ is $\eta_*$-regular in the sense of Definition \ref{assumption_edgebehavior}. Moreover, we assume that  $t$ satisfies $n^{\epsilon} \eta_* \leq t^2 \leq n^{-\epsilon}$ for a small constant $\epsilon>0$. Then we have 
\begin{equation}\label{sqrtdensity0}
\rho_{w,t}(E) \sim \sqrt{(\lambda_{+,t}-E)_+} \quad \text{for}\quad \lambda_{+,t} - 3c_V/4 \le E\le \lambda_{+,t}+3 c_V/4 ,
\end{equation}
and for $z=E+\ii \eta \in \C_+$,
\begin{equation}\label{Immc}
\im m_{w,t}(z) \sim
\begin{dcases}
\sqrt{|E-\lambda_{+,t}|+\eta}, & \lambda_{+,t} - 3c_V/4 \le  E\leq \lambda_{+,t} \\
\frac{\eta}{\sqrt{|E-\lambda_{+,t}|+\eta}}, &\lambda_{+,t}\le  E  \le \lambda_{+,t} + 3c_V/4
\end{dcases}.
\end{equation} 
\end{lemma}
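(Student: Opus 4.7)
The approach I would take is to reduce the edge analysis of the rectangular free convolution to a one-variable variational problem through a subordination function. Starting from \eqref{originalequation}, set $f \deq 1 + c_n t\, m_{w,t}$; a direct manipulation rewrites the self-consistent equation as the subordination identity $m_{w,t}(z) = m_V(\zeta(z))/(1 - c_n t\, m_V(\zeta(z)))$, where $\zeta(z) \deq f(z)^2 z - t(1-c_n) f(z)$. Solving instead for $z$ in terms of a real variable $\zeta > \lambda_+$ gives the real-analytic inverse
$$
z(\zeta) \;=\; \zeta\bigl(1 - c_n t\, m_V(\zeta)\bigr)^{2} + t(1-c_n)\bigl(1 - c_n t\, m_V(\zeta)\bigr).
$$
Since $m_V(\zeta)<0$ is bounded on $(\lambda_+,\infty)$ and $z(\zeta) \to \infty$ at both endpoints, $z(\zeta)$ attains a minimum on this interval, and I would characterize $\lambda_{+,t} = \min_{\zeta > \lambda_+} z(\zeta)$ with critical point $\zeta_+$ determined by $z'(\zeta_+)=0$.

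The key quantitative task is to pin down the scale $\zeta_+ - \lambda_+$. Differentiating and isolating the dominant terms as $t \to 0$, the critical equation reduces at leading order to $1 \asymp 2\zeta_+ c_n t\, m_V'(\zeta_+)$. From the $\eta_*$-regularity of $V$ one obtains the real-axis asymptotics
$$
m_V(\zeta) - m_V(\lambda_+) \,\sim\, -\sqrt{\zeta - \lambda_+}, \quad m_V'(\zeta) \,\sim\, (\zeta - \lambda_+)^{-1/2}, \quad |m_V''(\zeta)| \,\sim\, (\zeta-\lambda_+)^{-3/2},
$$
valid on the scale $\zeta - \lambda_+ \gtrsim \eta_*$; these follow from Cauchy integral representations of $m_V^{(k)}$ on a contour of height $\sim \zeta-\lambda_+$, combined with the imaginary-part bounds \eqref{regular1}--\eqref{regular2}. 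Inserting these into the critical equation forces $\zeta_+ - \lambda_+ \sim t^2$, which lies comfortably above $\eta_*$ precisely because $t^2 \gg \eta_*$. A direct calculation using $f'' = -c_n t\, m_V''$ then gives $z''(\zeta_+) \sim t^{-2}$ (the $f''$-term dominates over the $O(1)$ contribution of $4ff'$ and $2\zeta (f')^2$), while $|z^{(k)}(\zeta)| \lesssim t^{-2(k-1)}$ on an $O(t^2)$ window around $\zeta_+$.

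With the critical scales established, I would Taylor expand
$$
z(\zeta) - \lambda_{+,t} \;=\; \tfrac12 z''(\zeta_+)(\zeta - \zeta_+)^2 \bigl(1 + O\bigl(|\zeta - \zeta_+|/t^2\bigr)\bigr),
$$
and invert this relation in the upper half-plane to obtain the principal branch $\zeta(z) - \zeta_+ \sim t\sqrt{z - \lambda_{+,t}}$ for $z \in \C_+$ in a neighborhood of $\lambda_{+,t}$. Writing $z = E + \ii \eta$, this yields $\im \zeta(z) \sim t\sqrt{|E-\lambda_{+,t}|+\eta}$ when $E \le \lambda_{+,t}$ and $\im \zeta(z) \sim t \eta/\sqrt{|E-\lambda_{+,t}|+\eta}$ when $E \ge \lambda_{+,t}$. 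Substituting back through $\im m_V(\zeta) \approx m_V'(\zeta_+)\, \im\zeta \sim t^{-1}\im\zeta$, and noting that $1 - c_n t\, m_V(\zeta_+) = 1 + O(t)$ is bounded away from $0$, the factor of $t$ cancels and the estimates \eqref{sqrtdensity0}--\eqref{Immc} follow on an $O(t^2)$ neighborhood of $\lambda_{+,t}$. To reach the full intervals $|E-\lambda_{+,t}| \le 3c_V/4$, I would separately treat $\zeta \in (\zeta_+ + ct^2, \lambda_+ + c_V)$ using the monotonicity $z'(\zeta) > 0$ there together with the fact that $m_V$, $m_V'$ and $m_V''$ stabilize to bounded $O(1)$ quantities once $\zeta - \lambda_+ \gtrsim 1$.

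The main obstacle is the delicate scaling analysis at $\zeta_+$: the regularity hypothesis is stated only for $\im m_V$ on the upper half-plane, yet we need sharp two-sided bounds on the \emph{real-axis} derivatives $m_V'$ and $m_V''$ on the critical scale $\sim t^2$, and we must simultaneously rule out any spurious critical point of $z(\zeta)$ in $(\lambda_+, \infty)$ that could interfere with the identification of the edge. Both issues are manageable through Cauchy-type integral representations and convexity estimates built from Definition \ref{assumption_edgebehavior}, but carrying them out uniformly across the entire range $\sqrt{\eta_*} \ll t \ll 1$ -- and not, say, only when $t$ is very close to one of the extreme scales -- is where the real work lies.
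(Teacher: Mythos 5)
Your overall route is the same as the paper's: the subordination function $\zeta_t$, the inverse map $\Phi_t$, the critical equation $\Phi_t'(\zeta_+)=0$ giving $c_nt\,m_V'(\zeta_+)\sim 1$ and hence $\zeta_+-\lambda_+\sim t^2$ (this is exactly Lemma \ref{lem rightedge}, using the real-axis consequences of $\eta_*$-regularity recorded in Lemma \ref{lem_immanalysis}), followed by $\Phi_t''(\zeta_+)\sim t^{-2}$, $|\Phi_t^{(k)}|\lesssim t^{-2(k-1)}$ and a quadratic Taylor inversion near the edge, as in \eqref{eq_taylorbasic}. Up to the identification of $\lambda_{+,t}$ with the unique critical value on $(d_1,\infty)$ (which the paper imports from \cite{VLM2012} via Lemma \ref{lem_eigenvalueslarger}, and which you would still need to justify rather than only assert via a minimization heuristic), this part of your argument is sound.

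The genuine gap is the regime $\lambda_{+,t}-3c_V/4\le E\le \lambda_{+,t}-ct^2$, i.e.\ the bulk side of the edge beyond the critical window. Your Taylor inversion $\zeta(z)-\zeta_+\sim t\sqrt{z-\lambda_{+,t}}$ is only valid when $|z-\lambda_{+,t}|\lesssim t^2$, since the error terms are of relative size $t^{-2}|\xi-\xi_+|$; and your proposed extension ``treat $\zeta\in(\zeta_++ct^2,\lambda_++c_V)$ by monotonicity of $z'(\zeta)$'' only covers real $\zeta$, i.e.\ $E>\lambda_{+,t}$. For $E$ below the edge at distance $\gg t^2$, the subordination point $\zeta(E)=\alpha+\ii\beta$ leaves the real axis and nothing in your outline controls it there, yet the claim \eqref{sqrtdensity0} is asserted on an interval of length $3c_V/4\gg t^2$. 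This is precisely where the paper's real work lies: Case 2 of Lemma \ref{lem_edgebehavior} establishes $\beta\sim t|\alpha-\xi_+|^{1/2}$ in that regime by two separate contradiction arguments applied to the imaginary part of \eqref{final_derivation}, and Lemma \ref{eq_edgeclose} shows $|\alpha-\xi_+|\sim|E-\lambda_{+,t}|$ by proving $\dd\alpha/\dd E\sim1$, which requires the lengthy computation of $\re\Phi_t'(\zeta)$ in \eqref{eq_firstpart}--\eqref{defnR00}, using the self-consistent relation \eqref{eq_deterministicalphabeta} to cancel the leading contributions before the remainder can be bounded with Lemma \ref{lem_immanalysis}. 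Without an argument of this type your proof only yields the square-root behavior of $\rho_{w,t}$ and the estimates \eqref{Immc} within an $\OO(t^2)$ neighborhood of $\lambda_{+,t}$, not on the stated macroscopic interval.
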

\begin{proof}
This lemma is an immediate consequence of Lemma \ref{lem_asymdensitysquare} and Lemma \ref{lem_asymdensitysquare2} below.
\end{proof}
\begin{remark}
We have required a lower bound $t^2\gg \eta_*$ in the assumption due to the following reason. For very small $t$, the edge behavior of $\rho_{w,t}$ is only a perturbation of the edge behavior of $YY^\top$ near $\lambda_+$, and $t=\sqrt{\eta_*}$ is the threshold when the random matrix statistics of $\sqrt{t}X$ begins to dominate over the effect of the edge eigenvalues of $YY^\top$. Theoretically, if the entries of $X$ are i.i.d. Gaussian, it has been shown in \cite{DY20201} that the edge statistics of $Y_tY_t^\top$ already converges to the local equilibrium when $t^2\gg \eta_*$.
\end{remark}


\subsection{Rectangular Dyson Brownian Motion}\label{sec DBM}

In this subsection, we state the main results for the case where the entries of $X$ are i.i.d. Gaussian random variables satisfying \eqref{assm1}. From both the theoretical and realistic points of view, this is perhaps the most important example of large deformed rectangular matrices. In this case, we shall call the the evolution of $\cal Q_t:=( Y+\sqrt{t}X)(Y+\sqrt{t}X)^\top$ with respect to $t$ a rectangular {matrix} Dyson Brownian motion (MDBM), and call the evolution of the {\it eigenvalues} of $\cal Q_t$ with respect to $t$ the rectangular Dyson Brownian motion (DBM). They are extensions of the symmetric MDBM and DBM for Wigner type random matrix ensembles, which have been used crucially in proving the universality conjecture for Wigner matrices \cite{EPRSY2010,ESY2011,ESYY2012,Bulk_univ}; for a more extensive review, we refer the reader to \cite{erdos2017dynamical} and references therein. In this paper, we mainly focus on the eigenvalue and eigenvector statistics of $\cal Q_t$ and $\uQ_t$ near the right-most edge for each fixed $t=\oo(1)$. On the other hand, the dynamics of the rectangular DBM with respect to $t$ will be studied in \cite{DY20201}, and the proof there also depends heavily on the results proved in this section.

Most of our main results can be formulated in a simple and unified fashion using the following $(p+n)\times (p+n)$ symmetric block matrix
$$H_t \equiv H_t (Y_t):= \begin{pmatrix} 0 & Y_t \\ Y_t^\top & 0\end{pmatrix},$$
which we shall refer to as the linearization of the matrices $\cal Q_t$ and $\uQ_t$. 

\begin{definition}[Resolvents]\label{resol_not}
We define the resolvent of $H_t$ as
 \begin{equation}\label{eqn_defG}
G(z)\equiv G(Y_t , z) \equiv G(t, X, z):=(z^{1/2}H_t - z)^{-1}, \quad z\in \mathbb C_+ . 
\end{equation}
For $\cal Q_t:= Y_tY_t^\top$ and $\uQ_t:=Y_t^\top Y_t$, we define the resolvents 
\begin{equation}\label{def_green}
  \mathcal G\equiv \cal G(Y_t,z) :=\left({\mathcal Q}_t -z\right)^{-1} , \ \ \ \uG \equiv \uG(Y_t,z):=\left({\uQ}_t-z\right)^{-1} .
\end{equation}
 We denote the empirical spectral density $\rho$ of $ {\mathcal Q}_t$ and its Stieltjes transform as
\be\label{defn_m}
\rho\equiv \rho(Y_t,z):= \frac{1}{p} \sum_{i=1}^p \delta_{\lambda_i( {\mathcal Q}_t)},\quad m(z)\equiv m(Y_t,z):=\int \frac{1}{x-z}\rho(\dd x)=\frac{1}{p} \mathrm{Tr} \, \mathcal G(z),
\ee
where $\lambda_i( {\mathcal Q}_t)$, $1\le i \le p$, denote the eigenvalues of $\cal Q_t$ in the decreasing order.
Similarly, we denote the empirical spectral density $\urho$ of $\uQ_t$ and its Stieltjes transform as
\be\label{defn_m2}
\urho\equiv \urho(Y_t,z):= \frac{1}{n} \sum_{i=1}^n \delta_{\lambda_i( {\uQ}_t)},\quad \um(z)\equiv \um (Y_t,z):=\int \frac{1}{x-z}\urho(\dd x)=\frac{1}{n} \mathrm{Tr} \, \uG(z).
\ee

\end{definition}

Using Schur complement formula, one can check that 
\begin{equation} \label{green2}
G = \left( {\begin{array}{*{20}c}
   { \mathcal G} & z^{-1/2} \mathcal G Y_t \\
   {z^{-1/2} Y_t^\top \mathcal G} & { \uG }  \\
\end{array}} \right)= \left( {\begin{array}{*{20}c}
   { \mathcal G} & z^{-1/2} Y_t \uG   \\
   z^{-1/2} {\uG}Y_t^\top & { \uG }  \\
\end{array}} \right).
\end{equation}
Thus a control of $G(z)$ yields directly a control of the resolvents $\mathcal G$ and $\uG$.

Since $\uQ$ share the same nonzero eigenvalues with $\cal Q$ and has $n-p$ more zero eigenvalues due to (\ref{eq_defnc}), we have 
\be\label{m21} \um =\frac{p}{n} m - \frac{n-p}{nz} = c_n m - \frac{1-c_n}{z}.\ee
We will see that $(m,\um)$ is asymptotically equal to $(m_{w,t},\um_{w,t})$ as $n\to \infty$, where
\be \label{undeline m} \mm_{w,t}: = c_n m_{w,t} - \frac{1-c_n}{z}.\ee
Furthermore, we define the asymptotic matrix limit of $G(z)$ as
\be\label{defnPi}
\Pi(z):=\begin{bdmatrix} \frac{-(1+c_n t m_{w,t})}{z(1+c_n t m_{w,t})(1+t \underline m_{w,t} )-YY^\top} &  \frac{-z^{-1/2} }{z(1+c_n t m_{w,t})(1+t \underline m_{w,t} )-YY^\top}Y \\ 
Y^\top \frac{-z^{-1/2} }{z(1+c_n t m_{w,t})(1+t \underline m_{w,t} )-YY^\top} & \frac{-(1+t \underline m_{w,t} )}{z(1+c_n t m_{w,t})(1+t \underline m_{w,t} )-Y^\top Y}  
\end{bdmatrix}.
\ee
It is easy to check that the inverse of $\Pi$ is 
\be\label{pi_i}
\Pi^{-1}(z):=\begin{pmatrix}  - z(1+t \underline m_{w,t} )I_{p} &  z^{1/2}Y \\ z^{1/2}Y^\top   & -z(1+c_n t m_{w,t})I_n \end{pmatrix}^{-1} .
\ee
For $z=E+\ii\eta\in \C_+$, we introduce the notation
\be\label{defn kappa}\kappa\equiv \kappa_E := |E-\lambda_{+,t}|.\ee
Then for any constant $\vartheta>0$, we define the spectral domains
\begin{equation}\label{eq_domantheta}
\begin{split}
\mathcal{D}_{\vartheta}:=&\left\{z=E+\ii \eta: \lambda_{+,t}- \frac34c_V \leq E \leq \lambda_{+,t} +\vartheta^{-1} t^2, n\eta\left( t+\sqrt{\kappa+\eta}\right) \geq n^{\vartheta} ,\eta\le 10 \right\} \cup \mathcal{D}^{out}_{\vartheta},
\end{split}
\end{equation}
where 
\begin{equation}\label{eq_domanthetaout}
\begin{split}
\mathcal{D}^{out}_{\vartheta}:=\left\{z=E+\ii \eta:  \lambda_{+,t} \le E \le  \lambda_{+,t}+ \frac34c_V,  n\eta \sqrt{\kappa+\eta} \geq n^{\vartheta}, \eta\le 10 \right\}.
\end{split}
\end{equation}

Before stating the first main result of this paper, we introduce the following notion of stochastic domination, which was first introduced in \cite{Average_fluc} and subsequently used in many works on random matrix theory, such as \cite{isotropic,principal,local_circular,Delocal,Semicircle,Anisotropic}. It simplifies the presentation of the results and their proofs by systematizing statements of the form ``$\xi$ is bounded by $\zeta$ with high probability up to a small power of $n$".

\begin{definition}[Stochastic domination]\label{stoch_domination}
(i) Let
\[\xi=\left(\xi^{(n)}(u):n\in\bbN, u\in U^{(n)}\right),\hskip 10pt \zeta=\left(\zeta^{(n)}(u):n\in\bbN, u\in U^{(n)}\right)\]
be two families of nonnegative random variables, where $U^{(n)}$ is a possibly $n$-dependent parameter set. We say $\xi$ is stochastically dominated by $\zeta$, uniformly in $u$, if for any fixed (small) $\epsilon>0$ and (large) $D>0$, 
\[\sup_{u\in U^{(n)}}\bbP\left(\xi^{(n)}(u)>n^\epsilon\zeta^{(n)}(u)\right)\le n^{-D}\]
for large enough $n \ge n_0(\epsilon, D)$, and we shall use the notation $\xi\prec\zeta$. Throughout this paper, the stochastic domination will always be uniform in all parameters that are not explicitly fixed (such as matrix indices, and $z$ that takes values in some compact set). 
If for some complex family of random variables $\xi$ we have $|\xi|\prec\zeta$, then we will also write $\xi \prec \zeta$ or $\xi=\OO_\prec(\zeta)$.

\vspace{5pt}
\noindent (ii) We extend the definition of $\OO_\prec(\cdot)$ to matrices in the operator norm sense as follows. Let $A$ be a family of random matrices and $\zeta$ be a family of nonnegative random variables. Then $A=\OO_\prec(\zeta)$ means that $\|A\| \prec\zeta$.


\vspace{5pt}
\noindent (iii) We say an event $\Xi$ holds with high probability if for any constant $D>0$, $\mathbb P(\Xi)\ge 1- n^{-D}$ for large enough $n$.
\end{definition}

Now we are ready to state the local law on the resolvent $G(z)$. For simplicity of notations, we introduce the following deterministic control parameter 
\begin{equation}\label{eq_defpsi}
\Psi (z)  : =  \sqrt {\frac{{\Im \, m_{w,t}(z)   }}{{n\eta }}} + \frac{1}{n\eta}  .
\end{equation}

\begin{theorem}\label{thm_local}
Suppose \eqref{eq_defnc} holds, $V$ is $\eta_*$-regular in the sense of Definition \ref{assumption_edgebehavior}, and the entries of $X$ are i.i.d. Gaussian random variables satisfying \eqref{assm1}. Let $t$ satisfy $n^{\epsilon} \eta_* \leq t^2 \leq n^{-\epsilon}$ for a small constant $\epsilon>0$. Then for any constant $\vartheta>0$, the following estimates hold uniformly in $z\in \cal D_\vartheta$:
\begin{itemize}
\item[(1)] ({\bf anisotropic local law}) for any deterministic unit vectors $\mathbf u, \mathbf v \in \mathbb R^{p+n}$,
\begin{equation}\label{aniso_law}
\left|  \mathbf u^\top \Pi^{-1}(z)\left[G(z)-\Pi(z)\right] \Pi^{-1}(z)\mathbf v   \right|\prec  t\Psi(z) + \frac{t^{1/2}}{n^{1/2}};
\end{equation}

\item[(2)] ({\bf averaged local law}) for $z\in \cal D_\vartheta$, we have
\be\label{averin}
|m(z)-m_{w,t}(z)|\prec \frac{1}{n\eta} ;
\ee
for $z\in \cal D_\vartheta^{out}$, we have
\be\label{averout}
|m(z)-m_{w,t}(z)|\prec \frac{1}{n(\kappa+\eta)}+\frac{1}{(n\eta)^2\sqrt{\kappa+\eta}}.
\ee
\end{itemize}
\end{theorem}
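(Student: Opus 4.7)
The strategy is to exploit the Gaussian distribution of $X$ by viewing $Y_t$ as the endpoint of a matrix Dyson Brownian motion. For $s \in [0,t]$, set $Y_s := Y + \sqrt{s}\,X$ with linearization $H_s$ and resolvent $G_s(z) := (z^{1/2}H_s - z)^{-1}$; since $\sqrt{s}\,X$ is (up to variance normalization) a matrix Brownian motion, and a direct Schur-complement computation gives $G_0(z) \equiv \Pi_0(z)$ at $s = 0$, the problem reduces to propagating a bound on the error $D_s := G_s - \Pi_s$ from $D_0 = 0$ up to time $s = t$.

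The deterministic inputs are supplied by the rectangular-free-convolution analysis of Section 4: uniform upper and lower bounds on $\|\Pi(z)\|$ and $\|\Pi^{-1}(z)\|$ on $\mathcal{D}_\vartheta$, an evolution equation in $s$ for $m_{w,s}$, the square-root edge profile of Lemma 2.1, and a quantitative stability estimate for the self-consistent equation defining $m_{w,s}$ that holds uniformly up to the spectral edge. Applying It\^o's formula then yields $dG_s = -z^{1/2}G_s\,dH_s\,G_s + \mathcal{L}(G_s)\,ds$, where the It\^o drift $\mathcal{L}(G_s)$ can be simplified through the resolvent identity $z^{1/2}H_s G_s = I + zG_s$ and matched against $\partial_s \Pi_s$ computed from the explicit form of $\Pi$. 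Subtracting, $D_s$ obeys $dD_s = \mathcal{L}'(\Pi_s)\,D_s\,ds + (\text{quadratic in } D_s)\,ds + dM_s$, and either a method-of-characteristics argument---introducing a curve $z(s)$ along which the linearized drift cancels against $\partial_z$---or a direct bootstrap in $s$, combined with the stability of the Dyson operator, propagates a bound on the anisotropic quantity $\mathbf u^\top \Pi^{-1}\,D_s\,\Pi^{-1}\mathbf v$ forward from zero initial data.

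The quadratic variation of the martingale $dM_s$, contracted against $\Pi^{-1}\mathbf u$ and $\Pi^{-1}\mathbf v$, is controlled from the Gaussian covariance structure at size $\sim n^{-1}\,\im m_{w,s}/\eta$ per unit time, so Burkholder--Davis--Gundy integrated over $[0,t]$ yields the anisotropic bound $t\,\Psi(z) + t^{1/2}n^{-1/2}$, with the second term accounting for the diagonal part of the It\^o correction. The averaged local law is obtained by re-running the argument directly on the scalar trace $m_s - m_{w,s}$: additional averaging cancellations and the sharper scalar stability of the self-consistent equation produce the improved bound $(n\eta)^{-1}$ on $\mathcal{D}_\vartheta$, while the extra decay $(n(\kappa+\eta))^{-1} + (n\eta)^{-2}(\kappa+\eta)^{-1/2}$ on $\mathcal{D}^{out}_\vartheta$ reflects the fact that $\im m_{w,t}$ vanishes like a square root outside the spectrum.

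The principal obstacle is to preserve quantitative stability of the Dyson equation \emph{uniformly} down to scales $t^2 \gg \eta_*$. When $t$ is this small, $\Pi^{-1}$ becomes nearly singular along directions tangent to the edge $\lambda_{+,t}$, and the standard $t \sim 1$ stability fails. The $\eta_*$-regularity assumption (Definition 2.1) together with the sharpened edge estimates derived in Section 4 for the subordination function are tailored precisely to rescue stability in this regime, but packaging them into an effective Gronwall bound along the stochastic flow, uniformly for all $z \in \mathcal{D}_\vartheta$ and all $s \le t$, is the central technical content of Sections 5--6.
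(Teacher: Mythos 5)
Your plan takes a genuinely different route from the paper: the paper never runs the flow in $s$, but instead uses the Gaussian rotation invariance to pass to $W_t=W+\sqrt t X$, and then proves the law statically via Schur-complement identities, $Z$-variables, a two-term Taylor expansion of the self-consistent equation $\Phi_t(u_t)=z+\mathrm{error}$, fluctuation averaging, and a polynomialization argument for the anisotropic bound. A dynamical (It\^o/characteristics) proof is not absurd in principle (your reduction $\sqrt s X\stackrel{d}{=}B_s$ at fixed time and the initial condition $G_0=\Pi_0$ are both correct), but as written the proposal has two concrete gaps. First, the martingale bookkeeping does not close: a quadratic-variation rate of order $n^{-1}\im m_{w,s}/\eta$ per unit time gives, after BDG over $[0,t]$, an error of order $\sqrt{t\,\im m_{w,t}/(n\eta)}$, which is \emph{larger} than the target $t\Psi+t^{1/2}n^{-1/2}$ exactly in the regime of interest; for instance at $\eta\sim n^{-2/3}$, $\kappa\lesssim \eta$, $\im m_{w,t}\sim n^{-1/3}$, $t=n^{-1/6}$ one gets $\sqrt{t\,\im m_{w,t}/(n\eta)}\sim n^{-5/12}$ versus a target $t\Psi\sim n^{-1/2}$. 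The extra factor $t$ in the theorem is not produced by integrating a crude variance bound in time; in the paper it appears deterministically because every fluctuation term carries an explicit prefactor $t$ (cf.\ \eqref{Zmu}, \eqref{Zii} and Lemma \ref{Z_lemma}), and in a dynamical proof you would have to extract it from the smallness of the $\Pi^{-1}$-weighted observables (via $\im\Pi\lesssim(\eta+s\,\im m)\|\Pi\|^2$) together with the growth of $\eta(s)$ along characteristics---none of which is set up in the proposal.

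Second, the step you yourself flag as "the central technical content" is precisely the missing proof. The linearized drift you must Gronwall against is the stability operator of the matrix Dyson equation, whose scalar part degenerates like $\Phi_t'(\zeta_t)\sim\min\{1,\sqrt{\kappa+\eta}/t\}$ near the edge (Lemma \ref{lem_xigeneral}); the paper copes with this degeneracy by retaining the quadratic term $\tfrac12\Phi_t''(\zeta_t)(u_t-\zeta_t)^2$ with $\Phi_t''\sim t^{-2}$ in the self-consistent equation (Lemmas \ref{lem derivPhi} and \ref{lemm_selfcons_weak}) and by a fluctuation-averaging bound on $[Z]$ (Lemma \ref{abstractdecoupling}) to reach $1/(n\eta)$ and the outside bound \eqref{averout}. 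In your scheme, a shift of the spectral parameter along characteristics can cancel the scalar transport term, but it does not diagonalize the full matrix stability operator acting on anisotropic observables $\mathbf u^\top\Pi^{-1}D_s\Pi^{-1}\mathbf v$; without an explicit treatment of the unstable edge direction the Gronwall factor is uncontrolled exactly when $\eta_*\ll t^2\ll1$. Likewise, attributing the averaged improvements \eqref{averin}--\eqref{averout} to "additional averaging cancellations and sharper scalar stability" names the needed mechanism (the analogue of fluctuation averaging) without supplying it. So the proposal is a plausible program, but the two load-bearing estimates---the weighted martingale bound with the extra factor $t$, and the edge stability/Gronwall input---are either quantitatively wrong as stated or absent.
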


\begin{remark}
The anisotropic local law \eqref{aniso_law} can be reformulated as follows: for any deterministic unit vectors $\mathbf u, \mathbf v \in \mathbb R^{p+n}$,
\begin{equation}\label{aniso_law_rem}
\left|  \mathbf u^\top \left[G(z)-\Pi(z)\right] \mathbf v   \right|\prec  \left(t\Psi(z) + \frac{t^{1/2}}{n^{1/2}}\right)\|\Pi(z)\mathbf u\|\|\Pi(z)\mathbf v\|.
\end{equation}
In Lemma \ref{lem_domian1control} and Lemma \ref{lem_domian2control} below, we will see that
\be\label{rough Pi} \|\Pi(z)\mathbf u\| \lesssim \varpi^{-1}(z),\quad \varpi(z):=\begin{cases} t^2 + \eta+ t\sqrt{\kappa+\eta}, &\text{ if } E\le \lambda_{+,t}\\ t^2 + \kappa + \eta, &\text{ if } E\ge \lambda_{+,t}\end{cases},\ee
for any unit vector $\mathbf u\in \mathbb R^{p+n}$.
\end{remark}

Theorem \ref{thm_local} has some important implications, including the rigidity of eigenvalues and Tracy-Widom distribution for the edge eigenvalues of $\cal Q_t$. The former one will be presented in Theorem \ref{thm_implication} below, while the latter one will be presented in another paper \cite{DY20201} due to the length constraint. For the purpose of \cite{DY20201}, we also need another type of local law when $Y$ itself is a random matrix. In fact, when $Y$ is random, we can still apply Theorem \ref{thm_local} by conditioning on $Y$, but the asymptotic density $\rho_{w,t}$ and its Stieltjes transform $m_{w,t}$ will be random, which makes the result less convenient to use. On the other hand, if we know that $Y$ already satisfies a local law in the sense that $m_V$ is close to some deterministic function, then we can show that $m_{w,t}$ also converges to a deterministic limit. 

\begin{assumption}\label{assm regularW}
Let $m_c(z)$ be the Stieltjes transform of a deterministic law $\rho_c(x)$ that is compactly supported on $[0,\lambda_+]$, and such that $\|\rho_c\|_\infty =\OO(1)$ and 
\be\label{sqrtrhoc}\rho_c(x)\sim \sqrt{x},\quad \text{for} \ \ \lambda_+-c_V\le x\le \lambda_+.\ee
We assume that the Stieltjes transform of $V$ satisfies that for any constant $\vartheta>0$,
$$|m_{V}(z)-m_c(z)| \prec \frac{1}{n\eta},\quad \lambda_+ - c_V \le E \le \lambda_+,\ 0\le \eta\le 10, \ \sqrt{|E-\lambda_+|+\eta}\ge \frac{n^\vartheta}{n\eta},$$
and
$$|m_{V}(z)-m_c(z)| \prec \frac{1}{n(|E-\lambda_+|+\eta)}+ \frac{1}{(n\eta)^2\sqrt{|E-\lambda_+|+\eta}},\quad \lambda_+ \le E \le \lambda_++c_V,\ n^{-2/3+\vartheta}\le \eta\le 10.$$
\end{assumption}

We denote the rectangular free convolution of $\rho_c$ with the MP law at time $t$ as $\rho_{c,t}$, and denote its Stieltjes transform as $m_{c,t}$. More precisely, similar to \eqref{originalequation}, $m_{c,t}(z)$ is the unique solution of
\begin{equation}\nonumber
m_{c,t}= \int \frac{(1+c_n tm_{c,t})\rho_c(x)\dd x}{x- (1+c_ntm_{c,t})^2z+t(1-c_n)(1+c_ntm_{c,t})},
\end{equation} 
such that  $\im m_{c,t}(z)>0$ for any $z \in \mathbb{C}_+$. With \eqref{sqrtrhoc}, it is easy to check that $m_c$ is $\eta_*$-regular in the sense of Definition \ref{assumption_edgebehavior} with $\eta_*=0$. In particular, Lemma \ref{lem regularDBM} holds for any $0< t\le n^{-\e}$, which shows that $\rho_{c,t}$ has a square root behavior around its right edge, denoted by $\lambda_{c,t}$. Similar as before, we define $\kappa_c:=|E-\lambda_{c,t}|$ and the spectral domain
\begin{equation}\label{eq_domanthetas}
\begin{split}
\mathcal{D}^c_{\vartheta}:=&\left\{z=E+\ii \eta: \lambda_{c,t}- \frac34c_V \leq E \leq \lambda_{c,t}, \sqrt{\kappa+\eta} \geq \frac{n^{\vartheta}}{n \eta} ,\eta\le 10 \right\} \\
\bigcup &\left\{z=E+\ii \eta:  \lambda_{c,t} \le E \le  \lambda_{c,t}+ \frac34c_V,  n^{-2/3+\vartheta}\le \eta\le 10 \right\},
\end{split}
\end{equation}
and the control parameter 
\begin{equation}\nonumber 
\Psi_c (z)  : = \sqrt {\frac{{\Im \, m_{c,t}(z)   }}{{n\eta }}} + \frac{1}{n\eta}.
\end{equation}
Now we have the following result.

\begin{theorem}\label{thm_local2}
Suppose \eqref{eq_defnc} holds, $m_V$ satisfies Assumption \ref{assm regularW}, and the entries of $X$ are i.i.d. Gaussian random variables satisfying \eqref{assm1}. Let $t$ satisfy $0 \leq t^2 \leq n^{-\epsilon}$ for a small constant $\epsilon>0$. Then for any constant $\vartheta>0$, the following averaged local laws hold uniformly in $z\in \cal D^c_\vartheta$: 
\be\label{averin2}
|m(z)-m_{c,t}(z)|\prec \frac{1}{n\eta} \quad \text{for}\quad E\le \lambda_{c,t},
\ee
and 
\be\label{averout2}
|m(z)-m_{c,t}(z)|\prec \frac{1}{n(\kappa+\eta)}+\frac{1}{(n\eta)^2\sqrt{\kappa+\eta}}\quad \text{for}\quad E\ge \lambda_{c,t}.
\ee
\end{theorem}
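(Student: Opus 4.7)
The overall strategy is a triangle inequality
\[
|m(z)-m_{c,t}(z)| \;\leq\; |m(z)-m_{w,t}(z)| \,+\, |m_{w,t}(z)-m_{c,t}(z)|,
\]
where the first term is controlled by applying Theorem \ref{thm_local} to the (possibly random) signal matrix $Y$, and the second term is purely deterministic and is bounded through stability of the subordination equation for the rectangular free convolution. To invoke Theorem \ref{thm_local}, I would first verify that Assumption \ref{assm regularW} forces $V$ to be $\eta_*$-regular in the sense of Definition \ref{assumption_edgebehavior} with high probability, for any $\eta_* \gg n^{-2/3}$. Indeed, \eqref{sqrtrhoc} gives $\im m_c$-bounds of the form \eqref{regular1}--\eqref{regular2} with $\eta_*=0$, and Assumption \ref{assm regularW} transfers them to $\im m_V$ down to the scale on which the local-law error is dominated by $\im m_c$; together with a standard consequence that $\|V\|=\lambda_+ + \OO_\prec(n^{-2/3+\vartheta'})$, this places us in the setting of Theorem \ref{thm_local} whenever $t^2 \geq n^{\epsilon/2}\eta_*$, and yields the desired bound on $|m-m_{w,t}|$ on $\mathcal D_\vartheta \cup \mathcal D_\vartheta^{\mathrm{out}}$.

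Next I would compare $m_{w,t}$ with $m_{c,t}$ through the subordination identity. Rewriting \eqref{originalequation} gives
\[
m_{*,t}(z) \;=\; (1+c_n t\, m_{*,t}(z))\cdot m_*\bigl(\xi(z,m_{*,t}(z))\bigr), \qquad \xi(z,m) := (1+c_n t m)^2 z - t(1-c_n)(1+c_n t m),
\]
with $m_*=m_V$ for $*=w$ and $m_*=m_c$ for $*=c$. Subtracting the two equations and isolating $\Delta := m_{w,t}-m_{c,t}$ produces a linearized identity $(1-\mathcal L(z))\,\Delta = (1+c_n t m_{c,t})\,[m_V(\xi_c)-m_c(\xi_c)] + \OO(|\Delta|^2)$, where $\xi_c:=\xi(z,m_{c,t})$ and the multiplier $\mathcal L$ is assembled from derivatives of $\xi$ and $m_c$ at $\xi_c$. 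Using the deterministic subordination-function estimates of Section \ref{sec anafree} that underlie Lemma \ref{lem regularDBM}, I would show that (a) $\xi_c$ sits in the region of $\mathbb C_+$ to which Assumption \ref{assm regularW} applies, (b) the stability factor satisfies $|1-\mathcal L(z)|\gtrsim c(z)$ for a function $c(z)$ large enough to absorb the $t$-factors relating $\im\xi_c$ and $|\re\xi_c-\lambda_+|$ to $\eta$ and $\kappa$, and (c) feeding the bound on $|m_V(\xi_c)-m_c(\xi_c)|$ from Assumption \ref{assm regularW} into the linearized relation and dividing by $|1-\mathcal L|$ produces exactly the right-hand sides of \eqref{averin2}--\eqref{averout2}. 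The edge-matching bound $|\lambda_{+,t}-\lambda_{c,t}|\prec n^{-2/3+\vartheta'}$, obtained from the same subordination analysis combined with Assumption \ref{assm regularW}, then allows one to interchange $|E-\lambda_{+,t}|$ with $|E-\lambda_{c,t}|$ in the domain descriptions.

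The very small-$t$ regime $t^2 \lesssim n^{-2/3+\vartheta'}$, which is not covered by Theorem \ref{thm_local}, is handled by a direct perturbation argument: writing $m(z)-m_V(z) = p^{-1}\tr\bigl[\mathcal G(z)\,(V-\mathcal Q_t)\,(V-z)^{-1}\bigr]$ with $\|V-\mathcal Q_t\|\prec \sqrt{t}$, together with a Hanson--Wright-type concentration for the resulting linear and quadratic forms in the Gaussian entries of $X$, and coupling this with the deterministic estimate $|m_c(z)-m_{c,t}(z)|=\OO(t)$ obtained by differentiating the fixed-point equation for $m_{c,t}$ in $t$, closes the remaining range of $t$. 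The main technical obstacle, in my view, is the quantitative stability estimate in step (b) above: the multiplier $\mathcal L$ degenerates precisely where the square-root singularity of $\rho_{c,t}$ emerges, so preventing the loss of an extra factor of $\sqrt{\kappa+\eta}$ requires the refined subordination-function estimates of Section \ref{sec anafree}, which is exactly why the deterministic analysis of the rectangular free convolution developed there is a prerequisite for this theorem.
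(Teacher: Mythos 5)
Your first step (for $t\ge n^{-1/3+\epsilon_0}$) is essentially the paper's own Proposition \ref{prop_local}: Assumption \ref{assm regularW} plus \eqref{sqrtrhoc} gives $\eta_*$-regularity with $\eta_*=n^{-2/3+\delta}$, Theorem \ref{thm_local} controls $|m-m_{w,t}|$, and a stability/Taylor analysis of the subordination equations $\Phi_t(\zeta_t)=z$, $\Phi_c(\zeta_c)=z$ controls $|m_{w,t}-m_{c,t}|$; your sketch of that deterministic comparison is consistent with the paper's. The genuine gap is in how you treat the remaining range of $t$. The theorem allows all $0\le t^2\le n^{-\epsilon}$, and Theorem \ref{thm_local} only covers $t\gtrsim n^{-1/3+\epsilon_0}$, so you must handle the whole window $n^{-2/3}\lesssim t\lesssim n^{-1/3+\epsilon_0}$. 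Your proposal for this window decomposes $m-m_{c,t}=(m-m_V)+(m_V-m_c)+(m_c-m_{c,t})$ and bounds the outer two terms separately, but near the edge each of these terms is far larger than the target $1/(n\eta)$, and there is an exact cancellation between $m-m_V$ and $m_{c,t}-m_c$ that the triangle inequality destroys. Concretely, the right edge moves by $|\lambda_{c,t}-\lambda_+|\sim t$ (cf.\ the derivative formula \eqref{eq_defnpsiderviative0}), so for $z$ at distance $\kappa+\eta\sim n^{-2/3}$ from $\lambda_{c,t}$ one has $|m_c(z)-m_{c,t}(z)|\gtrsim \min\{\sqrt t,\, t/\sqrt{\kappa+\eta}\}$, which for $t\sim n^{-1/3}$ is of order $n^{-1/6}\gg 1/(n\eta)\sim n^{-1/3}$; your claim $|m_c-m_{c,t}|=\OO(t)$ is false at the edge, and even the correct size is too big. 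Symmetrically, $|m-m_V|$ is itself of order $\sqrt t$ there, so no concentration estimate can make your perturbation bound small: the quadratic term $t\,p^{-1}\tr\bigl[\mathcal G\, XX^\top (V-z)^{-1}\bigr]$ has a deterministic mean of order $t\,\im m/\eta$, which is exactly the (large) drift that turns $m_V$ into $m_{w,t}$, not a fluctuation that Hanson--Wright can kill. Your route therefore only closes the range $t\lesssim n^{-2/3}$, leaving $n^{-2/3}\ll t\lesssim n^{-1/3+\epsilon_0}$ unproved.

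The paper avoids this by never comparing $m$ with $m_V$ in that regime: for $0\le t\le n^{-1/3+\epsilon_0}$ it reruns the full local-law machinery directly against $m_{c,t}$ (Propositions \ref{region21} and \ref{region22}), deriving a self-consistent equation for $u_t=(1+c_ntm)^2z-(1+c_ntm)t(1-c_n)$ versus $\zeta_c$, in which Assumption \ref{assm regularW} enters only through the deterministic difference $\Phi_t(u_t)-\Phi_c(u_t)$ (Lemmas \ref{lemm_selfcons_weak3} and \ref{lemm_selfcons_weak4}), together with new deterministic bounds on $|d_i-\zeta_c|$ (\eqref{Dtheta1est}--\eqref{Dtheta2est2}), a weak law, and fluctuation averaging (Lemmas \ref{abstractdecoupling3}, \ref{abstractdecoupling4}). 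This formulation keeps the cancellation you lose, because $m_V$ is only ever evaluated at the subordination point $u_t$, where its deviation from $m_c$ is $\prec (n\,\im\zeta_c)^{-1}$ rather than $\sqrt t$. To repair your argument you would need to replace the small-$t$ perturbation step by such a direct self-consistent comparison (or some other mechanism that exploits the cancellation), not merely sharpen the concentration inequalities.
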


\subsection{General deformed rectangular matrices}\label{sec_mainresultgeneral}

In this section, we extend the local laws in Theorem \ref{thm_local} to the general case where the entries of $X$ are generally distributed, assuming only certain moment assumptions. 
We remark that the proof of the general case actually uses Theorem \ref{thm_local} in an essential way; see the discussions in Section \ref{sec_pfgen}.

We define the following control parameter 
\be\label{defnPhi} \Phi(z):=\frac{1}{\varpi(z)}\left( t\Psi(z) + \frac{t^{1/2}}{n^{1/2}}\right).\ee
Using the definition of $\cal D_\vartheta$ and the definition of $\varpi$ in \eqref{rough Pi}, it is easy to check that 
\be\label{boundpsi} \Phi(z) \lesssim n^{-\vartheta/2}, \quad z\in \cal D_\vartheta.\ee
For any vector $\bu\in \R^{p+n}$, we introduce the notation 
\be\label{upi}   \|\mathbf u\|_\Pi (z):=  \|\Pi(z) \mathbf u_1\|+  \|\Pi(z)\mathbf u_2\|,\ee
where $\bu_1\in \R^p$ and $\bu_2\in \R^n$ are vectors such that $\bu=\begin{pmatrix} \bu_1 \\ \bu_2\end{pmatrix}$.

\begin{theorem}\label{thm_local_gen}
Suppose \eqref{eq_defnc} holds, $V$ is $\eta_*$-regular in the sense of Definition \ref{assumption_edgebehavior}, and $X$ is a $p\times n $ random matrix whose entries $x_{ij}$ are real independent random variables satisfying \eqref{assm1} and 
\begin{equation}\label{condition_3rd}
\mathbb{E} x_{ij} ^3 =0,\quad 1\le i \le p,  \ 1\le j \le n.
\end{equation}
Moreover, assume that the entries of $X$ have finite moments up to any order, that is, for any fixed $k\in \N$,
\begin{equation}\label{conditionA2}
\mathbb{E}\vert  \sqrt{n}x_{ij} \vert^k \le C_k,
\end{equation}
for some constant $C_k>0$. Let $t$ satisfy $n^{\epsilon} \eta_* \leq t^2 \leq n^{-\epsilon}$ for a small constant $\epsilon>0$. Then for any constant $\vartheta>0$, the following estimates hold uniformly in $z\in {\cal D}_\vartheta$:
\begin{itemize}
\item[(1)] ({\bf anisotropic local law}) for any deterministic unit vectors $\mathbf u, \mathbf v \in \mathbb R^{p+n}$,
\begin{equation}\label{aniso_law_gen}
\left|  \mathbf u^\top \left[G(z)-\Pi(z)\right] \mathbf v   \right|\prec \Phi(z)  \|\mathbf u\|_\Pi^{1/2}  \|\mathbf v\|_\Pi^{1/2};
\end{equation}

\item[(2)] ({\bf averaged local law}) for $z\in \cal D_\vartheta$, we have
\be\label{averin_gen}
|m(z)-m_{w,t}(z)|\prec \frac{1}{n\eta} ;
\ee
for $z\in \cal D_\vartheta^{out}$, we have
\be\label{averout_gen}
|m(z)-m_{w,t}(z)|\prec \frac{1}{n(\kappa+\eta)}+\frac{1}{(n\eta)^2\sqrt{\kappa+\eta}} + \frac{\Phi}{n\eta}.
\ee
\end{itemize}
\end{theorem}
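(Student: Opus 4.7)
The overall strategy is a self-consistent comparison argument, taking the Gaussian case of Theorem~\ref{thm_local} as the starting point and controlling the difference to the general case by a cumulant expansion that crucially exploits the third-moment vanishing condition \eqref{condition_3rd}. By Markov's inequality, the anisotropic bound \eqref{aniso_law_gen} follows from a high-moment estimate of the form
\begin{equation*}
\E \left| \mathbf{u}^\top (G - \Pi) \mathbf{v} \right|^{2p} \prec \Phi(z)^{2p}\, \|\mathbf{u}\|_\Pi^{p}\, \|\mathbf{v}\|_\Pi^{p}
\end{equation*}
for every fixed $p \in \N$, and similarly for the averaged laws. When $X$ is replaced by a Gaussian $X^G$ with matching second moment, the same moment is already controlled by Theorem~\ref{thm_local}, so the comparison only has to eliminate the contribution of the non-Gaussian cumulants.

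First I would establish a rough a priori bound on $G-\Pi$ on the whole domain $\mathcal{D}_\vartheta$, obtained by a bootstrap argument in the imaginary part $\eta$. At $\eta=10$ such a bound follows at once from a Neumann-series expansion of $G$, and it can be propagated down to smaller $\eta$ together with the main self-consistent comparison step provided each step worsens the estimate by at most a factor $n^{o(1)}$. This rough bound serves as the input estimate in the subsequent cumulant expansion.

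The heart of the proof is the cumulant expansion applied to $\E\bigl[\mathbf{u}^\top(G-\Pi)\mathbf{v}\bigr]^{2p}$. Starting from the defining resolvent identity $(z^{1/2}H_t-z)G=I$, one isolates single entries of $X$ and applies the generalized Stein identity
\[
\E\!\left[ x_{ij}\, F(X) \right] \;=\; \sum_{k=1}^{K} \frac{c_{k+1}(x_{ij})}{k!}\, \E\!\left[ \partial_{ij}^{k} F(X) \right] \;+\; \OO\!\left(n^{-(K+1)/2}\right),
\]
where $c_k$ denotes the $k$-th cumulant of $x_{ij}$. The $c_2 = n^{-1}$ term reproduces the Gaussian calculation exactly, so only $c_k$ with $k \geq 3$ contribute to the discrepancy with the Gaussian ensemble. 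The assumption $\E x_{ij}^3 = 0$ kills the $c_3$ contribution, and the next term $c_4 = \OO(n^{-2})$ is already small enough---after summation over the $n^2$ pairs of indices and combination with $2p$ resolvent entries each of size $\Psi$---to produce a contribution fitting into the target $\Phi^{2p}\|\mathbf{u}\|_\Pi^{p}\|\mathbf{v}\|_\Pi^{p}$. The finite-moment condition \eqref{conditionA2} controls the remainder and all higher cumulants uniformly.

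The main obstacle I anticipate is the combinatorial bookkeeping of the anisotropic weights $\|\mathbf{u}\|_\Pi, \|\mathbf{v}\|_\Pi$ through the cumulant expansion. Each differentiation $\partial_{ij}$ produces factors of the form $(\mathbf{u}^\top G e_\alpha)(e_\beta^\top G \mathbf{v})$ together with bulk resolvent entries $G_{\alpha\beta}$; to close the argument with $\|\mathbf{u}\|_\Pi, \|\mathbf{v}\|_\Pi$ on the right-hand side, rather than the much larger isotropic norms, I must verify, using the explicit form of $\Pi$ in \eqref{defnPi} and the bound \eqref{rough Pi}, that each resolvent factor produced inherits the appropriate $\Pi$-weight from its endpoint vector and that the internal $G$-entries are bounded by the a priori rough estimate. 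Once the anisotropic law is established, the averaged laws \eqref{averin_gen}-\eqref{averout_gen} follow by taking $\mathbf{u},\mathbf{v}$ to be standard basis vectors, averaging over diagonal indices, and applying a fluctuation-averaging argument that exploits cancellations among off-diagonal entries of $G$; the extra term $\Phi/(n\eta)$ in \eqref{averout_gen} accounts precisely for the leading non-Gaussian $c_4$-correction that survives after averaging.
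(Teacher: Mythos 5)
Your overall strategy is the same as the paper's: both arguments take the Gaussian case (Theorem \ref{thm_local}) as the input, control the general case by a moment-matching comparison in which the vanishing third moment \eqref{condition_3rd} and the moment bounds \eqref{conditionA2} make the per-entry discrepancy of size $\OO(n^{-2})$, run a multi-scale bootstrap in $\eta$ (the paper's properties $(\mathbf A_k)$, $(\mathbf C_k)$ on the nets $\mathbf S_k$ play exactly the role of your rough a priori bound propagated down from $\eta=10$), and carry the anisotropic weights $\|\mathbf u\|_\Pi$, $\|\mathbf v\|_\Pi$ through the expansion using \eqref{rough Pi} and Ward-type identities. The implementation differs: the paper compares entry by entry through the interpolating matrices $X^\theta$ and the interpolation formula of Lemma \ref{lemm_comp_3}, Taylor-expands the resolvent in the single replaced entry up to order $4a+4$ as in \eqref{eq_comp_taylor}, organizes the derivatives by the word combinatorics of Definition \ref{def_comp_words}, and closes with a Gr\"onwall argument, whereas you propose a cumulant (Stein-type) expansion of high moments. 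Your route for the averaged laws also differs: you would deduce \eqref{averin_gen}--\eqref{averout_gen} from the anisotropic law by averaging plus a fluctuation-averaging argument for the non-Gaussian ensemble, while the paper simply reruns the comparison on $|m-m_{w,t}|$ and extracts the extra $\Phi/(n\eta)$ term from the improved averaged bound \eqref{average_bound}; your version is feasible but requires proving fluctuation averaging for general $X$, which is additional work the paper's second comparison avoids.

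One step in your write-up does not close as stated. You apply the cumulant/Stein identity directly to $\E\bigl|\mathbf u^\top (G-\Pi)\mathbf v\bigr|^{2p}$ for the general ensemble and assert that the $c_2$ term ``reproduces the Gaussian calculation exactly,'' so that only cumulants of order at least three measure the discrepancy with $X^{Gauss}$. As written this is not a comparison: the second-cumulant terms still involve resolvents of the general $X$, and nothing identifies them with the Gaussian quantity controlled by Theorem \ref{thm_local}. To import the Gaussian result you must expand the derivative of the moment along an interpolation between $X^{Gauss}$ and $X$ (a continuous flow, or the entrywise replacement the paper uses), so that the second-order contributions cancel between the endpoints and only the third- and higher-order cumulants survive; the alternative, treating the $c_2$ terms self-consistently for general $X$, would force you to redo the whole stability analysis outside the Gaussian setting, which the paper deliberately avoids because the reduction $Y_t\stackrel{d}{=}O_1W_tO_2^\top$ to the diagonal $W$ is available only for Gaussian noise. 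Once that interpolation step is inserted, your argument matches the paper's in substance.
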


\begin{remark} 
Theorem \ref{thm_local_gen} is proved through a comparison with the Gaussian case in Theorem \ref{thm_local}, where we need the first three moments of $x_{ij}$ to match those of the Gaussian random variables. Without the condition (\ref{condition_3rd}), the comparison argument cannot give the local laws up to a small enough $\eta$ (for our purpose, we need $\eta$ to be as small as $n^{-2/3}$). 
We will try to remove the assumption (\ref{condition_3rd}) in future works.
\end{remark}

Consider the singular value decomposition of $Y_t$,
$$Y_t  = \sum_{k = 1}^{p} {\sqrt {\lambda_k} \bm{\xi}_k } \bm{\zeta} _{k}^\top ,$$
 where $\sqrt{\lambda_1}\ge \sqrt{\lambda_2} \ge \ldots \ge \sqrt{\lambda_{p}}$ are the singular values of $Y_t$, 
$\{\bm{\xi}_{k}\}_{k=1}^{p}$ are the left singular vectors, and $\{\bm{\zeta}_{k}\}_{k=1}^{n}$ are the right-singular vectors.
As a consequence of Theorem \ref{thm_local_gen}, we can obtain some important estimates on $\{\lambda_k\}$, $\{\bm{\xi}_{k}\}$ and $\{\bm{\zeta}_{k}\}$. Before stating them, we first introduce some notations. For any fixed $E$, let $\eta_l(E)$ (``$l$" stands for ``lower bound") be the unique solution of
  $$n\eta_l(E)\left( t + \sqrt{\kappa_E + \eta_l(E)}\right)=1.$$
For $t$ satisfying $t \gg n^{-1/3}$, it is easy to check that   
\be\label{etalE}
 \eta_l(E)\sim \frac{1}{n(t+\sqrt{\kappa_E})}. 
\ee
We define the classical location $\gamma_j$ for the $j$-th eigenvalue of $\mathcal Q_t$ as
\begin{equation}\label{gammaj}
\gamma_j:=\sup_{x}\left\{\int_{x}^{+\infty} \rho_{w,t}(x)dx > \frac{j-1}{p}\right\} .
\end{equation}
In particular, by the square root behavior of $\rho_{w,t}$ in \eqref{sqrtdensity0}, we have $\gamma_1 = \lambda_{+,t}$, and $\kappa_{\gamma_j}=|\gamma_j -\lambda_{+,t}|\sim j^{2/3}n^{-2/3}$ for $j\ge 2$ and $\gamma_j \ge \lambda_{+,t}-c_V/2$. 

\begin{theorem}\label{thm_implication}
Suppose the assumptions of Theorem \ref{thm_local_gen} hold. 
\begin{itemize}
\item[(1)] ({\bf Eigenvalues rigidity}) For any $k$ such that $\lambda_{+,t} -  c_V/2 < \gamma_k \le \lambda_{+,t}$, we have
\begin{equation}\label{rigidity}
\vert \lambda_k - \gamma_k \vert \prec  n^{-2/3}  k^{-1/3}+  \eta_{l}(\gamma_k)  .
\end{equation}

\item[(2)] ({\bf Edge universality}) There exist constants $\epsilon,\delta >0$ such that for all $x\in \mathbb R$, 
\be\label{edgeuniv} 
\begin{split}
\mathbb{P}^{G} \left(n^{{2}/{3}}(\lambda_1-\lambda_{+,t}) \leq x-n^{-\epsilon}\right)-n^{-\delta}& \leq \mathbb{P}\left(n^{{2}/{3}}(\lambda_1-\lambda_{+,t})\leq x\right) \\
& \leq \mathbb{P}^{G}\left(n^{{2}/{3}}(\lambda_1-\lambda_{+,t}) \leq x+n^{-\epsilon}\right)+n^{-\delta} , 
 \end{split}
\ee
 where $\mathbb P^G$ denotes the law for $X=(x_{ij})$ with i.i.d. Gaussian entries satisfying \eqref{assm1}.
 
\item[(3)] ({\bf Eigenvector delocalization}) For any deterministic unit vectors $\mathbf u \in \mathbb R^{p}$, $\mathbf v  \in \mathbb R^{n}$ and any $k$ such that $\lambda_{+,t} -  c_V/2 < \gamma_k \le \lambda_{+,t}$, we have that
\begin{equation}\label{delocal}
\begin{split}
 & \left| \mathbf u^\top \bm{\xi}_k  \right|^2 \prec \eta_l(\gamma_k)\cdot \left[\im (\mathbf u^\top \Pi (z_k) \mathbf u) + \Phi(z_k)\cdot \|\mathbf u\|_{\Pi}(z_k)\right], \\
  & \left| \mathbf v^\top \bm{\zeta}_k  \right|^2 \prec  \eta_l(\gamma_k)\cdot \left[\im (\mathbf v^\top \Pi (z_k) \mathbf v) + \Phi(z_k)\cdot \|\mathbf v\|_{\Pi}(z_k)\right],
  \end{split}
\end{equation}
where we denote $z_k:=\gamma_k+ \ii \eta_l(\gamma_k)$.
\end{itemize}
\end{theorem}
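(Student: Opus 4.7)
The plan is to derive all three statements as standard consequences of the local laws in Theorem \ref{thm_local_gen}, following the route now well-established in random matrix theory for generalized Wigner and sample covariance models. Throughout, I would use the block structure \eqref{green2} of $G$ together with the fact that $\mathcal{Q}_t$ and $\uQ_t$ share nonzero eigenvalues and have singular vectors readable off $G$.

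For the rigidity statement (1), I would first convert the averaged local laws \eqref{averin_gen}, \eqref{averout_gen} into a bound on the eigenvalue counting function $N(E) = \#\{j : \lambda_j \geq E\}$ via a Helffer--Sjöstrand (or equivalent contour integration) argument: writing $N(E)$ as an integral of $\im m$ against a smooth cutoff localised at scale $\eta_l(\gamma_k)$, the error terms coming from \eqref{averin_gen}--\eqref{averout_gen} and the square-root edge behavior \eqref{sqrtdensity0} sum to $O_\prec(k^{-1/3} n^{-2/3} + \eta_l(\gamma_k))$. Inverting this estimate and using that $\rho_{w,t}(\gamma_k) \sim k^{1/3} n^{-1/3}$ near the edge gives \eqref{rigidity}. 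The extra $\eta_l(\gamma_k)$ term reflects that, unlike the pure MP case, here the spectral edge can only be resolved down to the scale $\eta_l$ dictated by the $t\Psi(z)$ term in the anisotropic estimate.

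For the delocalization bound (3), I would use the spectral decomposition
\begin{equation*}
\im \cal G(z) \;=\; \eta \sum_{k} \frac{\bm{\xi}_k \bm{\xi}_k^\top}{(\lambda_k - E)^2 + \eta^2},\qquad \im \uG(z) \;=\; \eta \sum_{k} \frac{\bm{\zeta}_k \bm{\zeta}_k^\top}{(\lambda_k - E)^2 + \eta^2},
\end{equation*}
and evaluate at $z = z_k = \gamma_k + \ii\eta_l(\gamma_k)$. On the rigidity event from (1), $|\lambda_k - \gamma_k| \lesssim \eta_l(\gamma_k)$, so the $k$-th summand dominates and gives $|\mathbf{u}^\top \bm{\xi}_k|^2 \lesssim \eta_l(\gamma_k) \cdot \mathbf{u}^\top \im \cal G(z_k) \mathbf{u}$. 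Substituting the anisotropic law \eqref{aniso_law_gen} in its reformulation \eqref{aniso_law_rem}, and using $\im \mathbf{u}^\top \Pi(z) \mathbf{u}$ to express the deterministic part, yields \eqref{delocal}; the argument for $\bm{\zeta}_k$ is identical using the $\uG$-block. Here I would apply \eqref{aniso_law_gen} to the embedded vectors $(\mathbf{u},0)$ and $(0,\mathbf{v})$ in $\mathbb{R}^{p+n}$, so that $\|\mathbf u\|_\Pi$ and $\|\mathbf v\|_\Pi$ appear as in the statement.

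The hardest piece is the edge universality (2), which is not a direct corollary of the local laws but requires a Green function comparison (GFC) step. The plan is to fix a smooth cutoff $\chi$ approximating $\mathbf 1(x \leq E)$ at scale $n^{-2/3-\epsilon}$, write $\mathbb P(\lambda_1 \leq E)$ as an expectation of a smooth function of $\tr \chi(\cal Q_t / n^{-2/3})$ (expressible via $\im m$ integrated against a mollifier, using rigidity to localize the relevant eigenvalues), and run a Lindeberg-type swap replacing the entries of $X$ one by one with Gaussian counterparts. At each swap the fourth-order Taylor expansion of the resolvent contributes a term controlled by the anisotropic law \eqref{aniso_law_gen}, the first three moments cancel exactly thanks to \eqref{condition_3rd} and \eqref{assm1}, and the remainder is summable in $n$ provided the smoothed observable has bounded derivatives in a controlled sense. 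The main obstacle is obtaining sufficiently fine control of the derivatives of $G$ near the edge: because $\Phi$ is only $n^{-\vartheta/2}$ small, one needs the fourth moment contribution to be multiplied by $\eta^{-1}$ factors that are harmlessly bounded at $\eta \sim n^{-2/3-\epsilon}$. This is the same mechanism as in standard edge GFC arguments, but has to be redone with the $(p+n)$-dimensional linearization $H_t$, the deterministic limit $\Pi(z)$ in place of the MP subordination, and the weight $\|\cdot\|_\Pi$ governing anisotropic errors.
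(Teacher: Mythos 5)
Your treatments of parts (1) and (2) coincide with the paper's: rigidity via a Helffer--Sj\"ostrand argument from the averaged laws \eqref{averin_gen}--\eqref{averout_gen}, and edge universality via a Lindeberg/Green-function comparison in which the first three moments match by \eqref{assm1} and \eqref{condition_3rd}. The only detail you should add for (2) is the explicit check that the comparison needs the local laws at $z=E+\ii n^{-2/3-\delta}$ for $E$ near $\lambda_{+,t}$, and that such $z$ indeed lie in $\cal D_\vartheta$ because $n\eta\,(t+\sqrt{\kappa+\eta})\ge n\eta\, t\ge n^{\vartheta}$ thanks to $t\ge n^{\e/2}\sqrt{\eta_*}\ge n^{-1/3+\e/2}$; the paper flags exactly this point.

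The genuine gap is in part (3). You assert that on the rigidity event $|\lambda_k-\gamma_k|\lesssim\eta_l(\gamma_k)$, so that the $k$-th summand of $\im\,\cal G(z_k)$ at $z_k=\gamma_k+\ii\eta_l(\gamma_k)$ dominates. But \eqref{rigidity} only gives $|\lambda_k-\gamma_k|\prec n^{-2/3}k^{-1/3}+\eta_l(\gamma_k)$, and by \eqref{etalE} with $\kappa_{\gamma_k}\sim k^{2/3}n^{-2/3}$ one has $\eta_l(\gamma_k)\sim \left[n\left(t+k^{1/3}n^{-1/3}\right)\right]^{-1}$, which is much smaller than $n^{-2/3}k^{-1/3}$ as soon as $t\gg k^{1/3}n^{-1/3}$ (for instance $k=\OO(1)$ and $t= n^{-1/3+\omega}$: the extreme eigenvalues fluctuate on scale $n^{-2/3}$, while $\eta_l\sim n^{-2/3-\omega}$). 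Hence the $k$-th term need not dominate, and your argument only yields $|\mathbf u^\top\bm{\xi}_k|^2\le \eta_l^{-1}\left[(\lambda_k-\gamma_k)^2+\eta_l^2\right]\cdot \mathbf u^\top\im\,\cal G(z_k)\mathbf u$, which loses a factor as large as $(n^{1/3}t)^2$ compared with \eqref{delocal}. The fix --- and the paper's route --- is to center the spectral parameter at the eigenvalue itself: take $\wt z_k=\lambda_k+\ii\, n^{\e}\eta_l(\lambda_k)$, so that the $k$-th summand equals $|\mathbf u^\top\bm{\xi}_k|^2/(n^{\e}\eta_l(\lambda_k))$ exactly and no domination argument is needed; rigidity is then used only to verify $\wt z_k\in\cal D_\vartheta$ (so that \eqref{aniso_law_gen} applies at $\wt z_k$) and afterwards to replace $\lambda_k$ by $\gamma_k$ in the deterministic quantities $\im\,\mathbf u^\top\Pi$, $\Phi$ and $\|\mathbf u\|_\Pi$, i.e.\ to pass from $\wt z_k$ to $z_k$. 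Since $\e>0$ is arbitrary, this yields \eqref{delocal}; the same correction applies to your $\uG$-block argument for $\bm{\zeta}_k$.
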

 \begin{remark} \label{rigid_multi}
As in \cite{EKYY,EYY,LY}, \eqref{edgeuniv} can be generalized to the finite correlation function of the $k$ largest eigenvalues for any fixed $k\in \N$:
\begin{align}
 \mathbb{P}^{G} \left( \left(n^{{2}/{3}}(\lambda_i-\lambda_{+,t}) \leq x_i -n^{-\epsilon}\right)_{1\le i \le k}\right)-n^{-\delta} \le \mathbb{P}  \left( \left(n^{{2}/{3}}(\lambda_i-\lambda_{+,t})  \leq x_i \right)_{1\le i \le k}\right)  \nonumber\\
  \le \mathbb{P}^{G} \left( \left(n^{{2}/{3}}(\lambda_i-\lambda_{+,t}) \leq x_i + n^{-\epsilon}\right)_{1\le i \le k}\right)+ n^{-\delta},  \label{edgeuniv_ext}
\end{align}
for all $x_i\in \R$, $1\le i \le k$. Moreover, in \cite{DY20201} we will show that $\mathbb{P}^{G}  \left( (n^{{2}/{3}}(\lambda_i-\lambda_{+,t}) \leq x_i)_{1\le i \le k} \right)$ converges to the Tracy-Widom law \cite{TW1,TW}. Thus \eqref{edgeuniv} and \eqref{edgeuniv_ext} actually show that for a general $X$, the largest few eigenvalues obey the Tracy-Widom fluctuation.
\end{remark}


\subsection{Statistical applications}\label{sec_statapp}
In this section, we discuss some potential applications of our results in high dimensional statistics. Specifically, we shall consider the model used in multicell multiuser MIMO system \cite{VBL} as an example, which belongs to the massive MIMO system  \cite{BHKD, VBS, NKDA}. The massive MIMO system is a promising technique to deal with large wireless communication systems, such as the design of 5G \cite{NKDA}. 
In contrast to the standard MIMO system that assumes a low-rank structure of $Y$ \cite{RMT_Wireless}, the massive MIMO system requires a large-rank signal matrix. 

First we introduce the model for the multicell multiuser MIMO system \cite{VBL}. Suppose there is a single target cell and $r$ nearby interfering cells. Each cell contains a single base station equipped with $p$ antennas and $K$ single-antenna users. Consider the uplink (reverse link) transmission where the target base station receives
signals from all users in all cells. Then we observe $n$ i.i.d. samples $\bm{y}_i$, $i=1,2,\cdots,n$, each of which can be modeled as 
\begin{equation}\label{eq_massmimomodel}
\bm{y}_i=\mathbf{H} \bm{z}_i+\sum_{k=1}^r \mathbf{W}_k \mathbf{z}^k_i+\bm{w}_i.  
\end{equation}
Here the transmitted data $\bm{z}_i \in \mathbb{R}^K$ is a centered random vector with covariance matrix $\Gamma$; $\mathbf{H} \in \mathbb{R}^{p \times K}$ is the channel matrix between the base station and the $K$ users; $\mathbf{z}_i^k \in \mathbb{R}^{K}$ is the interfering data in the $k$-th interfering cell with i.i.d. centered entries of unit variance; $\mathbf{W}_k \in \mathbb{R}^{p \times K}$ is the channel matrix between the base station and the users in cell $k$; $\bm{w}_i \in \mathbb{R}^p$ is the additive noise with i.i.d. centered entries of variance $\sqrt{t}$. We assume that all the random vectors $\bm{z}_i$, $\mathbf{z}^k_i$ and $\bm{w}_i$ are independent of each other. 
Suppose that the number of users in each cell is fixed, i.e., $K$ is a fixed integer,  and that the number of the neighboring interfering cells is large, i.e., $r$ is large. Denote 
\begin{equation*}
\widetilde{\mathbf{z}}_i=((\mathbf{z}_i^1)^\top, \cdots, (\mathbf{z}_i^r)^\top)^\top,\quad W:=(\mathbf{W}_1,\cdots, \mathbf{W}_r).
\end{equation*}       
Then we obtain the following matrix model by concatenating the $n$ observed samples:
\begin{equation}\label{eq_matrixmodelmimo}
\widetilde{Y}_t:=(\bm{y}_1, \cdots, \bm{y}_n)=\mathbf{H} \Gamma^{1/2} Z+W Z_I+\sqrt{t} X, 
\end{equation}
where $Z \in \mathbb{R}^{K \times n}$, $Z_I:=(\widetilde{\mathbf{z}}_1,\cdots, \widetilde{\mathbf{z}}_n) \in \mathbb{R}^{(rK) \times n}$ and $X:=t^{-1/2}(\bm{w}_1,\cdots, \bm{w}_n)\in \R^{p\times n}$ are independent random matrices with i.i.d. centered entries of unit variance. 
Denoting $Y_0:=\mathbf{H} \Gamma^{1/2} Z$ and $Y:=W Z_I$, 
 we can rewrite (\ref{eq_matrixmodelmimo}) as 
\begin{equation}\label{eq_defnmodemmimogeneralgenral}
\widetilde{Y}_t=Y_0+Y+\sqrt{t}X,
\end{equation} 
where $Y_0$ is a low-rank matrix representing the transmitted signals of the home cell, $Y$ is a large-rank matrix representing the signals of the interfering cells, and $\sqrt{t}X$ is the additive noise. Note that $Y_t:=Y+\sqrt{t}X$ is the deformed rectangular matrix model,
which, as mentioned in \cite[Section 3]{VBL}, is called the \emph{interference-plus-noise} matrix in the above application. 
We are interested in the estimation and inference of the model (\ref{eq_defnmodemmimogeneralgenral}). We will propose some useful statistics based on our results in Section \ref{sec_mainresultgeneral}. 
For definiteness, we assume that $Y_t$ satisfies the assumptions of Theorem \ref{thm_local_gen} in the following discussion,

In the first application, we are interested in detecting the signals, that is, the existence of $Y_0$. Let $\mathrm{r}_*$ the number of non-zero singular values of $Y_0$. 
Formally, we consider the following hypothesis testing problem 
\begin{equation}\label{eq_testhypothesis}
\mathbf{H}_0: \ \mathrm{r}_*=0 \quad \text{vs} \quad  \mathbf{H}_a: \ \mathrm{r}_*>0. 
\end{equation}   
Under the null hypothesis of $\mathbf{H}_0,$
by \eqref{edgeuniv_ext} we know that the joint distribution of the largest few eigenvalues of $\widetilde{Y}_t \widetilde{Y}_t^\top $ is universal regardless of the distributions of the entries of $X$. Since we have no a priori information on the interference matrix $Y$ and the noise level $t,$ we shall use the following pivotal statistic \cite{OAea} 
\begin{equation}\label{eq_testonestat}
\mathbb{T}_1=\frac{\mu_1-\mu_2}{\mu_2-\mu_3},
\end{equation}  
where $\mu_1 \geq \mu_2 \geq \cdots \geq \mu_{p \wedge n} $ are the eigenvalues of $\widetilde{Y}_t \widetilde{Y}_t^\top.$ The statistic $\mathbb{T}_1$ will be powerful if some singular values of $Y_0$ are 
above the threshold for BBP transition such that they give rise to some outliers of $\wt Y_t$, that is, singular values those are detached from the bulk singular value spectrum. This kind of assumption appears commonly in the literature of signal detection, see e.g. \cite{AN17,RRN14,5447639,nadler2008}. Furthermore, by the remark below \eqref{edgeuniv_ext}, under the null hypothesis $\mathbf{H}_0,$ $\mathbb{T}_1$ actually satisfies an explicit distribution that can be derived from the Tracy-Widom law. 

For the second application, we consider the estimation of the number of signals once we reject the null hypothesis of (\ref{eq_testhypothesis}). For simplicity, for now we assume that the eigenvalues $d_1>d_2>\cdots>d_*$ of $Y_0Y_0^\top$ are reasonably large such that they gives rise to $\mathrm{r}_*$ outliers 
$\mu_1>\mu_2>\cdots>\mu_{\mathrm{r}_*}$.  Following the discussions in Sections 3 and 4 of \cite{capfreeprob}, one can show that with probability $1-\oo(1)$,
\begin{equation*}
 \mu_i=\zeta_t^{-1}(d_i)+\oo(1), \quad 1 \leq i \leq \mathrm{r}_*,
\end{equation*} 
where $\zeta_t^{-1}(\cdot)$ is the inverse function of the subordination function defined in (\ref{eq_defnzeta}). With the estimates proved in Section \ref{sec_contour} below, we can show that $\mu_{i}>\lambda_{+,t}$ if $d_{i}>\zeta_t(\lambda_{+,t})$, where $\zeta_t(\lambda_{+,t})$ gives the threshold for BBP transition. On the other hand, by \eqref{rigidity} and Cauchy interlacing theorem, we have that 
\begin{equation*}
\mu_{j+\mathrm{r}_*}=\lambda_{+,t}+\OO_\prec(n^{-2/3+\e}), \quad \text{for any fixed} \ \ j \geq 1. 
\end{equation*}
 In light of the above observations, we propose the following statistic,
\begin{equation*}
\widehat{\mathrm{r}}_* :=\underset{1 \leq i \leq \ell}{\mathrm{arg min}} \left \{ \frac{\mu_{i+1}}{\mu_{i+2}}-1 \leq \omega \right\}.
\end{equation*}   
Here $\ell$ is a pre-given large constant and $\omega$ is a small number that can be chosen using a calibration procedure. We refer the readers to \cite[Section 4.1]{dingyang2} for more details. Using our result, Theorem \ref{thm_implication}, it is not hard to show that $\widehat{\mathrm{r}}_*$ is a consistent estimator of $\mathrm{r}_*.$ We also remark that our local law, Theorem \ref{thm_local_gen}, combined with the strategy in \cite{BDW, principal, ding2020, dingyang2} can give optimal convergent rates and exact asymptotic distributions for the outlier eigenvalues $\mu_i$, $1 \leq i \leq \mathrm{r}_*$. However, this requires a lot more dedicated efforts and is beyond the scope of the current paper. We will pursue this direction somewhere else. We also remark that in general, it may happen that only a subset of the eigenvalues of $Y_0Y_0^\top$ are above the BBP transition threshold, say $\mu_1> \cdots> d_{r_+} > \zeta_t(\lambda_{+,t})> d_{r_++1}> \cdots >d_{\mathrm{r}_*}$ for some $0<r_+<\mathrm{r}_*$. In this case, the estimator $\widehat{\mathrm{r}}_* $ will consistently give the value $r_+$, and it is known that eigenvalues $ d_{r_++1}, \cdots ,d_{\mathrm{r}_*}$ cannot be detected reliably using the singular values of $\widetilde{Y}_t$ only.


Finally, we mention that our results can be used to study many other problems involving large-rank deformed rectangular matrices. For instance, in Section \ref{sec anafree}, we will conduct a thorough analysis of $m_{w,t}$ and the equation (\ref{originalequation}). Based on the results there, we can propose a convex optimization based methodology to estimate the large-rank matrix $Y$ by utilizing (\ref{originalequation}) and the strategy in \cite{karoui2008spectrum}. Moreover, the model (\ref{eq_defnmodemmimogeneralgenral}) also appears in many other statistical problems. For example, in the factor model \cite{FWZZ,LETTAU20201,OAea}, $Y_0$ represents the excess return matrix, $Y$ is the cross-section (i.e. the common factors) part, and $\sqrt{t}X$ is the idiosyncratic component. In existing literature, $Y$ is commonly assumed to be either sparse or low-rank. Based on our results, we can study the factor model beyond the low-rank assumption. We will consider these problems in future works.







\section{Analysis of rectangular free convolution} \label{sec anafree}

The proof of the main results depends crucially on a good understanding of the rectangular free convolution $\rho_{w,t}$ and its Stieltjes transform $m_{w,t}$. In this section, we prove some deterministic estimates on them given that $m_{w,0}\equiv m_V$ is $\eta_*$-regular as in Definition \ref{assumption_edgebehavior}. In particular, we will show that $\rho_{w,t}$ has a regular square root behavior around the right edge as given in Lemma \ref{lem regularDBM}. When $t\sim 1$, some of the estimates have been proved in \cite{DOZIER20071099,DOZIER2007678,VLM2012}. Here we extend them to the case $n^{\epsilon} \eta_* \leq t^2 \leq n^{-\epsilon}$, which requires much more careful estimates regarding the equation \eqref{originalequation}. We expect the results of this section to be of independent interest in the statistical estimation of large rank deformed rectangular matrices.


\subsection{Basic estimates}


In this section, we collect some known estimates from the previous works \cite{DOZIER20071099,DOZIER2007678,VLM2012}. 
Following \cite{DOZIER20071099}, we denote $b_t(z):=1+c_n t m_{w,t}(z).$ It is easy to see from (\ref{originalequation}) that $b_t$ satisfies the following equation 
\begin{equation}\label{originaleqautionbbb}
b_t=1+\frac{tc_n}{p} \sum_{i=1}^p \frac{1}{b_t^{-1} d_i-b_t z+t(1-c_n)}.
\end{equation} 
Next, we introduce the notation
\begin{equation}\label{eq_defnzeta}
\zeta_t(z):=b_t^2 z-b_t t(1-c_n),
\end{equation}
which $\zeta$ is actually the subordination function of the rectangular free convolution \cite{capfreeprob}. Then the equation \eqref{originaleqautionbbb} can be rewritten as
\begin{equation}\label{eq_keyequation11}
\frac{1}{c_n t}\left(1-\frac{1}{b_t}\right)=m_{w,0}(\zeta_t).
\end{equation}
We remark that $m_{w,0}(\zeta_t)$ is well-defined because $\im \zeta_t>0$ whenever $\im z>0$; see Lemma \ref{existuniq} below.

%
%
%
%
%
%
As shown later, our main analysis will boil down to the study of the two analytic functions $\zeta_t$ and $b_t$ on $\C_+:=\{ z\in \C: \im z>0\}$. 
We first summarize some basic properties of these quantities, which have been proved in previous works \cite{DOZIER20071099,DOZIER2007678,VLM2012}. 

\begin{lemma}[Existence and uniqueness of asymptotic density] \label{existuniq}
For any $t>0$, the following properties hold.
\begin{itemize}
\item[(i)] There exists a unique solution $m_{w,t}$ to equation \eqref{originalequation} satisfying that $\im m_{w,t}(z)> 0$ and $\im z m_{w,t}(z)> 0$ if $z\in \C_+ $.

\item[(ii)] For all $x \in \mathbb{R}\setminus\{0\},$ $\lim_{\eta  \downarrow 0} m_{w,t}(x+\ii \eta)$ exits, and we denote it as $m_{w,t}(x).$ The function $m_{w,t}$ is continuous on $\mathbb{R}\setminus \{0\}$, 
and $\rho_{w,t}(x):=\pi^{-1} \im m_{w,t}(x)$ is a continuous probability density function on $\mathbb{R}_+:=\{x\in \R:x>0\}$. Moreover, $m_{w,t}$ is the Stieltjes transform of $\rho_{w,t}$. Finally, $m_{w,t}(x)$ is a solution to (\ref{originalequation}) for $z=x$. 

\item[(iii)] For all $x \in \mathbb{R}\setminus\{0\},$ $\lim_{\eta  \downarrow 0} \zeta_t(x+\ii \eta)$ exits, and we denote it as $\zeta_t(x).$ Moreover, we have $\im \zeta_t(z)>0$ if $z\in \C_+ $. 

\item[(iv)] We have $\re b_t(z)>0$ for all $z\in \C_+$ and 
\be\label{rough mt}|m_{w,t}(z)|\le (ct|z|)^{-1/2}.\ee
\end{itemize}
\end{lemma}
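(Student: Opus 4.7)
The plan is to recognize that every claim in this lemma is essentially classical for the rectangular free (equivalently, information-plus-noise) convolution, going back to Dozier--Silverstein \cite{DOZIER20071099,DOZIER2007678} and Vallet--Loubaton--Mestre \cite{VLM2012}, so the proof should consist of citing or mildly adapting those arguments. For part (i), I would set up a fixed-point scheme by rewriting \eqref{originalequation} as $m = \Phi_z(m)$ with
\[
\Phi_z(m) \deq \frac1p\sum_{i=1}^p\frac{1}{d_i(1+c_n t m)^{-1} - (1+c_n t m)z + t(1-c_n)},
\]
and checking that, for each $z\in\C_+$, $\Phi_z$ is a self-map of the Nevanlinna-type set $\{m\in\C_+:\im(zm)>0\}$ and a strict contraction on compact subsets; this yields a unique solution $m_{w,t}$ in that set, with both sign conditions $\im m_{w,t}>0$ and $\im(zm_{w,t})>0$ built in by construction.

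Part (ii) then follows from general Herglotz/Nevanlinna theory: an analytic self-map of $\C_+$ is the Stieltjes transform of a positive finite Borel measure $\rho_{w,t}$; the extra condition $\im(zm_{w,t})>0$ forces $\supp \rho_{w,t}\subseteq[0,\infty)$, and substituting the ansatz $m_{w,t}(z)\sim -z^{-1}+O(z^{-2})$ into \eqref{originalequation} as $|z|\to\infty$ pins down the total mass to be one. Continuity of the boundary values on $\R\setminus\{0\}$, and hence of the density $\rho_{w,t}=\pi^{-1}\im m_{w,t}$, follows from a standard normal-family argument combined with the uniqueness clause of (i). For (iii), existence of the boundary values of $\zeta_t=b_t^2 z-b_t t(1-c_n)$ is immediate from those of $b_t=1+c_n t m_{w,t}$; strict positivity of $\im\zeta_t$ on $\C_+$ follows from the identity \eqref{eq_keyequation11}, since a direct computation gives $\im[(1-b_t^{-1})/(c_n t)]=\im(m_{w,t}/b_t)=\im m_{w,t}/|b_t|^2>0$ on $\C_+$, and since $m_V$ is a rational function mapping $\C_\pm$ into $\C_\pm$, a point whose $m_V$-image lies in $\C_+$ must itself lie in $\C_+$.

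Part (iv) is the only quantitatively new line. For $\re b_t>0$ I would argue by contradiction: $b_t\to 1$ as $\im z\to\infty$ so $\re b_t>0$ at infinity, while at any hypothetical $z_0\in\C_+$ with $\re b_t(z_0)=0$ one has $b_t(z_0)=\ii B_i$ with $B_i>0$, whence $\zeta_t(z_0)=-B_i^2 z_0-\ii t(1-c_n)B_i$ satisfies $\im\zeta_t(z_0)=-B_i^2\im z_0-t(1-c_n)B_i<0$, contradicting (iii); by continuity and connectedness of $\C_+$, $\re b_t$ has constant positive sign. For the bound $|m_{w,t}(z)|\le(ct|z|)^{-1/2}$, the cleanest route is to rewrite \eqref{eq_keyequation11} as $m_{w,t}=b_t\, m_V(\zeta_t)$ and use $|m_V(\zeta_t)|\le(\im\zeta_t)^{-1}$ together with an elementary lower bound $\im\zeta_t\gtrsim|b_t|\sqrt{t|z|}\,|m_{w,t}|$ extracted by taking imaginary parts of $\zeta_t=b_t^2 z-b_t t(1-c_n)$. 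The only real obstacle I foresee is that \cite{DOZIER20071099,VLM2012} write their arguments assuming $t\sim 1$, and some of the intermediate constants there implicitly blow up as $t\to 0$; however, at the qualitative level claimed here the fixed-point and Nevanlinna steps survive once the $t$-dependence is tracked explicitly, and the sharper uniform-in-$t$ statements are deferred to the subsequent subsections.
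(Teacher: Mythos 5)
Your proposal is correct and takes essentially the same route as the paper, which proves this lemma purely by citation: (i) to \cite[Theorem 4.1]{DOZIER2007678}, (ii)--(iii) to \cite[Theorem 2.1]{DOZIER20071099} and \cite[Proposition 1]{VLM2012}, and (iv) to \cite[Lemma 2.1]{DOZIER20071099} --- exactly the sources you propose to cite or mildly adapt. Your additional sketches (Nevanlinna representation for (ii), the observation that $\im m_V(\zeta_t)>0$ forces $\im \zeta_t>0$ for (iii), the contradiction argument for $\re b_t>0$) are consistent with the classical proofs; only the asserted ``strict contraction'' in (i) and the claimed lower bound $\im\zeta_t \gtrsim |b_t|\sqrt{t|z|}\,|m_{w,t}(z)|$ in (iv) are stated without justification, but since the paper itself defers these points entirely to the cited references, this does not constitute a gap.
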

\begin{proof}
(i) follows from \cite[Theorem 4.1]{DOZIER2007678}, (ii) and (iii) follow from \cite[Theorem 2.1]{DOZIER20071099} and \cite[Proposition 1]{VLM2012}, and (iv) follows from \cite[Lemma 2.1]{DOZIER20071099}. 
\end{proof}

Denote the support of $\rho_{w,t}$ as $S_{w,t}.$ It is has been shown in \cite{DOZIER20071099,VLM2012} that the support and edges of $S_{w,t}$ can be completely characterized by $m_{w,t}.$  
\begin{lemma}\label{lem_supportproperty} The interior $\mathtt{Int}(S_{w,t})$ of $S_{w,t}$ is given by 
\begin{equation*}
\mathtt{Int}(S_{w,t})=\{x>0: \im m_{w,t}(x)>0  \}=\{x>0: \im \zeta_t(x)>0 \} , 
\end{equation*}
which is a subset of $\R_+$. Moreover, $\zeta_t(x) \notin \{ d_1, \cdots, d_p\}$ when $x \notin \partial S_{w,t}.$ 
\end{lemma}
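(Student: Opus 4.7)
The plan is to derive all three claims from three structural identities that appear just above the lemma: $b_t = 1 + c_n t\, m_{w,t}$, the definition $\zeta_t = b_t^2 z - b_t t(1-c_n)$ from \eqref{eq_defnzeta}, and the subordination identity $(1 - b_t^{-1})/(c_n t) = m_{w,0}(\zeta_t)$ from \eqref{eq_keyequation11}, combined with the regularity supplied by Lemma \ref{existuniq}.

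For the equality $\mathtt{Int}(S_{w,t}) = \{x>0 : \im m_{w,t}(x) > 0\}$, I would invoke Lemma \ref{existuniq}(ii), which says $\rho_{w,t} = \pi^{-1}\im m_{w,t}$ is a continuous probability density on $\R_+$. The right-hand set is open (by continuity of $\im m_{w,t}$) and supports nonzero density, so it lies in $\mathtt{Int}(S_{w,t})$. For the reverse inclusion I would use that $m_{w,t}$ solves the algebraic equation \eqref{originalequation}, hence extends real-analytically in $x$ across the interior of $S_{w,t}$; on each connected component of the interior, $\im m_{w,t}$ is analytic and nontrivial (otherwise the measure would not charge that component), so its zero set is discrete, and an interior zero would contradict the density being strictly positive on a dense subset of every neighborhood. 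This also gives $\mathtt{Int}(S_{w,t}) \subset \R_+$, since $\rho_{w,t}$ is supported in $\R_+$.

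For the second equality I would take imaginary parts of the subordination identity \eqref{eq_keyequation11} at a boundary point $x > 0$. Since $\im b_t = c_n t\,\im m_{w,t}$, whenever $b_t(x) \neq 0$ one obtains
\[
\frac{\im b_t(x)}{c_n t\,|b_t(x)|^2} \;=\; \im m_{w,0}(\zeta_t(x)) \;=\; \frac{1}{p}\sum_{i=1}^{p}\frac{\im \zeta_t(x)}{|d_i - \zeta_t(x)|^2}.
\]
Both sides are nonnegative boundary limits of quantities with positive imaginary part on $\C_+$, and each side vanishes precisely when its numerator does; hence $\im m_{w,t}(x) > 0$ is equivalent to $\im b_t(x) > 0$, which is equivalent to $\im \zeta_t(x) > 0$. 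The degenerate case $b_t(x) = 0$ forces $\zeta_t(x) = 0$, so $\im \zeta_t(x) = 0 = \im m_{w,t}(x)$, and such $x$ lies in neither of the two sets in question.

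For the ``moreover'' claim I split on where $x$ sits relative to the support. If $x \in \mathtt{Int}(S_{w,t})$, the second equality already gives $\im \zeta_t(x) > 0$, so $\zeta_t(x)$ is non-real and cannot equal any of the real numbers $d_i$. If instead $x \in \R_+ \setminus S_{w,t}$, then $m_{w,t}$ extends analytically across $x$, so $b_t$ and $\zeta_t$ have real boundary values there. If $\zeta_t(x) = d_i$ for some $i$, the right-hand side of the subordination identity diverges while the left-hand side $(1 - 1/b_t(x))/(c_n t)$ is finite unless $b_t(x) = 0$; but $b_t(x) = 0$ forces $\zeta_t(x) = 0$, which can only match $d_i = 0$, and this remaining edge case is excluded by combining $\re b_t(z) > 0$ on $\C_+$ from Lemma \ref{existuniq}(iv) with continuity of the boundary value together with $d_1 \ge 2c_V > 0$ from the $\eta_*$-regularity hypothesis (so the pole set $\{d_i\}$ stays away from $0$ whenever it matters for the edge analysis that Lemma \ref{lem_supportproperty} is actually used for). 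The main technical obstacle is just the careful bookkeeping of boundary limits: justifying that the subordination identity survives passage to the real axis, and pinning down the degenerate case $b_t(x) = 0$ as genuinely excluded; the rest is a mechanical bookkeeping exercise once these identities are available on $\R$.
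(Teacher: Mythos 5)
Note first that the paper does not actually prove this lemma: its ``proof'' is a citation to Propositions 1 and 2 of \cite{VLM2012}, so any self-contained argument has to reproduce the Silverstein--Choi-type analysis carried out there. Parts of your attempt do work: the inclusion $\{x>0:\im m_{w,t}(x)>0\}\subset \mathtt{Int}(S_{w,t})$ via continuity of the density, and the equivalence $\im m_{w,t}(x)>0 \Leftrightarrow \im\zeta_t(x)>0$ obtained by taking imaginary parts of \eqref{eq_keyequation11} together with the observation that $|m_{w,0}(\zeta)|\to\infty$ as $\zeta\to d_j$ (a finite sum of simple poles), are essentially sound. The genuine gap is the reverse inclusion $\mathtt{Int}(S_{w,t})\subset\{x>0:\im m_{w,t}(x)>0\}$. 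Your argument is: $\im m_{w,t}$ is real-analytic and nontrivial on each component of the interior, so its zero set is discrete, ``and an interior zero would contradict the density being strictly positive on a dense subset of every neighborhood.'' That last step is a non sequitur: an isolated zero of $\rho_{w,t}$ at an interior point $x_0$ is perfectly compatible with $\rho_{w,t}>0$ on a dense subset of every neighborhood of $x_0$ (this is exactly the cusp/touching-bands scenario, where two bands of the support meet at a point of vanishing density). Ruling out such interior zeros is the actual content of the lemma's hard direction, and it cannot be done by soft analyticity; in \cite{VLM2012} it comes out of the global characterization of the support through the function $\Phi_t$ and its monotonicity intervals (what the paper records as Lemma \ref{lem_eigenvalueslarger}), i.e.\ one shows that any real point where $\im m_{w,t}$ vanishes is the image of a point where $\Phi_t$ is locally increasing and hence lies in the closure of the complement of $S_{w,t}$, not in its interior. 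Without some version of that argument your first equality is unproven.

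The ``moreover'' claim has a second, smaller but real, gap. Your dichotomy (interior vs.\ exterior of the support) omits $x\le 0$ (harmless, since there $\zeta_t(x)<0\le d_i$), but more importantly the degenerate case $b_t(x)=0$, i.e.\ $\zeta_t(x)=0$, is not actually excluded. The model allows $d_i=0$ for some $i$ (only $d_1=\lambda_+$ is bounded below by $2c_V$), so ``$\zeta_t(x)=0$ can only match $d_i=0$'' does not finish the argument, and your closing remark that this case ``is excluded \ldots whenever it matters for the edge analysis that the lemma is actually used for'' is a concession, not a proof of the statement as written. To close it you would need a positive lower bound on $\re b_t(x)$ (equivalently on $1+c_ntm_{w,t}(x)$) at real $x$ outside $\partial S_{w,t}$, or to invoke the fact from Lemma \ref{lem_eigenvalueslarger} that the relevant preimages lie in $\{\zeta: 1-c_ntm_{w,0}(\zeta)>0\}$; Lemma \ref{existuniq}(iv) only gives $\re b_t>0$ in $\C_+$, and continuity alone only yields $\re b_t(x)\ge 0$ on the boundary.
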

\begin{proof}
The result was contained in \cite[Propositions 1 and 2]{VLM2012}.
\end{proof}

The following lemma characterizes the right-most edge of $S_{w,t}.$ 
From equation \eqref{eq_keyequation11}, we can solve that
\be\nonumber
m_{w,t}=\frac{m_{w,0}(\zeta_{t}) }{1-c_n tm_{w,0}(\zeta_{t})}.
\ee
Plugging it into \eqref{eq_defnzeta}, we get
\be\label{simplePhizeta}\Phi_t(\zeta_t(z))=z,
\ee
where $\Phi_t$ is an analytic function on $\C_+$ defined as
\begin{equation}\label{eq_subcompansion}
\Phi_t(\zeta)=\zeta(1-c_nt m_{w,0}(\zeta))^2+(1-c_n)t(1-c_n t m_{w,0}(\zeta)),\quad \zeta\in \C_+.
\end{equation} 
In \cite{VLM2012}, the authors characterize the support of $\rho_{\omega,t}$ and its edges using the local extrema of $\Phi_t$ on $ \R$. 

\begin{lemma}\label{lem_eigenvalueslarger} 
Fix any $t>0$. The function $\Phi_t(x)$ on $\R\setminus \{0\}$ admits $2q$ positive local extrema counting multiplicities for some integer $q \geq 1$. The preminages of these extrema are denoted by $\zeta_{1,-}(t)<0<\zeta_{1,+}(t) \leq \zeta_{2,-}(t) \leq \zeta_{2,+}(t) \leq \cdots \leq \zeta_{q,-}(t) \leq \zeta_{q,+}(t),$ and they belong to  the set $\{\zeta \in \mathbb{R}: 1-c_ntm_{w,0}(\zeta_t)>0 \}.$ Moreover, the rightmost edge of $S_{w,t}$ is given by $\lambda_{+,t}=\Phi_t(\zeta_{q,+}(t))$, and $\Phi_t$ is increasing on the intervals $(-\infty, \zeta_{1,-}(t)],\ [\zeta_{1,+}(t), \zeta_{2,-}(t)],\ \cdots, \ [\zeta_{q-1,+}(t), \zeta_{q,-}(t)],$ $[\zeta_{q,+}(t), \infty).$ Finally, for $k=1,\cdots, q,$ each interval $(\zeta_{k,-}(t), \zeta_{k,+}(t))$ contains at least one of the elements of $\{d_1,\cdots, d_p, 0\}$, and $\zeta_{q,-}(t) < d_1 < \zeta_{q,+}(t)$. 
  
\end{lemma}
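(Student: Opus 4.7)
The strategy rests on the functional identity $\Phi_t(\zeta_t(z)) = z$ from \eqref{simplePhizeta} combined with Lemma \ref{lem_supportproperty}, which identifies the complement of $\mathtt{Int}(S_{w,t})$ in $\R_+$ with the set where $\im \zeta_t = 0$. The edges of $S_{w,t}$ therefore correspond to real points $x$ at which $\zeta_t$ cannot be extended holomorphically into $\C_+$, that is, to critical points of $\Phi_t$ along certain admissible real branches. The task reduces to a careful real-analytic study of $\Phi_t$.

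First I would delineate the admissible set $A := \{\zeta \in \R \setminus \{d_1,\ldots,d_p\} : 1 - c_n t m_{w,0}(\zeta) > 0\}$ in which the preimages $\zeta_t(x)$ for $x \in \R_+ \setminus S_{w,t}$ must lie. That $\zeta_t(x)$ avoids each $d_i$ is Lemma \ref{lem_supportproperty}; the sign condition follows from Lemma \ref{existuniq}(iv), since $\re b_t > 0$ on $\C_+$ passes to the boundary of $\R_+ \setminus S_{w,t}$ and strict positivity is forced by $\Phi_t(\zeta_t(x)) = x > 0$. I would then establish the duality: for $\zeta_0 \in A$, the value $\Phi_t(\zeta_0)$ lies in $\R_+ \setminus S_{w,t}$ if and only if $\Phi_t'(\zeta_0) > 0$. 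The forward direction comes from differentiating $\Phi_t(\zeta_t(x)) = x$ and verifying $\zeta_t'(x) > 0$ outside the support using \eqref{originalequation} and Stieltjes-transform properties. The reverse direction uses the inverse function theorem to produce a local inverse of $\Phi_t$ near $\zeta_0$, checks via $\im \Phi_t(\zeta_0 + \ii \eta) > 0$ for small $\eta > 0$ that this inverse sends $\C_+$ into $\C_+$, and identifies it with $\zeta_t$ by analytic continuation from the regime $\im z \gg 1$.

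With the duality in hand, $\R_+ \setminus S_{w,t} = \Phi_t(\{\zeta \in A : \Phi_t'(\zeta) > 0\})$, and the edges of $S_{w,t}$ are the images of the critical points of $\Phi_t$ in $A$. The asymptotics $\Phi_t(\zeta) \to +\infty$ at each pole of $m_{w,0}$ and near $\zeta = 0$, $\Phi_t(\zeta) = 0$ at zeros of $b_t$, and $\Phi_t(\zeta) \sim \zeta$ as $|\zeta| \to \infty$, combined with the Stieltjes-transform monotonicity of $m_{w,0}$ on each gap of $\sigma(V)$, force the critical points to organize into pairs $(\zeta_{k,-}, \zeta_{k,+})$ separating the increasing intervals listed in the statement. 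Each such pair must bracket a singularity of $m_{w,0}$, that is, an element of $\{d_1,\ldots,d_p,0\}$, because on any singularity-free subinterval of $A$ the algebraic form of $\Phi_t'$ precludes a consecutive max-then-min pattern. For the rightmost pair, $(d_1, \infty) \subset A$ since $m_{w,0}(\zeta) < 0$ there, and $\Phi_t$ is analytic on $(d_1, \infty)$ with $\Phi_t \to +\infty$ at both ends, so it attains a global minimum at some $\zeta_{q,+}(t) > d_1$, giving $\lambda_{+,t} = \Phi_t(\zeta_{q,+}(t))$ and $\zeta_{q,-}(t) < d_1 < \zeta_{q,+}(t)$.

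The main obstacle I anticipate is the counting step: ensuring that the number of extrema on each component of $A$ matches the claimed structure exactly, rather than being larger. This requires a careful monotonicity argument using the specific algebraic form $\Phi_t(\zeta) = \zeta b_t(\zeta)^2 + (1 - c_n) t \, b_t(\zeta)$ together with the positivity and sign structure of $m_{w,0}'$ on each gap of $\sigma(V)$ — a computation that becomes delicate near the zeros of $b_t$ and near the singularities of $m_{w,0}$, where the relevant derivatives blow up and must be tamed by the explicit cancellation built into the subordination equation \eqref{eq_keyequation11}.
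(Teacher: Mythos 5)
First, a point of calibration: the paper does not prove this lemma at all---its ``proof'' is a citation to \cite[Proposition 3]{VLM2012} and \cite[Lemma 1]{loubaton2011}. Your Silverstein--Choi-type strategy (characterize $\R_+\setminus S_{w,t}$ as the image under $\Phi_t$ of the admissible real points where $\Phi_t'>0$, then read off the edges as critical values) is indeed the strategy underlying those cited works, so the general route is the right one. However, as a standalone argument your sketch has a genuine gap precisely at the step you flag as the ``main obstacle'': the claim that on any subinterval of the admissible set free of elements of $\{d_1,\dots,d_p,0\}$ the form of $\Phi_t'$ ``precludes a consecutive max-then-min pattern'' is asserted, not proven, and it is exactly the content of the bracketing statement of the lemma (it is what \cite{VLM2012} actually establishes). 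Everything else in your outline (the duality, the identification of edges with critical values, the rightmost pair via the global minimum of $\Phi_t$ on $(d_1,\infty)$) is standard once this exclusion step is in place, so without it the proposal does not yet constitute a proof.

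There is also a concrete error in the asymptotics you invoke to organize the extrema. Unless some $d_i=0$, the point $\zeta=0$ is not a pole of $m_{w,0}$, and from \eqref{eq_subcompansion} one has $\Phi_t(0)=(1-c_n)t\bigl(1-c_ntm_{w,0}(0)\bigr)$, which is finite; so the claim ``$\Phi_t(\zeta)\to+\infty$ near $\zeta=0$'' fails in general, and your mechanism does not produce the first pair $\zeta_{1,-}(t)<0<\zeta_{1,+}(t)$. The special role of $0$ in the lemma comes from the rectangular structure of the convolution rather than from a singularity of $m_{w,0}$: for $c_n<1$ the subordination map \eqref{eq_defnzeta} sends the gap $(0,\inf S_{w,t})$ to negative values of $\zeta$ (indeed $\zeta_t(z)\to -(1-c_n)t\,b_t(0)<0$ as $z\downarrow 0$, using $\re b_t>0$ from Lemma \ref{existuniq}), which is why an extremum pair must straddle $0$ even when $m_{w,0}$ is regular there. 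Any complete proof has to handle this boundary branch separately; your current argument does not. A minor further slip: in your displayed formula for $\Phi_t$ the factor should be $1-c_ntm_{w,0}(\zeta)=1/b_t$, not $b_t$ itself, by \eqref{eq_keyequation11}; this does not affect the structure of the argument but should be fixed if you carry out the monotonicity computation.
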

\begin{proof}
See \cite[Proposition 3]{VLM2012} and the discussion below \cite[Theorem 2]{VLM2012} or \cite[Lemma 1]{loubaton2011}.
\end{proof}

For our purpose, in some cases the equation \eqref{eq_keyequation11} is more convenient to use than \eqref{simplePhizeta}. Now we rewrite \eqref{eq_keyequation11} into a equation of $\zeta_t$ and $z$. 
We focus on $z\in \C_+$ with $\re z>0$. Then we can solve from \eqref{eq_defnzeta} that
\begin{equation}\label{eq_banotherform}
b_t =\frac{t(1-c_n)+\sqrt{t^2(1-c_n)^2+4 \zeta_t z}}{2z},
\end{equation}
where we have chosen the branch of the solution such that Lemma \ref{existuniq} (iv) holds.
Together with (\ref{eq_keyequation11}), we find that the pair $(z, b_t)$ is a solution to (\ref{eq_keyequation11}) if and only if $(z,\zeta_t)$ is a solution to
\begin{equation}\label{final_derivation}
F_t(z, \zeta_t)=0, \quad F_t(z, \zeta_t):=1+\frac{t(1-c_n)-\sqrt{t^2(1-c_n)^2+4 \zeta_t z}}{2 \zeta}-c_n t m_{w,0}(\zeta_t). 
\end{equation}
Since $\Phi_t(\zeta_t(x))=x$ and $F_t(x,\zeta_t)=0$ are the same equation, from Lemma \ref{lem_eigenvalueslarger} we can derive the following characterization of the edges of $S_{w,t}$.

\begin{lemma}\label{lem_partialedge} Denote $a_{k,\pm}(t):= \Phi_t(\zeta_{k,\pm}(t))$, $1\le k \le q$. Then $(a_{k,\pm}(t), \zeta_{k,\pm}(t))$ are real solutions of 
\begin{equation}\label{eq_Ftderivative}
F_t(z,\zeta)=0, \quad \text{and} \quad \frac{\partial F_t}{\partial \zeta}(z,\zeta)=0. 
\end{equation} 
\end{lemma}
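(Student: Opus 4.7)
The plan is to exploit the identity between $F_t(z,\zeta)=0$ and the subordination relation $\Phi_t(\zeta)=z$ from \eqref{simplePhizeta}, and then differentiate implicitly. First I would verify, by algebraic manipulation on \eqref{final_derivation}, that for $z,\zeta>0$ the identity $F_t(z,\zeta)=0$ is equivalent to $\Phi_t(\zeta)=z$ (with the branch of $b_t$ fixed as in \eqref{eq_banotherform}). Concretely, rearranging $F_t=0$ gives
\[
2\zeta(1-c_n t m_{w,0}(\zeta)) + t(1-c_n) \;=\; \sqrt{t^2(1-c_n)^2 + 4\zeta z},
\]
and squaring and cancelling yields exactly \eqref{eq_subcompansion}; since both sides of the square-root identity are positive on the relevant domain $\{1-c_n t m_{w,0}(\zeta)>0\}$, no extraneous root is introduced. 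This equivalence is the bridge that lets us pass between the two descriptions of the edges.

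Given the equivalence, the first equation in \eqref{eq_Ftderivative} is immediate: by definition $a_{k,\pm}(t)=\Phi_t(\zeta_{k,\pm}(t))$, and by Lemma \ref{lem_eigenvalueslarger} the points $\zeta_{k,\pm}(t)$ lie in $\{1-c_ntm_{w,0}(\zeta)>0\}$, so they fall in the domain where the equivalence applies; hence $F_t(a_{k,\pm}(t),\zeta_{k,\pm}(t))=0$. Reality is automatic since $\zeta_{k,\pm}(t)\in\R$ and, because each $\zeta_{k,\pm}(t)$ is a local extremum of $\Phi_t$ at which $m_{w,0}$ is smooth (the poles of $m_{w,0}$ occur at the $d_i$'s, which lie strictly inside the open intervals $(\zeta_{k,-}(t),\zeta_{k,+}(t))$ by Lemma \ref{lem_eigenvalueslarger}), so $a_{k,\pm}(t)\in\R$ as well.

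For the second equation, I would differentiate the identity $F_t(\Phi_t(\zeta),\zeta)=0$ with respect to $\zeta$ on an open neighborhood of $\zeta_{k,\pm}(t)$ (on which $\Phi_t$ is smooth), obtaining
\[
\frac{\partial F_t}{\partial z}\bigl(\Phi_t(\zeta),\zeta\bigr)\,\Phi_t'(\zeta) + \frac{\partial F_t}{\partial \zeta}\bigl(\Phi_t(\zeta),\zeta\bigr) \;=\; 0.
\]
A direct computation gives $\partial F_t/\partial z = -1/\sqrt{t^2(1-c_n)^2+4\zeta z}$, which is finite and nonzero at $(z,\zeta)=(a_{k,\pm}(t),\zeta_{k,\pm}(t))$ (both entries are positive reals). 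Since by Lemma \ref{lem_eigenvalueslarger} the points $\zeta_{k,\pm}(t)$ are local extrema of $\Phi_t$, we have $\Phi_t'(\zeta_{k,\pm}(t))=0$, and the displayed identity forces $\partial F_t/\partial \zeta(a_{k,\pm}(t),\zeta_{k,\pm}(t))=0$.

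The only delicate point — and thus the main thing to double-check — is the equivalence between $F_t(z,\zeta)=0$ and $\Phi_t(\zeta)=z$ on the correct branch: one must rule out that the squaring step introduces spurious solutions at the extrema $\zeta_{k,\pm}(t)$. This is handled by observing that Lemma \ref{lem_eigenvalueslarger} restricts $\zeta_{k,\pm}(t)$ to the set where $1-c_ntm_{w,0}(\zeta)>0$, so $b_t = 1/(1-c_ntm_{w,0}(\zeta))>0$ agrees with the branch chosen in \eqref{eq_banotherform}. Everything else is a routine calculus computation.
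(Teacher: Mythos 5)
Your proof is correct and follows essentially the same route as the paper: differentiate the relation $F_t(\Phi_t(\zeta),\zeta)=0$ in $\zeta$ (the paper writes this as the chain rule for $z=z(\zeta)$), then use $\Phi_t'(\zeta_{k,\pm}(t))=0$ from Lemma \ref{lem_eigenvalueslarger} to kill the $\partial F_t/\partial z$ term and conclude $\partial F_t/\partial\zeta=0$. Your extra verification that $F_t(z,\zeta)=0$ and $\Phi_t(\zeta)=z$ agree on the branch $1-c_ntm_{w,0}(\zeta)>0$ is a careful spelling-out of what the paper simply asserts in the sentence preceding the lemma (having derived \eqref{final_derivation} from \eqref{eq_banotherform} and \eqref{eq_keyequation11}), not a different argument.
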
 
\begin{proof}
By chain rule, if we regard $z$ as a function of $\zeta,$ then we have 
\begin{equation}\label{partialFt}
0=\frac{\dd F_t}{\dd \zeta}=\frac{\partial F_t}{\partial \zeta}+\frac{\partial F_t}{\partial z} z'(\zeta). 
\end{equation}
By Lemma \ref{lem_eigenvalueslarger}, we have $\Phi_t'(\zeta_{k,\pm})=0$ since $a_{k,\pm}$ are local extrema of $\Phi_t$. Then from equation \eqref{simplePhizeta}, we can derive  
$$z'(\zeta_{k,\pm})=\Phi_t'(\zeta_{k,\pm})=0,$$ 
Plugging it into \eqref{partialFt} and using $z(\zeta_{k,\pm}(t))=a_{k,\pm}(t)$ by definition, we get
$$\frac{\partial F_t}{\partial \zeta}(a_{k,\pm}(t), \zeta_{k,\pm}(t))=0,$$
which concludes the proof.
\end{proof}

\begin{remark}
As an application of Lemma \ref{lem_partialedge}, we can use it to derive an expression for the derivative $\partial_t \lambda_{+,t}$ of the right edge, which will be used in \cite{DY20201}. 
Taking derivative of \eqref{final_derivation} with respect to $t$ and using \eqref{eq_Ftderivative}, we get that at $z=\lambda_{+,t}$ and $\zeta_+(t):=\zeta_t(\lambda_{+,t}) $,
\begin{align*}
\frac{\partial F(t,\lambda_{+,t},\zeta_+(t))}{\partial t}  + \frac{\partial F(t,\lambda_{+,t},\zeta_+(t))}{\partial z}\frac{\dd\lambda_{+,t}}{\dd t} =0,
\end{align*}
where we denote $F(t,z,\zeta)\equiv F_t(z,\zeta)$. Thus we can solve that
\begin{align} 
\frac{\dd \lambda_{+,t}}{\dd t}&=\left[\frac{1-c_n}{2\zeta_+(t)}-c_n m_{w,0}(\zeta_+(t)) \right]\sqrt{t^2(1-c_n)^2+4 \zeta_+(t)\cdot \lambda_{+,t}} - \frac{(1-c_n)^2t}{2\zeta_+(t)} \nonumber\\
&=\left[\frac{1-c_n}{2\zeta_+(t)}-\frac{c_n m_{w,t}(\lambda_{+,t})}{b(\lambda_{+,t})} \right]\sqrt{t^2(1-c_n)^2+4 \zeta_+(t)\cdot \lambda_{+,t}} - \frac{(1-c_n)^2t}{2\zeta_+(t)},\label{eq_defnpsiderviative0}
\end{align}
where we used \eqref{eq_keyequation11} in the second step. 
\end{remark}

Our proof will use extensively the following estimates in Lemma \ref{lem_immanalysis}, which are consequences of the regularity assumption in Definition \ref{assumption_edgebehavior}. Define the spectral domains 
\begin{align}
\mathcal{D}&:=\{z=E+\ii \eta: \lambda_+ \leq E \leq \lambda_+ +3c_V/4, \ 2\eta_* \leq \eta \leq 10 \} \nonumber\\
& \cup   \{z=E+\ii \eta: \lambda_+ - 3c_V/4 \leq E \leq \lambda_+, \ \eta_*+\sqrt{\eta_*(\lambda_+-E)} \leq \eta \leq 10 \}, \nonumber\\
&\cup  \{ z=E+\ii \eta:  \lambda_++2\eta_* \leq E \leq \lambda_++3c_V/4, \ 0 \leq \eta \leq 10\}. \label{eq_domain}
\end{align}
\begin{lemma}[Lemma C.1 of \cite{edgedbm}]\label{lem_immanalysis}
Suppose $V$ is $\eta_*$-regular in the sense of Definition \ref{assumption_edgebehavior}. Let $\mu_V$ be the measure associated with $m_{V}$. For any fixed $a \geq 2,$ the following estimates hold for $z=E+\ii \eta \in \mathcal{D}$: if $E\le \lambda_+$, we have 
\begin{equation}\label{eq_boundnear}
\int \frac{\dd \mu_V(x)}{|x-E-\ii \eta|^a} \sim \frac{\sqrt{|E-\lambda_+|+\eta}}{\eta^{a-1}};
\end{equation}
if $E> \lambda_+$, we have 
\begin{equation}\label{eq_boundfarawayregion}
\int \frac{\dd \mu_V(x)}{|x-E-\ii \eta|^a} \sim \frac{1}{(|E-\lambda_+|+\eta)^{a-3/2}}.
\end{equation} 
\end{lemma}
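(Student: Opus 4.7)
Since this lemma is cited as Lemma C.1 of \cite{edgedbm} and Definition \ref{assumption_edgebehavior} matches the $\eta_*$-regularity hypothesis in that reference verbatim, my plan is simply to invoke the proof there; below I sketch the mechanism. The base case $a=2$ is immediate: use the identity $\int |x-z|^{-2}\,\dd\mu_V(x) = \eta^{-1}\im m_V(z)$ and plug in the regularity bounds \eqref{regular1}--\eqref{regular2}. This gives $\int |x-z|^{-2}\,\dd\mu_V \sim \sqrt{\kappa+\eta}/\eta$ for $E\leq \lambda_+$ and $\int |x-z|^{-2}\,\dd\mu_V \sim 1/\sqrt{\kappa+\eta}$ for $E>\lambda_+$, matching \eqref{eq_boundnear}--\eqref{eq_boundfarawayregion} at $a=2$.

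For the upper bound when $a>2$, I would bootstrap from the base case by elementary comparison. When $E\leq \lambda_+$, the trivial inequality $|x-z|^a \geq \eta^{a-2}|x-z|^2$ reduces to the $a=2$ estimate and yields $\lesssim \sqrt{\kappa+\eta}/\eta^{a-1}$. When $E>\lambda_+$, since $\supp\mu_V\subset [0,\lambda_+]$ (as $\lambda_+=d_1$), every $x\in \supp\mu_V$ satisfies $|x-z|^2=(E-x)^2+\eta^2 \geq \kappa^2+\eta^2 \gtrsim (\kappa+\eta)^2$, so $|x-z|^a \gtrsim (\kappa+\eta)^{a-2}|x-z|^2$, and the base case yields $\lesssim (\kappa+\eta)^{-(a-3/2)}$.

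For the matching lower bound when $a>2$, the plan is to localize the integral to a region where the integrand is already of the target order, and extract a lower bound on the $\mu_V$-mass there from $\eta_*$-regularity. For $E\leq \lambda_+$, restrict to $|x-E|\leq \eta$: the integrand is $\gtrsim \eta^{-a}$, and a Stieltjes-inversion argument converts \eqref{regular1} into $\mu_V([E-\eta,E+\eta]) \gtrsim \eta \sqrt{\kappa+\eta}$ (using the lower bound $\eta \geq \eta_* + \sqrt{\eta_*(\lambda_+-E)}$ built into $\mathcal{D}$), producing $\gtrsim \sqrt{\kappa+\eta}/\eta^{a-1}$. For $E>\lambda_+$, restrict to $[\lambda_+-(\kappa+\eta),\lambda_+]$: the integrand is $\gtrsim (\kappa+\eta)^{-a}$, and the same smoothing argument combined with the square-root behavior of $\mu_V$ near $\lambda_+$ gives mass $\gtrsim (\kappa+\eta)^{3/2}$, producing $\gtrsim (\kappa+\eta)^{-(a-3/2)}$.

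The only delicate point is the Stieltjes-inversion step that converts the $\im m_V$-regularity into mass lower bounds on intervals of length comparable to $\eta$. This must be carried out at the borderline scale $\eta\sim\eta_*$; the shape of $\mathcal D$, in particular the constraints $\eta\geq \eta_*+\sqrt{\eta_*(\lambda_+-E)}$ below the edge and $E\geq \lambda_++2\eta_*$ for real $z$ above the edge, is precisely tuned so that this step goes through. Everything else is elementary comparison with the base case.
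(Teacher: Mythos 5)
Your primary move---quoting Lemma C.1 of \cite{edgedbm}---is exactly what the paper does: the lemma is stated there with no proof beyond the citation, so on that level the approaches coincide, and your reduction of the upper bounds for $a>2$ to the $a=2$ identity is fine. The genuine gap is in your lower-bound mechanism. The step ``Stieltjes inversion converts \eqref{regular1} into $\mu_V([E-\eta,E+\eta])\gtrsim \eta\sqrt{|E-\lambda_+|+\eta}$'' is false at the single scale $\eta$: inverting $\im m_V(E+\ii\eta)$ only yields an \emph{upper} bound on the mass of that window, and $\mu_V$ is purely atomic (atoms of weight $k/p$ at the $d_i$), so the measure can satisfy $\im m_V\asymp\sqrt{|E-\lambda_+|+\eta}$ at every admissible scale while $[E-\eta,E+\eta]$ carries no mass at all (atoms of the right total weight sitting a couple of multiples of $\eta$ outside the window reproduce the same $\im m_V$ at scale $\eta$). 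The correct route is multi-scale: for $r\ge K\eta$ one gets $\mu_V([E-r,E+r])\ge \eta'\bigl(\im m_V(E+\ii\eta')-C\eta'\sqrt{|E-\lambda_+|+r}/r\bigr)$ with $\eta'=\delta r$, i.e.\ the lower bound on $\im m_V$ at a strictly smaller scale combined with the $a=2$ upper bound at scale $r$, and then these dilated-interval mass bounds are summed over dyadic scales $r\gtrsim\eta$ (layer-cake for $|x-z|^{-a}$) to produce $\sqrt{|E-\lambda_+|+\eta}/\eta^{a-1}$; the analogous two-scale argument, run below the edge, gives the mass $\gtrsim(\kappa+\eta)^{3/2}$ needed for \eqref{eq_boundfarawayregion}. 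This is precisely where the constraints in \eqref{eq_domain} are consumed, and it is more than ``elementary comparison with the base case.''

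Two smaller points. Your $a=2$ base case above the edge uses $\im m_V\sim \eta/\sqrt{|E-\lambda_+|+\eta}$, but \eqref{regular2} as printed reads $\eta/(|E-\lambda_+|+\eta)$, which through the identity would give $(\kappa+\eta)^{-1}$ at $a=2$ and contradict \eqref{eq_boundfarawayregion}; you are implicitly using the (presumably intended, \cite{edgedbm}-style) square-root form, and this should be said explicitly. Also, for $E\ge\lambda_++2\eta_*$ the domain $\mathcal D$ allows $\eta<\eta_*$, even $\eta=0$, where the identity $\int|x-z|^{-2}\,\dd\mu_V=\eta^{-1}\im m_V$ cannot be combined with \eqref{regular2}; there one must first use $|x-z|^2\asymp (x-E)^2+\eta_*^2$ (valid because every $x\in\supp\mu_V$ satisfies $|x-E|\ge 2\eta_*$) and apply the regularity at the scale $\eta_*$ instead.
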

\subsection{Behavior of the contour $\zeta_t(E)$}\label{sec_contour}

In this subsection, we study the behaviors of $\zeta_t(E)$ for $E\in \R$ around the right edge $\lambda_+$ of $V$. Throughout the rest of this section, we assume that $V$ is $\eta_*$-regular, and  
\be\label{restr_t}
t:=n^{-1/3+\omega},  \quad \text{with } \ \  1/3-\phi_*/2-\e/2 \le \omega \le 1/3-\e/2,
\ee
such that $n^\e \eta_*\ll t^2 \le n^{-\e}$. We will not repeat them in the assumptions of our results. 

For simplicity of notations, we shall abbreviate $b_t$ and $\zeta_t$ as $b$ and $\zeta$, respectively. Moreover, we centralize $\zeta$ at the right-most edge $\lambda_+$ of $V$ as 
\begin{equation}\label{eq_xit}
\xi(z)\equiv \xi_t(z) :=\zeta_t(z) -\lambda_{+},  \quad \text{and}\quad \xi_+\equiv \xi_+(t) :=\zeta_+(t) -\lambda_{+},
\end{equation}
where $\zeta_+(t):=\zeta_t(\lambda_{+,t}).$ The following lemma gives a basic estimate on $\xi_+$. 
\begin{lemma}\label{lem rightedge}
For $\xi_+(t)$ defined in (\ref{eq_xit}), we have $\xi_+(t) \geq 0$ and 
\begin{equation}\label{eq_edgebound}
\xi_+(t) \sim t^2. 
\end{equation}
\end{lemma}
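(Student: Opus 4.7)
The plan is to read off the result from the edge-condition $\Phi_t'(\zeta_+(t))=0$ (which is forced by Lemma \ref{lem_partialedge}), combined with the precise edge asymptotics for $m_{w,0}'$ furnished by the $\eta_*$-regularity assumption through Lemma \ref{lem_immanalysis}.

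First, positivity $\xi_+(t)\ge 0$ is immediate from Lemma \ref{lem_eigenvalueslarger}: the rightmost pre-image $\zeta_{q,+}(t)$ satisfies $\zeta_{q,+}(t)>d_1=\lambda_+$, and the right-most edge of $S_{w,t}$ equals $\Phi_t(\zeta_{q,+}(t))=\lambda_{+,t}$, so $\zeta_+(t)=\zeta_{q,+}(t)>\lambda_+$. Next, writing $A(\zeta):=1-c_n t\, m_{w,0}(\zeta)=1/b_t(z)$ evaluated at $\zeta=\zeta_+(t)$, a direct computation of the derivative of $\Phi_t(\zeta)=\zeta A(\zeta)^2+(1-c_n)t A(\zeta)$ gives
\[
\Phi_t'(\zeta)=A(\zeta)^2+A'(\zeta)\bigl[2\zeta A(\zeta)+(1-c_n)t\bigr],\qquad A'(\zeta)=-c_n t\, m_{w,0}'(\zeta).
\]
Setting this to zero at $\zeta=\zeta_+(t)$ and writing $b_+:=b_t(\lambda_{+,t})=1/A(\zeta_+(t))$ yields the key identity
\begin{equation}\label{eq:edgekey}
1=c_n t\, m_{w,0}'(\zeta_+(t))\,\bigl[2\zeta_+(t)\,b_+ + (1-c_n)\,t\,b_+^{\,2}\bigr].
\end{equation}

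The second step is to argue that the bracket in \eqref{eq:edgekey} is of order one. Since $\zeta_+(t)>\lambda_+$ lies outside the support of $\mu_V$, $m_{w,0}(\zeta_+(t))$ is a real negative number of order $1$ (bounded by $\eta_*$-regularity and the bound $\Vert V\Vert\le n^{C_V}$), so $A(\zeta_+(t))=1+O(t)$, hence $b_+\sim 1$. Together with $\lambda_+\sim 1$ (the assumption $2c_V\le\lambda_+\le C_V/2$) and $\xi_+\ll 1$ (to be verified a posteriori), the bracket is comparable to $2\lambda_+\sim 1$, and \eqref{eq:edgekey} reduces to
\[
m_{w,0}'(\zeta_+(t))\sim \frac{1}{t}.
\]

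The third and final step is to match this with the behavior of $m_{w,0}'$ predicted by $\eta_*$-regularity. For real $\zeta>\lambda_+$, $m_{w,0}'(\zeta)=\int(x-\zeta)^{-2}d\mu_V(x)$ is a strictly decreasing function of $\zeta$. Lemma \ref{lem_immanalysis} (applied with $\eta=0$ and $a=2$ in the third region of $\mathcal{D}$) gives
\[
m_{w,0}'(\lambda_++\xi)\sim \xi^{-1/2}\qquad \text{for}\quad \xi\ge 2\eta_*.
\]
From this and the monotonicity I obtain both bounds: if $\xi_+\gg t^2$, then $\xi_+\gg \eta_*$ and $m_{w,0}'(\zeta_+(t))\lesssim \xi_+^{-1/2}\ll 1/t$, contradicting \eqref{eq:edgekey}; if $\xi_+\ll t^2$, then in the case $\xi_+\ge 2\eta_*$ the same estimate gives $m_{w,0}'(\zeta_+(t))\sim\xi_+^{-1/2}\gg 1/t$, while in the case $\xi_+<2\eta_*$ monotonicity yields $m_{w,0}'(\zeta_+(t))\ge m_{w,0}'(\lambda_++2\eta_*)\sim\eta_*^{-1/2}\gg 1/t$ (using $t^2\gg\eta_*$), again a contradiction. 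Therefore $\xi_+\sim t^2$, which is \eqref{eq_edgebound}.

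The main technical point is the bootstrap that justifies linearizing the bracket in \eqref{eq:edgekey}: I need $\zeta_+(t)$ close to $\lambda_+$ (so that $b_+\sim 1$ and the bracket is of size $\lambda_+$) in order to extract $\xi_+\sim t^2$, but the closeness itself is what I am trying to prove. This is resolved by an a priori rough bound, for example showing first that $\xi_+(t)\to 0$ as $t\to 0$ using continuity of the subordination function and the characterization in Lemma \ref{lem_eigenvalueslarger}, which then makes the above quantitative analysis self-consistent.
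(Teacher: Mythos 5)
Your argument follows the same skeleton as the paper's proof (positivity from Lemma \ref{lem_eigenvalueslarger}, the critical-point equation $\Phi_t'(\zeta_+)=0$, deducing $m_{w,0}'(\zeta_+)\sim t^{-1}$, and then invoking the edge estimate \eqref{eq_boundfarawayregion}), and your use of monotonicity of $m_{w,0}'$ on $(\lambda_+,\infty)$ to handle the regimes $\xi_+<2\eta_*$ and $\xi_+>3c_V/4$ is a nice touch that the paper leaves implicit. However, there is a genuine gap in the step where you reduce your key identity to $m_{w,0}'(\zeta_+)\sim t^{-1}$: you assert that $m_{w,0}(\zeta_+(t))=\OO(1)$ ``by $\eta_*$-regularity and $\norm{V}\le n^{C_V}$''. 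Neither of these gives that bound. The norm bound is irrelevant here, and $\eta_*$-regularity alone does not a priori exclude $\zeta_+$ lying extremely close to the atom at $d_1$ (e.g.\ at distance $\ll 1/p$), in which case $|m_{w,0}(\zeta_+)|\ge \bigl(p\,\xi_+\bigr)^{-1}$ can be huge, $b_+=\bigl(1+c_nt|m_{w,0}(\zeta_+)|\bigr)^{-1}$ is no longer $\sim 1$, and the bracket $2\zeta_+b_++(1-c_n)tb_+^2$ need not be of order one; your contradiction argument then loses the lower bound on the bracket and cannot rule out $\xi_+\ll t^2$. This is precisely the circularity your last paragraph gestures at, but the proposed resolution (a continuity/bootstrap argument showing $\xi_+(t)\to 0$) is only sketched, not carried out, and it would anyway not by itself control $|m_{w,0}(\zeta_+)|$ on the scale needed.

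The fix is available without any bootstrap and is what the paper does: use the a priori bound \eqref{rough mt} from Lemma \ref{existuniq}(iv) on $m_{w,t}$ (not on $m_{w,0}$), which gives $b_+=1+c_ntm_{w,t}(\lambda_{+,t})=1+\OO(t^{1/2})$, and then the subordination relation \eqref{eq_keyequation11} yields $c_ntm_{w,0}(\zeta_+)=1-1/b_+=\OO(t^{1/2})$ with no circularity. Once $b_+\sim1$ is known, your identity coincides with \eqref{eq_firstderivative}, and $m_{w,0}'(\zeta_+)\sim t^{-1}$ follows provided you also note an upper bound on the bracket; for $\xi_+\le 3c_V/4$ this is immediate from $\lambda_+\le C_V/2$, while for $\xi_+>3c_V/4$ a one-line computation using $\supp\mu_V\subset[0,\lambda_+]$ gives $t\,m_{w,0}'(\zeta_+)\,\zeta_+\lesssim t\,\zeta_+/\xi_+^2\lesssim t\ll1$, contradicting the identity, so that regime cannot occur. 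With these repairs your monotonicity argument closes the two-sided bound $\xi_+\sim t^2$ exactly as in the paper.
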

\begin{proof}
The statement $\xi_+(t) \geq 0$ follows directly from Lemma \ref{lem_eigenvalueslarger} since $\zeta_+(t) \equiv \zeta_{q,+}(t) \ge d_1 =\lambda_{+}$. For the estimate (\ref{eq_edgebound}), by Lemma \ref{lem_eigenvalueslarger}, we know that $\Phi_t(\zeta_+(t))$ is the only local extrema of $\Phi_t(\zeta)$ on the interval $(d_1,+\infty)$. Hence we have $\Phi'_t(\zeta_+(t))=0$, which gives the equation
\begin{equation} \label{eq_deterministicequation}
(1-c_nt m_{w,0}(\zeta_+))^2 - 2 c_nt m_{w,0}'(\zeta_+) \cdot \zeta_+ \left( 1-c_nt m_{w,0}(\zeta_+)\right) - c_n(1-c_n)t^2m_{w,0}'(\zeta_+)=0.
\ee
From this equation, we can get that
\begin{equation}\label{eq_firstderivative}
 c_nt m_{w,0}'(\zeta_+) = \frac{(1-c_nt m_{w,0}(\zeta_+))^2}{2\zeta_+ \left( 1-c_nt m_{w,0}(\zeta_+)\right)+(1-c_n)t}.
 \ee
By \eqref{rough mt}, we have the bound 
\begin{equation} \label{eq_bnorm}
b=1+\OO(t^{1/2}),
\end{equation} 
which gives $c_nt m_{w,0}(\zeta_+)=\OO(t^{1/2})$ by \eqref{eq_keyequation11}. Plugging it into \eqref{eq_firstderivative}, we obtain that 
\begin{equation} \label{eq_m'norm} m'_{\omega,0}(\zeta_+(t)) \sim t^{-1}.\ee
Together with \eqref{eq_boundfarawayregion}, it implies that $\sqrt{\xi_+(t)} \sim t$. 
\end{proof}


%


For $E \leq \lambda_{+,t},$ $\xi(E)$ has a nonzero imaginary part by Lemma \ref{lem_supportproperty},  and we denote 
\begin{equation}\label{eq_defnzetalambda}
\xi(E):=\alpha(E)+\ii \beta(E).
\end{equation}
We now establish an equation satisfied by $\alpha$ and $\beta$. 
We remark that this equation corresponds to equation (7.12) of \cite{edgedbm}, which takes a much simpler than our equation \eqref{eq_deterministicalphabeta} due to the simple form of additive free convolution.    

\begin{lemma}\label{lemma_dterministicrealtion}  
For any $E\in \R$, $\al\equiv \alpha(E)$ and $\beta \equiv \beta(E)$ satisfy the following equation 
\begin{align}
&1-2c_nt \int  \frac{x \, \dd \mu_{w,0}(x)}{(x-\alpha-\lambda_+)^2+\beta^2}  +c_n^2 t^2  \left[  \left(  \int\frac{x\, \dd \mu_{w,0}(x)}{(x-\alpha-\lambda_+)^2+\beta^2} \right)^2\right. \nonumber \\
& \left.+\frac{1-c_n}{c_n}\int \frac{\dd\mu_{w,0}(x) }{(x-\alpha-\lambda_+)^2+\beta^2}  -\left((\alpha+\lambda_+)^2 + \beta^2\right)\left( \int \frac{\dd \mu_{w,0}(x) }{(x-\alpha-\lambda_+)^2+\beta^2} \right)^2\right]=0. \label{eq_deterministicalphabeta}
\end{align}
where $\mu_{w,0}:=p^{-1}\sum_{i=1}^p\delta_{d_i}$ is the ESD associated with $V$.
\end{lemma}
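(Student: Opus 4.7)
The plan is to derive the claimed algebraic identity by taking the imaginary part of the subordination equation $\Phi_t(\zeta(E)) = E$ from \eqref{simplePhizeta}. Since $E \in \R$, the left-hand side must have vanishing imaginary part; after substituting $\zeta = (\alpha + \lambda_+) + \ii\beta$ and separating real and imaginary parts of every summand, one obtains a polynomial relation in which $\beta$ factors out, leaving the desired identity.

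Concretely, I would begin by decomposing
$$m_{w,0}(\zeta) = \int \frac{\dd\mu_{w,0}(x)}{x-\zeta} = I_1 + \ii\beta I_0,$$
where
$$I_0 := \int \frac{\dd\mu_{w,0}(x)}{(x-\alpha-\lambda_+)^2 + \beta^2}, \qquad I_1 := \int \frac{(x-\alpha-\lambda_+)\,\dd\mu_{w,0}(x)}{(x-\alpha-\lambda_+)^2 + \beta^2},$$
and setting $I_x := I_1 + (\alpha+\lambda_+) I_0$, which is precisely the first integral appearing in \eqref{eq_deterministicalphabeta}. Expanding \eqref{eq_subcompansion} as
$$\Phi_t(\zeta) = \zeta - 2 c_n t\, \zeta m_{w,0}(\zeta) + c_n^2 t^2\, \zeta m_{w,0}^2(\zeta) + (1-c_n) t - c_n(1-c_n) t^2 m_{w,0}(\zeta),$$
a short calculation gives $\im[\zeta m_{w,0}(\zeta)] = \beta I_x$ and $\im[\zeta m_{w,0}^2(\zeta)] = \beta\bigl[2(\alpha+\lambda_+) I_0 I_1 + I_1^2 - \beta^2 I_0^2\bigr]$, together with $\im m_{w,0}(\zeta) = \beta I_0$ and $\im \zeta = \beta$. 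Imposing $\im \Phi_t(\zeta(E)) = 0$ and pulling the common factor $\beta$ out then yields a single real identity linking $I_0, I_1, I_x$ and $(\alpha+\lambda_+)^2 + \beta^2$.

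The final step is purely algebraic: using $I_1 = I_x - (\alpha+\lambda_+) I_0$, direct expansion produces the bookkeeping identity
$$2(\alpha+\lambda_+) I_0 I_1 + I_1^2 - \beta^2 I_0^2 = I_x^2 - \bigl((\alpha+\lambda_+)^2 + \beta^2\bigr) I_0^2,$$
which collapses the bracketed expression into exactly the combination appearing in \eqref{eq_deterministicalphabeta}. Rearranging the resulting equation yields the claim.

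I do not anticipate any serious obstacle; the whole derivation is a routine separation of real and imaginary parts of the subordination equation \eqref{simplePhizeta}, together with one polynomial identity. The only care-point is the division by $\beta$, which is legitimate on the interior of $S_{w,t}$ where $\beta > 0$ by Lemma \ref{lem_supportproperty}; the identity at boundary points (and in particular at $E = \lambda_{+,t}$, where $\beta \to 0$) then extends by continuity of $\alpha, \beta$ and of the integrals $I_0, I_x$ in $E$, which holds because $\zeta(E)$ stays away from $\{d_1, \ldots, d_p\}$ outside $\partial S_{w,t}$ by Lemma \ref{lem_supportproperty}.
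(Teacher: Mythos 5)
Your route is the same as the paper's: take the imaginary part of the subordination relation $\Phi_t(\zeta(E))=E$ from \eqref{simplePhizeta}, write $m_{w,0}(\zeta)=I_1+\ii\beta I_0$, divide by $\beta$, and finish with the algebraic identity $2(\alpha+\lambda_+)I_0I_1+I_1^2-\beta^2I_0^2=I_x^2-\bigl((\alpha+\lambda_+)^2+\beta^2\bigr)I_0^2$; the paper's proof is exactly this, with \eqref{eq_originalimginary} as the intermediate step. All the imaginary parts you list are correct. However, your final claim of an exact match with \eqref{eq_deterministicalphabeta} does not survive the bookkeeping: the last term of $\Phi_t$ contributes $-c_n(1-c_n)t^2\im m_{w,0}(\zeta)=-c_n(1-c_n)t^2\beta I_0$, so after dividing by $\beta$ your derivation produces the bracket $I_x^2-\frac{1-c_n}{c_n}I_0-\bigl((\alpha+\lambda_+)^2+\beta^2\bigr)I_0^2$, with a \emph{minus} sign on the $\frac{1-c_n}{c_n}$ term, whereas \eqref{eq_deterministicalphabeta} is printed with a plus sign. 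The minus sign is the correct one: it agrees with the paper's own \eqref{eq_originalimginary}, and it can be sanity-checked in the case $\mu_{w,0}=\delta_0$ (pure MP case), where $I_x=0$, $I_0=\bigl((\alpha+\lambda_+)^2+\beta^2\bigr)^{-1}$, and a direct computation of $\zeta_t$ in the bulk gives $(\alpha+\lambda_+)^2+\beta^2=c_nt^2$, which is compatible only with the minus sign. So what you have actually proved is the corrected identity; you should flag the sign typo in the statement rather than assert that your expansion ``collapses into exactly'' \eqref{eq_deterministicalphabeta}, since that assertion, taken literally, is false.

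One further point on scope: dividing by $\beta$ is legitimate only where $\beta(E)>0$, i.e.\ on the interior of $S_{w,t}$, and the identity then extends by continuity to the edge, exactly as you say. But for $E$ outside the support $\zeta_t(E)$ is real, taking imaginary parts of \eqref{simplePhizeta} is vacuous, and the identity genuinely fails (already visible in the MP example above, where it would force $(\alpha+\lambda_+)^2=c_nt^2$ off the support). So your restriction is not merely a ``care-point'' but the actual range of validity: the ``for any $E\in\R$'' in the statement is too strong, and the lemma is only used by the paper for $E$ at or below $\lambda_{+,t}$ inside this range (Lemmas \ref{lem_edgebehavior} and \ref{eq_edgeclose}). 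With these two corrections noted, your argument is complete and coincides with the paper's.
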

\begin{proof}
By (\ref{simplePhizeta}), we have that $\Phi_t(\zeta(E))=E.$ By taking the imaginary parts of both sides of this equation and using (\ref{eq_defnzetalambda}), we obtain that  
\begin{align}\label{eq_originalimginary}
&\beta  \left(1-2c_nt \re m_{w,0}(\al+\ii \beta)+c_n^2 t^2 \left[ \left( \re m_{w,0}(\al+\ii \beta)\right)^2-\left(\im m_{w,0}(\al+\ii \beta) \right)^2 \right] \right) \nonumber \\
& + (\alpha+\lambda_+) \left(-2c_nt \im m_{w,0}(\al+\ii \beta) +2c_n^2 t^2 \im m_{w,0}(\al+\ii \beta) \cdot \re m_{w,0}(\al+\ii \beta) \right) \nonumber \\
& -c_n(1-c_n)t^2 \im m_{w,0}(\al+\ii \beta)=0.  
\end{align}
Then after a straightforward calculation using \eqref{eq_defng}, we can conclude  \eqref{eq_deterministicalphabeta}.  
\end{proof}
%
%

Now with Lemma \ref{lemma_dterministicrealtion}, we study the behaviors of $\al$ and $\beta$ for $E$ around $\lambda_+$. The next lemma corresponds to \cite[Lemma 7.1]{edgedbm}, but our proof is slightly different from the proof there. 
\begin{lemma}\label{lem_edgebehavior} 
For $ - 3c_V/4  \le \alpha  \leq \xi_+(t)$ and $c_V/8\le E\le \lambda_{+,t}$, we have 
\begin{equation}\label{betasimal}
\beta \sim t|\alpha-\xi_+(t)|^{1/2}.
\end{equation} 
\end{lemma}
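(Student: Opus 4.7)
The plan is to analyze the deterministic equation \eqref{eq_deterministicalphabeta} from Lemma \ref{lemma_dterministicrealtion}, viewed as an implicit relation $G(\alpha,\beta)=0$. The key starting observation is that $(\alpha,\beta)=(\xi_+,0)$ already satisfies this equation: at this point $\im m_V(\zeta_+)=0$, and the extremum condition \eqref{eq_deterministicequation} exactly matches what \eqref{eq_deterministicalphabeta} reduces to. The target estimate $\beta\sim t|\xi_+-\alpha|^{1/2}$ thus describes the leading local behaviour of the solution curve emanating from this edge point.

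First I would estimate the integrals $h_k(\alpha,\beta):=\int x^k\,\mathrm d\mu_{w,0}(x)/[(x-\alpha-\lambda_+)^2+\beta^2]$ and their relevant partial derivatives at the edge point. Since $\xi_+\sim t^2$ by Lemma \ref{lem rightedge} and $V$ is $\eta_*$-regular with $\eta_*\ll t^2$, Lemma \ref{lem_immanalysis} applied to $\int \mathrm d\mu_V/|x-\zeta_+|^a$ yields $h_0(\xi_+,0)=m'_V(\zeta_+)\sim t^{-1}$, $\partial_\alpha h_0|_{(\xi_+,0)}=m''_V(\zeta_+)\sim-t^{-3}$, and $\partial_{\beta^2}h_0|_{(\xi_+,0)}\sim-t^{-5}$, with analogous control for $h_1$. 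Because the integrals depend on $\beta$ only through $\beta^2$, the function $G$ is even in $\beta$, so its Taylor expansion around $(\xi_+,0)$ has the form $G(\alpha,\beta)=\partial_\alpha G(\alpha-\xi_+)+\partial_{\beta^2}G\cdot\beta^2+\mathrm{h.o.t.}$ Tracking cancellations (in particular, the identity $\re m_V=h_1-(\alpha+\lambda_+)h_0$), I compute that the dominant contributions to both partials come from the terms involving $\partial_\bullet C$ with $C:=c_nth_0$, giving $\partial_\alpha G|_{(\xi_+,0)}\sim t^{-2}$ and $\partial_{\beta^2}G|_{(\xi_+,0)}\sim t^{-4}$ with signs compatible with $\beta^2>0$ for $\alpha<\xi_+$. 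The leading-order balance then produces $\beta^2\sim t^2(\xi_+-\alpha)$ locally.

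The above Taylor argument controls a neighbourhood of size $\sim t^2$, since that is the distance from $\zeta_+$ to the singular support of $m_V$. To cover the full range $\alpha\in[-3c_V/4,\xi_+]$, I combine the equation with the identity that results from taking imaginary parts of \eqref{eq_keyequation11}, namely $\beta h_0(\alpha,\beta)=\im m_V(\zeta)=\im m_{w,t}(E)/|b_t(E)|^2$. With $|b_t|=1+\mathrm O(t^{1/2})$ by \eqref{rough mt} and the square-root scaling $\im m_{w,t}(E)\sim\sqrt{\lambda_{+,t}-E}$ from Lemma \ref{lem regularDBM}, this gives $\beta h_0\sim\sqrt{\lambda_{+,t}-E}$. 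Plugging in the regime-dependent bounds for $h_0$ from Lemma \ref{lem_immanalysis}, namely $h_0\sim\sqrt{|\alpha|+\beta}/\beta$ when $\alpha\le0$ and $h_0\sim(\alpha+\beta)^{-1/2}$ when $\alpha\in[0,\xi_+]$, produces two-sided bounds on $\beta$ in terms of $\lambda_{+,t}-E$. Combined with the real-part equation $\re\Phi_t(\zeta)=E$ (which, via the leading quadratic behaviour of $\Phi_t$ near $\zeta_+$, gives $\lambda_{+,t}-E\sim\xi_+-\alpha$ whenever $\beta\ll|\xi_+-\alpha|$), this delivers the claimed scaling across the full range.

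The main obstacle is precisely this stitching between the Taylor regime ($|\xi_+-\alpha|\ll t^2$) and the global regime ($|\xi_+-\alpha|$ up to order one). A clean way to reconcile them is to always work with the pair of equations (imaginary and real parts of $\Phi_t(\zeta)=E$) simultaneously and verify mutual consistency: the imaginary-part equation, after dividing by $\beta$, encodes $\beta^2\sim t^2(\xi_+-\alpha)$, while the real-part equation encodes $\lambda_{+,t}-E\sim\xi_+-\alpha$ in the far regime and $\lambda_{+,t}-E\sim t^{-2}\beta^2$ in the near regime. Checking that the Lemma \ref{lem_immanalysis} bounds give coefficients of order one (as opposed to merely $\OO$ bounds) requires some care, since several terms involving high powers of $t^{-1}$ in the partial derivatives of $G$ cancel to leave the right scaling.
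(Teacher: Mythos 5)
Your near-edge argument (the Taylor expansion of the real equation \eqref{eq_deterministicalphabeta} around $(\alpha,\beta)=(\xi_+,0)$, using $\partial_\alpha G=\Phi_t''(\zeta_+)\sim t^{-2}$ and $\partial_{\beta^2}G\sim t^{-4}$) is essentially a real-variable reformulation of the paper's Case 1, which inverts the complex expansion $E-\lambda_{+,t}=\frac12\Phi_t''(\xi_+)(\xi-\xi_+)^2+\dots$ and then takes real and imaginary parts. Up to checking signs and remainders (which the derivative bounds from Lemma \ref{lem_immanalysis} do provide in a $\tau t^2$-neighbourhood), that part is sound, although to apply it for a given $E$ you still need an a priori localization of $\zeta(E)$ in that neighbourhood, which the paper gets automatically from the inversion of $\Phi_t$.

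The genuine gap is in your treatment of the regime $|\alpha-\xi_+|\gtrsim t^2$: it is circular. You invoke the square-root scaling $\im m_{w,t}(E)\sim\sqrt{\lambda_{+,t}-E}$ from Lemma \ref{lem regularDBM}, but that lemma is proved in the paper as an immediate consequence of Lemmas \ref{lem_asymdensitysquare} and \ref{lem_asymdensitysquare2}, whose proofs rest on \eqref{betasimal} itself (via \eqref{betaEsimf}) together with \eqref{alEsqrt}; likewise your parenthetical ``$\lambda_{+,t}-E\sim\xi_+-\alpha$ in the far regime'' is exactly Lemma \ref{eq_edgeclose}, which is also proved downstream of Lemma \ref{lem_edgebehavior} (its proof of $\dd\alpha/\dd E\sim1$ uses \eqref{betasimal} to estimate the integrals in \eqref{eq_computationform1}--\eqref{eq_computationform3}). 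Moreover, the quadratic behaviour of $\Phi_t$ that you use to relate $\lambda_{+,t}-E$ to $\xi_+-\alpha$ is only valid within distance $\sim t^2$ of $\zeta_+$, since $|\Phi_t^{(k)}|\sim t^{-2k+2}$ there, so it cannot cover $|\xi_+-\alpha|$ up to order one. The paper's Case 2 avoids all of this by arguing directly from the deterministic equations, never using any property of $\rho_{w,t}$ near its edge: the upper bound $\beta\lesssim t|\alpha-\xi_+|^{1/2}$ follows by contradiction because \eqref{eq_deterministicalphabeta} forces $\int x\,\dd\mu_{w,0}(x)/|x-\zeta|^2\sim t^{-1}$, which fails if $\beta$ is too large; the lower bound follows by contradiction from the monotonicity of $\beta\mapsto\im m_{w,0}(\alpha+\lambda_++\ii\beta)/\beta$, Lemma \ref{lem_immanalysis} at the comparison point $\beta_0=\delta t|\alpha-\xi_+|^{1/2}$, and the imaginary part of \eqref{final_derivation}, whose left-hand side is $\OO(1)$ because $|\zeta|\ge c_V$. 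To repair your write-up you would need to replace the appeal to Lemma \ref{lem regularDBM} and to $\lambda_{+,t}-E\sim\xi_+-\alpha$ by such a self-contained argument based only on \eqref{eq_keyequation11}, \eqref{eq_deterministicalphabeta} or \eqref{final_derivation}, and on the $\eta_*$-regularity of $V$.
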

\begin{proof}
Under the given condition, we have $ c_V< \re \zeta=\al + \lambda_+ \le \lambda_+ + \OO(t^2)$. Moreover, by \eqref{eq_keyequation11} and \eqref{rough mt} we have that 
\be\label{c_nt m}
|c_nt m_{w,0}(\zeta_t)|=\left|\frac{c_ntm_{w,t}(\zeta)}{1+c_ntm_{w,t}(\zeta)}\right| \lesssim t^{1/2}.
\ee

In the following proof, for simplicity of notations, we treat function $\Phi_t(\zeta)$ as a function of $\xi=\zeta-\lambda_+$. We consider the following two cases for $\xi$.

\vspace{5pt}
\noindent{\bf Case 1}: $|\xi-\xi_+(t)| \leq \tau t^2$ for some small constant $\tau>0$. By (\ref{eq_edgebound}), for small enough $\tau>0$, we have that $\al \gtrsim t^2\gg \eta_*$. Then using Lemma \ref{lem_immanalysis}, we can get that for any fixed $k\ge 1$,
\begin{equation} \label{eq_derivativecomplex}
|c_nt m_{w,0}^{(k)}(\zeta)| \lesssim t^{-(2k-2)},\quad \text{for} \ \ |\xi-\xi_+| \leq \tau t^2.
\end{equation} 
Moreover, by (\ref{eq_edgebound}) and Lemma \ref{lem_immanalysis}, we have that
\begin{equation}\label{eq_boundederivative}
|c_nt m_{w,0}^{(k)}(\zeta_+)| \sim t^{-(2k-2)}.  
\end{equation}
Note that $m_{w,0}^{(k)}(\xi_+)$ are real numbers for all $k$.

Now for the equation \eqref{simplePhizeta}, we expand $\Phi_t(\xi)$ around $\xi_+$ and get that 
\begin{equation}\label{eq_taylorbasic}
E - \lambda_{+,t}= \Phi_t(\xi)-\Phi_t(\xi_+)=\frac{\Phi_t''(\xi_+)}{2}(\xi - \xi_+)^2+\frac{\Phi_t^{(3)}(\xi_+)}{6}(\xi - \xi_+)^3+\OO\left(t^{-6}|\xi - \xi_+|^4 \right).
\end{equation}
Using \eqref{eq_derivativecomplex}, it is easy to check that
\begin{equation} \label{eq_derivativePhi}
|\Phi_t^{(k)}(\xi)| \lesssim t^{-(2k-2)} .
\end{equation} 
Moreover, we can calculate directly that
\begin{align}
\Phi_t''(\xi)&=- 2c_nt m''_{w,0}(\zeta)\cdot \zeta (1-c_nt m_{w,0}(\zeta_t))- 4c_nt m'_{w,0}(\zeta)\cdot (1-c_nt m_{w,0}(\zeta_t)) \nonumber \\
&+ 2\zeta[c_nt m'_{w,0}(\zeta)]^2 -c_n(1-c_n)t^2m''_{w,0}(\zeta) =- 2 c_nt m''_{w,0}(\zeta)\cdot \zeta  +\OO(t^{-3/2}) ,\label{Phit''0}
\end{align}
where we used \eqref{c_nt m} and \eqref{eq_boundederivative} in the second step. Since 
$$ m''_{w,0}(\zeta_+) = \int\frac{\dd\mu_{w,0}(x)}{(x-\zeta_+)^3}<0,$$
we get that $\Phi_t''(\xi_+)>0$ and
\begin{equation} \label{eq_derivativePhi2}
\Phi_t''(\xi_+) \sim t^{-2} .
\end{equation} 
Now inverting equation \eqref{eq_taylorbasic} and using \eqref{eq_derivativePhi}-\eqref{eq_derivativePhi2}, we obtain that
$$\xi-\xi_+=\sqrt{\frac{2(E-\lambda_{+,t})}{\Phi_t''(\xi_+)}}\left( 1 - \frac{\Phi_t^{(3)}(\xi_+)}{3\Phi_t''(\xi_+)}(\xi-\xi_+)+\OO\left(t^{-4}|\xi - \xi_+|^2 \right)\right).$$
Back-substituting this equation once more we obtain that 
\be\label{Tayloredge}
\xi-\xi_+=\sqrt{\frac{2(E-\lambda_{+,t})}{\Phi_t''(\xi_+)}}\left( 1 - \frac{\Phi_t^{(3)}(\xi_+)}{3\Phi_t''(\xi_+)}\sqrt{\frac{2(E-\lambda_{+,t})}{\Phi_t''(\xi_+)}}+\OO\left(t^{-4}|\xi - \xi_+|^2 \right)\right).
\ee
Taking the real and imaginary part of the above equation and using \eqref{eq_derivativePhi} and \eqref{eq_derivativePhi2}, we obtain that 
\begin{equation}\label{eq_relation1}
|\alpha-\xi_+| \sim |E-\lambda_{+,t}|, \quad \beta \sim t|E-\lambda_{+,t}|^{1/2} \sim t |\alpha-\xi_+|^{1/2}, 
\end{equation}
for $|\xi-\xi_+| \leq \tau t^2.$ 

\vspace{5pt}
\noindent{\bf Case 2}: $-3c_V/4 < \alpha \leq \xi_+ - \tau_1 t^2$ for some small constant $\tau_1>0$. In this case, we have $t |\alpha-\xi_+|^{1/2}\gtrsim t^2$ by (\ref{eq_relation1}) as long as $\tau_1$ is small enough. First suppose that $\beta\gg t |\alpha-\xi_+|^{1/2}$. Then using Lemma \ref{lem_immanalysis} and $|\al|\lesssim |\al-\xi_+|$, we get that
$$ \int \frac{x\dd\mu_{w,0}(x) }{(x-\alpha-\lambda_+)^2+\beta^2} \lesssim \int \frac{ \dd\mu_{w,0}(x)}{(x-\alpha-\lambda_+)^2+\beta^2} =\oo(t^{-1}).$$
This contradicts (\ref{eq_deterministicalphabeta}), so we have $ \beta\lesssim t |\alpha-\xi_+|^{1/2}$. 

On the other hand, suppose $\beta\ll t |\alpha-\xi_+|^{1/2}$. For any small constant $\delta>0$, we take $\beta_0:= \delta t |\alpha-\xi_+|^{1/2}$ and $\zeta_0:=(\al + \lambda_{+}) +\ii \beta_0 $. Then we can check that
$$\frac{\im m_{w,0}(\zeta)}{\beta} = \int \frac{ \dd\mu_{w,0}(x)}{(x-\alpha-\lambda_+)^2+\beta^2} \ge  \int \frac{ \dd\mu_{w,0}(x)}{(x-\alpha-\lambda_+)^2+\beta_0^2}=\frac{\im m_{w,0}(\zeta_0)}{\beta_0}.$$
Then using Lemma \ref{lem_immanalysis}, we can bound 
$$ \frac{\im m_{w,0}(\zeta_0)}{\beta_0} \ge \frac{c_1}{\sqrt{\beta_0}} \ge \frac{c_2}{t\sqrt{\delta}}, \quad \text{if }\ -t^2 \le \al \le  \xi_+ - \tau_1 t^2,$$
and
$$ \frac{\im m_{w,0}(\zeta_0)}{\beta_0} \ge \frac{c_1\sqrt{|\al| + \beta_0}}{\beta_0} \ge \frac{c_2}{t\delta},\quad \text{if }\  \al < - t^2,$$
for some constants $c_1,c_2>0$ that do not depend on $\delta$. 
Now taking the imaginary part of equation  \eqref{final_derivation} gives that
$$\frac{1}{\beta}\im \frac{t(1-c_n)-\sqrt{t^2(1-c_n)^2+4 \zeta E}}{2 \zeta} = \frac{c_n t  \im m_{w,0}(\zeta_t)}{\beta} \ge \frac{c_nc_2}{\sqrt{\delta}}.$$
Using $|\zeta|\ge c_V$ and $t=\oo(1)$, we can bound the left hand side by some constant $C>0$ that does not depend on $\delta$. This gives a contradiction if $\delta$ is taken sufficiently small. Hence we must also have $\beta\gtrsim t |\alpha-\xi_+|^{1/2}$.
\end{proof}

Based on Lemma \ref{lem_edgebehavior}, we are able to prove the following result, which corresponds to \cite[Lemma 7.2]{edgedbm}.
\begin{lemma}\label{eq_edgeclose}
For $ - 3c_V/4  \le \alpha  \leq \xi_+(t)$, we have 
\begin{equation}\label{alEsqrt}
|\alpha(E)-\xi_+(t)| \sim |E-\lambda_{+,t}|. 
\end{equation}
\end{lemma}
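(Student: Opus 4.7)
My plan is to extend the result \eqref{eq_relation1} (established in Case 1 of the proof of Lemma \ref{lem_edgebehavior}), which already gives the claim \eqref{alEsqrt} in the range $|\alpha - \xi_+| \leq \tau t^2$, to the complementary range $\alpha \in [-3c_V/4, \xi_+(t) - \tau t^2]$. Throughout this second range I will view $\alpha$ as a free parameter, with $\beta = \beta(\alpha)$ implicitly defined by Lemma \ref{lemma_dterministicrealtion} (so that $\zeta(\alpha) := \alpha + \lambda_+ + \ii \beta(\alpha)$ makes $\Phi_t(\zeta)$ real) and $E(\alpha) := \Phi_t(\zeta(\alpha))$. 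Since $E$ is real, differentiating $\Phi_t(\zeta(\alpha)) = E(\alpha)$ and separating real and imaginary parts yields
\[
\beta'(\alpha) \;=\; -\,\frac{\im \Phi_t'(\zeta)}{\re \Phi_t'(\zeta)}, \qquad \frac{dE}{d\alpha} \;=\; \frac{|\Phi_t'(\zeta)|^2}{\re \Phi_t'(\zeta)}.
\]
The task reduces to showing $\re \Phi_t'(\zeta) \sim |\Phi_t'(\zeta)| \sim 1$ uniformly for $\alpha$ in Case 2, so that $dE/d\alpha \sim 1$; integrating from the transition point $\alpha = \xi_+ - \tau t^2$ (where \eqref{eq_relation1} provides the matching value $|E - \lambda_{+,t}| \sim t^2 \sim |\alpha - \xi_+|$) then yields $|E - \lambda_{+,t}| \sim |\alpha - \xi_+|$.

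To control $\Phi_t'(\zeta) = u^2 - [2\zeta u + (1-c_n)t]\, c_n t\, m_{w,0}'(\zeta)$, I will combine the bound $u = 1 + \OO(t^{1/2})$ coming from \eqref{c_nt m} with the estimates on $m_{w,0}'(\zeta)$ supplied by Lemma \ref{lem_immanalysis}, using the decisive input $\beta \sim t|\alpha - \xi_+|^{1/2}$ from Lemma \ref{lem_edgebehavior}. In the regime $|\alpha|$ bounded below by a positive constant, $\zeta$ is separated from $\supp \mu_{w,0}$ on the scale of order one, so $c_n t \, m_{w,0}'(\zeta) = \OO(t)$ and $\Phi_t'(\zeta) = 1 + \OO(t) \sim 1$. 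In the transition regime $|\alpha - \xi_+| \sim t^2$, I instead use the Taylor expansion $\Phi_t'(\zeta) \approx \Phi_t''(\zeta_+)(\zeta - \zeta_+) + \tfrac12 \Phi_t^{(3)}(\zeta_+)(\zeta - \zeta_+)^2$ together with the quantitative asymptotics $\Phi_t''(\zeta_+) \sim t^{-2}$ (see \eqref{eq_derivativePhi2}) and $\Phi_t^{(3)}(\zeta_+) \sim -\,t^{-4}$ (obtained by differentiating one more time and applying \eqref{eq_boundfarawayregion}). Substituting $\zeta - \zeta_+ = (\alpha - \xi_+) + \ii\beta$ with $\beta \sim t|\alpha - \xi_+|^{1/2}$ and taking real parts gives $\re \Phi_t'(\zeta) \sim |\alpha-\xi_+|/t^2 \sim 1$ (positive), while $|\im \Phi_t'(\zeta)| \sim |\alpha-\xi_+|^{1/2}/t \sim 1$, so $dE/d\alpha \sim 1$ at the transition. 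The intermediate regime $t^2 \ll |\alpha - \xi_+| \ll 1$ is then handled by interpolation, bounding $c_n t |m_{w,0}'(\zeta)|$ through the integrals $\int d\mu_{w,0}(x)/|x-\zeta|^2$ from Lemma \ref{lem_immanalysis}, which are controlled because $\beta \gg \eta_* + \sqrt{\eta_*|\alpha|}$.

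The main obstacle is the transition regime $|\alpha - \xi_+| \sim t^2$, where $\Phi_t'(\zeta)$ is a delicate cancellation: both the first- and second-order Taylor terms are individually of order $1$, and one must verify via the explicit signs of $\Phi_t''(\zeta_+) > 0$ and $\Phi_t^{(3)}(\zeta_+) < 0$ (inherited from the sign of $\int (x-\zeta_+)^{-k} d\mu_{w,0}$ on $\supp\mu_{w,0} \subseteq (-\infty, \zeta_+)$) that the two contributions to $\re \Phi_t'(\zeta)$ reinforce rather than cancel. Once this quantitative lower bound $\re \Phi_t'(\zeta) \gtrsim 1$ is established throughout Case 2, the integration step is immediate and produces the claim \eqref{alEsqrt} for the entire range $-3c_V/4 \le \alpha \le \xi_+$.
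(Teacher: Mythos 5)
Your overall skeleton matches the paper's: reduce \eqref{alEsqrt} on the range $|\alpha-\xi_+|\gtrsim t^2$ to the statement $\re \Phi_t'(\zeta)\sim|\Phi_t'(\zeta)|\sim 1$, deduce $\dd E/\dd\alpha\sim 1$, and match with \eqref{eq_relation1} at the scale $|\alpha-\xi_+|\sim t^2$. However, the way you propose to obtain the crucial lower bound $\re\Phi_t'(\zeta)\gtrsim 1$ has two genuine gaps. First, in the regime $\alpha\le -c<0$ of order one, your premise that ``$\zeta$ is separated from $\supp\mu_{w,0}$ on the scale of order one, so $c_nt\,m_{w,0}'(\zeta)=\OO(t)$'' is false: here $\re\zeta=\lambda_++\alpha$ lies \emph{inside} the support region (the $\eta_*$-regularity forces square-root mass all the way up to $\lambda_+$), and the only separation is $\beta\sim t|\alpha-\xi_+|^{1/2}\sim t$, so by \eqref{eq_boundnear} one has $c_nt\,|m_{w,0}'(\zeta)|\sim t\cdot\sqrt{|\alpha|+\beta}/\beta\sim 1$, not $\OO(t)$. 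Consequently $\Phi_t'(\zeta)=1+\OO(t)$ fails, and naive bounds on the terms of \eqref{eq_firstpart} only give $|\re\Phi_t'|\lesssim 1$ with no sign control. The paper's resolution is structural: it uses the constraint that $E$ is real, encoded in the identity \eqref{eq_deterministicalphabeta} of Lemma \ref{lemma_dterministicrealtion}, to cancel the dangerous order-one terms exactly and rewrite $\re\Phi_t'$ as the manifestly positive quantity $\int 4c_nt\beta^2x\,\dd\mu_{w,0}(x)/[(x-\alpha-\lambda_+)^2+\beta^2]^2\sim 1$ minus a remainder $\mathtt{R}=\OO(t^{1/2})$ (see \eqref{eq_reducedform}); your ``interpolation'' step supplies only upper bounds and cannot rule out $\re\Phi_t'$ degenerating or changing sign in this bulk regime.

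Second, in the transition regime $|\alpha-\xi_+|\sim t^2$ your proposed sign argument is incorrect as stated: with $\Phi_t''(\xi_+)>0$, $\Phi_t^{(3)}(\xi_+)<0$, $\alpha-\xi_+<0$ and $\beta\sim t|\alpha-\xi_+|^{1/2}$, the first-order contribution $\Phi_t''(\xi_+)(\alpha-\xi_+)$ to $\re\Phi_t'$ is \emph{negative} of order one, while the second-order contribution $\tfrac12\Phi_t^{(3)}(\xi_+)[(\alpha-\xi_+)^2-\beta^2]$ is of order one with a sign that is not determined by scaling alone; the two do not ``reinforce''. A model computation with $E=\lambda_{+,t}+\tfrac{A}{2}(\xi-\xi_+)^2+\tfrac{B}{6}(\xi-\xi_+)^3$ shows $\alpha-\xi_+=\tfrac{B}{3A}\beta^2$ and hence $\re\Phi_t'\approx-\tfrac{B}{6}\beta^2>0$ only after a partial cancellation whose outcome depends on the precise coefficients supplied by the inversion \eqref{Tayloredge} (equivalently, again by the constraint $\im\Phi_t(\zeta)=0$), not merely on the signs of $\Phi_t''$ and $\Phi_t^{(3)}$. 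So to make your plan rigorous you must either carry out this coefficient-level Taylor bookkeeping, or, as the paper does, feed the equation \eqref{eq_deterministicalphabeta} into the explicit formula \eqref{eq_firstpart} so that the cancellation is performed identically for all $|\alpha-\xi_+|\ge\tau t^2$, after which Lemma \ref{lem_immanalysis} and Lemma \ref{lem_edgebehavior} yield $\re\Phi_t'\sim|\Phi_t'|\sim 1$ and the integration step goes through.
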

\begin{proof}
Note that by (\ref{eq_relation1}), \eqref{alEsqrt} holds true when $|\alpha-\xi_+(t)| \leq \tau t^2.$ To conclude the proof, it suffices to show that $\dd \al/ \dd E \ge 0$ and 
\begin{equation}\label{daldE}
\frac{\dd \alpha}{\dd E} \sim 1, \quad \text{for}\quad  |\alpha-\xi_+| \geq \tau t^2. 
\end{equation} 
From equation \eqref{simplePhizeta}, we obtain that 
\be\label{Cauchyderiv} \frac{\dd \al}{\dd E}= \re \frac1{\Phi_t'(\zeta)} = \frac{\re \Phi_t'(\zeta)}{|\Phi_t'(\zeta)|^2}.\ee
After a tedious but straightforward calculation, we can calculate that
\begin{align}
 \re \Phi_t'(\zeta)&=1+2c_nt\left[ \int \frac{-x}{(x-\alpha-\lambda_+)^2+\beta^2} \dd \mu_{w,0}(x)+ \int \frac{2 \beta^2 x}{\left[ (x-\alpha-\lambda_+)^2+\beta^2 \right]^2} \dd \mu_{w,0}(x) \right] \label{eq_firstpart}\\
& + c_n^2 t^2 \left\{    \left( \int \frac{x-\alpha-\lambda_+ }{(x-\alpha-\lambda_+)^2+\beta^2} \dd \mu_{w,0}(x)\right)^2-\left( \int \frac{\beta }{(x-\alpha-\lambda_+)^2+\beta^2}\dd \mu_{w,0}(x) \right)^2    \right. \nonumber \\
& \left.  + 2 (\alpha+\lambda_+) \int \frac{x-\alpha-\lambda_+}{(x-\alpha-\lambda_+)^2+\beta^2} \dd \mu_{w,0}(x)\cdot \int \frac{(x-\alpha-\lambda_+)^2-\beta^2 }{[(x-\alpha-\lambda_+)^2+\beta^2]^2}\dd \mu_{w,0}(x)\right. \nonumber\\
& \left.  -4 (\alpha+\lambda_+)\int \frac{\beta^2}{(x-\alpha-\lambda_+)^2+\beta^2} \dd \mu_{w,0}(x)\cdot \int \frac{x-\alpha-\lambda_+}{[(x-\alpha-\lambda_+)^2+\beta^2]^2} \dd \mu_{w,0}(x)  \right. \nonumber\\
& \left. -4  \int \frac{ \beta^2 (x-\alpha-\lambda_+) }{[(x-\alpha-\lambda_+)^2+\beta^2]^2}\dd \mu_{w,0}(x)\cdot \int \frac{x-\alpha-\lambda_+}{(x-\alpha-\lambda_+)^2+\beta^2}  \dd \mu_{w,0}(x)\right. \nonumber \\
& \left. -2   \int \frac{\beta^2}{(x-\alpha-\lambda_+)^2+\beta^2} \dd \mu_{w,0}(x)\cdot \int \frac{(x-\alpha-\lambda_+)^2-\beta^2}{[(x-\alpha-\lambda_+)^2+\beta^2]^2}  \dd \mu_{w,0}(x)  \right. \nonumber \\
& \left. -\frac{1-c_n}{c_n} \int \frac{(x-\alpha-\lambda_+)^2-\beta^2}{[(x-\alpha-\lambda_+)^2+\beta^2]^2} \dd \mu_{w,0}(x)
 \right\}. \nonumber
\end{align}
Then using the equation \eqref{eq_deterministicalphabeta}, we can rewrite the above equation as
\begin{align}\label{eq_reducedform}
 \re \Phi_t'(\zeta)=\int \frac{4c_n t \beta^2 x}{\left[ (x-\alpha-\lambda_+)^2+\beta^2 \right]^2} \dd \mu_{w,0}(x)-\mathtt{R},
\end{align}
where $\mathtt{R}$ is defined as 
\be\label{defnR00}
\begin{split}
 \mathtt{R}&:=  4c_n^2 t^2  (\alpha+\lambda_+)\int \frac{\beta^2 }{(x-\alpha-\lambda_+)^2+\beta^2}\dd \mu_{w,0}(x)\cdot \int \frac{x-\alpha-\lambda_+}{[(x-\alpha-\lambda_+)^2+\beta^2]^2}  \dd \mu_{w,0}(x)   \\
&   +4c_n^2 t^2   \int \frac{ \beta^2 x }{[(x-\alpha-\lambda_+)^2+\beta^2]^2}\dd \mu_{w,0}(x)\cdot \int \frac{x-\alpha-\lambda_+}{(x-\alpha-\lambda_+)^2+\beta^2}  \dd \mu_{w,0}(x)  \\
&   +2c_n^2 t^2    \int \frac{\beta^2}{(x-\alpha-\lambda_+)^2+\beta^2} \dd \mu_{w,0}(x)\cdot \int \frac{(x-\alpha-\lambda_+)^2-\beta^2}{[(x-\alpha-\lambda_+)^2+\beta^2]^2}  \dd \mu_{w,0}(x)   \\
&  +2c_n(1-c_n) t^2 \int \frac{(x-\alpha-\lambda_+)^2 }{[(x-\alpha-\lambda_+)^2+\beta^2]^2} \dd \mu_{w,0}(x)=:\mathtt{R}_1+\mathtt{R}_2+\mathtt{R}_3+\mathtt{R}_4 . 
\end{split}
\ee

Now we estimate $ \re \Phi_t'(\zeta)$ using Lemma \ref{lem_immanalysis}. We first consider the case $\al \le \tau_1 t^2$ for some small enough constant $\tau_1>0$. In this case, by \eqref{eq_edgebound} and \eqref{betasimal}, we have that $\zeta = (\al+\lambda_+) + \ii \beta \in \mathcal{D}$ defined in (\ref{eq_domain}). Then using Lemma \ref{lem_immanalysis} and \eqref{betasimal}, we obtain that
\begin{align} 
0< \int \frac{ \dd \mu_{w,0}(x)}{(x-\alpha-\lambda_+)^2+\beta^2} \sim \frac{\sqrt{|\alpha|+\beta}}{\beta} \sim \frac{\sqrt{|\alpha|+t|\alpha-\xi_+(t)|^{1/2}}}{t{|\alpha-\xi_+(t)|}^{1/2}} \sim t^{-1},\label{eq_computationform1}
\end{align}
where we also used $|\al-\lambda_+|+\beta\sim \beta$ for $\al\ge 0$ in the second step, \eqref{betasimal} in the third step, and $\al \le \xi_+(t) - \tau t^2$ by \eqref{eq_edgebound} in the last step. Similarly, we have 
\begin{equation}\label{eq_computationform2}
0<\int \frac{\dd \mu_{w,0}(x)}{\left[ (x-\alpha-\lambda_+)^2+\beta^2 \right]^2}  \sim \frac{\sqrt{|\al|+t|\al-\xi_+(t)|^{1/2}}}{\beta^3}\sim t^{-1}\beta^{-2},
\end{equation}
and
\begin{equation}\label{eq_computationform3}
0<\int \frac{(x-\alpha-\lambda_+)^2}{\left[ (x-\alpha-\lambda_+)^2+\beta^2 \right]^2} \dd \mu_{w,0}(x) \le  \int \frac{ \dd \mu_{w,0}(x)}{(x-\alpha-\lambda_+)^2+\beta^2 } \lesssim  t^{-1}.
\end{equation}
Using \eqref{eq_computationform1}-\eqref{eq_computationform3}, we can bound each term in $\mathtt R$ as:
\begin{align*}
&|\mathtt R_1|\lesssim t^2 \frac{\beta^2}{t}\left[\int \frac{(x-\alpha-\lambda_+)^2}{[(x-\alpha-\lambda_+)^2+\beta^2]^2}  \dd \mu_{w,0}(x)  \right]^{1/2}\left[\int \frac{ \dd \mu_{w,0}(x)  }{[(x-\alpha-\lambda_+)^2+\beta^2]^2} \right]^{1/2}\lesssim \beta \lesssim t,\\
&|\mathtt R_2|\lesssim t^2 \frac{\beta^2}{t \beta^2} \left[\int \frac{(x-\alpha-\lambda_+)^2}{[(x-\alpha-\lambda_+)^2+\beta^2]^2}  \dd \mu_{w,0}(x)  \right]^{1/2}\lesssim t^{1/2},\\
&|\mathtt R_3|\lesssim t^2\cdot \frac{\beta^2}{t}\cdot t^{-1}   \lesssim t^2,\quad |\mathtt R_4|\lesssim t^2\cdot  t^{-1} \le t.
\end{align*}
In sum we get that
$$
|\mathtt R| \le |\mathtt R_1|+ |\mathtt R_2|+|\mathtt R_3|+|\mathtt R_4|\lesssim t^{1/2} .
$$
On the other hand, by \eqref{eq_computationform2} we have
$$0< \int \frac{4c_n t \beta^2 x}{\left[ (x-\alpha-\lambda_+)^2+\beta^2 \right]^2} \dd \mu_{w,0}(x) \sim  1.$$
Hence \eqref{eq_reducedform} gives that $ \re \Phi_t'(\zeta) \sim 1$, which also gives a lower bound $|\Phi_t'(\zeta)| \ge \re \Phi_t'(\zeta) \gtrsim 1$. For an upper bound of $|\Phi_t'(\zeta)|$, we have
$$ |\Phi_t'(\zeta)|=\left|(1-c_nt m_{w,0}(\zeta_t))^2 - 2 c_nt m_{w,0}'(\zeta)\cdot \zeta \left( 1-c_nt m_{w,0}(\zeta_t)\right) - c_n(1-c_n)t^2m_{w,0}'(\zeta)\right|\lesssim 1, $$ 
using \eqref{rough mt} and  
$$ c_n t |m_{w,0}'(\zeta)| \le c_nt\int\frac{\dd\mu_{w,0}(x) }{(x-\al-\lambda_+)^2 + \beta^2}\sim 1$$
by \eqref{eq_computationform1}. This concludes \eqref{daldE} for $\al \le \tau_1 t^2$.

For the case $\tau_1 t^2 \le \al \le \xi_+(t) - \tau t^2$, the proof is similar except that we shall use \eqref{eq_boundfarawayregion} to estimate each term.
\end{proof}

\subsection{Behavior of $\zeta_t(z)$ on general domain}

In this subsection, we first extend the result of Lemma \ref{lem_edgebehavior} to $\xi(z)= \al(z) + \ii \beta(z)$ for complex $z= E+ \ii \eta$ around the right edge $\lambda_+$. 
In the proof, we will regard $\al$ and $\beta$ as functions of $E$ and $\eta$. First, we claim the following simple estimate.
\begin{lemma}
Suppoe $-3c_V/4 \le \al \le \xi_+(t) - \tau t^2$ for some constant $\tau>0$, $|E-\lambda_+| \leq c_V/2$ and $0 \leq \eta \leq 10$. Then we have 
\be\label{claim betasim}
\beta (E,\eta)\ge c_\tau t |E - \lambda_+|^{1/2}
\ee
for some constant $c_\tau>0$.
\end{lemma}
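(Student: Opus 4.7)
The plan is to extend the real-line analysis of Lemma \ref{lem_edgebehavior} to complex $z = E + \ii \eta$ by studying the imaginary part of the equation $\Phi_t(\zeta(z)) = z$. Writing $\zeta = (\alpha+\lambda_+) + \ii \beta$ and $m_{w,0}(\zeta) = m_r + \ii m_i$ with $m_i = \beta J$ for $J := \int d\mu_{w,0}(x)/|x-\zeta|^2 > 0$, expansion of $\Phi_t(\zeta) = \zeta(1-c_nt m_{w,0})^2 + (1-c_n)t(1-c_n t m_{w,0})$ yields, at leading order, the identity
\begin{equation*}
\eta \;=\; \beta \bigl[\,1 - 2c_n t\, I(\alpha,\beta) + \OO(t^2)\,\bigr], \qquad I(\alpha,\beta) := \int \frac{x\,d\mu_{w,0}(x)}{(x-\alpha-\lambda_+)^2+\beta^2}.
\end{equation*}
Since $\eta \ge 0$, this forces the bracketed expression to be nonnegative when $\beta > 0$, producing the crucial implicit constraint that I will exploit.

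Next I would estimate $I$ and $J$ via the $\eta_*$-regularity of $V$ (Lemma \ref{lem_immanalysis}), splitting into two sub-regimes according to the sign of $\alpha$. For $-3c_V/4 \le \alpha \le 0$, Lemma \ref{lem_immanalysis} gives $J \sim \sqrt{|\alpha|+\beta}/\beta$ and $I \sim \lambda_+ J$, so the constraint $1 - 2c_n t I \ge 0$ forces $\beta \gtrsim t\sqrt{|\alpha|+\beta}$; rearranging yields $\beta \gtrsim t\sqrt{|\alpha|}$ when $|\alpha| \gtrsim \beta^2/t^2$, and $\beta \gtrsim t^2$ otherwise. For $0\le \alpha \le \xi_+(t)-\tau t^2$, I would invoke Lemma \ref{lem_edgebehavior} at $\eta=0$ directly, which yields $\beta(E,0) \sim t|\alpha-\xi_+|^{1/2} \ge \sqrt{\tau}\, t^2$, together with a continuity argument (using the Herglotz property of $\zeta$ on $\C_+$ and the strict positivity of $\beta(E,0)$ on the entire relevant interval by Lemma \ref{lem_supportproperty}) to extend the lower bound to $\eta \ge 0$.

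Finally I would relate $|\alpha|$ back to $|E-\lambda_+|$. Taking the real part of $\Phi_t(\zeta)=z$ yields $E-\lambda_+ = \alpha + \OO(t)$, so $|\alpha| \sim |E-\lambda_+|$ whenever the latter dominates $t$, in which case $\beta \gtrsim t|E-\lambda_+|^{1/2}$ follows immediately from the previous step. In the complementary small regime where $|E-\lambda_+|\lesssim t$, one has $t|E-\lambda_+|^{1/2}\lesssim t^{3/2}$, and the claim is handled via the $\beta \gtrsim t^2$ branch combined with an appropriately small choice of the constant $c_\tau$ depending only on $\tau$, $c_V$ and $C_V$.

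The main obstacle will be the delicate case analysis around the crossover $\re \zeta \approx \lambda_+$, where the regularity estimates of Lemma \ref{lem_immanalysis} switch qualitative form, together with the precise tracking of the $t$-dependent shift between $\alpha$ and $E-\lambda_+$ arising from the $(1-c_n)t$ term in the subordination relation; this is what ultimately allows the bound $\beta \ge c_\tau t |E-\lambda_+|^{1/2}$ to be stated uniformly in $E$ and $\eta$ on the full range specified in the hypothesis.
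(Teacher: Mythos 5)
Your proposal works where it is easiest and is hand-waving exactly where the lemma has content. The constraint you extract from $\im\Phi_t(\zeta)=\eta\ge 0$ only bites when the weighted integral $I$ is forced to be large, i.e.\ for $\alpha\le 0$, where Lemma \ref{lem_immanalysis} gives $I\sim\sqrt{|\alpha|+\beta}/\beta$; for $0\le\alpha\le\xi_+(t)-\tau t^2$ one has $I\sim(\alpha+\beta)^{-1/2}$, so nonnegativity of your bracket only yields $\alpha+\beta\gtrsim t^2$, which is vacuous since $\alpha$ itself is of order $t^2$ there. For that regime you fall back on Lemma \ref{lem_edgebehavior} at $\eta=0$ plus a ``continuity/Herglotz'' extension to $\eta>0$, but no such mechanism exists: $\beta(E,\eta)=\im\zeta_t(E+\ii\eta)$ is not monotone in $\eta$, and at fixed $E$ the real part $\alpha(E,\eta)$ moves as well, so strict positivity of $\beta(E,0)$ gives no quantitative lower bound off the real axis — quantitative control of $\zeta_t$ at complex $z$ is precisely what this lemma is supposed to provide. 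The paper works directly at complex $z$: it first proves the a priori bound $\beta\ge\eta/2$ by taking imaginary parts of \eqref{eq_defnzeta} and of \eqref{final_derivation} (the latter gives $t\,\im m_{w,t}\lesssim\beta+\eta$), and then reruns the contradiction argument of Case 2 of Lemma \ref{lem_edgebehavior}: assuming $\beta\le\delta t|\alpha-\xi_+|^{1/2}$, monotonicity of $\beta\mapsto\im m_{w,0}(\al+\lambda_++\ii\beta)/\beta$ transfers the regularity lower bound from the admissible test point $\beta_0=\delta t|\alpha-\xi_+|^{1/2}$ down to $\beta$, while the imaginary part of \eqref{final_derivation} bounds the same quantity by $C(\beta+\eta)/\beta\le 3C$ thanks to $\beta\ge\eta/2$; choosing $\delta$ small gives the contradiction uniformly in the sign of $\alpha$ and in $\eta\in[0,10]$. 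Your scheme contains neither the $\beta\gtrsim\eta$ input nor any substitute for the test-point argument in the $\alpha\ge0$ window.

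Two further quantitative problems. First, you apply Lemma \ref{lem_immanalysis} at $\zeta$ itself to claim $J\sim\sqrt{|\alpha|+\beta}/\beta$, but that lemma requires $\zeta\in\cal D$, i.e.\ $\beta\gtrsim\eta_*+\sqrt{\eta_*|\alpha|}$, which is not known a priori (it is essentially the conclusion); you need the test-point/monotonicity device — lower bounds on $J$ do survive decreasing $\beta$, but this has to be said and used. Second, your conversion $E-\lambda_+=\alpha+\OO(t)$ (the error is really only $\OO(t^{1/2})$ from the rough bound $|c_ntm_{w,0}(\zeta_t)|\lesssim t^{1/2}$, and in any case includes the $\OO(t)$ displacement of $\lambda_{+,t}$ from $\lambda_+$) leaves a hole on the window $t^2\lesssim|E-\lambda_+|\lesssim t$: there $|\alpha|\sim|E-\lambda_+|$ is unavailable, while the fallback $\beta\gtrsim t^2$ is strictly weaker than the required $t|E-\lambda_+|^{1/2}$, which can be as large as $t^{3/2}$; no choice of $c_\tau$ repairs this. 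The paper's proof sidesteps the issue entirely by establishing the bound in the form $\beta\gtrsim t|\alpha-\xi_+(t)|^{1/2}$ (which is $\gtrsim t^2$ by the hypothesis $\alpha\le\xi_+(t)-\tau t^2$ and is exactly what is used later to place $\zeta$ in $\cal D$), rather than routing through $E-\lambda_+$ with an additive $t$-sized error.
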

\begin{proof}
First, taking the imaginary parts of both sides of \eqref{eq_defnzeta}, we get
\begin{align}\label{betaeta} 
\beta=\eta \re \left( 1+c_nt m_{w,t}(z)\right)^2 + E\left( 2c_nt \im m_{w,t}(z) + c^2t^2 \im m^2_{w,t}(z)\right) - c_n(1-c_n)t^2 \im m_{w,t}(z) .
\end{align}
On the other hand, taking the imaginary part of equation  \eqref{final_derivation}, we can get that
$$ t \im m_{w,t}(z) \lesssim \beta + \eta.$$
Plugging it into \eqref{betaeta} and using \eqref{rough mt}, we get that for some constant $C>0$,
\begin{align}\label{betaeta2} 
\beta\ge \eta  (1+\OO(t^{1/2})) - Ct\left(\beta+\eta\right)\Rightarrow \beta\ge \frac12\eta. 
\end{align}

The rest of the proof is similar to Case 2 in the proof of Lemma \ref{lem_edgebehavior}. Suppose $\beta\ll t |\alpha-\xi_+|^{1/2}$. For any small constant $\delta>0$, we take $\beta_0:= \delta t |\alpha-\xi_+|^{1/2}$ and $\zeta_0:=(\al + \lambda_{+}) +\ii \beta_0 $. Then we have
$$\frac{\im m_{w,0}(\zeta_t)}{\beta} \ge \frac{\im m_{w,0}(\zeta_0)}{\beta_0} \ge \frac{c_2}{t\sqrt{\delta}}$$
for some constant $c_2>0$ that does not depend on $\delta$. Then taking the imaginary part of equation  \eqref{final_derivation}, we get that for some constant $C>0$ independent of $\delta$,
$$\frac{c_2}{\sqrt{\delta}} \le \frac{t  \im m_{w,0}(\zeta_t)}{\beta}  =  \frac{1}{c_n\beta}\im \frac{t(1-c_n)-\sqrt{t^2(1-c_n)^2+4 \zeta z}}{2 \zeta} \le \frac{C}{c_n}\frac{\beta+\eta}{\beta}\le \frac{3C}{c_n},$$
where we used \eqref{betaeta2} in the last step. This gives a contradiction if $\delta$ is taken sufficiently small. Hence we must have $\beta\gtrsim t |\alpha-\xi_+|^{1/2}$.
\end{proof}

Then we prove the following estimate. 
\begin{lemma}\label{lem_derivativecontrol} 
For $|E-\lambda_+| \leq c_V/2$ and $0 \leq \eta \leq 10,$ we have 
\be\label{Phit'0}|\Phi_t'(\zeta)| \sim \min \left\{1, \frac{|\al-\xi_+(t)|+ \beta}{t^2} \right\}.\ee
\end{lemma}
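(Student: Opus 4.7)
The plan is to split the analysis into two regimes based on the size of $|\zeta-\zeta_+(t)|\sim |\al-\xi_+(t)|+\beta$ compared to $t^2$. The starting ingredients will be the vanishing relation $\Phi_t'(\zeta_+)=0$ coming from \eqref{eq_deterministicequation}, the real positive asymptotic $\Phi_t''(\zeta_+)\sim t^{-2}$ derived in \eqref{Phit''0} and \eqref{eq_derivativePhi2}, and the derivative bounds $|\Phi_t^{(k)}(w)|\lesssim t^{-2(k-1)}$ obtained from Lemma~\ref{lem_immanalysis} together with \eqref{c_nt m}, valid on a disk of radius $\lesssim t^2$ around $\zeta_+$. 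It is important here that $\xi_+\sim t^2\gg\eta_*$ so that such a disk stays inside the regular domain $\mathcal{D}$ of Lemma~\ref{lem_immanalysis}.

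\textbf{Near regime} ($|\zeta-\zeta_+|\le c_0 t^2$ for a small constant $c_0>0$). I would write the second-order Taylor expansion
\[
\Phi_t'(\zeta)=\Phi_t''(\zeta_+)(\zeta-\zeta_+)+(\zeta-\zeta_+)^2\int_0^1(1-s)\,\Phi_t^{(3)}\bigl(\zeta_++s(\zeta-\zeta_+)\bigr)\,ds,
\]
bound the integral by $O(t^{-4})$ via the derivative estimate above, and observe that for $c_0$ small enough the leading term, whose modulus equals $\Phi_t''(\zeta_+)|\zeta-\zeta_+|\sim t^{-2}|\zeta-\zeta_+|$, strictly dominates the remainder $O(t^{-4}|\zeta-\zeta_+|^2)$. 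This yields $|\Phi_t'(\zeta)|\sim t^{-2}(|\al-\xi_+|+\beta)$, and since $t^{-2}|\zeta-\zeta_+|\le c_0<1$ in this regime, it matches $\min\{1,(|\al-\xi_+|+\beta)/t^2\}$.

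\textbf{Far regime} ($|\zeta-\zeta_+|\ge c_0 t^2$). Here I must prove $|\Phi_t'(\zeta)|\sim 1$. For the upper bound, I will insert the elementary estimates $|c_n t m_{w,0}(\zeta)|\lesssim t^{1/2}$ from \eqref{c_nt m} and $c_n t|m_{w,0}'(\zeta)|\lesssim 1$ (the latter following from Lemma~\ref{lem_immanalysis} combined with the pointwise lower bound $\beta\gtrsim t|\al-\xi_+|^{1/2}$ established in \eqref{claim betasim}, supplemented by direct estimates when $\al$ is of order $1$ away from $\supp\mu_{w,0}$) into the explicit formula for $\Phi_t'$. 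For the lower bound, I would extend the real-$E$ computation of Lemma~\ref{eq_edgeclose} to the complex setting: rewrite $\re\Phi_t'(\zeta)$ in the reduced form \eqref{eq_reducedform}, bound the remainder $\mathtt{R}$ by $o(1)$ using Lemma~\ref{lem_immanalysis} with the size estimate \eqref{betasimal}, and verify that the surviving positive term $\int 4c_nt\beta^2 x\,d\mu_{w,0}(x)/[(x-\al-\lambda_+)^2+\beta^2]^2$ is of order $1$. The case split will mirror Cases~1 and~2 of the proof of Lemma~\ref{lem_edgebehavior}, treating $\al\le 0$ and $\al\ge 0$ separately via \eqref{eq_boundnear} and \eqref{eq_boundfarawayregion}.

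\textbf{Main obstacle.} The hardest point is the lower bound in the Far regime when $\eta$ (and hence $\beta$) is of order one, or when $\al$ lies to the right of $\xi_+$ with $\eta>0$. In these regions the estimates of Lemma~\ref{lem_immanalysis}, which are calibrated to a neighborhood of the edge $\lambda_+$ of $V$, do not immediately produce sharp asymptotics, so one must combine them with global control of $m_{w,0}$ on bounded sets separated from $\supp\mu_{w,0}$. A clean reduction is to match Near and Far estimates at the boundary scale $|\zeta-\zeta_+|\sim t^2$, where both give $|\Phi_t'|\sim 1$, and then to propagate the lower bound outward by direct integral estimates obtained case by case from Lemma~\ref{lem_immanalysis} and the elementary inequality $|x-\zeta|\ge\max(|x-\al-\lambda_+|,\beta)$.
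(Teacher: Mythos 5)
Your two-regime architecture and the near-regime Taylor argument (expansion around $\zeta_+$ using $\Phi_t'(\xi_+)=0$, $\Phi_t''(\xi_+)\sim t^{-2}$ and $|\Phi_t^{(k)}|\lesssim t^{-2(k-1)}$) coincide with the paper's proof, and your far-regime upper bound is fine. The genuine gap is in the far-regime lower bound. First, \eqref{eq_reducedform} is a real-axis identity: for $\eta>0$ the imaginary part of $\Phi_t(\zeta)=z$ equals $\eta$, not $0$, so the correct reduced form is \eqref{eq_reducedform22}, which carries the additional term $\eta/\beta$ that you drop. More seriously, your central claim that the surviving positive term $\int 4c_nt\beta^2 x\,\dd\mu_{w,0}(x)/[(x-\al-\lambda_+)^2+\beta^2]^2$ is of order one throughout the far regime is false. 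By \eqref{eq_boundnear} this term is comparable to $Q:=t\sqrt{|\al|+\beta}/\beta$, and for instance when $\eta\sim 1$ one has $\beta\sim\eta\sim 1$ (by \eqref{betaeta} and \eqref{betaeta2}) and $|\al|\lesssim 1$, so $Q\sim t\ll 1$; the reduced form without the $\eta/\beta$ term then only yields $\re\Phi_t'\gtrsim t$, which is useless. Your fallback of "matching at the scale $|\zeta-\zeta_+|\sim t^2$ and propagating the lower bound outward" is not an argument: a lower bound for $|\Phi_t'|$ on a circle around $\zeta_+$ does not propagate to the exterior region by any continuity or integral estimate you have indicated.

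The paper closes exactly this point by playing two representations of $\re\Phi_t'$ against each other in the region $\al\le\xi_+(t)-\tau t^2$: the unreduced formula \eqref{eq_firstpart2222} gives $\re\Phi_t'\ge 1-C_1Q+\oo(1)$, while the reduced formula \eqref{eq_reducedform2222} gives $\re\Phi_t'\ge c_1Q-\oo(1)$ (with the harmless extra $\eta/\beta\ge 0$); since $\max\{1-C_1Q,\,c_1Q\}$ is bounded below uniformly in $Q\ge 0$, this gives $\re\Phi_t'\gtrsim 1$ no matter whether $Q$ is small or large. The remaining far-regime cases ($\al\ge \xi_+(t)-\tau t^2$ with $|\al-\xi_+|+\beta\ge C_1t^2$ for a \emph{large} constant, and the intermediate annulus $c_1t^2\le |\al-\xi_+|+\beta\le C_1t^2$) are then handled with \eqref{eq_boundfarawayregion}, where the correction terms really are small after choosing $C_1$ large. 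To repair your proposal you must either adopt this max-of-two-representations device, or retain the $\eta/\beta$ term from \eqref{eq_reducedform22} and verify case by case that $\eta/\beta+Q\gtrsim 1$ in the far regime; neither step is implied by what you wrote.
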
 
\begin{proof}
We first assume that $|\alpha-\xi_+(t)|+\beta  \leq c_{1} t^2$ for some small constant $c_1>0$. In this case, applying the mean value theorem to $\Phi'_t(\xi)$ we get 
\begin{equation}\label{eq_needtobound}
\Phi_t'(\xi)=\Phi_t^{''}(\xi_+(t))(\xi-\xi_+(t))\cdot\left[1+\OO(t^{-2}|\xi-\xi_+(t)|)\right].
\end{equation}
where we used \eqref{eq_derivativePhi}, \eqref{eq_derivativePhi2} and $\Phi_t'(\xi_+)=0$. 
Hence for a small enough $c_1$, we get 
$$ |\Phi_t'(\xi)| \sim |\Phi_t^{''}(\xi_+(t))||\xi-\xi_+(t)| \sim t^{-2}|\xi - \xi_+(t)| .$$

It remains to prove that $|\Phi_t'(\xi) |\sim 1$ when $|\alpha-\xi_+(t)|+\beta  > c_{1} t^2$. The proof is based on a careful analysis of $\re \Phi_t'(\xi)$. First we observe that (\ref{eq_firstpart}) still holds for general $z= E+\ii \eta$. On the other hand, the right-hand side of equation (\ref{eq_originalimginary}) is now $\eta$, and hence (\ref{eq_reducedform}) becomes
\begin{align}\label{eq_reducedform22}
 \re \Phi_t'(\zeta)=\frac{\eta}{\beta}+\int \frac{4c_n t \beta^2 x}{\left[ (x-\alpha-\lambda_+)^2+\beta^2 \right]^2} \dd \mu_{w,0}(x)-\mathtt{R},
\end{align}
We will estimate $ \re \Phi_t'(\zeta)$ using \eqref{eq_firstpart} and \eqref{eq_reducedform22}. 



\vspace{5pt}

\noindent{\bf Case 1:} We first consider the case where $\al \le \xi_+(t) -\tau t^2$ for some constant $\tau>0$. Together with \eqref{claim betasim}, we see that $\zeta=(\al+\lambda_+)+\ii \beta\in \cal D$. Then with the same arguments as in the proof of Lemma \ref{eq_edgeclose}, we can derive from \eqref{eq_firstpart} and \eqref{eq_reducedform22} that
\begin{align}\label{eq_firstpart2222}
 \re \Phi_t'(\zeta)&=1+2c_nt\left[ \int \frac{-x}{(x-\alpha-\lambda_+)^2+\beta^2} \dd \mu_{w,0}(x)+ \int \frac{2 \beta^2 x}{\left[ (x-\alpha-\lambda_+)^2+\beta^2 \right]^2} \dd \mu_{w,0}(x) \right] +\oo(1),
\end{align}
and 
\begin{align}\label{eq_reducedform2222}
 \re \Phi_t'(\zeta)=\frac{\eta}{\beta}+\int \frac{4c_n t \beta^2 x}{\left[ (x-\alpha-\lambda_+)^2+\beta^2 \right]^2} \dd \mu_{w,0}(x) +\oo(1).
\end{align}
Then using \eqref{eq_boundnear}, we get that
$$ 2c_nt \int \frac{x}{(x-\alpha-\lambda_+)^2+\beta^2} \dd \mu_{w,0}(x)\sim \frac{t\sqrt{|\al| + \beta}}{\beta}=:Q,$$
and
$$4c_n t\int \frac{ \beta^2 x}{\left[ (x-\alpha-\lambda_+)^2+\beta^2 \right]^2} \dd \mu_{w,0}(x) \sim t \beta^2\frac{\sqrt{|\al| + \beta}}{\beta^3}=Q.$$
Inserting these two estimates into \eqref{eq_firstpart2222} and \eqref{eq_reducedform2222}, we obtain that for some constants $c_1,C_1>0$,
$$ \re \Phi_t'(\zeta)\ge \max\{1- C_1Q, c_1Q\} \gtrsim 1,$$
which gives a lower bound for $|\Phi_t'(\zeta)| $. Finally, using \eqref{eq_boundnear} and \eqref{claim betasim}, it is easy to check that $|\Phi_t'(\zeta)|\lesssim 1$. Hence we obtain the estimates $\re\Phi'_t(\zeta)\sim |\Phi_t'(\zeta)| \sim 1 $. 

\vspace{5pt}

\noindent{\bf Case 2:} Second, we assume that $|\alpha-\xi_+(t)|+\beta \geq C_1 t^2$ for some large constant $C_1>0$ and $\al \ge \xi_+(t) -\tau t^2$. Then $\zeta=(\al+\lambda_+)+\ii \beta\in \mathcal{D}$ with $\al \gtrsim t^2$ by (\ref{eq_edgebound}). Hence using \eqref{eq_boundfarawayregion}, we get that 
\begin{equation*}
t \int \frac{2c_nx }{(x-\alpha-\lambda_+)^2+\beta^2}  \dd \mu_{w,0}(x) \ge c_1\frac{t}{\sqrt{\alpha+\beta}}
\end{equation*}
for some constant $c_1>0$ that does not depend on $C_1$. On the other hand, we have
\begin{align}
0\le t \int \frac{4c_n \beta^2 x}{\left[ (x-\alpha-\lambda_+)^2+\beta^2 \right]^2} \dd \mu_{w,0}(x) \le C'\frac{t \beta^2}{(\alpha+\beta)^{5/2}} \le C' \frac{t}{\sqrt{\alpha+\beta}}
\end{align}
for some constant $C'>0$ that does not depend on $C_1$. Therefore, we conclude that as long as $C_1$ is chosen large enough, then 
$$\frac12\le 1+t\left[ \int \frac{-2c_n x}{(x-\alpha-\lambda_+)^2+\beta^2} \dd \mu_{w,0}(x)+ \int \frac{4c_n \beta^2 x}{\left[ (x-\alpha-\lambda_+)^2+\beta^2 \right]^2} \dd \mu_{w,0}(x) \right] \le \frac32.$$
Moreover, using \eqref{eq_boundfarawayregion} we can readily show that the rest of the terms on the right-hand side of (\ref{eq_firstpart}) are all of order $\oo(1)$, and that $|\Phi_t'(\zeta)|\lesssim 1$. We omit the details since they are similar to the arguments in the proof of Lemma \ref{eq_edgeclose}. In sum, we obtain that $\re\Phi'_t(\zeta)\sim |\Phi_t'(\zeta)| \sim 1 $ for the current case. 


\vspace{5pt}

\noindent{\bf Case 3:} It remains to consider the case $ c_{1} t^2 \le |\alpha-\xi_+(t)|+\beta \leq C_{1} t^2$ and $\al \ge \xi_+(t) -\tau t^2$.  If $|\alpha-\xi_+(t)|\le c_1 t^2/2$, we have $\beta\ge c_1 t^2/2$. Then using Lemma \ref{lem_immanalysis}, one can check that $|\Phi_t'(\zeta)|=\OO(1)$, \eqref{eq_reducedform2222} still holds, and  
$$4c_n t\int \frac{ \beta^2 x}{\left[ (x-\alpha-\lambda_+)^2+\beta^2 \right]^2} \dd \mu_{w,0}(x) \sim t \frac{\sqrt{|\al| + \beta}}{\beta}\sim 1.$$
Thus we get $\re\Phi'_t(\zeta)\sim |\Phi_t'(\zeta)| \sim 1 $.

In the above proof, we can take the constants such that $\tau \le c_1/2 $. Then we are only left with the regime $\al \ge \xi_+(t)+c_1 t^2/2$ and $ c_{1} t^2 \leq |\alpha-\xi_+(t)|+\beta \leq C_{1} t^2$. In this regime, we have $\zeta=(\al+\lambda_+)+\ii \beta\in \mathcal{D}$ with $\al \gtrsim t^2$ by (\ref{eq_edgebound}). Then using \eqref{eq_boundfarawayregion} and the same arguments as in the proof of Lemma \ref{eq_edgeclose}, we can check that $|\Phi_t'(\zeta)|=\OO(1)$, \eqref{eq_reducedform2222} still holds, and that
$$4c_n t\int \frac{ \beta^2 x}{\left[ (x-\alpha-\lambda_+)^2+\beta^2 \right]^2} \dd \mu_{w,0}(x) \sim t \frac{\beta^2}{(|\al| + \beta)^{5/2}}\sim 1.$$
Thus we get $\re\Phi'_t(\zeta)\sim |\Phi_t'(\zeta)| \sim 1 $. This completes the proof.
\end{proof}

Armed with Lemma \ref{lem_derivativecontrol}, we can prove the following estimates. Recall the notation in \eqref{defn kappa}.

\begin{lemma} \label{lem_xigeneral}
If $\kappa+\eta \leq \tau_1 t^2$ for some sufficiently small constant $\tau_1>0,$ then we have 
\begin{equation}\label{eq_approximate}
t \sqrt{\kappa+\eta} \sim |\xi-\xi_+(t)|,
\end{equation}
which also implies that 
\be\label{Phit'}|\Phi_t'(\zeta)| \sim \min \left\{1, \frac{\sqrt{\kappa+\eta}}{t} \right\}.\ee
In the region $|\kappa+\eta| \geq \tau t^2$ for any constant $\tau>0$, we have 
\begin{equation}\label{eq_derivativebound}
\frac{\partial \alpha}{\partial E} = \frac{\partial \beta}{\partial \eta} \sim 1,
\end{equation}
and
\begin{equation}\label{eq_absbound}
\left| \frac{\partial \alpha}{\partial \eta} \right|=\left| \frac{\partial \beta}{\partial E} \right| \lesssim 1. 
\end{equation}
The above two estimates imply that 
\begin{equation}\label{eq_ccc}
|\alpha|+|\alpha-\xi_+(t)|+\beta \lesssim t^2+\eta+\kappa.
\end{equation}
\end{lemma}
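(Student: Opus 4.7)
The plan is to address the three assertions of Lemma \ref{lem_xigeneral} in order, leveraging the Taylor analysis already developed for real $E$ in the proofs of Lemmas \ref{lem_edgebehavior}--\ref{lem_derivativecontrol} together with the analyticity of $\zeta_t$.

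\emph{Step 1 (the small regime $\kappa+\eta\le\tau_1 t^2$, giving \eqref{eq_approximate} and \eqref{Phit'}).} I start from the defining identity $\Phi_t(\zeta)=z$ and expand $\Phi_t$ around $\xi_++\lambda_+$. Since $\Phi_t'(\xi_+)=0$, $\Phi_t''(\xi_+)\sim t^{-2}$ by \eqref{eq_derivativePhi2}, and $|\Phi_t^{(k)}(\zeta)|\lesssim t^{-(2k-2)}$ for $\zeta$ in the relevant neighborhood by \eqref{eq_derivativePhi} (which uses Lemma \ref{lem_immanalysis}), the same Taylor expansion as in \eqref{eq_taylorbasic} yields
\[
z-\lambda_{+,t}=\tfrac12\Phi_t''(\xi_+)(\xi-\xi_+)^2\bigl(1+\OO(t^{-2}|\xi-\xi_+|)\bigr).
\]
Choosing $\tau_1$ small enough makes the remainder a small perturbation, and inverting the square root (exactly as in the derivation of \eqref{Tayloredge}) gives $|\xi-\xi_+|\sim t\sqrt{|z-\lambda_{+,t}|}\sim t\sqrt{\kappa+\eta}$, which is \eqref{eq_approximate}. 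Since $|\alpha-\xi_+|+\beta\sim|\xi-\xi_+|$, feeding this into Lemma \ref{lem_derivativecontrol} produces \eqref{Phit'}.

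\emph{Step 2 (Cauchy--Riemann estimates \eqref{eq_derivativebound}--\eqref{eq_absbound} in the regime $\kappa+\eta\ge\tau t^2$).} Because $\Phi_t$ is analytic and $\Phi_t'(\zeta)\neq 0$ off the spectral edge, $\zeta$ depends analytically on $z$ with $\zeta'(z)=1/\Phi_t'(\zeta)$. The Cauchy--Riemann equations then give the announced equalities and the formulas
\[
\frac{\partial\alpha}{\partial E}=\frac{\partial\beta}{\partial\eta}=\frac{\re\Phi_t'(\zeta)}{|\Phi_t'(\zeta)|^2},\qquad -\frac{\partial\alpha}{\partial\eta}=\frac{\partial\beta}{\partial E}=-\frac{\im\Phi_t'(\zeta)}{|\Phi_t'(\zeta)|^2}.
\]
To bound these, I need $|\Phi_t'(\zeta)|\sim 1$ and $\re\Phi_t'(\zeta)\sim|\Phi_t'(\zeta)|$. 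By Lemma \ref{lem_derivativecontrol}, the first of these holds precisely when $|\alpha-\xi_+|+\beta\gtrsim t^2$. I will establish this by running Step~1 in reverse: if instead $|\alpha-\xi_+|+\beta\le c_0 t^2$ for a sufficiently small $c_0$, then $|\xi-\xi_+|\le c_0 t^2$ lies in the Taylor regime, so $|z-\lambda_{+,t}|\sim t^{-2}|\xi-\xi_+|^2\le c_0^2 t^2$, contradicting $\kappa+\eta\ge\tau t^2$ once $c_0<\sqrt{\tau}/C$. This forces $|\alpha-\xi_+|+\beta\gtrsim t^2$, hence $|\Phi_t'(\zeta)|\sim 1$. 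The comparison $\re\Phi_t'(\zeta)\sim|\Phi_t'(\zeta)|$ is already contained in Cases~1 and~2 of the proof of Lemma \ref{lem_derivativecontrol}, via the decomposition \eqref{eq_reducedform22}. Combining these, $\partial\alpha/\partial E\sim 1$ and $|\partial\alpha/\partial\eta|\le 1/|\Phi_t'(\zeta)|\lesssim 1$, which are \eqref{eq_derivativebound}--\eqref{eq_absbound}.

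\emph{Step 3 (the global bound \eqref{eq_ccc}).} I split on whether $\kappa+\eta$ is small. If $\kappa+\eta\le\tau_1 t^2$, Step~1 already yields $|\alpha-\xi_+|+\beta\sim t\sqrt{\kappa+\eta}\lesssim t^2$. If $\kappa+\eta\ge\tau_1 t^2$, I pick a reference point $(E_0,\eta_0)$ with $\kappa_{E_0}+\eta_0\sim\tau_1 t^2$ on the segment joining $(E,\eta)$ to the boundary of the small regime; at this point Step~1 gives $|\alpha(E_0,\eta_0)-\xi_+|+\beta(E_0,\eta_0)\lesssim t^2$. Integrating the uniform bounds $|\partial_E\alpha|,|\partial_\eta\alpha|,|\partial_E\beta|,|\partial_\eta\beta|\lesssim 1$ from Step~2 along the piecewise path $(E_0,\eta_0)\to(E,\eta_0)\to(E,\eta)$, which by construction stays in the region $\kappa+\eta\ge\tau_1 t^2/2$, adds at most $\OO(\kappa+\eta)$ to $|\alpha-\xi_+|$ and $\beta$. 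Hence $|\alpha-\xi_+|+\beta\lesssim t^2+\kappa+\eta$, and combining with $|\xi_+|\sim t^2$ from Lemma \ref{lem rightedge} gives the corresponding bound on $|\alpha|$, completing \eqref{eq_ccc}.

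\emph{Main obstacle.} The principal nuisance is calibrating the constants uniformly across the three steps: the threshold $\tau_1$ in the Taylor regime, the lower bound on $|\alpha-\xi_+|+\beta$ needed to invoke Lemma \ref{lem_derivativecontrol} in the comparable form $|\Phi_t'|\sim 1$, and the choice of reference point and path for the integration in Step~3. None of these is deep, but they must be chosen consistently so that the derivative bounds of Step~2 remain valid on the entire integration path used in Step~3, and so that the Taylor remainder in Steps~1 and~2 remains a genuine perturbation.
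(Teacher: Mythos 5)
Your proposal is correct and follows essentially the same route as the paper: Taylor expansion and inversion of $\Phi_t$ around $\xi_+$ (as in \eqref{Tayloredge}) for \eqref{eq_approximate} and \eqref{Phit'}, the estimate $\re\Phi_t'(\zeta)\sim|\Phi_t'(\zeta)|\sim 1$ from Lemma \ref{lem_derivativecontrol} combined with \eqref{Cauchyderiv} and Cauchy--Riemann for \eqref{eq_derivativebound}--\eqref{eq_absbound}, and integration of those derivative bounds for \eqref{eq_ccc}. You merely spell out two steps the paper leaves implicit (the reverse-Taylor argument giving $|\xi-\xi_+|\gtrsim t^2$ when $\kappa+\eta\ge\tau t^2$, and the explicit integration path), which is fine.
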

\begin{proof}
If $|\xi -\xi_+(t)| \le c_1t^2$ for some small constant $c_1>0$, then with the same Taylor expansion argument as in the proof of Lemma \ref{lem_edgebehavior}, we can obtain that (recall \eqref{Tayloredge})
\be\label{Tayloredge2}
\xi-\xi_+=\sqrt{\frac{2(z-\lambda_{+,t})}{\Phi_t''(\xi_+)}}\left( 1 - \frac{\Phi_t^{(3)}(\xi_+)}{3\Phi_t''(\xi_+)}\sqrt{\frac{2(z-\lambda_{+,t})}{\Phi_t''(\xi_+)}}+\OO\left(t^{-4}|\xi - \xi_+|^2 \right)\right).
\ee
As long as $c_1$ is small enough, we have
\be\label{eq_smallrelation1} 
|\xi-\xi_+|\sim \sqrt{\frac{|z-\lambda_{+,t}|}{\Phi_t''(\xi_+)}} \sim t|z-\lambda_{+,t}|^{1/2} \sim t\sqrt{\kappa+\eta}.
\ee
where we used the estimates \eqref{eq_derivativePhi} and \eqref{eq_derivativePhi2}. Moreover, with \eqref{Tayloredge2} one can observe that there exists a constant $\tau_1>0$ such that $|\Phi_t^{-1}(z) -\xi_+(t)|\le c_1 t^2$ for all $z$ with $\kappa+\eta\le \tau_1 t^2$. It then concludes \eqref{eq_approximate} together with \eqref{eq_smallrelation1}. Moreover, inserting \eqref{eq_approximate} into \eqref{Phit'0} we get \eqref{Phit'}.

Now we consider the region $|\kappa+\eta| \geq \tau t^2$ for some constant $\tau>0$. By \eqref{eq_approximate}, we have $|\xi -\xi_+(t)| \ge c_\tau t^2$ for some constant $\tau>0$. Moreover, in the proof of Lemma \ref{lem_derivativecontrol} we have shown that $\re\Phi'_t(\zeta)\sim |\Phi_t'(\zeta)| \sim 1 $. Together with 
\eqref{Cauchyderiv}, we get \eqref{eq_derivativebound}, where the first equality comes from Cauchy-Riemann equation. Similarly, we have 
$$ \left| \frac{\partial \alpha}{\partial \eta} \right|=\left| \frac{\partial \beta}{\partial E} \right| \le \frac{1}{|\Phi_t'(\zeta)|} \lesssim 1,$$
which gives \eqref{eq_absbound}. Finally \eqref{eq_ccc} is an easy consequence of \eqref{eq_derivativebound} and \eqref{eq_absbound}.
\end{proof}

\begin{remark}
Besides \eqref{eq_approximate}, we will also use the expansion \eqref{Tayloredge2}, which gives more detailed behavior of $\xi$ near $\xi_+$. In the following proof, whenever we refer to Lemma \ref{lem_xigeneral}, it also includes \eqref{Tayloredge2}.
\end{remark}

Next we collect some useful estimates that are needed in the proof of the local laws. They are established on different spectral domains. 

\begin{lemma}\label{lem_domian1control} 
Fix any constant $C_1>0$. For $z\in \cal D_{\vartheta}$ with $E \leq \lambda_{+,t}+C_1 t^2$, the following estimates hold: 
\begin{equation}\label{eq_dtheta1control}
\min_{i=1}^p |d_i-\zeta| \gtrsim t^2+\eta+t \im m_{w,t} ; 
\end{equation}
\begin{equation}\label{eq_dtheta1extension}
\int \frac{\dd \mu_{w,0}(x)}{|x-\zeta|^a} \lesssim \frac{t+\sqrt{\kappa+\eta}}{(t^2+\im \zeta)^{a-1}}, \quad \text{for any fixed }\ a \geq 2;
\end{equation}
\begin{equation}\label{kappaeta_d1}
t+\sqrt{\kappa+\eta} \lesssim t+\im m_{w,t}. 
\end{equation}
\end{lemma}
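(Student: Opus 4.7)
The plan is to establish the three estimates by leveraging the detailed behavior of $\zeta=(\alpha+\lambda_+)+\ii\beta$ developed in Lemmas~\ref{lem rightedge}--\ref{lem_xigeneral}. A useful preliminary reduction: taking imaginary parts of \eqref{eq_defnzeta} (cf.~\eqref{betaeta}) and using $|c_n t m_{w,t}|=\OO(t^{1/2})$ yields $\beta \sim \eta + t\im m_{w,t}$, so \eqref{eq_dtheta1control} is equivalent to $\min_i|d_i-\zeta|\gtrsim t^2+\beta$, and the denominator in \eqref{eq_dtheta1extension} satisfies $(t^2+\im\zeta)^{a-1}\sim (t^2+\beta)^{a-1}$.

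For \eqref{kappaeta_d1}, when $E\le\lambda_{+,t}$ the asymptotics \eqref{Immc} give $\im m_{w,t}\sim \sqrt{\kappa+\eta}$ and the bound is trivial. When $\lambda_{+,t}\le E\le \lambda_{+,t}+C_1 t^2$, the hypothesis yields $\sqrt{\kappa+\eta}\le \sqrt{C_1}\,t+\sqrt{\eta}$; I then combine $\im m_{w,t}\sim \eta/\sqrt{\kappa+\eta}$ (so that $\sqrt{\eta}\sim \sqrt{\im m_{w,t}\cdot \sqrt{\kappa+\eta}}$) with a weighted Young inequality $\sqrt{\eta}\lesssim \epsilon^{-1}\im m_{w,t}+\epsilon\sqrt{\kappa+\eta}$ and absorb the $\epsilon\sqrt{\kappa+\eta}$ term into the left-hand side for $\epsilon$ small.

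The main estimate \eqref{eq_dtheta1control} is proven by a case analysis on the size of $|z-\lambda_{+,t}|$. In the small regime $\kappa+\eta\le \tau_1 t^2$ with $\tau_1$ sufficiently small, the Taylor expansion \eqref{Tayloredge2} combined with $\Phi_t''(\xi_+)\sim t^{-2}>0$ and the non-negativity of $\re\sqrt{z-\lambda_{+,t}}$ on $\overline{\C_+}$ forces $\alpha\ge \xi_+/2\sim t^2$ and $\beta\lesssim t^2$; since $d_i\le \lambda_+$ and $\alpha>0$ give $|d_i-\lambda_+-\alpha|\ge \alpha$, the minimum is attained at $d_1$ and satisfies $|d_1-\zeta|\ge \alpha\gtrsim t^2\sim t^2+\beta$. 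In the complementary regime $\kappa+\eta\ge \tau_1 t^2$, I split further: if $\eta\ge c_0 t^2$ then $\beta\gtrsim \eta\gtrsim t^2$ and the trivial inequality $\min_i|d_i-\zeta|\ge \beta$ already majorizes $t^2+\beta$; otherwise $\eta<c_0 t^2$ forces $\kappa\gtrsim t^2$ and $E\ge \lambda_{+,t}$ (by the hypothesis $\kappa\le C_1 t^2$), in which case Lemma \ref{lem_eigenvalueslarger} gives $\alpha(E,0)=\Phi_t^{-1}(E)-\lambda_+>\xi_+\sim t^2$, and the derivative bound $|\partial_\eta\alpha|\lesssim 1$ from \eqref{eq_absbound} propagates this to $\alpha(E,\eta)\gtrsim t^2$ once $c_0$ is chosen small.

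Finally, \eqref{eq_dtheta1extension} follows by applying Lemma \ref{lem_immanalysis} at the shifted point $\zeta$: the case analysis just performed places $\zeta$ in the domain $\mathcal D$ of \eqref{eq_domain} (using $\eta_*\ll t^2$ to make all $\eta_*$-thresholds harmless), and depending on the sign of $\alpha$ I use either \eqref{eq_boundfarawayregion} to obtain $(\alpha+\beta)^{3/2-a}$ or \eqref{eq_boundnear} to obtain $\sqrt{|\alpha|+\beta}/\beta^{a-1}$. Both are controlled by $\sqrt{t^2+\kappa+\eta}/(t^2+\beta)^{a-1}\le (t+\sqrt{\kappa+\eta})/(t^2+\beta)^{a-1}$ using \eqref{eq_ccc} together with \eqref{eq_dtheta1control}. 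The main obstacle I anticipate lies in the sub-case $\kappa\sim t^2$, $\eta\ll t^2$ of \eqref{eq_dtheta1control}: the Taylor expansion \eqref{Tayloredge2} ceases to be quantitatively controllable there (its correction becomes $\OO(1)$ rather than $\oo(1)$), so the argument must route through the global monotonicity of $\Phi_t$ on $(\zeta_+(t),\infty)$ from Lemma \ref{lem_eigenvalueslarger} together with a continuity-in-$\eta$ perturbation via \eqref{eq_absbound}, rather than through a direct local expansion.
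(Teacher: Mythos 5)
Your overall strategy is the same as the paper's (reduce to $\beta=\im\zeta\sim\eta+t\im m_{w,t}$, get a separate lower bound of order $t^2$ via the behavior of $\xi$ near $\xi_+$ using $\xi_+\sim t^2$ and $d_i\le\lambda_+$, then feed the resulting location of $\zeta$ into Lemma \ref{lem_immanalysis} for \eqref{eq_dtheta1extension}; the argument for \eqref{kappaeta_d1} is a fine, if slightly heavier, version of the paper's "simple calculation"). However, there is one concrete flaw in your case analysis for \eqref{eq_dtheta1control}: in the regime $\kappa+\eta\ge\tau_1 t^2$, $\eta<c_0t^2$ you assert that this "forces $\kappa\gtrsim t^2$ and $E\ge\lambda_{+,t}$ (by the hypothesis $\kappa\le C_1 t^2$)". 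The hypothesis $E\le\lambda_{+,t}+C_1t^2$ only bounds $\kappa$ when $E\ge\lambda_{+,t}$; on the domain $\cal D_\vartheta$ the energy may lie as far as $\frac34 c_V$ below $\lambda_{+,t}$, so the sub-case $E<\lambda_{+,t}$, $\kappa\gtrsim t^2$, $\eta\ll t^2$ is not covered. In that sub-case your mechanism fails outright: $\alpha$ satisfies $\xi_+-\alpha\sim\kappa$ by \eqref{alEsqrt}, so $\alpha$ drops below $\xi_+$ and can even be negative, and neither the monotonicity of $\Phi_t$ on $(\zeta_+(t),\infty)$ nor the propagation of $\alpha(E,0)>\xi_+$ via \eqref{eq_absbound} applies.

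The gap is easily repaired with tools you already invoke: for $E\le\lambda_{+,t}$ one has $\im m_{w,t}\sim\sqrt{\kappa+\eta}$ by \eqref{Immc}, hence $\beta\sim\eta+t\im m_{w,t}\gtrsim t\sqrt{\kappa+\eta}\gtrsim\sqrt{\tau_1}\,t^2$ in this regime, and the trivial bound $\min_i|d_i-\zeta|\ge\beta$ already dominates $t^2+\eta+t\im m_{w,t}$ — the same argument you use when $\eta\ge c_0t^2$. It is worth noting that the paper sidesteps this bookkeeping entirely by decoupling the two lower bounds: it first records $|d_i-\zeta|\ge\im\zeta\gtrsim\eta+t\im m_{w,t}$ unconditionally, and then proves $\min_i|d_i-\zeta|\ge\tau t^2$ separately (via Lemma \ref{lem_xigeneral} when $E-\lambda_{+,t}\ge c't^2$, and via $|\xi-\xi_+|\le\tau_1 t^2$ together with $\xi_+\sim t^2$ when $\im\zeta$ and $E-\lambda_{+,t}$ are both $\le c_1t^2$), so no case needs both bounds at once. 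Restructuring your proof this way, or adding the missing sub-case as above, makes it correct.
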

\begin{proof}
Using \eqref{eq_defnzeta} and \eqref{rough mt}, we can check that
\begin{equation}\label{zetasimeta+t2} 
\beta=\im \zeta = \im \left[(1+c_nt m_{w,t}(z))^2 z-t(1-c_n)(1+c_nt m_{w,t}(z))\right] \sim \eta+t \im m_{w,t}.
\end{equation}
Since $d_i$'s are real values, we get that $|d_i-\zeta| \geq \im \zeta \gtrsim \eta+t \im m_{w,t}.$
Thus to show \eqref{eq_dtheta1control}, it remains to show that 
\begin{equation}\label{eq_dtheta1controlpf}
\min_{i=1}^p |d_i-\zeta| \geq \tau t^2  
\end{equation}
for some constant $\tau>0$. If $E-\lambda_{+,t} \ge c' t^2$ for some constant $c'>0$, then by Lemma \ref{lem_xigeneral} we have $\al(z) - \xi_+(t) \gtrsim t^2$. Hence we get 
$$\min_{i=1}^p |d_i-\zeta| \ge \min_{i=1}^p |d_i-\al | \gtrsim t^2$$
using (\ref{eq_edgebound}) and that $d_i\le \lambda_+ $. Finally, we are only left with the case $\im \zeta \leq c_1 t^2$ and $E\le \lambda_{+,t} + c_1 t^2$ for some small constant $c_1>0$. In this case, we claim that 
\be\label{check_kappaeta1}
\kappa\leq C c_1 t^2
\ee
for some constant $C>0$ that does not depend on $c_1$. If \eqref{check_kappaeta1} does not hold, then by Lemma \ref{lem_xigeneral} we must have $\im \zeta \ge C'\sqrt{Cc_1}t^2$ for some constant $C'$ that does not depend on $C$ and $c_1$, which gives a contradiction for large enough $C$. Now given \eqref{check_kappaeta1}, we can choose $c_1>0$ small enough such that $|\xi-\xi_t| \leq \tau_1 t^2$ for any small constant $\tau_1>0$ by \eqref{eq_approximate}. Together with (\ref{eq_edgebound}) and $d_i^2\le \lambda_+$, we conclude that 
$$|d_i^2-\zeta|=|d_i^2-(\lambda_+ +\xi_{+,t})-(\xi-\xi_{+,t})| \geq |\xi_+(t)|- |\xi-\xi_{+,t}|\ge \tau t^2.$$

For (\ref{eq_dtheta1extension}), we first suppose that $\im \zeta \leq \tau t^2$ for a small constant $\tau>0.$ As shown in the above proof of (\ref{eq_dtheta1control}), we must have that $\alpha \geq \tau t^2$ as long as $\tau$ is chosen sufficiently small. Thus by (\ref{eq_boundfarawayregion}), we get 
\begin{equation*}
\int \frac{\dd \mu_{w,0}(x)}{|x-\zeta|^a} \lesssim  \frac{1}{(t^2+\im \xi)^{a-3/2}} \lesssim \frac{t}{(t^2+\im \zeta)^{a-1}}. 
\end{equation*}  
Then we consider the case $\im \zeta \geq \tau t^2.$ In this case, (\ref{eq_boundnear}) always serves as an upper bound regardless of the sign of $\al$, and hence
\begin{equation*}
\int \frac{\dd \mu_{w,0}(x)}{|x-\zeta|^a} \lesssim \frac{\sqrt{|\alpha|+\beta}}{\beta^{a-1}} \leq \frac{\sqrt{|\alpha|+\beta}}{(t^2+\im \zeta)^{a-1}}.
\end{equation*}
Then we conclude (\ref{eq_dtheta1extension}) using (\ref{eq_ccc}). 

Finally, the last estimate \eqref{kappaeta_d1} follows from a simple calculation using the fact that $m_{w,t}$ is a Stieltjes transform of a density with square root behavior near the right edge; see Lemma \ref{lem_asymdensitysquare} below.  
\end{proof}


\begin{lemma}\label{lem_domian2control} 
Fix any constant $C_1>0$. For $z\in \cal D_{\vartheta}$ with $E \ge \lambda_{+,t}+C_1 t^2$, the following estimates hold: 
\begin{equation}\label{eq_dtheta2bound1}
\min_{i=1}^p |d_i-\zeta| \gtrsim t^2+\kappa+\eta;
\end{equation} 
\begin{equation}\label{eq_dtheta2bound2}
\int \frac{\dd \mu_{w,0}(x)}{|x-\zeta|^a} \lesssim \frac{1}{(\kappa+\eta+t^2)^{a-3/2}},\quad   \text{for any fixed }\ a \geq 2.
\end{equation}
\end{lemma}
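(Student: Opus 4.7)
The plan is to reduce both bounds to sharp estimates on $\zeta=(\alpha+\lambda_+)+\ii\beta$ at $z=E+\ii\eta$. The key observation is that once $\kappa\ge C_1 t^2$, we are outside the Taylor window of Lemma \ref{lem_xigeneral} and $|\Phi_t'(\zeta)|\sim 1$, so the Cauchy--Riemann relations in \eqref{eq_derivativebound}--\eqref{eq_absbound} give $\partial_E\alpha\sim 1$ and $|\partial_\eta\alpha|\lesssim 1$. Together with the identity \eqref{zetasimeta+t2}, this will let me propagate a sharp lower bound on $\alpha$ from $\eta=0$ to general $\eta$, and a sharp lower bound on $\beta$ in terms of $\eta$.

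First I would establish the baseline $\alpha(E,0)\sim t^2+\kappa$ for $E\ge \lambda_{+,t}+C_1 t^2$ by integrating $\partial_E\alpha$. Starting from $\alpha(\lambda_{+,t},0)=\xi_+(t)\sim t^2$ via \eqref{eq_edgebound}, the Taylor window $[\lambda_{+,t},\lambda_{+,t}+\tau_1 t^2]$ contributes $\OO(t^2)$ by \eqref{eq_approximate}, while the linear regime beyond it contributes $\sim\kappa$ by \eqref{eq_derivativebound}; choosing $C_1>\tau_1$ glues these together. Extending to $\eta>0$, \eqref{zetasimeta+t2} furnishes $\beta\sim\eta+t\,\im m_{w,t}\gtrsim\eta$, and \eqref{eq_absbound} gives $\alpha(E,\eta)=\alpha(E,0)+\OO(\eta)$. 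I then split cases: if $\eta\le c_0(t^2+\kappa)$ for a sufficiently small $c_0>0$, then $\alpha(E,\eta)\ge\alpha(E,0)/2\gtrsim t^2+\kappa>0$, which together with $d_i\le \lambda_+$ yields $|d_i-\alpha-\lambda_+|\ge\alpha$ and hence
\[|d_i-\zeta|^2=(d_i-\alpha-\lambda_+)^2+\beta^2\gtrsim \alpha^2+\eta^2\gtrsim(t^2+\kappa+\eta)^2;\]
otherwise $\eta\ge c_0(t^2+\kappa)$ and $|d_i-\zeta|\ge\beta\gtrsim\eta\sim t^2+\kappa+\eta$. Either way \eqref{eq_dtheta2bound1} follows.

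For \eqref{eq_dtheta2bound2}, the plan is to invoke Lemma \ref{lem_immanalysis} at the point $\zeta$. In the first case above, $\alpha\gtrsim t^2\gg \eta_*$ places $\zeta$ in the third subregion of $\mathcal{D}$ (when $\alpha+\beta$ exceeds the boundary constants in $\mathcal{D}$ both sides of the target bound are $\OO(1)$ and the estimate is trivial), and \eqref{eq_boundfarawayregion} gives $\int\lesssim(\alpha+\beta)^{-(a-3/2)}\lesssim(t^2+\kappa+\eta)^{-(a-3/2)}$. In the second case, if $\alpha\ge 0$ the same argument works; if $\alpha<0$, \eqref{eq_boundnear} applied at $\zeta$ gives $\lesssim\sqrt{|\alpha|+\beta}/\beta^{a-1}$, and then $|\alpha|\lesssim t^2+\kappa+\eta$ from \eqref{eq_ccc} combined with $\beta\sim\eta\sim t^2+\kappa+\eta$ yields the desired bound. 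The main technical point is the patching in the baseline estimate on $\alpha(E,0)$, which requires gluing the Taylor regime \eqref{Tayloredge2} to the linear regime across $\kappa\asymp t^2$; the case split for part (1) and the verification of the domain hypothesis for part (2) are routine once that baseline is in hand.
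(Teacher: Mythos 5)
Your argument is correct and runs along essentially the same lines as the paper's proof: both reduce the lemma to the deterministic estimates on $\zeta=\alpha+\ii\beta$ from Section \ref{sec anafree} (Lemma \ref{lem_immanalysis}, Lemma \ref{lem_xigeneral}, \eqref{zetasimeta+t2}, \eqref{eq_edgebound}), obtain lower bounds on $\alpha$ and $\beta$, split into regimes according to whether $\kappa$ or $\eta+t^2$ dominates, and then invoke \eqref{eq_boundfarawayregion}; your integration of $\partial_E\alpha\sim 1$ and $|\partial_\eta\alpha|\lesssim 1$ simply re-derives what the paper cites Lemma \ref{lem_xigeneral} for, and your extra $\alpha<0$ subcase handled via \eqref{eq_boundnear} is harmless. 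One small fix: $C_1$ is a given constant, so you cannot ``choose $C_1>\tau_1$'', but nothing is lost, since for $C_1t^2\le\kappa\le\tau_1t^2$ the Taylor-window estimate \eqref{eq_approximate} itself gives $\alpha\ge\xi_+(t)\sim t^2\sim t^2+\kappa$, so the baseline $\alpha(E,0)\sim t^2+\kappa$ holds for every fixed $C_1>0$.
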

\begin{proof}
From the proof of Lemma \ref{lem_domian1control}, we have seen that $|d_i-\zeta|\gtrsim t^2$. Moreover, since $\im \zeta \gtrsim \eta$, we have $|d_i-\zeta|\gtrsim \eta$. Finally, if $E\ge \lambda_{+,t} + C\eta+t^2$ for some large constant $C>0$, then by Lemma \ref{lem_xigeneral} we have $\al \ge c_1\kappa$ for some constant $c_1>0$. Hence using $d_i\le \lambda_+ \le\lambda_{+,t}$, we get 
$$\min_{i=1}^p |d_i-\zeta| \ge \al \gtrsim \kappa.$$
This concludes \eqref{eq_dtheta2bound1}.
%
%
For (\ref{eq_dtheta2bound2}), if $E\ge \lambda_{+,t} + C\eta+t^2$, then by (\ref{eq_boundfarawayregion}) we have
\begin{equation*}
\int \frac{\dd \mu_{w,0}(x)}{|x-\zeta|^a} \lesssim \frac{1}{\kappa^{a-3/2}} \lesssim  \frac{1}{(\kappa+\eta+t^2)^{a-3/2}}.
\end{equation*} 
On the other hand, suppose $C_1 t^2\le E - \lambda_{+,t}\le   C\eta+t^2$. Then we have $\al\gtrsim t^2$ by Lemma \ref{lem_xigeneral} and $\beta \gtrsim \eta$ by \eqref{zetasimeta+t2}. Thus using (\ref{eq_boundfarawayregion}),  we get  
\begin{equation*}
\int \frac{\dd \mu_{w,0}(x)}{|x-\zeta|^a} \lesssim \frac{1}{(t^2+\eta)^{a-3/2}} \lesssim  \frac{1}{(\kappa+\eta+t^2)^{a-3/2}},
\end{equation*} 
where we used $\kappa\le C\eta+t^2$ in the second step. This concludes  \eqref{eq_dtheta2bound2}.
\end{proof}

\begin{lemma}\label{lem_domian12control} 
 If $\kappa+\eta \leq \tau_1 t^2$ for some sufficiently small constant $\tau_1>0,$ then we have 
\begin{equation}\label{eq_dtheta12bound}
|m_{w,0}''(\zeta)|\sim t^{-3} .
\end{equation}
\end{lemma}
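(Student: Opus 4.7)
The plan is to prove both an upper bound $|m_{w,0}''(\zeta)|\lesssim t^{-3}$ and a matching lower bound $|m_{w,0}''(\zeta)|\gtrsim t^{-3}$ by leveraging the known behavior of $m_{w,0}''$ at the pre-image $\zeta_+(t)=\lambda_++\xi_+(t)$ of the edge. The key observation is that, under the hypothesis $\kappa+\eta\le\tau_1 t^2$, Lemma \ref{lem_xigeneral} gives $|\xi-\xi_+(t)|\sim t\sqrt{\kappa+\eta}\lesssim t^2\sqrt{\tau_1}$, so $\zeta$ stays within a ball of radius $\OO(t^2\sqrt{\tau_1})$ around the real anchor point $\zeta_+(t)=\lambda_++\xi_+(t)$, where by Lemma \ref{lem rightedge} we have $\xi_+(t)\sim t^2$.

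First, I would record the identity $m_{w,0}''(\zeta)=2\int(x-\zeta)^{-3}\,\dd\mu_{w,0}(x)$. The upper bound follows immediately from the triangle inequality together with \eqref{eq_dtheta2bound2} (applied with $a=3$), or alternatively from the bound \eqref{eq_derivativecomplex} that was already established. For the lower bound, the anchor estimate $|m_{w,0}''(\zeta_+)|\sim t^{-3}$ is precisely \eqref{eq_boundederivative} with $k=2$, combined with $c_n,t\sim t$.

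Next, I would propagate this to nearby $\zeta$ via the mean value (or first-order Taylor) estimate
\[
|m_{w,0}''(\zeta)-m_{w,0}''(\zeta_+)|\;\le\;|\zeta-\zeta_+|\sup_{\tilde\zeta\in[\zeta_+,\zeta]}|m_{w,0}'''(\tilde\zeta)|.
\]
For every $\tilde\zeta$ on the segment $[\zeta_+,\zeta]$, since $\zeta_+$ is real with $\re\zeta_+-\lambda_+=\xi_+(t)\sim t^2$ and the perturbation is of size $\OO(t^2\sqrt{\tau_1})$, one has $\re\tilde\zeta-\lambda_+\ge\xi_+(t)/2\gtrsim t^2$ for $\tau_1$ small, so $\tilde\zeta$ lies in the exterior regime of Lemma \ref{lem_immanalysis}. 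Applying \eqref{eq_boundfarawayregion} with $a=4$ then yields
\[
|m_{w,0}'''(\tilde\zeta)|\;\le\;6\!\int\!\frac{\dd\mu_{w,0}(x)}{|x-\tilde\zeta|^4}\;\lesssim\;\frac{1}{(|\re\tilde\zeta-\lambda_+|+\im\tilde\zeta)^{5/2}}\;\lesssim\;t^{-5}.
\]

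Finally, combining these pieces gives $|m_{w,0}''(\zeta)-m_{w,0}''(\zeta_+)|\lesssim t^{-5}\cdot t^2\sqrt{\tau_1}=\sqrt{\tau_1}\,t^{-3}$, which is strictly smaller than $|m_{w,0}''(\zeta_+)|/2\sim t^{-3}$ provided $\tau_1$ is chosen sufficiently small. This yields $|m_{w,0}''(\zeta)|\sim t^{-3}$. The only mild subtlety—and the sole thing that needs verification—is that the segment $[\zeta_+,\zeta]$ stays in the regime where \eqref{eq_boundfarawayregion} is valid; this follows from Lemma \ref{lem rightedge} and the size bound on $|\zeta-\zeta_+|$ supplied by Lemma \ref{lem_xigeneral}, and is the main (though minor) technical point.
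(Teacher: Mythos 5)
Your proof is correct, but your route to the lower bound differs from the paper's. Both arguments start the same way: from \eqref{eq_approximate} and \eqref{eq_edgebound} you get $|\xi-\xi_+(t)|\lesssim \sqrt{\tau_1}\,t^2$, hence $\al\gtrsim t^2$ and $\zeta\in\mathcal D$, and the upper bound $|m_{w,0}''(\zeta)|\lesssim t^{-3}$ then follows from \eqref{eq_boundfarawayregion} with $a=3$ (note that \eqref{eq_dtheta2bound2}, which you cite first, is stated only for $E\ge\lambda_{+,t}+C_1t^2$ and so is formally out of range here, but your fallback via \eqref{eq_derivativecomplex}, or the direct use of \eqref{eq_boundfarawayregion}, closes that). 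For the lower bound the paper argues directly at $\zeta$: since $\beta=\im\zeta\lesssim\sqrt{\tau_1}\,t^2\ll\al$, the argument of $x-\zeta$ is within $\OO(\sqrt{\tau_1})$ of $\pi$ for every $x\in\supp\mu_{w,0}$, so $-\re(x-\zeta)^{-3}\sim|x-\zeta|^{-3}$ and there is no cancellation in $m_{w,0}''(\zeta)=2\int(x-\zeta)^{-3}\dd\mu_{w,0}(x)$, giving $|m_{w,0}''(\zeta)|\sim\int|x-\zeta|^{-3}\dd\mu_{w,0}(x)\sim t^{-3}$ in one step. You instead anchor at the real point $\zeta_+(t)$, where \eqref{eq_boundederivative} with $k=2$ gives $|m_{w,0}''(\zeta_+)|\sim t^{-3}$ (no cancellation is automatic there since $\zeta_+>\lambda_+$ is real), and propagate by a mean-value estimate with $|m_{w,0}'''|\lesssim t^{-5}$ on the segment (via \eqref{eq_boundfarawayregion} with $a=4$; your verification that the segment stays in the $\al\gtrsim t^2$ part of $\mathcal D$ is the right point to check), so the error $\sqrt{\tau_1}\,t^{-3}$ is absorbed for $\tau_1$ small. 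Both proofs use the smallness of $\tau_1$ in the same essential way; the paper's phase argument is shorter and avoids the third-derivative bound, while yours is a more modular perturbation-off-the-real-axis argument that reuses the already established anchor estimate \eqref{eq_boundederivative} and would generalize verbatim to higher derivatives $|m_{w,0}^{(k)}(\zeta)|\sim t^{-(2k-1)}$.
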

\begin{proof}
 If $\kappa+\eta \leq \tau_1 t^2$, then by \eqref{eq_approximate} we have $\zeta\in \cal D$ with $\al \gtrsim t^2$ and $|\xi|\sim t^2$. Thus using \eqref{eq_boundfarawayregion} we get
$$ \int \frac{\dd \mu_{w,0}(x)}{|x-\zeta|^3}\sim t^{-3}.$$ 
 Furthermore, by \eqref{eq_approximate} we have $\al \ge C_1\sqrt{\tau_1}\im \zeta $ for some constant $C_1>0$ that does not depend on $\tau_1$. As long as $\tau_1$ is taken sufficiently small, we have $-\re {(x-\zeta)^{-3}}\sim {|x-\zeta|^{-3}}$ for all $x\in \supp(\mu_{w,0}).$  Thus we get
 $$|m_{w,0}''(\zeta)| = 2\left|\int \frac{\dd \mu_{w,0}(x)}{(x-\zeta)^3}\right|\sim  \int \frac{\dd \mu_{w,0}(x)}{|x-\zeta|^3}\sim t^{-3},$$
which concludes the proof.
\end{proof}

\subsection{Qualitative properties of $\rho_{w,t}$ and $m_{w,t}$}\label{sec_properties}

%

The following lemma 
describes the square root behavior of $\rho_{w,t}$ around the edge $\lambda_{+,t}$. 
\begin{lemma}\label{lem_asymdensitysquare} For $|E-\lambda_{+,t}| \leq 3c_V/4,$ the asymptotic density $\rho_{w,t}$ satisfies 
\begin{equation}\label{sqrtdensity}
\rho_{w,t}(E) \sim \sqrt{(\lambda_{+,t}-E)_+}  .
\end{equation}
Moreover, for $-\tau t^2 \le E-\lambda_{+,t} \leq 0$ for some sufficiently small constant $\tau>0$, we have 
\begin{equation}\label{sqrtdensity2}
\rho_{w,t}(E)=\frac{1}{\pi}\sqrt{\frac{2(\lambda_{+,t}-E)}{[4\lambda_{+,t}\xi_+(t) + (1-c_n)^2 t^2] c_n^2 t^2\Phi^{''}(\xi_+(t))}} \left(1+\OO\left(\frac{|E-\lambda_{+,t}|}{t^2} \right)\right).
\end{equation}
Recall that by \eqref{eq_derivativePhi2} we have $t^2\Phi^{''}(\xi_+(t))| \sim 1. $
\end{lemma}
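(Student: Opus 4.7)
The plan is to start from the subordination relation derived from \eqref{eq_keyequation11}, namely
$$m_{w,t}(z) = \frac{m_{w,0}(\zeta_t(z))}{1 - c_n t\, m_{w,0}(\zeta_t(z))},$$
which also gives $1 - c_n t\, m_{w,0}(\zeta_+) = 1/b(\lambda_{+,t})$. Taking the boundary value $z = E + \ii 0^+$ yields
$$\rho_{w,t}(E) = \frac{1}{\pi}\,\im m_{w,t}(E) = \frac{1}{\pi}\,\frac{\im m_{w,0}(\zeta_t(E))}{|1 - c_n t\, m_{w,0}(\zeta_t(E))|^2},$$
so everything reduces to understanding how $\im m_{w,0}(\zeta_t(E))$ behaves, since by \eqref{eq_bnorm} the denominator is of order $1$.

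First I would prove the sharp expansion \eqref{sqrtdensity2} in the narrow window $-\tau t^2 \le E - \lambda_{+,t} \le 0$. Using the refined Taylor expansion \eqref{Tayloredge2} of Lemma \ref{lem_xigeneral}, the branch choice $\sqrt{E - \lambda_{+,t}} = \ii\sqrt{\lambda_{+,t}-E}$ gives
$$\xi(E) - \xi_+(t) = \ii\sqrt{\frac{2(\lambda_{+,t}-E)}{\Phi_t''(\xi_+(t))}}\,\Bigl(1 + O\big(|E-\lambda_{+,t}|/t^2\big)\Bigr).$$
Next I would Taylor expand $m_{w,0}$ about $\zeta_+$, which is legitimate since $\dist(\zeta_+, \supp\mu_{w,0}) \gtrsim t^2$ by \eqref{eq_edgebound} and $|m_{w,0}^{(k)}(\zeta_+)| \sim t^{-(2k-2)}$ by \eqref{eq_boundederivative}. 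Keeping only the dominant imaginary contribution,
$$\im m_{w,0}(\zeta_t(E)) = m_{w,0}'(\zeta_+)\,\im(\xi - \xi_+)\,\bigl(1 + O(|E-\lambda_{+,t}|/t^2)\bigr).$$
The key algebraic identity is \eqref{eq_firstderivative}, rewritten as $m_{w,0}'(\zeta_+)/B^2 = 1/(c_n t\,[2\zeta_+ B + (1-c_n)t])$ with $B := 1/b(\lambda_{+,t})$. Combined with the identity $[2\zeta_+ B + (1-c_n)t]^2 = 4\lambda_{+,t}\zeta_+ + (1-c_n)^2 t^2$ that comes from squaring $\zeta_+ = b(\lambda_{+,t})^2\lambda_{+,t} - b(\lambda_{+,t})t(1-c_n)$ in \eqref{eq_defnzeta}, this plugs into the expression for $\rho_{w,t}$ above to produce \eqref{sqrtdensity2}.

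For the global square-root estimate \eqref{sqrtdensity}, I would split the range $|E - \lambda_{+,t}| \le 3c_V/4$ into three pieces. On $-\tau t^2 \le E - \lambda_{+,t} \le 0$, \eqref{sqrtdensity} follows directly from \eqref{sqrtdensity2} together with $\Phi_t''(\xi_+)\sim t^{-2}$ (by \eqref{eq_derivativePhi2}), $\zeta_+ \sim \lambda_+ \sim 1$, and $c_n \sim 1$, which render the whole square-root prefactor of order $1$. On $-3c_V/4 \le E - \lambda_{+,t} \le -\tau t^2$, Lemmas \ref{lem_edgebehavior} and \ref{eq_edgeclose} furnish $\beta \sim t|\alpha-\xi_+|^{1/2}$ and $|\alpha-\xi_+|\sim |E-\lambda_{+,t}|$; since $|E-\lambda_{+,t}| \gg t^2 \sim \xi_+$, this forces $|\alpha| \sim |E-\lambda_{+,t}|$ and $\zeta(E) \in \mathcal{D}$, so \eqref{eq_boundnear} gives
$$\im m_{w,0}(\zeta) = \beta \int \frac{\dd \mu_{w,0}(x)}{|x-\zeta|^2} \sim \sqrt{|\alpha|+\beta} \sim \sqrt{|E-\lambda_{+,t}|},$$
and dividing by $|1 - c_n t\, m_{w,0}(\zeta)|^2 \sim 1$ produces \eqref{sqrtdensity}. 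Finally, for $0 < E - \lambda_{+,t} \le 3c_V/4$, Lemmas \ref{lem_supportproperty} and \ref{lem_eigenvalueslarger} place $E$ outside $\supp \rho_{w,t}$, so $\rho_{w,t}(E) = 0$, matching $\sqrt{(\lambda_{+,t}-E)_+} = 0$.

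The main technical obstacle will be controlling the higher-order Taylor remainders with the sharp relative error $O(|E-\lambda_{+,t}|/t^2)$ demanded by \eqref{sqrtdensity2}. Since $|m_{w,0}^{(k)}(\zeta_+)| \sim t^{-(2k-2)}$ grows rapidly in $k$ and $|\xi-\xi_+|\sim t|\lambda_{+,t}-E|^{1/2}$, the $k$-th term in the Taylor expansion is of size $t^{-k+2}|\lambda_{+,t}-E|^{k/2}$, so consecutive terms differ by a factor $|\lambda_{+,t}-E|^{1/2}/t$. For the imaginary part, the competing contributions are the leading odd term $m_{w,0}'(\zeta_+)\im(\xi-\xi_+)$ and the subleading odd correction $\tfrac16 m_{w,0}^{(3)}(\zeta_+)\im(\xi-\xi_+)^3$, together with the real part of $\xi-\xi_+$ picked up by $m_{w,0}'$; all three must be balanced against the cubic correction in \eqref{Tayloredge2} and shown to collapse into a single relative error of order $|E-\lambda_{+,t}|/t^2$.
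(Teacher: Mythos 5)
Your argument is correct, but it is not the route the paper takes, so let me compare. The paper proves \eqref{sqrtdensity} in one line from the identity $\beta(E)=\im\zeta_t(E)\sim t\,\im m_{w,t}(E)=\pi t\,\rho_{w,t}(E)$ (see \eqref{zetasimeta+t2}), combined with \eqref{betasimal} and \eqref{alEsqrt}; and it proves \eqref{sqrtdensity2} by taking the imaginary part of the closed-form representation \eqref{eq_banotherform}--\eqref{mwt_zetat} of $m_{w,t}$ in terms of $\zeta_t$ and inserting \eqref{Tayloredge}, so the constant comes out directly as the real part under the square root. You instead invert the subordination relation \eqref{eq_keyequation11} to write $\pi\rho_{w,t}=\im m_{w,0}(\zeta_t)/|1-c_ntm_{w,0}(\zeta_t)|^2$, Taylor-expand $m_{w,0}$ about $\zeta_+$, and convert $m_{w,0}'(\zeta_+)$ via the critical-point equation \eqref{eq_firstderivative} together with the identity $\bigl[2\zeta_+/b+(1-c_n)t\bigr]^2=4\lambda_{+,t}\zeta_+(t)+(1-c_n)^2t^2$; this is a valid and somewhat more structural derivation, and reassuringly it produces the same constant as the paper's own formula, namely with $\zeta_+(t)$ in the bracket --- the $\xi_+(t)$ printed in \eqref{sqrtdensity2} appears to be a typo (with $\xi_+\sim t^2$ the formula would contradict \eqref{sqrtdensity} in size). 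Your final paragraph correctly identifies the real technical content: since the odd-order corrections in \eqref{Tayloredge2} are real at $\eta=0$, the imaginary part of $\xi-\xi_+$ carries a relative error $\OO(|E-\lambda_{+,t}|/t^2)$ even though the correction to $\xi-\xi_+$ itself is only $\OO(\sqrt{|E-\lambda_{+,t}|}/t)$, and the higher Taylor terms of $m_{w,0}$ contribute only at relative order $|E-\lambda_{+,t}|/t$.

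Two small points you should tighten. First, in your middle window $-3c_V/4\le E-\lambda_{+,t}\le-\tau t^2$ the claim that $|\alpha|\sim|E-\lambda_{+,t}|$ with $\zeta\in\mathcal D$ and \eqref{eq_boundnear} applicable is not justified in the sub-window $\tau t^2\le\lambda_{+,t}-E\lesssim t^2$, where $\alpha$ may be nonnegative (recall $\xi_+\sim t^2$); there one must use \eqref{eq_boundfarawayregion} instead, which still yields $\im m_{w,0}(\zeta)\sim\beta/\sqrt{|\alpha|+\beta}\sim\sqrt{|E-\lambda_{+,t}|}$, so the conclusion survives but the case split is needed (the paper's identity $\beta\sim t\,\im m_{w,t}$ avoids this entirely). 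Second, for \eqref{sqrtdensity2} you must also control the denominator: replacing $m_{w,0}(\zeta_t(E))$ by $m_{w,0}(\zeta_+)$ naively costs $\OO(t\sqrt{|E-\lambda_{+,t}|})$, which is \emph{not} $\OO(|E-\lambda_{+,t}|/t^2)$ when $|E-\lambda_{+,t}|\ll t^6$; the saving grace is that $1-c_ntm_{w,0}(\zeta_+)=1/b(\lambda_{+,t})$ is real, so the imaginary part of the variation enters $|1-c_ntm_{w,0}(\zeta)|^2$ only quadratically and the true relative error is $\OO(|E-\lambda_{+,t}|)$, which is admissible. With these two points made explicit, your proof is complete.
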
 
\begin{proof}
By \eqref{zetasimeta+t2}, we have 
\be\label{betaEsimf}
\beta(E)=\im \zeta(E)\sim t\im m_{w,t}(E) =t \pi  \rho_{w,t}(E). 
\ee
Then \eqref{sqrtdensity} follows from \eqref{betasimal} and \eqref{alEsqrt}. For \eqref{sqrtdensity2}, we use \eqref{eq_banotherform} and $b(z)=1+c_nt m_{w,t}(z)$ to get that
\begin{equation}\label{mwt_zetat}
m_{w,t}(E)=\frac{t(1-c_n)+\sqrt{t^2(1-c_n)^2+4 (\al(E) + \ii \beta(E) + \lambda_+) E}}{2Ec_nt} -\frac{1}{c_nt} ,
\end{equation}
Taking the imaginary part of \eqref{mwt_zetat} and using \eqref{Tayloredge}, we can conclude \eqref{sqrtdensity2}. 
\end{proof}

Lemma \ref{lem_asymdensitysquare} immediately implies the following estimates on $\im m_{w,t}.$
\begin{lemma} \label{lem_asymdensitysquare2}
We have the following estimates for $z= E+\ii\eta$ with $\lambda_{+,t} - 3c_V/4 \le E\le \lambda_{+,t} + 3c_V/4$ and $0\le \eta \le 10$: 
\begin{equation}\label{eq_imasymptoics}
|m_{w,t}(z)|\lesssim 1,\quad  \im m_{w,t}(z) \sim
\begin{dcases}
\sqrt{\kappa+\eta}, & \lambda_{+,t} - 3c_V/4 \le  E\leq \lambda_{+,t} \\
\frac{\eta}{\sqrt{\kappa+\eta}}, &\lambda_{+,t}\le  E  \le \lambda_{+,t} + 3c_V/4
\end{dcases}.
\end{equation} 
\end{lemma}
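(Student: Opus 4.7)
The plan is to derive both estimates directly from the Stieltjes-transform representation, combined with the square-root profile of $\rho_{w,t}$ supplied by Lemma~\ref{lem_asymdensitysquare}.

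For the bound $|m_{w,t}(z)|\lesssim 1$, I would use the factorization $m_{w,t}(z)=b_{t}(z)\,m_{V}(\zeta_{t}(z))$, which follows from \eqref{eq_keyequation11} upon solving for $m_V(\zeta_t)$ and using $b_t=1+c_n t m_{w,t}$. The factor $b_{t}=1+\OO(t^{1/2})$ is controlled by \eqref{eq_bnorm}. By Lemma~\ref{lem_xigeneral}, $\zeta_{t}(z)$ stays in a bounded neighborhood of $\lambda_{+}$ with $\im\zeta_{t}\ge 0$. Since $V$ is $\eta_{*}$-regular, $\mu_V$ has a bounded, H\"older-$\tfrac12$ density near $\lambda_{+}$, so the Poisson-kernel bound $\im m_V(\zeta)\le \pi\sup\rho_V$ together with the boundedness of the Hilbert transform of a H\"older-$\tfrac12$ density yields $|m_{V}(\zeta_{t}(z))|\lesssim 1$. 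Multiplying these gives $|m_{w,t}(z)|\lesssim 1$.

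For the asymptotic $\im m_{w,t}(z)\sim\cdots$, I start from
\begin{equation*}
\im m_{w,t}(E+\ii\eta)=\int\frac{\eta\,\rho_{w,t}(x)\,dx}{(x-E)^{2}+\eta^{2}}
\end{equation*}
and split the integration range into the near-edge part $[\lambda_{+,t}-3c_V/4,\lambda_{+,t}]$, where Lemma~\ref{lem_asymdensitysquare} gives $\rho_{w,t}(x)\sim\sqrt{(\lambda_{+,t}-x)_{+}}$, and its complement in $\supp \rho_{w,t}$. Substituting $u=\lambda_{+,t}-x$ and setting $s=\lambda_{+,t}-E$, the near part is comparable (up to $\sim$ constants) to $\int_{0}^{3c_V/4}\eta\sqrt{u}/((u-s)^{2}+\eta^{2})\,du$. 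For $E\le \lambda_{+,t}$ (so $s=\kappa\ge 0$), a two-regime analysis---peak near $u=\kappa$ contributing $\sim\sqrt{\kappa}$ when $\eta\lesssim\kappa$, and the rescaling $u=\eta v$ giving $\sim\sqrt{\eta}$ when $\eta\gtrsim\kappa$---yields $\sim\sqrt{\kappa+\eta}$. For $E\ge \lambda_{+,t}$ (so $s=-\kappa\le 0$), the denominator $(u+\kappa)^{2}+\eta^{2}$ is bounded below by $\kappa^{2}+\eta^{2}$, and the same two-regime analysis yields $\sim \eta/\sqrt{\kappa+\eta}$. The complementary far contribution is $\OO(\eta)$, since $\rho_{w,t}$ is bounded off the near region (by the factorization in the first step) and $|x-E|$ stays bounded below for such $x$, with a minor adjustment of the cutoff handling the case when $E$ is close to the endpoint $\lambda_{+,t}-3c_V/4$; this far part is subdominant relative to the main term.

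The main obstacle is the bookkeeping in matching upper and lower bounds across all regimes of $(\eta,\kappa)$ and treating the boundary of the allowed $E$-range. Both are routine once one recognizes the model integral $\int_{0}^{3c_V/4}\eta\sqrt{u}/((u-s)^{2}+\eta^{2})\,du$ as the imaginary part of the Stieltjes transform of a density with square-root edge behavior, whose asymptotics are classical.
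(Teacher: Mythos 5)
Your treatment of the imaginary-part asymptotics coincides with the paper's (one-line) argument: insert the square-root profile from Lemma \ref{lem_asymdensitysquare} into the representation \eqref{def mwt} and carry out the classical two-regime computation for $E\le\lambda_{+,t}$ and $E\ge\lambda_{+,t}$; the far contribution is handled by total mass, and the endpoint issue you mention is also glossed over by the paper. That part is fine.

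The genuine gap is in your argument for $|m_{w,t}(z)|\lesssim 1$. The factorization $m_{w,t}=b_t\,m_V(\zeta_t)$ from \eqref{eq_keyequation11} and the bound \eqref{eq_bnorm} are correct, but you then bound $|m_V(\zeta_t)|\lesssim1$ by asserting that $\eta_*$-regularity gives ``a bounded, H\"older-$\tfrac12$ density'' for $\mu_V$ near $\lambda_+$, and you invoke the Poisson-kernel bound $\im m_V\le\pi\sup\rho_V$ and boundedness of the Hilbert transform. This step fails: $\mu_V=p^{-1}\sum_i\delta_{d_i}$ is a purely atomic empirical measure with no density, and Definition \ref{assumption_edgebehavior} only controls $\im m_V(E+\ii\eta)$ on scales $\eta\gtrsim\eta_*$; in particular $\im m_V(x+\ii\eta)\to\infty$ as $\eta\downarrow0$ at each atom, so $\sup\rho_V$ does not exist and the Hilbert-transform argument is not available. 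The correct input is the location of $\zeta_t$: by Lemma \ref{lem rightedge} and Lemmas \ref{lem_edgebehavior}, \ref{eq_edgeclose} one has $\xi_+\sim t^2$, $|\alpha-\xi_+|\sim|E-\lambda_{+,t}|$ and $\beta\sim t|\alpha-\xi_+|^{1/2}$, so $\zeta_t$ stays at distance $\gtrsim t^2\gg\eta_*$ from $\supp\mu_V$, and then the integral estimates of Lemma \ref{lem_immanalysis} (equivalently, a dyadic decomposition using \eqref{regular1}--\eqref{regular2}) bound $\int|x-\zeta_t|^{-1}\,\dd\mu_V(x)$, e.g.\ near the edge via $\int_0^{c_V}\sqrt{s}\,(s+t^2)^{-1}\,\dd s\lesssim1$. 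Alternatively, and closer to what the paper intends, the bound on $|m_{w,t}|$ can be obtained from the same Stieltjes-transform representation you already use for the imaginary part, using $\rho_{w,t}(x)\lesssim\sqrt{(\lambda_{+,t}-x)_+}$ on the near window together with total mass one off it. As written, however, the ``bounded H\"older density of $\mu_V$'' step is false in this setting, and it also silently underpins your claim that $\rho_{w,t}$ is bounded off the near region, so it needs to be replaced rather than merely tightened.
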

\begin{proof}
\eqref{eq_imasymptoics} can be derived easily from \eqref{def mwt} combined with the square root behavior of $\rho_{w,t}$ in \eqref{sqrtdensity}. 
\end{proof}

We also need to control the derivative $\partial_z m_{w,t}(z)$. First by \eqref{def mwt}, we have the trivial estimate
\begin{equation}\label{eq_trivialbounderivative}
\left| \partial_z m_{w,t}(z) \right| = \left|\int \frac{\dd \mu_{w,t}(x)}{(x-z)^2}\right|\leq \frac{\im m_{w,t}}{\eta}.
\end{equation}
Moreover, we claim the following estimates.

\begin{lemma} \label{lem_partialm}
For $\kappa+\eta \leq t^2,$ we have 
\begin{equation}\label{eq_bound1}
\left| \partial_z m_{w,t}(z) \right| \lesssim (\kappa+\eta)^{-1/2}.
\end{equation}
Moreover, if $\kappa+\eta \geq t^2$,  we have that for $E \geq \lambda_{+,t},$ 
\begin{equation}\label{eq_bound2}
|\partial_z m_{w,t}(z)|   \lesssim (\kappa+\eta)^{-1/2},
\end{equation}
and for $E \leq \lambda_{+,t}$, 
\begin{equation}\label{eq_bound3}
|\partial_z m_{w,t}(z)| \lesssim \frac{ \sqrt{\kappa+\eta}}{t \sqrt{\kappa+\eta}+\eta}. 
\end{equation}
\end{lemma}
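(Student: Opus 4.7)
The plan is to reduce all three bounds to estimates on $|m_{w,0}'(\zeta_t(z))|$ and $|\Phi_t'(\zeta_t(z))|$, which have already been controlled in Lemmas \ref{lem_xigeneral}, \ref{lem_domian1control}, \ref{lem_domian2control}. Starting from the subordination identity $\Phi_t(\zeta_t(z))=z$, differentiation gives $\zeta_t'(z)=1/\Phi_t'(\zeta_t(z))$. From the equation \eqref{eq_keyequation11}, one has
\begin{equation*}
m_{w,t}(z)=\frac{m_{w,0}(\zeta_t(z))}{1-c_nt\,m_{w,0}(\zeta_t(z))}=b_t(z)\,m_{w,0}(\zeta_t(z))-\tfrac{1}{c_nt}(b_t(z)-1)\cdot 0,
\end{equation*}
so a direct computation yields
\begin{equation*}
\partial_z m_{w,t}(z)=\frac{b_t(z)^2\,m_{w,0}'(\zeta_t(z))}{\Phi_t'(\zeta_t(z))}.
\end{equation*}
Since $|m_{w,t}(z)|\lesssim (t|z|)^{-1/2}$ by \eqref{rough mt} and $|z|\sim 1$ in the region of interest, we have $|b_t|=|1+c_ntm_{w,t}|\sim 1$. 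Hence
\begin{equation*}
|\partial_z m_{w,t}(z)|\sim \frac{|m_{w,0}'(\zeta_t(z))|}{|\Phi_t'(\zeta_t(z))|},
\end{equation*}
and it remains to combine the two ingredients in each of the three regimes.

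For the first bound, when $\kappa+\eta\leq t^2$, Lemma \ref{lem_xigeneral} gives $|\Phi_t'(\zeta)|\sim \sqrt{\kappa+\eta}/t$. The imaginary part $\im\zeta\sim\eta+t\,\im m_{w,t}$ is bounded by $t^2$ using Lemma \ref{lem_asymdensitysquare2}, so $t^2+\im\zeta\sim t^2$, and \eqref{eq_dtheta1extension} with $a=2$ gives $|m_{w,0}'(\zeta)|\lesssim (t+\sqrt{\kappa+\eta})/t^2\lesssim 1/t$. The quotient is then $(\kappa+\eta)^{-1/2}$, proving \eqref{eq_bound1}. For the second bound, when $\kappa+\eta\geq t^2$ and $E\geq\lambda_{+,t}$, Lemma \ref{lem_xigeneral} gives $|\Phi_t'(\zeta)|\sim 1$; I would split into $\kappa\geq C_1 t^2$, where Lemma \ref{lem_domian2control} directly delivers $|m_{w,0}'(\zeta)|\lesssim(\kappa+\eta+t^2)^{-1/2}\sim(\kappa+\eta)^{-1/2}$, and $\kappa\leq C_1 t^2$, where $\eta\gtrsim t^2$ forces $\im\zeta\sim\eta$ (via the cancellation $t\im m_{w,t}\sim t\eta/\sqrt{\kappa+\eta}\leq\eta$), so \eqref{eq_dtheta1extension} gives $|m_{w,0}'(\zeta)|\lesssim \sqrt{\kappa+\eta}/(t^2+\eta)\sim (\kappa+\eta)^{-1/2}$. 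This yields \eqref{eq_bound2}.

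For the third bound, when $\kappa+\eta\geq t^2$ and $E\leq\lambda_{+,t}$, again $|\Phi_t'(\zeta)|\sim 1$. Lemma \ref{lem_asymdensitysquare2} now gives $\im m_{w,t}\sim\sqrt{\kappa+\eta}$, whence $\im\zeta\sim\eta+t\sqrt{\kappa+\eta}$ and $t^2+\im\zeta\sim \eta+t\sqrt{\kappa+\eta}$ (since $t\leq\sqrt{\kappa+\eta}$ in this regime). Inserting this into \eqref{eq_dtheta1extension} yields
\begin{equation*}
|m_{w,0}'(\zeta)|\lesssim \frac{t+\sqrt{\kappa+\eta}}{\eta+t\sqrt{\kappa+\eta}}\sim \frac{\sqrt{\kappa+\eta}}{\eta+t\sqrt{\kappa+\eta}},
\end{equation*}
which is exactly \eqref{eq_bound3}.

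The computations are mostly bookkeeping once the correct identity $\partial_z m_{w,t}=b_t^2 m_{w,0}'(\zeta_t)/\Phi_t'(\zeta_t)$ is in hand. The main delicate point is the transition between the two edge behaviors of $\im m_{w,t}$ from Lemma \ref{lem_asymdensitysquare2}: inside the spectrum $\im\zeta$ picks up a $t\sqrt{\kappa+\eta}$ contribution (which produces the denominator in \eqref{eq_bound3}), while outside the spectrum $\im\zeta\sim\eta$ dominates and restores the simpler square-root bound \eqref{eq_bound2}. Verifying that the cross-over between the regimes $\kappa+\eta\lessgtr t^2$ is consistent and that the Lemma \ref{lem_domian1control}/\ref{lem_domian2control} dichotomy at $E=\lambda_{+,t}+C_1 t^2$ is handled uniformly is the only place where care is needed.
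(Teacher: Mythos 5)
Your proof is correct, but the estimation step takes a somewhat different route from the paper's. Both arguments start by differentiating the subordination relation $\Phi_t(\zeta_t(z))=z$ and both use $|\Phi_t'(\zeta_t)|\sim\min\{1,\sqrt{\kappa+\eta}/t\}$. The paper, however, works with the form $\partial_z m_{w,t}=\big([\Phi_t'(\zeta_t)]^{-1}-b_t^2\big)/\big([2b_tz-(1-c_n)t]c_nt\big)$, which only yields the crude bound $\max\{(\kappa+\eta)^{-1/2},t^{-1}\}$; this settles \eqref{eq_bound1} and the $t\sqrt{\kappa+\eta}\ge\eta$ part of \eqref{eq_bound3}, and the remaining regimes (\eqref{eq_bound2} and the $t\sqrt{\kappa+\eta}\le\eta$ part of \eqref{eq_bound3}) are patched with the trivial Stieltjes bound \eqref{eq_trivialbounderivative} combined with \eqref{eq_imasymptoics}. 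You instead use the exact identity $\partial_z m_{w,t}=b_t^2\,m_{w,0}'(\zeta_t)/\Phi_t'(\zeta_t)$ — which is correct, since \eqref{eq_keyequation11} gives $1-c_ntm_{w,0}(\zeta_t)=b_t^{-1}$, so differentiating $m_{w,t}=m_{w,0}(\zeta_t)/(1-c_ntm_{w,0}(\zeta_t))$ produces exactly this (your displayed derivation with the spurious ``$\cdot\,0$'' term should be cleaned up, and the chain rule must be run through the quotient form so that the $z$-dependence of $b_t$ is accounted for) — and you then control $|m_{w,0}'(\zeta_t)|$ via the integral estimates \eqref{eq_dtheta1extension} and \eqref{eq_dtheta2bound2}. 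This buys a single unified formula covering all three regimes without invoking the trivial bound, at the price of leaning on Lemmas \ref{lem_domian1control} and \ref{lem_domian2control}.

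Two small points need repair. First, in the sub-case $E\ge\lambda_{+,t}$, $\kappa\le C_1t^2$ you assert $\eta\gtrsim t^2$; this does not follow from $\kappa+\eta\ge t^2$ when $C_1>1$ (take $\kappa=2t^2$ and $\eta$ arbitrarily small). The conclusion survives anyway: for $E\ge\lambda_{+,t}$ one has $t\,\im m_{w,t}\sim t\eta/\sqrt{\kappa+\eta}\lesssim\eta$, so $\im\zeta_t\sim\eta$ by \eqref{zetasimeta+t2}, and then $(t+\sqrt{\kappa+\eta})\sqrt{\kappa+\eta}\sim\kappa+\eta\lesssim t^2+\eta\lesssim t^2+\im\zeta_t$ because $\kappa\lesssim t^2$; so simply delete the claim $\eta\gtrsim t^2$. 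Second, Lemmas \ref{lem_domian1control} and \ref{lem_domian2control} are stated for $z\in\mathcal{D}_\vartheta$, whereas Lemma \ref{lem_partialm} is a deterministic statement with no such restriction (in particular it should hold down to very small $\eta$). You should either remark that the proofs of those lemmas never use the constraint $n\eta(t+\sqrt{\kappa+\eta})\ge n^\vartheta$ but only the location of $E$ near the edge, or, as the paper does, replace them in the relevant regimes by the domain-free inputs \eqref{eq_trivialbounderivative}, \eqref{eq_imasymptoics} and Lemma \ref{lem_immanalysis}. With these adjustments your argument is a valid alternative proof.
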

\begin{proof}
By equation \eqref{simplePhizeta}, we have $\partial_z \zeta = [\Phi_t'(\zeta)]^{-1}$. Then using the definition of $\zeta$ in \eqref{eq_defnzeta}, we can solve that
$$\partial_z m_{w,t}(z)=\frac{[\Phi_t'(\zeta)]^{-1}- b^2 }{\left[ 2bz - (1-c_n)t\right]c_nt}. $$
Then using \eqref{Phit'}, we get that
\begin{equation}\label{pfpartialm}
\left| \partial_z m_{w,t}(z) \right| \lesssim \max \left\{ \frac{1}{\sqrt{\kappa+\eta}}, \frac{1}{t} \right\},
\end{equation}
 which concludes \eqref{eq_bound1} for $\kappa+\eta \leq t^2$. 
The bound (\ref{eq_bound2}) follows directly from (\ref{eq_imasymptoics}) and (\ref{eq_trivialbounderivative}). Similarly, with (\ref{eq_imasymptoics}) and (\ref{eq_trivialbounderivative}), we can get \eqref{eq_bound3} when $t\sqrt{\kappa +\eta}\le \eta$. If $t\sqrt{\kappa +\eta}\ge \eta$, we use \eqref{pfpartialm} 
to get
$$\left| \partial_z m_{w,t}(z) \right| \lesssim \max \left\{ \frac{1}{\sqrt{\kappa+\eta}}, \frac{1}{t} \right\}\lesssim \frac{ \sqrt{\kappa+\eta}}{t \sqrt{\kappa+\eta}+\eta}.$$
This concludes (\ref{eq_bound3}). 
\end{proof}

\section{Proof of Theorem \ref{thm_local}}\label{sec largelocal}

Since the multivariate Gaussian distribution is rotationally invariant under orthogonal transforms, for any $t>0$ we have the following equality in distribution 
\be\label{Yt=Wt}Y_t=Y+\sqrt{t}X\stackrel{d}{=}O_1W_t O_2^\top,\quad W_t:= W+\sqrt{t}X,\ee
where we used the SVD of $Y$ in \eqref{SVD}. As in Definition \ref{resol_not}, we define the following resolvents 
 \begin{equation}\label{eqn_defGd}
R(z)\equiv R(W_t , z):=(z^{1/2}\wt H_t - z)^{-1}, \quad \wt H_t (W_t):= \begin{pmatrix} 0 & W_t \\ W_t^\top & 0\end{pmatrix},
\end{equation}
and
\begin{equation}\label{def_greend}
  \mathcal R\equiv \cal R(W_t,z) :=\left(W_tW_t^\top -z\right)^{-1} , \ \ \ \uR \equiv \uR(W_t,z):=\left(W_t^\top W_t-z\right)^{-1} .
\end{equation}
With slight abuse of notations, we still denote 
\be\label{defn_md}
m(z)\equiv m(W_t,z):= \frac{1}{p} \mathrm{Tr} \, \mathcal R(z),\quad  \um(z)\equiv \um (W_t,z):= \frac{1}{n} \mathrm{Tr} \, \uR(z).
\ee
By \eqref{Yt=Wt}, we know that they have the same distribution as the original $m(z)$ and $\um(z)$ defined in \eqref{defn_m} and \eqref{defn_m2}. Moreover, by \eqref{Yt=Wt} we have
$$G(z)\stackrel{d}{=} \begin{pmatrix} O_1 & 0 \\ 0 & O_2\end{pmatrix}R(z)\begin{pmatrix} O_1^\top & 0 \\ 0 & O_2^\top \end{pmatrix}.$$
Correspondingly, we can obtain from \eqref{defnPi} the asymptotic limit of $R(z)$ as
\be \label{defnPi2}
\Pi^w(z):=\begin{bdmatrix} \frac{-(1+c_n t m_{w,t})}{z(1+c_n t m_{w,t})(1+t \underline m_{w,t} )-WW^\top} &  \frac{-z^{-1/2} }{z(1+c_n t m_{w,t})(1+t \underline m_{w,t} )-WW^\top}W \\ 
W^\top \frac{-z^{-1/2} }{z(1+c_n t m_{w,t})(1+t \underline m_{w,t} )-WW^\top} & \frac{-(1+t \underline m_{w,t} )}{z(1+c_n t m_{w,t})(1+t \underline m_{w,t} )-W^\top W}  
\end{bdmatrix}.
\ee
To prove Theorem \ref{thm_local}, it suffices to study the resolvent $R(z)$.

\subsection{Basic tools}
In this subsection, we introduce more notations and collect some basic tools that will be used in the proof. 
First as in \eqref{green2}, using Schur complement formula we can get
\begin{equation} \label{green}
R= \left( {\begin{array}{*{20}c}
   { \mathcal R} & z^{-1/2} \mathcal RW_t \\
   {z^{-1/2} W_t^\top \mathcal R} & { \uR }  \\
\end{array}} \right)= \left( {\begin{array}{*{20}c}
   { \mathcal R} & z^{-1/2} W_t \uR   \\
   z^{-1/2} {\uR}W_t^\top & { \uR }  \\
\end{array}} \right).
\end{equation}
For simplicity of notations, we define the index sets
$$\mathcal I_1:=\{1,...,p\}, \ \ \mathcal I_2:=\{p+1,...,p+n\}, \ \ \mathcal I:=\mathcal I_1\cup\mathcal I_2.$$
We shall consistently use the latin letters $i,j\in\mathcal I_1$, greek letters $\mu,\nu\in\mathcal I_2$, and $\fa,\fb\in\mathcal I$. 
For simplicity, given a vector $\mathbf v\in \mathbb C^{\mathcal I_{1,2}}$, we always identify it with its natural embedding in $\C^{\cal I}$. For example, we shall identify $\mathbf v\in \mathbb C^{\mathcal I_1}$ with $\left( {\begin{array}{*{20}c}
   {\mathbf v}  \\
   \mathbf 0_{n} \\
\end{array}} \right)$.


\begin{definition}[Minors]
For any $ \cal J \times \cal J$ matrix $\cal A$ and $\mathbb T \subseteq \mathcal J$, where $\cal J$ and $\mathbb T$ are some index stes, we define the minor $\cal A^{(\mathbb T)}:=(\cal A_{ab}:a,b \in \mathcal J\setminus \mathbb T)$ as the $ (\cal J\setminus \mathbb T)\times (\cal J\setminus \mathbb T)$ matrix obtained by removing all rows and columns indexed by $\mathbb T$. Note that we keep the names of indices when defining $\cal A^{(\mathbb T)}$, i.e. $(\cal A^{(\mathbb{T})})_{ab}= \cal A_{ab}$ for $a,b \notin \mathbb{{T}}$. Correspondingly, we define the resolvent minor as
\begin{align*}
R^{(\mathbb T)}(z):&=\left[ z^{1/2}\wt H_t^{(\mathbb T)}(z) -z\right]^{-1}= \left( {\begin{array}{*{20}c}
   { \mathcal R^{(\mathbb T)}} & z^{-1/2} \mathcal R^{(\mathbb T)} W_t^{(\mathbb T)}  \\
   {z^{-1/2} (W_t^{(\mathbb T)})^\top \mathcal R^{(\mathbb T)}} & { \uR^{(\mathbb T)} }  \\
\end{array}} \right) ,
\end{align*}
and the partial traces
$$m^{(\mathbb T)}:=\frac{1}{p}\sum_{i\in \cal I_1}^{(\mathbb T)} R_{ii}^{(\mathbb T)},\quad \um^{(\mathbb T)}:= \frac{1}{n}\sum_{\mu \in \cal I_2 }^{(\mathbb T)} R_{\mu\mu}^{(\mathbb T)},$$
where we used the notation $\sum_{a}^{(\mathbb T)} := \sum_{a\notin \mathbb T} .$ For $\mathbb T\subset \mathcal I_1$, we define the subset $[\mathbb T]:=\{\fa \in \mathcal I: \fa \in \mathbb T \text{ or } \fa + p \in \mathbb T\}$. Then we define the minor $\wt H^{[\mathbb T]}:=\wt H^{([\mathbb T])}$, and correspondingly $R^{[\mathbb T]}:= R^{([\mathbb T])}$.
\end{definition}
 For convenience, we will adopt the convention that for any minor $\cal A^{(\mathbb T)}$ defined as above, $\cal A^{(\mathbb T)}_{ab} = 0$ if $a \in \mathbb T$ or $b \in \mathbb T$. Moreover, we will abbreviate $\overline i:= i+p\in \cal I_2$ for $i\in \cal I_1$, $\overline \mu:= \mu-p\in \cal I_1$ for $p+1\le \mu \le 2p$, $(\{\fa\})\equiv (\fa)$, $[i]\equiv [\{i\}]$, $(\{\fa, \fb\})\equiv (\fa\fb)$ and $[\{i, j\}]\equiv [ij]$.


For an $\cal I \times \mathcal I$ matrix $\cal A$ and $i,j \in \mathcal I_1$, we define the $2\times 2$ minors as
\begin{equation}\nonumber
 \cal A_{[ij]} :=\left( {\begin{array}{*{20}c}
   {\cal A_{ij} } & {\cal A_{i\overline j} }  \\
   {\cal A_{ \overline i j} } & {\cal A_{ \overline i \overline j} }  \\
\end{array}} \right),
\end{equation}
Moreover, for $\fa\in \cal I\setminus \{i,\overline i\}$ we denote
\begin{equation}\nonumber 
 \cal A_{[i]\fa}=\left( {\begin{array}{*{20}c}
   {\cal A_{i \fa} }  \\
   {\cal A_{\overline i \fa} }\\
\end{array}} \right),\quad \cal A_{\fa[i]}=\left(  {\cal A_{\fa i} }, {\cal A_{\fa\overline i} } .
 \right), \quad.
\end{equation}
Now we record the following resolvent identities obtained from Schur complement formula.


\begin{lemma}\label{lemm_resolvent}
The following resolvent identities hold.
\begin{itemize}
\item[(i)] For $i\in \cal I_1$, we have 
 \begin{equation}\label{eq_res11}
   (R_{[ii]})^{ - 1}  = \begin{pmatrix} - z & z^{1/2}(d_i^{1/2} + \sqrt{t}X_{i\overline i}) \\ z^{1/2}(d_i^{1/2} + \sqrt{t}X_{i\overline i})  & -z \end{pmatrix}  - zt \begin{pmatrix}( XR^{[i]} X^\top)_{ii} & ( X R^{[i]}X)_{i \overline i} \\  (X^\top R^{[i]}  X^\top)_{\overline i i} &  (X^\top R^{[i]}  X)_{\overline i \overline i} \end{pmatrix}   .
    \end{equation}
    For $2p+1\le \mu \le p+n$, we have
\begin{equation}
\frac{1}{{R_{\mu \mu } }} =  - z  - zt\left( {X^\top  R^{\left( \mu  \right)} X} \right)_{\mu \mu }.\label{resolvent2}
\end{equation}

\item[(ii)]  For $i\ne j \in \mathcal I_1$, we have 
       \begin{align}
   R_{[ij]}  &= - z^{1/2}\sqrt{t}  R_{[ii]} \begin{pmatrix} (X R^{[i]})_{ij} &  (X R^{[i]})_{i \overline j} \\ (X^{\top} R^{[i]})_{\overline i j} & (X^{\top} R^{[i]})_{\overline i  \overline j} \end{pmatrix}   = -  z^{1/2}\sqrt{t}  \begin{pmatrix} (R^{[j]} X^\top)_{ij} & (R^{[j]} X)_{i \overline j}\\ (R^{[j]} X^\top)_{\overline i j } & (R^{[j]} X)_{\overline i \overline j }\end{pmatrix} R_{[jj]} 
   \nonumber\\
 & = zt R_{[ii]} \begin{pmatrix} (X R^{[ij]} X^\top )_{ij} & (X R^{[ij]}X)_{i \overline j} \\ (X^{\top} R^{[ij]} X^\top )_{\overline ij} & (X^{\top} R^{[ij]} X)_{\overline i \overline j}\end{pmatrix}R_{[jj]}^{[i]}. \label{eq_res21}
    \end{align}
 For $2p+1\le \mu \ne \nu \le p+n$, we have
 \begin{equation}
 R_{\mu \nu }  = - z^{1/2} \sqrt{t} R_{\mu \mu }  \left( {X^\top  R^{\left( {\mu} \right)} } \right)_{\mu \nu} = - z^{1/2} \sqrt{t}  \left( { R^{\left( {\nu} \right)} X } \right)_{\mu \nu}R_{\nu \nu } = zt R_{\mu \mu } R^{(\mu)}_{\nu \nu }  \left( {X^\top  R^{\left( {\mu\nu} \right)} X} \right)_{\mu \nu} . \label{resolvent3}
\end{equation}

\item[(iii)] For $i \in \mathcal I_1$ and $2p+1\le \mu \le p+n$, we have 
    \begin{align}
   R_{[i]\mu} = - z^{1/2}\sqrt{t} R_{[ii]} \begin{pmatrix}  (X R^{(i\overline i \mu)})_{ i \mu}   \\ ( X^\top R^{(i\overline i \mu)})_{\overline i\mu} \end{pmatrix} =zt R_{[ii]}R_{\mu\mu}^{[i]} \begin{pmatrix}  (X R^{(i\mu\overline \mu)}X)_{  i\mu}   \\ ( X^{\top}R^{(i\mu\overline \mu)}X)_{\overline i \mu } \end{pmatrix} .\label{eq_res23}
       \end{align}

 \item[(vi)]
 For $\fa \in \mathcal I$ and $\fb,\mathfrak c \in \mathcal I \setminus \{\fa\}$,
\begin{equation}
R_{\fb\mathfrak c}  = R_{\fb\mathfrak c}^{\left( \fa \right)} + \frac{R_{\fb\fa} R_{\fa\mathfrak c}}{R_{\fa\fa}}, \quad \frac{1}{{R_{\fb\fb} }} = \frac{1}{{R_{\fb\fb}^{(\fa)} }} - \frac{{R_{\fb\fa} R_{\fa\fb} }}{{R_{\fb\fb} R_{\fb\fb}^{(\fa)} R_{\fa\fa} }}. \label{resolvent8}
\end{equation}
  For $i \in \mathcal I_1 $ and $\fa,\fb \in \mathcal I\setminus \{i,\overline i\}$, we have 
    \begin{equation}\label{eq_res3}
     R_{\fa \fb} =R_{\fa \fb}^{[i]} +R_{\fa[i]}R_{[ii]}^{-1}R_{[i]\fb}, 
     \quad R_{\fa\fa}^{-1}= (R^{[i]}_{\fa\fa})^{ -1} - R_{\fa\fa}^{ - 1} (R^{[i]}_{\fa\fa})^{ -1} R_{\fa[i]}R_{[ii]}^{-1}R_{[i]\fa}.
    \end{equation}

  \item[(vii)] All of the above identities hold for $R^{[\mathbb T]}$ instead of $R$ for any subset $\mathbb T\subset \mathcal I_1$.

  \end{itemize}
\end{lemma}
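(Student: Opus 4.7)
The plan is to derive every identity as a direct application of the Schur complement formula to the block matrix $M := z^{1/2}\widetilde H_t - z$, exploiting that the only nonzero off-diagonal entries of $M$ are the ``signal'' entries $z^{1/2}\sqrt{d_i}$ at positions $(i,\overline i)$ and the noise entries $z^{1/2}\sqrt{t}\, X_{i,\mu-p}$ at positions $(i,\mu)$ with $i\in\mathcal I_1$ and $\mu\in\mathcal I_2\setminus\{\overline i\}$ (together with their transposes). The base identity I would use throughout is: for any disjoint subsets $S, S' \subset \mathcal I$,
\begin{equation*}
(R_{SS})^{-1} = M_{SS} - M_{S,\,\mathcal I\setminus S}\, R^{S}\, M_{\mathcal I\setminus S,\,S}, \qquad R_{SS'} = -R_{SS}\, M_{S,\,\mathcal I\setminus S}\, R^{S}_{\mathcal I\setminus S,\,S'},
\end{equation*}
together with the symmetric version in which the roles of $S$ and $S'$ are reversed.

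For item (i), I would specialize $S = [i]$. The direct term $M_{[ii]}$ produces the first matrix in the right-hand side of the claim; the correction, obtained by summing $M_{[i],\,\mathcal I\setminus [i]}R^{[i]}M_{\mathcal I\setminus[i],\,[i]}$ over $\nu \in \mathcal I_2\setminus\{\overline i\}$ and $k \in \mathcal I_1\setminus\{i\}$ against the appropriate block of $R^{[i]}$, gives the four quadratic forms $(XR^{[i]}X^\top)_{ii}$, $(XR^{[i]}X)_{i\overline i}$, and so on. Identity for $R_{\mu\mu}$ when $\mu \ge 2p+1$ is the scalar analog with $S = \{\mu\}$, where there is no signal contribution because $W_{\cdot,\mu-p} = 0$. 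For (ii) and (iii), I would apply the off-diagonal formula with $S=[i]$ and $S' = [j]$ (respectively $S' = \{\mu\}$); the three equivalent forms then follow by one additional application of Schur complement relating $R^{[i]}$ to $R^{[ij]}$ (respectively to $R^{(i\overline i\mu)}$), which is itself an instance of the general identity
\begin{equation*}
R^{[i]}_{\fa\fb} = R^{[ij]}_{\fa\fb} + R^{[i]}_{\fa[j]}\bigl(R^{[i]}_{[jj]}\bigr)^{-1}R^{[i]}_{[j]\fb}.
\end{equation*}

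Item (vi) consists of the standard resolvent identities: the first equality in the display is obtained by writing $R^{(\fa)}_{\fb\mathfrak c}$ via Schur complement on the singleton $\{\fa\}$ and solving for $R_{\fb\mathfrak c}$, while the second is its $\fb = \mathfrak c$ specialization combined with $(R_{\fb\fb})^{-1} = M_{\fb\fb} - M_{\fb,\,\mathcal I\setminus\{\fb\}}R^{(\fb)}M_{\mathcal I\setminus\{\fb\},\,\fb}$. The pair version for $[i]$ is obtained by repeating the argument with the $2\times 2$ block $[ii]$ instead of a singleton. Finally, (vii) is automatic: taking the minor $R^{[\mathbb T]}$ corresponds to inverting the submatrix of $M$ on $\mathcal I\setminus[\mathbb T]$, so every preceding Schur identity applies verbatim with $R$ replaced by $R^{[\mathbb T]}$.

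The computation is entirely mechanical; the main (modest) obstacle is the combinatorial bookkeeping, namely ensuring that each nonzero entry of $M$ is matched with the correct block of the resolvent minor on the right-hand side, so that, for instance, $(XR^{[i]}X)_{i\overline i}$ really refers to contracting the $\mathcal I_2\setminus\{\overline i\}$ portion of row $i$ of $X$ with the $(\mathcal I_2\setminus\{\overline i\})\times(\mathcal I_1\setminus\{i\})$ block of $R^{[i]}$ with column $i$ of $X$ restricted to $\mathcal I_1\setminus\{i\}$.
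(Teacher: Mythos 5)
Your derivation is correct and takes essentially the same route as the paper, which records these identities as direct consequences of the Schur complement formula applied to $z^{1/2}\widetilde H_t - z$ (together with the standard one-index and two-index resolvent expansion identities), exactly as you do. The only cosmetic point is that the $(i,\overline i)$ entry of that matrix carries both the signal $z^{1/2}d_i^{1/2}$ and the noise $z^{1/2}\sqrt{t}\,X_{i\overline i}$, so it is not purely a ``signal'' entry as your preamble suggests; your actual computation of $M_{[ii]}$ in item (i) accounts for this correctly, so there is no gap.
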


Using the spectral decomposition of $G(z)$, it is easy to prove the following estimates and identities, which also hold for $R(z)$ as a special case. The reader can also refer to \cite[Lemma 4.6]{Anisotropic}, \cite[Lemma 3.5]{XYY_circular} and \cite[Lemma 5.4]{yang20190} for the proof.

\begin{lemma}\label{Ward_id}
For $z=E+\ii\eta$ such that $|z|\sim 1$, we have that for some constant $C>0$,
\begin{equation}
\left\| G(z) \right\| \le C\eta ^{ - 1} ,\quad \left\| {\partial _z G}(z) \right\| \le C\eta ^{ - 2}. 
\label{eq_gbound}
\end{equation}
Furthermore, we have the following identities:
\begin{align}
 \sum_{i \in \mathcal I_1 }  \left| {G_{ij} } \right|^2   = \frac{\Im\, G_{jj}}{\eta} , \quad 
&\sum_{\mu  \in \mathcal I_2 } {\left| {G_{\mu\nu} } \right|^2 }  = \frac{{\Im \, G_{\nu\nu} }}{\eta}, \label{eq_gsq1}\\ 
 \sum_{i \in \mathcal I_1 } {\left| {G_{i\mu} } \right|^2 }  = |z|^{-1}{G}_{\mu \mu}  + \frac{\overline z}{|z|}\frac{\Im \, G_{\mu\mu} }{\eta} , \quad &\sum_{\mu \in \mathcal I_2 } {\left| {G_{i \mu} } \right|^2 } =|z|^{-1} {G}_{ii} + \frac{\overline z}{|z|} \frac{\Im\, G_{ii}  }{\eta} .\label{eq_gsq3} 
 \end{align}
All of the above estimates remain true for $G^{(\mathbb T)}$ instead of $G$ for any $\mathbb T \subseteq \mathcal I$.
\label{lemma_Im}
\end{lemma}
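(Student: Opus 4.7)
The plan is to reduce everything to the spectral theorem for the symmetric block matrix $\wt H_t$ (equivalently, for $\mathcal{R}$ and $\underline{\mathcal{R}}$). Let me first set up: write $\wt H_t=\sum_k \mu_k \bm{\psi}_k \bm{\psi}_k^\top$ with $\mu_k\in\R$ and orthonormal eigenvectors, or equivalently exploit the SVD $W_t=\sum_k \sqrt{\lambda_k}\bm{u}_k\bm{v}_k^\top$ to diagonalize $\mathcal{R}$ and $\underline{\mathcal{R}}$ via $\mathcal{R}=\sum_k(\lambda_k-z)^{-1}\bm{u}_k\bm{u}_k^\top$, etc. The block formula \eqref{green} is then the starting point for all off-diagonal estimates.

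For the norm bounds \eqref{eq_gbound}, I would argue as follows. Writing $z=re^{\ii\theta}$ with $\theta\in(0,\pi)$ and $r\sim 1$, one has
\[
|z^{1/2}\mu-z|=|z|^{1/2}\,|\mu-z^{1/2}|\ \geq\ |z|^{1/2}\,\im z^{1/2}=|z|^{1/2}\cdot\tfrac{\eta}{2|z|^{1/2}\cos(\theta/2)}\gtrsim \eta.
\]
Since $R=\sum_k(z^{1/2}\mu_k-z)^{-1}\bm{\psi}_k\bm{\psi}_k^\top$, this yields $\|R\|\lesssim \eta^{-1}$. For the derivative, the spectral decomposition gives
\[
\partial_z R=\sum_k \frac{1-\mu_k/(2z^{1/2})}{(z^{1/2}\mu_k-z)^2}\,\bm{\psi}_k\bm{\psi}_k^\top,
\]
and a short case split (according to whether $|\mu_k-z^{1/2}|\leq 1$ or $\geq 1$) bounds each summand by $\OO(\eta^{-2})$, uniformly in $k$.

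For the Ward-type identities \eqref{eq_gsq1}, the key input is the operator identity
\[
(A-z)^{-1}\bigl((A-z)^{-1}\bigr)^\ast=\frac{\im (A-z)^{-1}}{\eta}\qquad\text{for self-adjoint }A,
\]
applied to $A=\mathcal{Q}_t$ and $A=\underline{\mathcal{Q}}_t$. Taking diagonal entries immediately gives $\sum_i|R_{ij}|^2=(\mathcal{R}\mathcal{R}^\ast)_{jj}=\im \mathcal{R}_{jj}/\eta=\im R_{jj}/\eta$ for $j\in\mathcal{I}_1$, and similarly for the $\mathcal{I}_2$ sum. For the mixed identities \eqref{eq_gsq3}, I would use the off-diagonal block $R_{i\mu}=z^{-1/2}(\mathcal{R}W_t)_{i\mu'}$ from \eqref{green} (with $\mu'=\mu-p$) to get
\[
\sum_{\mu\in\mathcal{I}_2}|R_{i\mu}|^2=|z|^{-1}(\mathcal{R}W_tW_t^\top\mathcal{R}^\ast)_{ii}=|z|^{-1}\bigl(\mathcal{R}\mathcal{Q}_t\mathcal{R}^\ast\bigr)_{ii},
\]
then apply the intertwining $\mathcal{Q}_t\mathcal{R}=I+z\mathcal{R}$ so that $\mathcal{R}\mathcal{Q}_t\mathcal{R}^\ast=\mathcal{R}^\ast+z\,\im \mathcal{R}/\eta$; a direct separation of real and imaginary parts matches the claimed expression $|z|^{-1}R_{ii}+(\bar z/|z|)\,\im R_{ii}/\eta$ (both sides being real). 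The other mixed sum is handled symmetrically using $R_{i\mu}=z^{-1/2}(W_t\underline{\mathcal{R}})_{i\mu'}$ and $W_tW_t^\top$ replaced by $W_t^\top W_t=\underline{\mathcal{Q}}_t$.

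The extension to any minor $R^{(\mathbb T)}$ is essentially free: the minor of $\wt H_t$ obtained by deleting rows/columns indexed by $\mathbb T$ is again a symmetric matrix, and when $\mathbb T$ respects the block structure it is again of rectangular-linearization form, so the same spectral-theoretic arguments apply verbatim. The only mild obstacle is bookkeeping when $\mathbb T$ splits across $\mathcal{I}_1$ and $\mathcal{I}_2$, but one can always reduce to the block case because the block structure of the minor is preserved. There is no substantive analytic difficulty here; the lemma is really a direct consequence of self-adjointness of $\wt H_t$ together with the Schur complement formula \eqref{green}.
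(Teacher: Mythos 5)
Your proposal is correct: the norm bounds follow from the spectral decomposition of the linearization $H_t$ exactly as you compute, and the identities \eqref{eq_gsq1} and \eqref{eq_gsq3} follow from the block formula \eqref{green2} combined with $\mathcal Q_t\mathcal G=I+z\mathcal G$ and $\mathcal G\mathcal G^{\ast}=\im\mathcal G/\eta$ (and likewise for $\uG$), while the minor case is indeed automatic because deleting any rows and columns of $H_t$ preserves the two-block rectangular linearization structure, for every $\mathbb T\subseteq\mathcal I$. This is essentially the same spectral-decomposition argument the paper invokes and delegates to the cited references, so no changes are needed.
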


As a consequence of Lemma \ref{Ward_id}, we can derive the following estimate. For its proof, we refer the reader to \cite[Lemma 5.5]{yang20190}.
\begin{lemma}
For any $\mathbb T \subseteq \mathcal I$ and $z=E+\ii\eta$ such that $|z|\sim 1$, we have
\begin{equation}\label{m_T}
\big| {m  - m^{\left( \mathbb T \right)} } \big| = c_n^{-1} \big| {\mm  - \mm^{\left( \mathbb T \right)} } \big|  \le \frac{{C\left| \mathbb T \right|}}{{n\eta }}, 
\end{equation}
for some constant $C>0$.
\end{lemma}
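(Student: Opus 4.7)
The plan is to exploit the Hermitian block structure of the linearization $\wt H_t$ together with Cauchy interlacing of its principal minors. The only genuinely analytic ingredient is the interlacing bound; everything else is algebra.

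First I would establish two identities for the traces of $R$ and $R^{(\mathbb T)}$. Because $c_n\le 1$, the spectrum of $\wt H_t$ consists of $\pm\sqrt{\lambda_i(\cal Q_t)}$ for $i=1,\ldots,p$ together with $n-p$ zero eigenvalues, so pairing the $\pm$ eigenvalues in the spectral decomposition of $(z^{1/2}\wt H_t-z)^{-1}$ yields
\[
\tr R(z) \;=\; 2p\,m(z)\;-\;\frac{n-p}{z},
\]
and combining with the block identity $\tr R(z)=p\,m(z)+n\,\mm(z)$ (immediate from \eqref{green}) reproduces \eqref{m21}. The same two identities hold verbatim for $\wt H_t^{(\mathbb T)}$, which is itself the linearization of the $(p-k_1)\times(n-k_2)$ matrix $W_t^{(\mathbb T)}$, where $k_j:=|\mathbb T\cap\cal I_j|$; remembering that $m^{(\mathbb T)}$ and $\mm^{(\mathbb T)}$ are normalized by the original $p$ and $n$, one obtains
\[
\tr R^{(\mathbb T)}(z) \;=\; 2p\,m^{(\mathbb T)}(z)-\frac{(n-k_2)-(p-k_1)}{z}\;=\;p\,m^{(\mathbb T)}(z)+n\,\mm^{(\mathbb T)}(z).
\]
Subtracting the full and minor identities gives the two linear relations
\[
2p\bigl(m-m^{(\mathbb T)}\bigr)=\tr R-\tr R^{(\mathbb T)}-\frac{k_1-k_2}{z},\qquad p\bigl(m-m^{(\mathbb T)}\bigr)+n\bigl(\mm-\mm^{(\mathbb T)}\bigr)=\tr R-\tr R^{(\mathbb T)},
\]
from which $p(m-m^{(\mathbb T)})-n(\mm-\mm^{(\mathbb T)})=(k_2-k_1)/z$. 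Since $|z|\sim 1$, this discrepancy is $O(|\mathbb T|)$, so after dividing by $p\sim n$ it produces the asserted equality $|m-m^{(\mathbb T)}|=c_n^{-1}|\mm-\mm^{(\mathbb T)}|$ up to an additive term of size $O(|\mathbb T|/n)$ that is harmlessly dominated by the target bound $C|\mathbb T|/(n\eta)$.

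It only remains to control $\tr R(z)-\tr R^{(\mathbb T)}(z)$. Because $\wt H_t^{(\mathbb T)}$ is a Hermitian principal submatrix of $\wt H_t$ of codimension $|\mathbb T|$, the Cauchy interlacing theorem gives $|N(x)-N^{(\mathbb T)}(x)|\le|\mathbb T|$ for every $x\in\R$, where $N$ and $N^{(\mathbb T)}$ are the unnormalized eigenvalue counting functions. Integration by parts then yields
\[
\bigl|\tr R(z)-\tr R^{(\mathbb T)}(z)\bigr| \;\le\; |\mathbb T|\int_{\R}\frac{dx}{|x-z|^{2}}\;=\;\frac{\pi|\mathbb T|}{\eta}.
\]
Inserting this bound into both linear relations above, and using $|z|\sim 1$ together with $p\sim n$, delivers $|m-m^{(\mathbb T)}|,\,c_n^{-1}|\mm-\mm^{(\mathbb T)}|\le C|\mathbb T|/(n\eta)$ simultaneously. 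I do not anticipate any real obstacle: Cauchy interlacing plus the two block-trace identities do all the work, and no finer spectral information about $\wt H_t$ is needed.
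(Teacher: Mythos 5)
Your proof is correct in substance, but it follows a genuinely different route from the paper. The paper obtains \eqref{m_T} as a consequence of the resolvent-minor identities \eqref{resolvent8} combined with the Ward-type identities of Lemma \ref{Ward_id} (summing $R_{\fb\fb}-R^{(\fa)}_{\fb\fb}=R_{\fb\fa}R_{\fa\fb}/R_{\fa\fa}$ over $\fb$ and bounding $\sum_{\fb}|R_{\fb\fa}|^2$ by $\im R_{\fa\fa}/\eta$, then iterating over the elements of $\mathbb T$; the details are delegated to \cite[Lemma 5.5]{yang20190}). You instead work at the level of eigenvalues: Cauchy interlacing for the Hermitian principal submatrix $\wt H_t^{(\mathbb T)}$ controls the difference of counting functions by $|\mathbb T|$, integration by parts controls $\tr R-\tr R^{(\mathbb T)}$, and exact block-trace identities convert this into bounds on $m-m^{(\mathbb T)}$ and $\mm-\mm^{(\mathbb T)}$. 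This is more elementary (no resolvent expansion or Ward identities needed) and is insensitive to which rows/columns are removed, at the price of being purely a trace statement, whereas the paper's algebraic route is the one that generalizes to entrywise and isotropic quantities used elsewhere in the proof.

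Two small points to tighten. First, $R(z)=(z^{1/2}\wt H_t-z)^{-1}$ is not the standard resolvent of $\wt H_t$, so the kernel in your integration by parts is $x\mapsto (z^{1/2}x-z)^{-1}$ rather than $(x-z)^{-1}$; since $|z^{1/2}x-z|^2=|z|\,|x-z^{1/2}|^2$ and $\im z^{1/2}\geq \eta/(2|z|^{1/2})$, the bound $|\tr R-\tr R^{(\mathbb T)}|\leq C|\mathbb T|/\eta$ still holds under the hypothesis $|z|\sim 1$, but the identity you wrote should be adjusted accordingly. Second, your own computation shows $p(m-m^{(\mathbb T)})-n(\mm-\mm^{(\mathbb T)})=(k_2-k_1)/z$, so the literal equality $|m-m^{(\mathbb T)}|=c_n^{-1}|\mm-\mm^{(\mathbb T)}|$ holds only when $|\mathbb T\cap\cal I_1|=|\mathbb T\cap\cal I_2|$ (e.g.\ for the $[\mathbb T]$-type minors); for general $\mathbb T$ there is an $\OO(|\mathbb T|/n)$ discrepancy, which, as you note, is absorbed into $C|\mathbb T|/(n\eta)$, so the estimate that is actually used is unaffected.
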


The following lemma gives standard large deviation bounds. 

\begin{lemma}[Lemmas B.2-B.4 of \cite{Delocal}]\label{largedeviation}
Let $(x_i)$, $(y_j)$ be independent families of centered and independent random variables, and $(A_i)$, $(B_{ij})$ be families of deterministic complex numbers. Suppose the entries $ x_i$ and $ y_j$ have variance at most $n^{-1}$ and finite moments up to any order, i.e. for any fixed $k\in \N$, there exists a constant $C_k>0$ such that
$$\max_i \E |\sqrt{n}x_i|^k \le C_k,\quad \max_i \E |\sqrt{n}y_i|^k \le C_k .$$
Then the following large deviation bounds hold:
\begin{align*}
 \Big\vert \sum_i A_i x_i \Big\vert \prec  n^{-1/2}\Big(\sum_i |A_i|^2 \Big)^{1/2} , \quad &\Big\vert \sum_{i,j} x_i B_{ij} y_j \Big\vert \prec n^{-1}\Big(\sum_{i, j} |B_{ij}|^2\Big)^{{1}/{2}} , \\
  \Big\vert \sum_{i} \overline x_i B_{ii} x_i - \sum_{i} (\mathbb E|x_i|^2) B_{ii}  \Big\vert  \prec n^{-1}\Big( \sum_i |B_{ii}|^2\Big) ,\quad &\Big\vert \sum_{i\ne j} \overline x_i B_{ij} x_j \Big\vert  \prec  n^{-1}\Big(\sum_{i\ne j} |B_{ij}|^2\Big)^{{1}/{2}} .
\end{align*}
\end{lemma}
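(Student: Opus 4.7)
The plan is to apply the standard high-moment method. By Definition~\ref{stoch_domination}, the bound $|Z|\prec a$ for a deterministic scale $a$ is equivalent to: for every fixed $\epsilon>0$ and $D>0$, $\P(|Z|>n^\epsilon a)\le n^{-D}$ for all large $n$. By Markov's inequality applied to $|Z|^{2k}$, this follows once one shows $\E|Z|^{2k}\le C_k\,a^{2k}$ for every fixed $k\in\N$, with $C_k$ depending only on $k$ and on the moment constants from the hypothesis; choosing $k\ge D/(2\epsilon)$ then gives the conclusion. I would carry out this moment bound for each of the four quantities $Z_1,\dots,Z_4$ in turn.

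For the linear form $Z_1=\sum_i A_i x_i$, the plan is to expand
\[
\E|Z_1|^{2k}=\sum_{i_1,\dots,i_{2k}} A_{i_1}\cdots A_{i_k}\,\overline A_{i_{k+1}}\cdots \overline A_{i_{2k}}\;\E\bigl(x_{i_1}\cdots x_{i_k}\,\overline x_{i_{k+1}}\cdots \overline x_{i_{2k}}\bigr),
\]
to decompose the sum according to the set partition $\pi$ of $\{1,\dots,2k\}$ generated by index coincidences, and to observe that independence together with $\E x_i=0$ forces every block of $\pi$ to have size at least $2$. The pairings (all blocks of size $2$) contribute precisely of order $n^{-k}(\sum_i|A_i|^2)^k$, and any partition containing a block of size $r\ge 3$ is suppressed by extra factors of $n^{-1/2}$ coming from the $\E|x_i|^r\lesssim n^{-r/2}$ bound in \eqref{conditionA2}, yielding the same order after applying $\max_i|A_i|^{r-2}\sum_j|A_j|^2\le(\sum_\ell|A_\ell|^2)^{r/2}$ block by block.

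The bilinear form $Z_2=\sum_{i,j} x_i B_{ij} y_j$ is handled by the same combinatorics applied simultaneously to the $i$-indices and the $j$-indices, giving $\E|Z_2|^{2k}\le C_k\,n^{-2k}(\sum_{i,j}|B_{ij}|^2)^k$. The centered diagonal quadratic $Z_3=\sum_i(|x_i|^2-\E|x_i|^2)B_{ii}$ is similar, the centering removing the would-be non-vanishing single-block contribution; the resulting bound is $C_k\,n^{-2k}(\sum_i|B_{ii}|^2)^k$. For the off-diagonal form $Z_4=\sum_{i\ne j}\overline x_i B_{ij} x_j$, independence of $x_i$ and $x_j$ for $i\ne j$ plays the same role as the independence of the $x$ and $y$ families in $Z_2$, and the analogous partition argument yields $C_k\,n^{-2k}(\sum_{i\ne j}|B_{ij}|^2)^k$.

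The main technical obstacle in all four cases is the partition bookkeeping: one must verify, uniformly in the coefficients $A_i$ and $B_{ij}$, that every partition with a block of size $\ge 3$ is suppressed by a strictly positive power of $n$ relative to the pure pairings. This is the classical Marcinkiewicz--Zygmund style argument that underlies Lemmas B.2--B.4 of \cite{Delocal}; since the statement is used here only as a black box, in practice I would invoke those lemmas directly rather than reproduce the combinatorial accounting.
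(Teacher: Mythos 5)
Your proposal is correct in outline and matches the situation in the paper: the paper gives no proof of this lemma at all, simply quoting it as Lemmas B.2--B.4 of \cite{Delocal}, and your high-moment/partition sketch is exactly the standard argument underlying those cited lemmas (with the off-diagonal quadratic form being the only case whose combinatorics is genuinely more delicate, requiring cycle-type bounds controlled by the Frobenius norm, as you acknowledge by deferring to the reference). Invoking the cited lemmas as a black box, as you suggest, is precisely what the paper does.
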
  

The following lemma collects basic properties of stochastic domination $\prec$, which will be used tacitly throughout the proof.

\begin{lemma}[Lemma 3.2 in \cite{isotropic}]\label{lem_stodomin}
Let $\xi$ and $\zeta$ be families of nonnegative random variables.
\begin{enumerate}
\item Suppose that $\xi (u,v)\prec \zeta(u,v)$ uniformly in $u\in U$ and $v\in V$. If $|V|\le n^C$ for some constant $C$, then $\sum_{v\in V} \xi(u,v) \prec \sum_{v\in V} \zeta(u,v)$ uniformly in $u$.

\item If $\xi_1 (u)\prec \zeta_1(u)$ and $\xi_2 (u)\prec \zeta_2(u)$ uniformly in $u\in U$, then $\xi_1(u)\xi_2(u) \prec \zeta_1(u)\zeta_2(u)$ uniformly in $u$.

\item Suppose that $\Psi(u)\ge n^{-C}$ is deterministic and $\xi(u)$ satisfies $\mathbb E\xi(u)^2 \le n^C$ for all $u$. Then if $\xi(u)\prec \Psi(u)$ uniformly in $u$, we have $\mathbb E\xi(u) \prec \Psi(u)$ uniformly in $u$.
\end{enumerate}
\end{lemma}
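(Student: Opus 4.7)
The plan is to dispatch the three parts by straightforward reductions to the defining inequality $\P(\xi(u) > n^{\epsilon}\zeta(u)) \le n^{-D}$, with the main subtlety arising only in part (3), where one must convert a tail bound into an expectation bound.

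For part (1), I would fix $\epsilon > 0$ and $D > 0$, and apply the uniform domination $\xi(u,v) \prec \zeta(u,v)$ with tolerance $D' = D + C$, where $|V| \le n^C$. A union bound over $v \in V$ gives
\begin{equation*}
\P\pB{\exists v \in V \st \xi(u,v) > n^{\epsilon}\zeta(u,v)} \le |V| \cdot n^{-D-C} \le n^{-D}
\end{equation*}
uniformly in $u$. On the complementary event, $\sum_{v} \xi(u,v) \le n^{\epsilon} \sum_{v} \zeta(u,v)$, which is exactly the claim.

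For part (2), I would again fix $\epsilon, D > 0$ and apply $\xi_i \prec \zeta_i$ with tolerance $\epsilon/2$, so that each event $\{\xi_i > n^{\epsilon/2}\zeta_i\}$ has probability at most $n^{-D}/2$. On the intersection of the two complementary events (which has probability at least $1 - n^{-D}$), the product satisfies $\xi_1\xi_2 \le n^{\epsilon}\zeta_1\zeta_2$, uniformly in $u$.

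Part (3) is the only step that requires more than a union bound, since converting stochastic domination to an expectation bound can fail without a moment assumption — this is precisely where the hypothesis $\E\xi(u)^2 \le n^C$ enters. I would split
\begin{equation*}
\E \xi(u) \;=\; \E\qb{\xi(u) \indb{\xi(u) \le n^{\epsilon}\Psi(u)}} + \E\qb{\xi(u) \indb{\xi(u) > n^{\epsilon}\Psi(u)}},
\end{equation*}
and bound the first term by $n^{\epsilon}\Psi(u)$. For the second term, Cauchy--Schwarz together with $\xi \prec \Psi$ (with tolerance $D$ to be chosen) gives
\begin{equation*}
\E\qb{\xi(u) \indb{\xi(u) > n^{\epsilon}\Psi(u)}} \le \pb{\E \xi(u)^2}^{1/2} \pb{\P(\xi(u) > n^{\epsilon}\Psi(u))}^{1/2} \le n^{C/2} n^{-D/2}.
\end{equation*}
Since $\Psi(u) \ge n^{-C}$ by assumption, choosing $D$ large enough (depending on $C$ and the target tolerance) makes the second term $\le \Psi(u)$, yielding $\E \xi(u) \le (n^{\epsilon}+1)\Psi(u) \le n^{2\epsilon}\Psi(u)$. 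Since $\epsilon$ is arbitrary, this is precisely $\E \xi(u) \prec \Psi(u)$. Uniformity in $u$ is automatic because every bound above depends on $u$ only through constants supplied by the hypotheses.
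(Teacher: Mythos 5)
Your proposal is correct: all three parts are the standard arguments (union bound over the polynomially large index set, intersection of two high-probability events, and the split of $\E\xi$ at level $n^{\epsilon}\Psi$ combined with Cauchy--Schwarz and the lower bound $\Psi\ge n^{-C}$). The paper itself gives no proof — it simply cites Lemma 3.2 of the reference — and your argument is essentially the same proof given there, so there is nothing to compare beyond noting that the minor bookkeeping (e.g.\ absorbing the factor $2$ in part (2) by enlarging $D$) is routine.
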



We define the random error
\begin{equation}\label{eqn_randomerror}
\theta(z):= |m(z)-m_{w,t}(z)|=c_n^{-1}|\mm(z)-\mm_{w,t}(z)| .
\end{equation}
Moreover, we define the random control parameters 
\begin{equation}\label{eq_defpsitheta}
\Psi _\theta(z)  : =  \sqrt {\frac{{\Im \, m_{w,t}(z)  + \theta(z) }}{{n\eta }}} + \frac{1}{n\eta},
\end{equation}
and
\begin{equation}\nonumber
 \Lambda _o : =  \max_{i \ne j \in \mathcal I_1} \left\| R_{[ii]}^{-1} {R_{[ij]}\left(R_{[jj]}^{[i]}\right)^{-1}  } \right\| +  \max_{\mu \ne \nu \ge 2p+1 } \left|R_{\mu \mu } ^{-1}R_{\mu\nu}\left(R^{(\mu)}_{\nu \nu }\right)^{-1}\right|+ \max_{i\in \cal I_1 ,  \mu\ge 2p+1 }  \left\| R_{[ii]}^{-1}   R_{[i]\mu} \left(R_{\mu\mu}^{[i]} \right)^{-1}\right\|  ,
 \ee
 which controls the size of off-diagonal entries. In analogy to \cite[Section 3]{EKYY1} and \cite[Section 5]{Anisotropic}, we introduce the $Z$ variables
\begin{equation*}
  Z_{\mu} :=(1-\mathbb E_{\mu})\big(R_{\mu\mu} \big)^{-1}, \quad \mu\ge 2p+1,
\end{equation*}
where $\mathbb E_{\mu}[\cdot]:=\mathbb E[\cdot\mid \wt H_t^{(\mu)}]$ is the partial expectation over the randomness of the $\mu$-th row and column of $\wt H_t$. By (\ref{resolvent2}), we have that for $\mu\ge 2p+1$, 
\begin{equation}
Z_\mu =zt (\mathbb E_{\mu} - 1) \left( {X^\top R^{\left( \mu \right)} X} \right)_{\mu\mu} =zt \sum_{i,j\in \cal I_1} R^{(\mu)}_{ij} \left(\frac{1}{n}\delta_{ij} - X_{i\mu}X_{j\mu}\right). \label{Zmu}
\end{equation}
We also introduce the following matrix value $Z$ variables:
\begin{equation*}
  Z_{[i]} :=(1-\bbE_{[i]})\left(R_{[ii]} \right)^{-1} ,\quad i \in \cal I_1,
\end{equation*}
where $\mathbb E_{[i]}[\cdot]:=\mathbb E[\cdot\mid \wt H_t^{[i]}]$ is the partial expectation over the randomness of the $i$-th and $\overline i$-th rows and columns of $\wt H_t$. By (\ref{eq_res11}), we have
\begin{equation}\label{Zii}
Z_{[i]} =\sqrt{zt} \begin{pmatrix} 0 &  X_{i\overline i} \\  X_{i\overline i}  & 0 \end{pmatrix} + zt \begin{pmatrix}\sum_{\mu,\nu \in \cal I_2} R^{[i]}_{\mu\nu} (n^{-1}\delta_{\mu\nu} -X_{i\mu  }X_{i\nu}) & \sum_{j \in \cal I_1, \mu\in \cal I_2} R^{[i]}_{\mu j} X_{i\mu}X_{j\overline i} \\ \sum_{j \in \cal I_1, \mu\in \cal I_2}R^{[i]}_{j\mu} X_{i \mu} X_{j\overline i } & \sum_{ j,k \in \cal I_1} R^{[i]}_{jk} (n^{-1}\delta_{jk} - X_{j\overline i}X_{k\overline i}) \end{pmatrix} .
\end{equation}
Now using Lemma \ref{largedeviation}, we can prove the following large deviation estimates on the $Z$ variables and $\Lambda_o$.


\begin{lemma}\label{Z_lemma}
Define the events $\Xi_0:=\{\theta=\OO(1)\}$. For $z\in \cal D_{\vartheta}$, we have
\begin{align}
 \|Z_{[i]}\| + |Z_\mu| \prec  t\Psi_\theta +t^{1/2}n^{-1/2}, \quad i \in \cal I_1, \ \mu\ge 2p+1,\label{Zestimate1}
\end{align}
and
\begin{align}
\mathbf 1(\Xi_0) \Lambda_o  \prec t\Psi_\theta .\label{Zestimate2}
\end{align}
Moreover, for any constant $c_0>0$ we have
\begin{align}
\mathbf 1(\eta\ge c_0)\left(  \|Z_{[i]}\| + |Z_\mu| +\Lambda_o\right)\prec  t^{1/2}n^{-1/2}, \quad i \in \cal I_1, \ \mu\ge 2p+1,\label{Zestimate3}
\end{align}
\end{lemma}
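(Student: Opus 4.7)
The plan is to treat each quantity as a quadratic or bilinear form in the Gaussian entries of $X$, and then apply the large-deviation bounds of Lemma~\ref{largedeviation} together with the Ward identities from Lemma~\ref{Ward_id}. First I would handle $|Z_\mu|$: starting from \eqref{Zmu}, Lemma~\ref{largedeviation} applied to the centred quadratic form yields $|Z_\mu|\prec t\,n^{-1}\bigl(\sum_{i,j\in\cal I_1}|R^{(\mu)}_{ij}|^2\bigr)^{1/2}$. By the Ward identity $\sum_{i\in\cal I_1}|R^{(\mu)}_{ij}|^2=\Im R^{(\mu)}_{jj}/\eta$, the double sum equals $p\,\Im m^{(\mu)}/\eta$, and \eqref{m_T} lets me replace $\Im m^{(\mu)}$ by $\Im m_{w,t}+\theta+O(1/(n\eta))$, producing the bound $|Z_\mu|\prec t\Psi_\theta$ required for \eqref{Zestimate1}.

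Next, for $\|Z_{[i]}\|$ I would bound each of the four entries of the $2\times 2$ matrix in \eqref{Zii} separately. The $\sqrt{zt}\,X_{i\overline i}$ terms are controlled directly by $|X_{i\overline i}|\prec n^{-1/2}$, which accounts for the extra additive term $t^{1/2}n^{-1/2}$ in \eqref{Zestimate1}. The centred quadratic pieces in $R^{[i]}_{\mu\nu}$ are handled exactly as in the $Z_\mu$ case, now using the Ward identity \eqref{eq_gsq1} for sums over $\cal I_2$ together with \eqref{m21} to pass from $\um$ back to $m$; the mixed bilinear pieces $\sum_{j,\mu}R^{[i]}_{j\mu}X_{i\mu}X_{j\overline i}$ use the bilinear case of Lemma~\ref{largedeviation} combined with \eqref{eq_gsq3} (the extra factors of $|z|^{-1}$ appearing there are harmless since $|z|\sim 1$ on $\cal D_\vartheta$).

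For $\mathbf 1(\Xi_0)\Lambda_o$ I would exploit the bilinear representations in \eqref{eq_res21}, \eqref{resolvent3} and \eqref{eq_res23}, each of which gives a schematic identity $R_{[ii]}^{-1}R_{[ij]}(R_{[jj]}^{[i]})^{-1}=zt\,B$, where $B$ is a scalar or $2\times 2$ matrix whose entries are purely \emph{off-diagonal} bilinear forms in two disjoint pairs of rows of $X$. Lemma~\ref{largedeviation} and Ward then bound each entry of $B$ by $(\Im m^{[ij]}/(n\eta))^{1/2}\prec\Psi_\theta$, and the indicator $\mathbf 1(\Xi_0)$ is invoked only to turn $\Im m^{[ij]}$ into $\Im m_{w,t}+\theta$ via \eqref{m_T}; multiplying by $|zt|\lesssim t$ yields \eqref{Zestimate2}. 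Finally, the improvement \eqref{Zestimate3} in the regime $\eta\ge c_0$ is purely quantitative: the trivial operator-norm bound $\|R^{(\cdot)}\|\le 1/\eta=O(1)$ from \eqref{eq_gbound} makes all Ward sums of size $O(n)$, so the bilinear-form bounds above collapse to $tn^{-1/2}\le t^{1/2}n^{-1/2}$ (using $t\le 1$), while the direct Gaussian contribution $\sqrt{zt}\,X_{i\overline i}$ in \eqref{Zii} saturates this bound.

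The main technical obstacle I anticipate is the bookkeeping of which index set ($\cal I_1$, $\cal I_2$, or a mixture) each summation runs over: the Ward identities in Lemma~\ref{Ward_id} take different forms depending on whether the free index lies in the same block as the summation, and an unnoticed factor of $|z|^{-1}$ or $\eta$ would spoil the $t\Psi_\theta$ scaling. No genuinely new ideas beyond the standard local-law toolkit appear to be needed; the only subtlety is that the mixed rectangular structure of $\wt H_t$ forces every Ward application to be matched to the correct variant among \eqref{eq_gsq1}--\eqref{eq_gsq3}, and that $\theta$ (rather than a deterministic quantity) appears inside $\Psi_\theta$, which causes no difficulty because $m^{(\mathbb T)}-m_{w,t}$ is controlled by $\theta+|\mathbb T|/(n\eta)$.
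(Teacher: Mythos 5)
Your proposal follows essentially the same route as the paper's proof: the large-deviation bounds of Lemma~\ref{largedeviation} combined with the Ward identities \eqref{eq_gsq1}--\eqref{eq_gsq3}, the estimate \eqref{m_T} and $|X_{i\overline i}|\prec n^{-1/2}$ give \eqref{Zestimate1}; the off-diagonal resolvent identities \eqref{eq_res21}, \eqref{resolvent3}, \eqref{eq_res23} reduce $\Lambda_o$ to the same bilinear estimates for \eqref{Zestimate2}; and the trivial bound $\|R\|\le \eta^{-1}=\OO(1)$ for $\eta\ge c_0$ yields \eqref{Zestimate3}. The only slight imprecision is your account of where $\Xi_0$ enters: it is not needed to pass from $\im m^{[ij]}$ to $\im m_{w,t}+\theta$ (that uses only \eqref{m_T} and the definition of $\theta$), but rather, as in the paper, to control the non-imaginary term $|z|^{-1}G_{jj}$ produced by \eqref{eq_gsq3}, i.e.\ to bound $|m^{[\cdot]}|$ by $\OO(1)$ so that the resulting $t n^{-1/2}$ contribution is absorbed into $t\Psi_\theta$ --- a minor bookkeeping point that does not affect the validity of your argument.
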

\begin{proof}
 Applying Lemma \ref{largedeviation} to $Z_{\mu}$ in (\ref{Zmu}), we get that 
\begin{equation}\label{estimate_Zi}
\begin{split}
\left| Z_{\mu}\right| & \prec \frac{t}{n} \left( \sum_{i,j} {\left| R_{ij}^{(\mu)}  \right|^2 }  \right)^{1/2} =  \frac{t}{n}\left( {\sum_i \frac{ \im R_{ii}^{(\mu)} }{\eta} } \right)^{1/2}\le t\sqrt { \frac{ \Im\, m^{(i)}  } {n\eta} } \lesssim t\sqrt { \frac{ \Im\, m } {n\eta} } + \frac{t}{n\eta}\le t\Psi_\theta(z),
\end{split}
\end{equation}
where we used (\ref{eq_gsq1}) in the second step, \eqref{m_T} in the fourth step and  (\ref{eqn_randomerror}) in the last step. With similar arguments, we get 
\begin{equation}\label{estimate_Zmu1}
|(Z_{[i]})_{11}|+ |(Z_{[i]})_{22}|\prec  t\Psi_\theta(z).
\ee
For the $(1,2)$ and $(2,1)$-th entries of $Z_{[\mu]}$, we have that on event $\Xi_0$,
\begin{equation}\label{estimate_Zmu2}
\begin{split}
|(Z_{[i]})_{12}|= |(Z_{[i]})_{21}|&\prec \frac{t^{1/2}}{n^{1/2}}+  \frac{t}{n} \left( \sum_{j,\mu} {\left| R_{j\mu}^{[i]}  \right|^2 }  \right)^{1/2}\le \frac{t^{1/2}}{n^{1/2}}+  t \left( |z|^{-1}\frac{m^{[i]}}{n} + \frac{\overline z}{|z|}\frac{\im m^{[i]}}{n\eta}\right)^{1/2}  \\
&\lesssim \frac{t^{1/2}}{n^{1/2}} + \frac{t}{n\eta}+  t\left( \frac{|m|}{n} +  \frac{\im m }{n\eta}\right)^{1/2} \prec \frac{t^{1/2}}{n^{1/2}}+ t\Psi_\theta(z),
\end{split}
\ee
where we used $X_{i\overline i}\prec n^{-1/2}$ by Markov's inequality in the second step, \eqref{eq_gsq3} in the third step, \eqref{m_T} in the fourth step, $\mathbf 1(\Xi_0)|m|=\OO(1)$ and (\ref{eqn_randomerror}) in the last step. This concludes \eqref{Zestimate1}. For \eqref{Zestimate2}, by \eqref{eq_res21}-\eqref{eq_res23}, each term  
$$R_{[ii]}^{-1} {R_{[ij]}\left(R_{[jj]}^{[i]}\right)^{-1}  },\quad \text{or}\quad R_{\mu \mu } ^{-1}R_{\mu\nu}\left(R^{(\mu)}_{\nu \nu }\right)^{-1},\quad \text{or}\quad  R_{[ii]}^{-1}   R_{[i]\mu} \left(R_{\mu\mu}^{[i]} \right)^{-1},$$
can be bounded in exactly the same way as above.  
Finally, for $\eta\ge c_0$ we have $\|R(z)\|=\OO(1)$ by \eqref{eq_gbound}, so $\Xi_0$ holds. Then we conclude \eqref{Zestimate3} immediately using \eqref{Zestimate1} and \eqref{Zestimate2}. 
\end{proof}

%

\subsection{The averaged local laws}\label{sec region11}
In this subsection, we mainly focus on the proof of the averaged local laws \eqref{averin} and \eqref{averout}, from which the anisotropic local law follows using a standard argument. We divide our proof into two parts according to whether $E\le \lambda_{+,t}+Ct^2$ or $E> \lambda_{+,t}+Ct^2$ for some constant $C>0$. More precisely, we define the following spectral domains for some large constant $C_1>0$:
\begin{equation}\label{eq_dtheta12}
\mathcal{D}_{\vartheta}^1:= \mathcal{D}_{\vartheta} \cap \{z=E+\ii \eta: E \leq \lambda_{+,t}+C_1 t^2 \}, \quad \mathcal{D}_{\vartheta}^2:= \mathcal{D}_{\vartheta} \cap \{z=E+\ii \eta: E > \lambda_{+,t}+C_1 t^2 \}.
\end{equation}
%
We first prove the averaged local laws on the region $\cal D_{\vartheta}^1$.
\begin{proposition}\label{region11}
Under the assumptions of Theorem \ref{thm_local}, \eqref{averin} holds uniformly in $z\in \cal D_{\vartheta}^1$, and \eqref{averout} holds uniformly in $z\in D_{\vartheta}^1\cap \cal D^{out}_\vartheta$. 
\end{proposition}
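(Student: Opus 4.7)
The plan is to derive an approximate self-consistent equation for the averaged Stieltjes transform $m(z)$ by combining Schur complement with the large deviation bounds of Lemma \ref{Z_lemma}, invert this equation using the quantitative stability provided by Section \ref{sec anafree}, and then run a continuity argument in $\eta$ from $\eta=10$ down to the lower boundary of $\mathcal{D}_\vartheta^1$. First I would apply \eqref{resolvent2} to write $R_{\mu\mu}^{-1}=-z-zt(X^\top R^{(\mu)}X)_{\mu\mu}$ for $\mu\ge 2p+1$. By Lemma \ref{Z_lemma}, the quadratic form concentrates around $\frac{1}{n}\mathrm{Tr}\, R^{(\mu)}$ up to an error $O_\prec(\Psi_\theta+t^{-1/2}n^{-1/2})$, and after using \eqref{m_T} this produces $R_{\mu\mu}^{-1}=-z(1+c_nt m)+O_\prec(t\Psi_\theta+t^{1/2}n^{-1/2})$. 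A parallel analysis of the $2\times 2$ block \eqref{eq_res11} for $R_{[ii]}$, together with $\|Z_{[i]}\|\prec t\Psi_\theta+t^{1/2}n^{-1/2}$, gives
\begin{equation*}
R_{ii}=\frac{-(1+c_n tm)}{z(1+c_n tm)(1+t\um)-d_i}+O_\prec(t\Psi_\theta+t^{1/2}n^{-1/2}),
\end{equation*}
modulo terms controlled by $\Lambda_o$. Averaging over $i\in\mathcal I_1$ and using \eqref{m21} then yields a perturbed version of the defining equation \eqref{originalequation} for $m$ in place of $m_{w,t}$, with an additive error of order $t\Psi_\theta+t^{1/2}n^{-1/2}$.

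The stability of \eqref{originalequation} is encoded by $\Phi_t'(\zeta)$ since \eqref{originalequation} is equivalent to $\Phi_t(\zeta)=z$. By Lemma \ref{lem_xigeneral} and Lemma \ref{lem_derivativecontrol}, on $\mathcal{D}_\vartheta^1$ we have $|\Phi_t'(\zeta)|\sim \min\{1,\sqrt{\kappa+\eta}/t\}$, and Lemmas \ref{lem_domian1control} and \ref{lem_asymdensitysquare2} give $t+\im m_{w,t}\sim t+\sqrt{\kappa+\eta}$. In the stable regime $\sqrt{\kappa+\eta}\gtrsim t$, a direct linearization around $m_{w,t}$ together with \eqref{Zestimate1}--\eqref{Zestimate2} produces
\begin{equation*}
|\Phi_t'(\zeta)|\,\theta\lesssim t\Psi_\theta+\frac{t^{1/2}}{n^{1/2}}\lesssim \frac{t\sqrt{\im m_{w,t}+\theta}}{\sqrt{n\eta}}+\frac{t}{n\eta}+\frac{t^{1/2}}{n^{1/2}},
\end{equation*}
and solving this quadratic inequality in $\theta$ yields $\theta\prec 1/(n\eta)$ after using $n\eta(t+\sqrt{\kappa+\eta})\ge n^\vartheta$. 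In the delicate regime $\kappa+\eta\ll t^2$ the linearization degenerates, but the second-order Taylor expansion \eqref{Tayloredge2} together with $\Phi_t''(\xi_+)\sim t^{-2}$ from \eqref{eq_derivativePhi2} turns the perturbed self-consistent equation into a quadratic equation in $\xi(z)-\xi_+(t)$ whose two branches are well separated; the correct branch is selected by the sign of $\im m>0$, and we again recover $\theta\prec 1/(n\eta)$. On the sub-domain $\mathcal{D}_\vartheta^1\cap\mathcal{D}_\vartheta^{out}$ the improvement \eqref{averout} comes from the fact that here $\im m_{w,t}\sim \eta/\sqrt{\kappa+\eta}$ is much smaller, so iterating the fluctuation averaging mechanism (using $\im m \le \im m_{w,t}+\theta$ to refine $\Psi_\theta$) and reusing $|\Phi_t'(\zeta)|\sim 1$ produces the sharper bound.

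The remaining ingredient is the continuity/bootstrap argument in $\eta$. At the initial scale $\eta=10$, $\theta=O(1)$ follows from $\|R\|\le \eta^{-1}$ and Lemma \ref{lem_asymdensitysquare2}, and \eqref{Zestimate3} gives the required small error directly; a one-shot application of the stability analysis above delivers the target bound. I then decrease $\eta$ along a fine net of scales: at each scale a rough a priori $\theta=O(1)$ (inherited from the previous scale by the Lipschitz continuity of $R(z)$ in $z$, cf.\ \eqref{eq_gbound}) activates the event $\Xi_0$ and hence \eqref{Zestimate2}, which in turn permits the stability analysis and upgrades the bound to $\theta\prec 1/(n\eta)$ at the current scale. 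A standard union bound over the net together with the Lipschitz continuity completes the propagation.

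The main obstacle I expect is the handling of the edge region $\kappa+\eta\lesssim t^2$: stability of \eqref{originalequation} degrades exactly where the finest estimates are needed, and naive linearization fails. The key technical step is to exploit the quadratic structure revealed by \eqref{Tayloredge2} and the sharp lower bound on $\Phi_t''(\xi_+)$ from \eqref{eq_derivativePhi2}, combined with the correct branch selection via $\im m>0$, to obtain a uniform $1/(n\eta)$ control across this crossover region and to patch it with the interior analysis.
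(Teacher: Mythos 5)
There is a genuine gap at the heart of your argument: you claim that solving the quadratic self-consistent inequality
\begin{equation*}
|\Phi_t'(\zeta_t)|\, t\,\theta \;\lesssim\; t\Psi_\theta+\frac{t^{1/2}}{n^{1/2}}
\end{equation*}
already yields $\theta\prec (n\eta)^{-1}$. It does not. With the naive entrywise bounds \eqref{Zestimate1}--\eqref{Zestimate2} on the $Z$ variables, the best this inequality can give (in the regime $|\Phi_t'(\zeta_t)|\sim 1$) is $\theta\prec \Psi(z)$ with $\Psi$ as in \eqref{eq_defpsi}, i.e. the \emph{weak} law. Since $\im m_{w,t}\sim\sqrt{\kappa+\eta}$ inside the spectrum (see \eqref{eq_imasymptoics}), $\Psi\sim (\kappa+\eta)^{1/4}(n\eta)^{-1/2}$ can be far larger than $(n\eta)^{-1}$ on most of $\mathcal D_\vartheta^1$ (e.g. $\kappa\sim 1$, $\eta\sim n^{-1/2}$ gives $\Psi\sim n^{-1/4}\gg n^{-1/2}\sim (n\eta)^{-1}$). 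The same issue appears in the edge regime $\kappa+\eta\lesssim t^2$: the quadratic structure from \eqref{Tayloredge2} with an error of size $t\Psi_\theta$ only produces a bound of the type $t^{2/3}(n\eta)^{-1/3}$, not $(n\eta)^{-1}$. The missing ingredient is the fluctuation averaging for the centered quantity $[Z]$ defined in \eqref{def_Zaver}: the average $\frac1p\sum_i \Pi^w_{[ii]}Z_{[i]}\Pi^w_{[ii]}$ is much smaller than the typical size $t\Psi_\theta$ of a single $Z_{[i]}$ because of cancellations among the (weakly dependent, mean-zero) summands. This is Lemma \ref{abstractdecoupling}, whose proof is a separate high-moment expansion argument, and the paper obtains \eqref{averin} only by combining the \emph{finer} self-consistent estimate \eqref{selfcons_improved} (which keeps $[Z]$ as an explicit input rather than bounding it entrywise) with the weak law of Lemma \ref{alem_weak} and then iterating: each pass feeds the improved $\theta$-bound into \eqref{flucaver_ZZ}, which in turn improves $\theta$, until $(n\eta)^{-1}$ is reached. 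Your only mention of "fluctuation averaging" — refining $\Psi_\theta$ via $\im m\le \im m_{w,t}+\theta$ — is a self-improvement of the Schwarz-type bound, not a cancellation mechanism, and it cannot cross the $\Psi$ barrier.

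A secondary point in the same direction: the $t^{1/2}n^{-1/2}$ error coming from the single entries $X_{i\overline i}$ in $Z_{[i]}$ must also be improved upon averaging (this is the estimate \eqref{ZAZ}, which uses a large-deviation bound for $\frac1p\sum_i \Pi^w_{[ii]}A_{[i]}\Pi^w_{[ii]}$ and yields $\OO_\prec\big(\tfrac{1}{n(t^2+\im\zeta_t)}\big)$); keeping it at $t^{1/2}n^{-1/2}$ would already spoil \eqref{averin} for moderate $\eta$. Apart from this, your derivation of the perturbed self-consistent equation via Schur complements, the use of $|\Phi_t'|\sim\min\{1,\sqrt{\kappa+\eta}/t\}$ and of the second-order expansion with $\Phi_t''(\xi_+)\sim t^{-2}$ near the edge, and the continuity argument in $\eta$ are all in line with the paper's strategy; what is missing is precisely the weak-law/fluctuation-averaging/iteration scaffolding that turns these inputs into the optimal $(n\eta)^{-1}$ and \eqref{averout} bounds.
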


Recall the equation in \eqref{simplePhizeta}. Using Lemma \ref{lem_domian1control} and Lemma \ref{lem_domian12control}, we now provide some deterministic estimates on the derivatives of $\Phi_t$.
\begin{lemma}\label{lem derivPhi}
 For $\mathfrak z\in \C_+$ satisfying 
\be\label{choosem1} |\mathfrak z-m_{w,t}|\le \frac{t+\im m_{w,t}}{(\log n)^2},\ee
and $u\equiv u(\mathfrak z):= (1+c_n t \mathfrak z)^2 z - (1+c_n t \mathfrak z)t(1-c_n)$, we have
\be\label{firstderiv}
|\Phi_t'(u)| \sim  \min \left\{1, \frac{\sqrt{\kappa+\eta}}{t} \right\},
\ee
and for  $k= 2, 3,$
\be\label{secondderiv}
  |\Phi_t^{(k)}(u)| \lesssim \frac{t^2 + t\sqrt{\kappa+\eta}}{(t^2 + \im \zeta_t )^{k}}.
\ee
Moreover, if $\kappa+\eta\le \tau_1t^2$ for some small enough constant $\tau_1>0$, then we have
\be\label{secondderivsim}
  |\Phi_t^{(2)}(u)| \sim t^{-2}.
\ee
\end{lemma}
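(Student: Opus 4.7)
}

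The plan is to treat $u(\mathfrak z)$ as a controlled perturbation of $\zeta_t$ and then import the estimates of Section \ref{sec anafree}. The starting observation is that $u(m_{w,t}) = \zeta_t$ by the definition \eqref{eq_defnzeta} of $\zeta_t$. Writing
\[
u - \zeta_t = c_n t(\mathfrak z - m_{w,t})\bigl[(2 + c_n t(\mathfrak z + m_{w,t}))\,z - t(1-c_n)\bigr],
\]
the hypothesis \eqref{choosem1} together with $|z| \lesssim 1$ and $t = \oo(1)$ yields $|u - \zeta_t| \lesssim t(t+\im m_{w,t})/(\log n)^2$. Combining this with $\im \zeta_t \sim \eta + t \im m_{w,t}$ from \eqref{zetasimeta+t2} and $\im m_{w,t} \lesssim \sqrt{\kappa+\eta}$ from \eqref{eq_imasymptoics}, I obtain $|u - \zeta_t| \ll t^2 + \im \zeta_t$. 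In particular, $\im u \sim \im \zeta_t > 0$, so the straight-line path $w(s) := (1-s)\zeta_t + s u$, $s \in [0,1]$, stays in $\C_+$ with $\im w(s) \sim \im \zeta_t$ uniformly, and moreover $|x - w(s)| \sim |x - \zeta_t|$ for every $x \in \supp(\mu_{w,0})$.

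With these comparability estimates, Lemmas \ref{lem_domian1control} and \ref{lem_domian2control} transfer verbatim from $\zeta_t$ to any $w = w(s)$, giving
\[
\int \frac{\dd\mu_{w,0}(x)}{|x - w|^a} \lesssim \frac{t + \sqrt{\kappa+\eta}}{(t^2 + \im \zeta_t)^{a-1}}, \qquad a \ge 2,
\]
and therefore $|c_n t\, m_{w,0}^{(k)}(w)| \lesssim t(t+\sqrt{\kappa+\eta})/(t^2 + \im \zeta_t)^k$ for $k = 1,2,3$. Differentiating \eqref{eq_subcompansion} twice and three times, the dominant terms are $-2c_n t\, m_{w,0}^{(k)}(u)\cdot u$ as in \eqref{Phit''0} (plus strictly lower-order contributions using $c_n t m_{w,0}(u) = \OO(t^{1/2})$ via \eqref{eq_keyequation11} and \eqref{rough mt}), and substituting the integral bound yields \eqref{secondderiv}. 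For \eqref{secondderivsim} in the regime $\kappa+\eta \le \tau_1 t^2$, Lemma \ref{lem_domian12control} gives $|m_{w,0}''(\zeta_t)| \sim t^{-3}$, and the same sign/size comparison survives the perturbation to $u$ because $|u - \zeta_t|$ is much smaller than $|\zeta_t - d_i|$ for each $i$; hence $|\Phi_t''(u)| \sim t \cdot t^{-3} = t^{-2}$.

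For \eqref{firstderiv} I use the fundamental theorem of calculus along the path $w(s)$:
\[
\Phi_t'(u) - \Phi_t'(\zeta_t) = (u - \zeta_t)\int_0^1 \Phi_t''(w(s))\,\dd s.
\]
Combining \eqref{secondderiv} at $w(s)$ with the bound on $|u - \zeta_t|$ gives
\[
|\Phi_t'(u) - \Phi_t'(\zeta_t)| \lesssim \frac{t(t + \im m_{w,t})}{(\log n)^2} \cdot \frac{t^2 + t\sqrt{\kappa+\eta}}{(t^2 + \im \zeta_t)^2},
\]
which simplifies in both regimes: if $\kappa+\eta \lesssim t^2$ the right side is $\OO(1/(\log n)^2)$, whereas if $\kappa+\eta \gg t^2$ it is $\OO(\sqrt{\kappa+\eta}/(t(\log n)^2))$. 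Combined with \eqref{Phit'} at $\zeta_t$ and the lower bound $\sqrt{\kappa+\eta} \gtrsim n^{(\vartheta-1)/2}/\sqrt{t}$ (extracted from the definition \eqref{eq_domantheta} of $\cal D_\vartheta$), this error is $\oo(|\Phi_t'(\zeta_t)|)$, so $|\Phi_t'(u)| \sim |\Phi_t'(\zeta_t)| \sim \min\{1, \sqrt{\kappa+\eta}/t\}$.

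The main obstacle is the final step: the first derivative estimate must survive the perturbation even in the regime $\kappa+\eta \ll t^2$, where $\Phi_t'(\zeta_t)$ is small and cancellations in the defining formula are delicate. The $(\log n)^2$ safety factor in the hypothesis \eqref{choosem1} is precisely what guarantees that the perturbation error remains strictly below the size of $\Phi_t'(\zeta_t)$ throughout $\cal D_\vartheta$, so the estimate \eqref{Phit'} persists at $u$.
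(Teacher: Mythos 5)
Your handling of \eqref{secondderiv} and \eqref{secondderivsim} follows the paper's own route: show $|u-\zeta_t|\lesssim t|\mathfrak z-m_{w,t}|\ll \min_i|d_i-\zeta_t|$ (this is \eqref{mclose1}, resting on \eqref{eq_dtheta1control}), deduce $|x-u|\sim|x-\zeta_t|$ for every atom of $\mu_{w,0}$, transfer \eqref{eq_dtheta1extension} and Lemma \ref{lem_domian12control} to the point $u$, and insert the result into the explicit formula \eqref{Phit''}. One caveat: your intermediate claims $\im u\sim\im\zeta_t$ and that the segment $w(s)$ stays in $\C_+$ are not justified and can fail when $\im\zeta_t\ll t^2/(\log n)^2$; fortunately they are unnecessary, since $\mu_{w,0}$ is supported on the real line and the comparability $|x-w(s)|\sim|x-\zeta_t|$ only needs $|u-\zeta_t|\ll\min_i|d_i-\zeta_t|$.

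The genuine gap is in your proof of \eqref{firstderiv}. You compare $\Phi_t'(u)$ with $\Phi_t'(\zeta_t)$ through $\Phi_t'(u)-\Phi_t'(\zeta_t)=(u-\zeta_t)\int_0^1\Phi_t''(w(s))\,\dd s$, which yields an \emph{additive} error that is at best $\OO((\log n)^{-2})$ (and your stated bound in the regime $\kappa+\eta\gg t^2$ is even weaker and not $\oo(1)$ there). This suffices when $\sqrt{\kappa+\eta}\gtrsim t$, where $|\Phi_t'(\zeta_t)|\sim 1$, but it cannot give the two-sided bound when $\sqrt{\kappa+\eta}\ll t$: there $|\Phi_t'(\zeta_t)|\sim\sqrt{\kappa+\eta}/t$, and on $\cal D_\vartheta$ the constraint $n\eta(t+\sqrt{\kappa+\eta})\ge n^{\vartheta}$ only forces $\sqrt{\kappa+\eta}/t\gtrsim n^{(\vartheta-1)/2}t^{-3/2}$, which is polynomially small in $n$ and hence far below your $(\log n)^{-2}$ error; the lower bound you extract from \eqref{eq_domantheta} therefore does not close the argument, and your final paragraph simply asserts the needed domination without proof. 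The paper's proof is structurally different at this point: it never compares $\Phi_t'(u)$ with the (possibly tiny) value $\Phi_t'(\zeta_t)$, but instead repeats the proof of \eqref{Phit'0}/\eqref{Phit'} (Lemmas \ref{lem_derivativecontrol} and \ref{lem_xigeneral}) directly at $u$, which \eqref{mclose1} makes possible. Near the edge this means expanding $\Phi_t'$ around $\xi_+$ and using $\Phi_t'(\xi_+)=0$, $\Phi_t''(\xi_+)\sim t^{-2}$, so the estimate comes out in multiplicative form $|\Phi_t'(u)|\sim t^{-2}|u-\lambda_+-\xi_+|$; away from the edge it means redoing the $\re\Phi_t'$ analysis at $u$ with the transferred integral bounds. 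To repair your argument you must switch to that route (and in the near-edge case relate $|u-\lambda_+-\xi_+|$ to $t\sqrt{\kappa+\eta}$, i.e.\ control the perturbation relative to $|\zeta_t-\zeta_+|\sim t\sqrt{\kappa+\eta}$, not merely relative to $t^2+\im\zeta_t$).
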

\begin{proof}
By \eqref{eq_dtheta1control}, we have that for $\mathfrak z$ satisfying \eqref{choosem1},
\be\label{mclose1}
|u-\zeta_t| \lesssim t|\mathfrak z-m_{w,t}| \lesssim \frac{\min_{i=1}^p |d_i-\zeta_t | }{(\log n)^2}.\ee
With this estimate, we can repeat the proof for \eqref{Phit'} and get  \eqref{firstderiv}.

Using \eqref{eq_dtheta1extension} and \eqref{mclose1}, we can bound that for any fixed $k\ge 1$,
\be\label{mkdtheta1}|m_{w,0}^{(k)}(u)| \lesssim \frac{t + \sqrt{\kappa+\eta}}{(t^2 + \im \zeta )^{k}}.\ee
From the definition \eqref{eq_subcompansion}, we can calculate directly that (recall \eqref{Phit''0})
\be\label{Phit''}
\begin{split}
\Phi_t''(u)&=- 2c_nt m''_{w,0}(u)\cdot u(1-c_nt m_{w,0}(u))- 4c_nt m'_{w,0}(u)\cdot (1-c_nt m_{w,0}(u))  \\
&+ 2u[c_nt m'_{w,0}(u)]^2 -c_n(1-c_n)t^2m''_{w,0}(u)  .
\end{split}
\ee
Using \eqref{mkdtheta1}, we can conclude \eqref{secondderiv} for $k=2$. The proof of \eqref{secondderiv} for $k=3$ is similar. Finally, using \eqref{Phit''} and Lemma \ref{lem_domian12control}, we can conclude \eqref{secondderivsim}.\end{proof}


The core of the proof for Proposition \ref{region11} is an analysis of the self-consistent equation. We define that 
\be\label{defn ut} u_{t}:=u(m)= (1+c_n t m)^2 z - (1+c_n t m)t(1-c_n),\ee
and the event
$$\Xi:=\left\{ \theta \le \frac{t+\im m_{w,t}}{(\log n)^2}\right\}.$$
Note that on this event, \eqref{mclose1} holds for $u=u_{t}$. Recalling the definition of $\Pi^w$ in \eqref{defnPi2}, it is easy to check that 
\be\label{pi_i}
\Pi^w_{[ii]}(z)=\begin{pmatrix}  - z(1+t \underline m_{w,t} ) &  z^{1/2}d_i^{1/2} \\ z^{1/2}d_i^{1/2}   & -z(1+c_n t m_{w,t}) \end{pmatrix}^{-1} .
\ee
\begin{lemma}\label{lemm_selfcons_weak}
For $z\in \cal D_{\vartheta}^1$ and on the event $\Xi$, we have that for any constant $\e>0$,
\begin{equation}
 \left| \Phi_t'(\zeta_t) (u_{t}-\zeta_t) + \frac12\Phi_t''(\zeta_t) (u_{t}-\zeta_t)^2  \right|\le  \frac{\theta^2}{\log n}\frac{t^2}{t^2 + \im \zeta_t}+n^\e  \frac{t^2+t\sqrt{\kappa+\eta}}{t^2+\im \zeta_t }t\Psi_\theta  +  n^\e \frac{tn^{-1}}{t^2+\im \zeta_t }   \label{selfcons_lemm}
\end{equation}
with high probability.
Moreover, we have the finer estimate: for any constant $\e>0$,
\begin{equation}
\left| \Phi_t'(\zeta_t) (u_{t}-\zeta_t) + \frac12\Phi_t''(\zeta_t) (u_{t}-\zeta_t)^2  \right|\le \frac{\theta^2}{\log n} \frac{ t^2}{t^2 + \im \zeta_t} + n^\e t \| [ Z]\| + n^\e \frac{t\left( t \Psi^2_\theta + n^{-1} \right)}{t^2+\im \zeta_t } , \label{selfcons_improved}
\end{equation}
with high probability, where
\begin{equation}\label{def_Zaver}
[Z]:=\frac{1}{p}\sum_{i\in \mathcal I_1}\Pi^w_{[ii]}Z_{[i]}\Pi^w_{[ii]}.
\end{equation}
\end{lemma}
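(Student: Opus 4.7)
The plan is to derive a self-consistent equation for $m$ (equivalently, for $u_t$) by expanding $R_{[ii]}$ via a Schur/resolvent expansion, average over $i\in\cal I_1$, identify the leading relation as $\Phi_t(u_t)=z$ plus fluctuations, and then Taylor-expand $\Phi_t$ around $\zeta_t$ using $\Phi_t(\zeta_t)=z$.

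First I would rewrite \eqref{eq_res11} in the form $R_{[ii]}^{-1}=\widetilde\Pi_{[ii]}^{-1}+Z_{[i]}$, where $\widetilde\Pi_{[ii]}$ is obtained from $\Pi^w_{[ii]}$ in \eqref{pi_i} by replacing $m_{w,t},\underline m_{w,t}$ with the partial traces $m^{[i]},\underline m^{[i]}$. Computing the determinant gives $\det(-\widetilde\Pi_{[ii]}^{-1})=z(u(m^{[i]})-d_i)$, with $u(m^{[i]})$ close to $u_t$ by \eqref{m_T}. Hence $\widetilde\Pi_{[ii],11}=-(1+c_ntm^{[i]})/(u(m^{[i]})-d_i)$, and the key algebraic identity $(1+c_ntm)m_{w,0}(u_t)=m$ is precisely equivalent to $\Phi_t(u_t)=z$, as one checks by writing $v=1-c_ntm_{w,0}(u_t)$ and verifying $bv=1$ with $b=1+c_ntm$.

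Next I would iterate the resolvent identity
$$R_{[ii]}=\widetilde\Pi_{[ii]}-\widetilde\Pi_{[ii]}Z_{[i]}\widetilde\Pi_{[ii]}+\widetilde\Pi_{[ii]}Z_{[i]}\widetilde\Pi_{[ii]}Z_{[i]}R_{[ii]},$$
and average $R_{ii}$ over $i\in\cal I_1$. The leading piece $p^{-1}\sum_i\widetilde\Pi_{[ii],11}$ yields $(1+c_ntm)m_{w,0}(u_t)$ up to an $\OO_\prec(1/(n\eta))$ error from $m^{[i]}-m$, which by the computation above translates into $\Phi_t(u_t)-z=\OO(t\cdot(\text{fluctuations}))$ — note the extra factor of $t$ from the factor $bv-1$ in $\Phi_t(u_t)-z=(bv-1)\bigl[(bv+1)z-(1-c_n)tv\bigr]$. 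The linear-in-$Z$ correction contributes $t\|[Z]\|$-type terms with $[Z]$ from \eqref{def_Zaver}; the quadratic-in-$Z$ piece and the off-diagonal $\sqrt{zt}X_{i\bar i}$ part of $Z_{[i]}$ are controlled using Lemma~\ref{Z_lemma} together with the trace bound \eqref{eq_dtheta1extension}, which gives $p^{-1}\sum_i\|\widetilde\Pi_{[ii]}\|^2\lesssim(t+\sqrt{\kappa+\eta})/(t^2+\im\zeta_t)$ after substituting $u_t$ back to $\zeta_t$ via the proximity on $\Xi$.

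Having the self-consistent identity $\Phi_t(u_t)-\Phi_t(\zeta_t)=(\text{explicit fluctuations})$, I would Taylor expand:
$$\Phi_t(u_t)-\Phi_t(\zeta_t)=\Phi_t'(\zeta_t)(u_t-\zeta_t)+\tfrac12\Phi_t''(\zeta_t)(u_t-\zeta_t)^2+\tfrac16\Phi_t^{(3)}(\tilde\xi)(u_t-\zeta_t)^3.$$
On $\Xi$, a direct computation gives $|u_t-\zeta_t|\lesssim t\theta$, and combined with the bound $|\Phi_t^{(3)}|\lesssim(t^2+t\sqrt{\kappa+\eta})/(t^2+\im\zeta_t)^3$ from Lemma~\ref{lem derivPhi} and $t\theta\lesssim t(t+\im m_{w,t})/(\log n)^2$, one sees that the cubic remainder is absorbed by the $\frac{\theta^2}{\log n}\frac{t^2}{t^2+\im\zeta_t}$ buffer in the statement. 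Multiplying the fluctuation bounds by the extra $t$ factor from $bv-1$ yields the two remaining error terms in \eqref{selfcons_lemm}: the $Z$-contribution gives $n^\epsilon\frac{t^2+t\sqrt{\kappa+\eta}}{t^2+\im\zeta_t}t\Psi_\theta$, while the $\sqrt{t/n}$-size off-diagonal $X_{i\bar i}$ and the $m-m^{[i]}$ substitutions produce $n^\epsilon\frac{tn^{-1}}{t^2+\im\zeta_t}$.

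For the refined bound \eqref{selfcons_improved}, instead of immediately bounding the linear-in-$Z$ contribution by $\|Z\|$ times a trace, I would retain it as exactly the averaged quantity $[Z]$, giving the explicit term $n^\epsilon t\|[Z]\|$. The cancellation that replaces $t\Psi_\theta$ by $t\Psi_\theta^2$ in the remaining bulk error comes from keeping the $Z_{[i]}\widetilde\Pi_{[i]i]}Z_{[i]}$ piece and applying large-deviation estimates after the average, rather than $L^\infty$ bounds site-by-site; this is precisely where Lemma~\ref{Z_lemma} gets combined with the diagonal/off-diagonal separation in $Z_{[i]}$. The main obstacle I anticipate is the bookkeeping in this last step: tracking how the matrix expansion of $R_{[ii]}$ simplifies after the block trace and verifying that the secondary terms (including dependence between $Z_{[i]}$ across different $i$, and the $m^{[i]}$-vs.-$m$ substitutions inside $\widetilde\Pi_{[ii]}$) all fit inside $t\Psi_\theta^2+n^{-1}$ after weighting by the $\Pi^w$ factor $1/(t^2+\im\zeta_t)$.
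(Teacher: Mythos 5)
Your overall route is the same as the paper's: expand $(R_{[ii]})^{-1}$ via the Schur formula \eqref{eq_res11} around a deterministic $2\times 2$ block, invert and average over $i\in\cal I_1$, identify the leading relation with $\Phi_t(u_{t})=z$ (your factorization $\Phi_t(u_{t})-z=(bv-1)[(bv+1)z-(1-c_n)tv]$ is exactly the mechanism by which the paper picks up the extra factor $t$ in front of the fluctuation error), then Taylor expand around $\zeta_t$ using the derivative bounds of Lemma \ref{lem derivPhi}, absorbing the cubic remainder into the $\theta^2/\log n$ buffer via the definition of $\Xi$; for \eqref{selfcons_improved} you keep the linear term as $[Z]$, and for \eqref{selfcons_lemm} you split $Z_{[i]}$ into the $\sqrt{zt}X_{i\overline i}$ part and the rest, bounding the former by a large-deviation estimate after averaging — all as in the paper. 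One small mischaracterization: the upgrade to $t\Psi_\theta^2$ in the bulk error does not come from ``large deviation after the average'' of the $Z\widetilde\Pi Z$ piece; that quadratic piece is bounded site-by-site by $\|\pi_{[i]}\|^3\|\wt\e_{[i]}\|^2$ and then summed with \eqref{eq_dtheta1extension}; the averaging-based large deviation enters only for the $X_{i\overline i}$ contribution to $[Z]$.

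The genuine gap is your treatment of the $m$-vs-$m^{[i]}$ substitution. You bound it by $\OO_\prec((n\eta)^{-1})$ (i.e.\ \eqref{m_T}) and defer the rest to ``bookkeeping''. That crude bound suffices for \eqref{selfcons_lemm}, because the substitution error enters the averaged equation weighted by $\frac1p\sum_i|d_i-\zeta_t|^{-2}\lesssim (t+\sqrt{\kappa+\eta})/(t^2+\im\zeta_t)$, and $(n\eta)^{-1}\le\Psi_\theta$. But it does \emph{not} suffice for the finer estimate \eqref{selfcons_improved}: with only $(n\eta)^{-1}$ the contribution to $\Phi_t(u_{t})-z$ is of order $t^2(t+\sqrt{\kappa+\eta})/\bigl(n\eta(t^2+\im\zeta_t)\bigr)$, and for, say, $\kappa\sim t^2$ and $\eta$ near the lower boundary of $\cal D_\vartheta^1$ (where $\eta\ll t^2$ and $\im m_{w,t}\sim\eta/t$), this exceeds the allowed error $n^\e t(t\Psi_\theta^2+n^{-1})/(t^2+\im\zeta_t)$ by a polynomial factor in $n$ — and \eqref{selfcons_improved} with this sharp error is exactly what the subsequent iteration with the fluctuation averaging lemma needs. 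The missing ingredient is the paper's improved estimate \eqref{bdd m-mi}, $|m-m^{[i]}|\prec (n^{-1}+t\Psi_\theta^2)/(t^2+\im\zeta_t)$, which is proved by writing $m-m^{[i]}$ as an average of $(R_{[ji]}R_{[ii]}^{-1}R_{[ij]})_{11}$ via \eqref{eq_res3}, invoking the off-diagonal bound $\Lambda_o\prec t\Psi_\theta$ from \eqref{Zestimate2}, and using that on $\Xi$ one has $R_{[ii]}\approx\Pi^w_{[ii]}$ and $R^{[i]}_{[jj]}\approx\Pi^w_{[jj]}$ with high probability. You should add this step (and note that it is also what keeps the linear-in-$(m-m^{[i]})$ part of your expansion, which is \emph{not} part of $[Z]$, inside the error of \eqref{Gii3}); with it, the rest of your argument goes through as in the paper.
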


\begin{proof}

 In the following proof, we always assume that event $\Xi$ holds. Using \eqref{resolvent2}, we get that for $\mu\ge 2p+1$,
\be\label{Gmumu} \frac{1}{R_{\mu\mu}} = - z- z c_n t m^{(\mu)} + Z_\mu = - z- z c_n t m_{w,t} + \e_\mu ,
\ee
where 
$$\e_\mu:= Z_\mu + z c_n t (m -m^{(\mu)}) + z c_n t (m_{w,t} -m).$$
On the other hand, using \eqref{eq_res11} we get that for $i\in \cal I_1$,
\begin{equation}\label{Gii}
\begin{split}
(R_{[ii]})^{-1} = \begin{pmatrix}- z(1+t\mm_{w,t} ) & z^{1/2}d_i \\ z^{1/2}d_i & -z(1+c_n tm_{w,t}) \end{pmatrix} +\e_{[i]} ,  
   \end{split}
\end{equation}
where 
\begin{align*} 
\e_{[i]}&:= Z_{[i]}+ zt \left( {\begin{array}{*{20}c}
   { \mm-\mm^{[i]} } & {0}  \\
   {0} & {c_n (m-m^{[i]}) } \end{array}} \right)+ zt \left( {\begin{array}{*{20}c}
   { \mm_{w,t}-\mm } & {0}  \\
   {0} & {c_n (m_{w,t}-m) } \end{array}} \right)  \\
   &= Z_{[i]}+ zc_nt\left[ (m-m^{[i]}) + (m_{w,t}-m)\right]\left( {\begin{array}{*{20}c}
   {1 } & {0}  \\
   {0} & {1} \end{array}} \right) ,
   \end{align*}
 where  we used \eqref{m21} in the second step. With \eqref{m_T} and \eqref{Zestimate1}, we can bound that 
 \be\label{eiemu} \|\e_{[i]}\|+|\e_{[\mu]}| \lesssim  t\theta + \OO_\prec\left(  t\Psi_\theta +t^{1/2}n^{-1/2} \right).\ee
 Together with \eqref{mclose1}, we get that with high probability,
$$\frac{\|\e_{[i]}\|}{|d_i-\zeta_t|}=\OO\left( (\log n)^{-2} \right) \ \Rightarrow \ \left\| \Pi^w_{[ii]}\e_{[i]}\right\|=\OO\left( (\log n)^{-2} \right).$$
Thus taking the matrix inverse of \eqref{Gii}, we get that
\be\label{Gii solv}
R_{[ii]}=\Pi_{[ii]}^w \left[ 1+\OO\left(\frac{\|\e_{[i]}\|}{|d_i-\zeta_t|}\right)\right] = \Pi_{[ii]}^w \left[ 1+\OO\left( (\log n)^{-2} \right)\right] 
\ee
with high probability on the event $\Xi$. On the other hand, by \eqref{m_T} we have that for $z\in \cal D_{\vartheta}^1$,
$$ \mathbf 1(\Xi) |m^{[i]}-m_{w,t}| \le \frac{C}{n\eta} +\frac{t+\im m_{w,t}}{(\log n)^2} \le 2\frac{t+\im m_{w,t}}{(\log n)^2}.$$
Thus repeating the above proof, we can obtain a similar estimate for $R_{[jj]}^{[i]}$ as in \eqref{Gii solv} for any $j\ne i$:
\be\label{Gii solv2}
R_{[jj]}^{[i]} = \Pi_{[jj]}^w \left[ 1+\OO\left( (\log n)^{-2} \right)\right] 
\ee
with high probability on the event $\Xi$. 

We can also write \eqref{Gii} as
\begin{equation}\label{Gii2}
\begin{split}
(R_{[ii]})^{-1} = \begin{pmatrix}- z(1+t\mm ) & z^{1/2}d_i^{1/2} \\ z^{1/2}d_i^{1/2} & -z(1+c_n tm) \end{pmatrix} +\wt \e_{[i]} ,  
   \end{split}
\end{equation}
where 
\begin{align}\label{bdd wte} 
\wt \e_{[i]}&:= Z_{[i]}+ zc_nt (m-m^{[i]})  \left( {\begin{array}{*{20}c}
   {1 } & {0}  \\
   {0} & {1} \end{array}} \right)  \prec t\Psi_\theta +\frac{t^{1/2}}{n^{1/2}}.
   \end{align}
Now taking the matrix inverse of both sides of \eqref{Gii2}, we get
\be\label{Gii22}
R_{[ii]}=\pi_{[i]} - \pi_{[i]} \wt\e_{[i]}\pi_{[i]} + \OO(\| \pi_{[i]} \|^3\|\wt\e_{[i]}\|^2),
\ee
where we denoted $\pi_{[i]}$ as
\be\label{pi_i}
\pi_{[i]}(z):=\begin{pmatrix}  - z(1+t  \mm ) &  z^{1/2}d_i^{1/2} \\ z^{1/2}d_i^{1/2}   & -z(1+c_n t m) \end{pmatrix}^{-1} .
\ee
With \eqref{mclose1} and \eqref{eq_dtheta1control}, we can check that on $\Xi$,
\be\label{pinorm}
\|\pi_{[i]}-\Pi^w_{[ii]}\|\lesssim \frac{t\theta}{|d_i-\zeta_t|^2} \lesssim \frac{t\theta}{(t^2+ \im \zeta_t)^2},\quad \|\pi_{[i]}\|\lesssim \|\Pi^w_{[ii]}\|\lesssim \frac{1}{|d_i-\zeta_t|} .
\ee
By \eqref{eq_res3}, we have 
$$ m_1-m_{1}^{[i]} = \frac1p\sum_{j}\left(R_{[ji]}R_{[ii]}^{-1}R_{[ij]}\right)_{11}.$$
Then using \eqref{Zestimate2}, \eqref{Gii solv} and \eqref{Gii solv2}, we get that 
\begin{align} 
|m_1-m_{1}^{[i]}| &\lesssim \frac1n \sum_{j\in \cal I_1} \left\| R_{[ji]}R_{[ii]}^{-1}R_{[ij]}\right\| \lesssim \frac{1}{n}\|R_{[ii]}\| + \frac1n\sum_{j\ne i}\Lambda_o^2 \|R_{[ii]}\| \| R_{[jj]}^{[i]}\|^2 \nonumber \\
&\prec \frac1{|d_i - \zeta_t|}\left(  \frac{1}{n} + \frac{t^2 \Psi_\theta^2}{n}\sum_{j\ne i}\frac{1}{|d_j - \zeta_t|^2} \right) \lesssim \frac{1}{t^2 + \im \zeta_t} \left( n^{-1} + t \Psi_\theta^2\right),\label{bdd m-mi}
\end{align}
where in the last step we used \eqref{eq_dtheta1control}, \eqref{eq_dtheta1extension} and that 
\be\label{loweretat}t^2 + \im \zeta_t \gtrsim t^2 + \eta+t \im m_{w,t}\gtrsim t^2+\eta + t\sqrt{\kappa+\eta}\ee  
by \eqref{kappaeta_d1}. Thus taking average of \eqref{Gii22} over $i\in \cal I_1$ and using \eqref{bdd wte},  \eqref{pinorm}, \eqref{bdd m-mi} and \eqref{eq_dtheta1extension} with $a=3$, we obtain that  
\be\label{Gii3}
\frac{1}{p}\sum_{i\in \cal I_1}R_{[ii]}=\frac{1}{p}\sum_{i\in \cal I_1} \pi_{[i]} - [Z] + \OO_\prec\left[  \frac{1}{t^2+\im \zeta} \left(\theta \left( t\Psi_\theta +\frac{t^{1/2}}{n^{1/2}}\right)+ t \Psi^2_\theta + \frac 1n \right)\right].
\ee
In particular, the $(1,1)$-th entry of \eqref{Gii3} gives that
\be\label{Giim1t}
m=\frac{1}{p} \sum_{i=1}^p \frac{1+c_n tm }{d_i-(1+c_n tm)^2z+t(1-c_n)(1+c_n tm )} + \OO_\prec( \cal E), \ee
where we used \eqref{m21} and introduced the notation
$$\cal E:= \| [ Z]\| + \frac{1}{t^2+\im \zeta} \left(\theta \left( t\Psi_\theta +\frac{t^{1/2}}{n^{1/2}}\right)+ t \Psi^2_\theta + \frac 1n \right).
$$
Using the definition of $m_{w,0}=m_V$, we can rewrite \eqref{Giim1t} as 
\be\nonumber
 m =(1+c_n tm )m_{w,0}(u_{t}) + \OO_\prec( \cal E) \Rightarrow m=\frac{m_{w,0}(u_{t}) + \OO_\prec( \cal E)}{1-c_n tm_{w,0}(u_{t})}.
\ee
Plugging it into \eqref{defn ut}, we get
\be\label{Gii4} z = \Phi_t(u_{t})+ \OO_\prec( t\cal E).\ee
Now subtracting the equation $z=\Phi_t(\zeta_{t})$ from \eqref{Gii4} and performing Taylor expansion, we get that
\be\label{Gii5} 
 \left|\Phi_t'(\zeta_t) (u_{t}-\zeta_t) + \frac12\Phi_t''(\zeta_t) (u_{t}-\zeta_t)^2\right|  \lesssim   \frac{t^2 +t \sqrt{\kappa+\eta}}{(t^2 + \im \zeta_t )^{3}} \cdot t^3 \theta^3 + \OO_\prec\left( t\cal E\right),
 \ee
where we used \eqref{secondderiv} and that $|u_{t}-\zeta_t|\lesssim t\theta$. We can bound $t\cal E$ as
\begin{align}\label{Giierr1}
t\cal E &\prec  t \| [ Z]\| + \frac{t}{t^2+\im \zeta } \left[ n^\e \left(t \Psi^2_\theta +  n^{-1} \right)+ n^{-\e}t\theta^2\right] 
\end{align}
for any constant $\e>0$, where we used that
$$t^{1/2}\theta \left( t^{1/2}\Psi_\theta + {n^{-1/2}}\right) \le  n^{-\e}{t\theta^2}  + n^\e \left( t^{1/2}\Psi_\theta + {n^{-1/2}}\right)^2.$$
On the other hand, using the definition of the event $\Xi$ and \eqref{loweretat} we can bound
\begin{align}\label{Giierr2}
  \frac{t^2 +t \sqrt{\kappa+\eta}}{(t^2 + \im \zeta_t )^{3}} \cdot t^3 \theta^3& \lesssim  \frac{ t^3\theta^2}{(t^2 + \im \zeta_t)^2}\frac{t+\im m_{w,t}}{(\log n)^2} \lesssim \frac{\theta^2}{(\log n)^2} \frac{ t^2}{t^2 + \im \zeta_t}.  
\end{align}
Plugging \eqref{Giierr1} and \eqref{Giierr2} into \eqref{Gii5}, we conclude \eqref{selfcons_improved}.

For \eqref{selfcons_lemm}, we need to bound 
$$[Z]=\frac1p\sum_{i\in \cal I_1}\Pi_{[ii]}^wA_{[i]}\Pi^w_{[ii]}+ \frac1p\sum_{i\in \cal I_1}\Pi^w_{[ii]}B_{[i]}\Pi^w_{[ii]},$$
where 
$$ A_{[i]}:= \sqrt{zt} \begin{pmatrix} 0 &  X_{i\overline i} \\  X_{i\overline i}  & 0 \end{pmatrix},\quad B_{[i]}:= Z_{[i]}-A_{[i]}.$$
The proof of Lemma \ref{Z_lemma} already gives that $\|A_{[i]}\|\prec t^{1/2} n^{-1/2}$ and $\|B_{[i]}\| \prec t\Psi_\theta.$ Using Lemma \ref{largedeviation} and \eqref{eq_dtheta1extension}, we obtain that 
\begin{align}\label{ZAZ}
\frac1p\sum_{i\in \cal I_1}\Pi_{[ii]}^wA_{[i]}\Pi^w_{[ii]} \prec n^{-1}\left( \frac1p\sum_i \frac{t}{|d_i -\zeta_t|^4}\right)^{1/2} \lesssim \frac{(t^2+t\sqrt{\kappa+\eta})^{1/2}}{n(t^2 + \im \zeta_t)^{3/2}}\lesssim \frac{1}{n(t^2 + \im \zeta_t) },
\end{align}
and 
\begin{align}\label{ZBZ}
\frac1p\sum_{i\in \cal I_1}\Pi_{[ii]}^wB_{[i]}\Pi_{[ii]}^w \prec \frac1p\sum_i \frac{t\Psi_{\theta} }{|d_i -\zeta_t|^2}\prec  \frac{t^2+t\sqrt{\kappa+\eta}}{t^2+\im \zeta_t }\Psi_\theta .
\end{align}
Plugging these two estimates into \eqref{selfcons_improved} and using  $ { \Psi_\theta  }  \ll t+\sqrt{\kappa+\eta} $ for $z\in \cal D_{\vartheta}^1$, we conclude \eqref{selfcons_lemm}.
\end{proof}

Combining Lemma \ref{lem_domian1control}, Lemma \ref{lem derivPhi} and Lemma \ref{lemm_selfcons_weak}, we can conclude the proof of Proposition \ref{region11} using the same argument as the one for Proposition 5.3 of \cite{edgedbm}. We only give an outline of the proof without writing down the details. 

\begin{lemma}[Weak averaged local law]\label{alem_weak} 
There exists a constant $\tau_1>0$ such that the following estimates hold for $z\in \cal D_{\vartheta}^1$: if $\kappa+\eta\ge \tau_1t^2$, then 
$$\theta \prec \Psi(z), $$
where $\Psi$ was defined in \eqref{eq_defpsi}; if $\kappa+\eta\le \tau_1 t^2$, then 
$$\theta \prec t^{2/3}{(n\eta)^{-1/3}}.$$
\end{lemma}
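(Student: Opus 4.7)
The plan is to combine the coarse self-consistent equation from Lemma \ref{lemm_selfcons_weak} with a standard continuity/bootstrap argument in $\eta$. I would fix a finite lattice $L\subset\mathcal D^1_\vartheta$ of mesh at most $n^{-3}$, prove the claim on $L$ by induction on decreasing $\eta$, and then extend to all of $\mathcal D^1_\vartheta$ via the trivial Lipschitz estimate $|\partial_z m|\le\eta^{-2}$ together with Lemma \ref{lem_partialm}. At the anchor $\eta=10$, the estimate \eqref{Zestimate3} combined with \eqref{Gmumu}--\eqref{Gii} immediately yields $\theta\prec t^{1/2}n^{-1/2}$, which is far below the threshold $(t+\im m_{w,t})/(\log n)^2$ defining the event $\Xi$. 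The inductive step assumes $\Xi$ at a neighbouring lattice point with slightly larger $\eta$; Lipschitz continuity gives $\Xi$ at the current point, so Lemma \ref{lemm_selfcons_weak} applies and the self-consistent equation produces the sharper bound claimed.

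To extract the two bounds, I would analyse \eqref{selfcons_lemm} using Lemmas \ref{lem_domian1control} and \ref{lem derivPhi}, together with $|u_t-\zeta_t|\sim t\theta$. In the bulk regime $\kappa+\eta\ge\tau_1 t^2$, \eqref{firstderiv} gives $|\Phi_t'(\zeta_t)|\sim 1$, so after absorbing both $\tfrac12\Phi_t''(\zeta_t)(u_t-\zeta_t)^2$ (which is $\OO(\theta^2)$ by \eqref{secondderiv} and Lemma \ref{lem_domian1control}) and the $\theta^2/\log n$ error into the LHS, the equation reads schematically $t\theta\lesssim n^\epsilon t\Psi_\theta + n^\epsilon(nt)^{-1}$; substituting $\Psi_\theta\lesssim\sqrt{(\theta+\im m_{w,t})/(n\eta)}+(n\eta)^{-1}$ and solving yields $\theta\prec\Psi$. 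In the near-edge regime $\kappa+\eta\le\tau_1 t^2$, Lemma \ref{lem derivPhi} gives $|\Phi_t'(\zeta_t)|\sim\sqrt{\kappa+\eta}/t$ and $|\Phi_t''(\zeta_t)|\sim t^{-2}$ while $t^2+\im\zeta_t\sim t^2$, so the self-consistent equation reduces to
\begin{equation*}
\sqrt{\kappa+\eta}\,\theta+\theta^2\;\lesssim\;n^\epsilon\bigl(t\Psi_\theta+(nt)^{-1}\bigr).
\end{equation*}
The standard dichotomy now applies: if $\theta\le\sqrt{\kappa+\eta}$ the linear term dominates and a direct inversion is cleaner than the target bound, while if $\theta>\sqrt{\kappa+\eta}$ the quadratic term dominates and, using $\Psi_\theta\lesssim\sqrt{\theta/(n\eta)}+(n\eta)^{-1}$ (since $\im m_{w,t}\lesssim\sqrt{\kappa+\eta}<\theta$ in this sub-case), one obtains the cubic inequality $\theta^{3/2}\lesssim n^\epsilon t/(n\eta)^{1/2}$, which gives $\theta\prec t^{2/3}(n\eta)^{-1/3}$.

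The main obstacle I anticipate is the propagation of the a priori bound defining $\Xi$ along the lattice. Because Lemma \ref{lemm_selfcons_weak} is valid only on $\Xi$, I must verify at every inductive step that the improved bound produced by the self-consistent equation is strictly below $(t+\im m_{w,t})/(\log n)^2$; this reduces to checking that both $t^{2/3}(n\eta)^{-1/3}$ in the edge regime and $\Psi(z)$ in the bulk regime are much smaller than $t+\im m_{w,t}\sim t+\sqrt{\kappa+\eta}$, which follows from the spectral-domain condition $n\eta(t+\sqrt{\kappa+\eta})\ge n^\vartheta$ built into $\mathcal D_\vartheta$. A secondary technical subtlety is that, in the quadratic step where I solve $\theta\lesssim \sqrt{\theta/(n\eta)}+\cdots$, the $\OO_\prec$-inequalities must be manipulated carefully so that the implicit $n^\epsilon$-factors do not accumulate; this is handled by choosing $\epsilon$ smaller than $\vartheta$ at each lattice step and then absorbing the $\OO(n^{-3})$ error from the discretisation into the probabilistic bound at the end.
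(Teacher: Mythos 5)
Your overall architecture is the same as the paper's intended proof (the paper simply defers to Proposition 5.6 of \cite{edgedbm}): a lattice continuity argument in $\eta$ anchored at $\eta\sim 1$, the self-consistent estimate \eqref{selfcons_lemm}, the coefficient bounds of Lemma \ref{lem derivPhi} (in particular \eqref{secondderivsim} when $\kappa+\eta\le\tau_1 t^2$), and — crucially — the observation that in the quadratic-dominant regime one may replace $\im m_{w,t}$ by $\theta$ inside $\Psi_\theta$, which is what produces the cubic inequality and the exponent $t^{2/3}(n\eta)^{-1/3}$.

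There is, however, a genuine gap in your near-edge analysis. The estimate \eqref{selfcons_lemm} bounds the modulus of the \emph{complex} combination $\Phi_t'(\zeta_t)(u_t-\zeta_t)+\tfrac12\Phi_t''(\zeta_t)(u_t-\zeta_t)^2$, so you may not rewrite it as $\sqrt{\kappa+\eta}\,\theta+\theta^2\lesssim(\text{RHS})$: when $|u_t-\zeta_t|$ is comparable to $|\Phi_t'(\zeta_t)/\Phi_t''(\zeta_t)|\sim t\sqrt{\kappa+\eta}$, i.e.\ $\theta\asymp\sqrt{\kappa+\eta}$, the linear and quadratic terms can nearly cancel and neither is individually controlled. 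This is exactly the "large root" of the quadratic, and your dichotomy at the single threshold $\theta=\sqrt{\kappa+\eta}$ leaves it uncontrolled: in the sub-case $\theta\le\sqrt{\kappa+\eta}$ with $\theta$ close to $\sqrt{\kappa+\eta}$ the "direct inversion" is not justified, and in the other sub-case the lower bound $|{\cdot}|\gtrsim\theta^2$ only holds once $\theta\ge C\sqrt{\kappa+\eta}$ for a sufficiently large constant. Since $\sqrt{\kappa+\eta}$ ranges up to order $t$, the spurious solution $\theta\approx\sqrt{\kappa+\eta}$ can be far above the target $t^{2/3}(n\eta)^{-1/3}$ (their ratio is a positive power of $tn\eta\ge n^{\vartheta}$), and it is perfectly compatible with the event $\Xi$, which only forces $\theta\lesssim t/(\log n)^2$; so propagating $\Xi$ along the lattice, as your inductive step does, cannot exclude it. The missing ingredient is the branch-selection step of \cite{edgedbm}: when the RHS of \eqref{selfcons_lemm} is $\ll\kappa+\eta$, the admissible values of $\theta$ lie in two well-separated regions (one of size $\text{RHS}/\sqrt{\kappa+\eta}$ near zero, one near $\sqrt{\kappa+\eta}$), and the induction must carry the \emph{sharper} bound from the previous lattice point, so that the $\OO(n^{-5/3})$ increment keeps $\theta$ in the small region; only when the two regions merge ($\text{RHS}\gtrsim\kappa+\eta$, hence $\sqrt{\kappa+\eta}\lesssim\theta$ up to constants) does your cubic computation legitimately take over. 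Two smaller points: in the small-branch case you still need the split $\sqrt{\kappa+\eta}\lessgtr t^{2/3}(n\eta)^{-1/3}$ before comparing $\text{RHS}/\sqrt{\kappa+\eta}$ with the target; and in the bulk regime, bounding the last error in \eqref{selfcons_lemm} by $n^\epsilon(nt)^{-1}$ and dividing by $t$ leaves $n^\epsilon(nt^2)^{-1}$, which is not $\lesssim\Psi$ when $\eta\gg t^2$ — keep the denominator $t^2+\im\zeta_t\gtrsim t^2+\eta$, so that after division the term is $n^\epsilon/(n(t^2+\eta))\le n^\epsilon/(n\eta)$.
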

\begin{proof}
The proof of this lemma is the same as the one for Proposition 5.6 of \cite{edgedbm}. Roughly speaking, it follows from the self-consistent equation estimate \eqref{selfcons_lemm}, which corresponds to estimate (5.21) of \cite{edgedbm}. In the proof, we shall need some deterministic estimates on the coefficients $\Phi'_t(\zeta_t)$ and $\Phi''_t(\zeta_t)$, which are provided by Lemma \ref{lem derivPhi}. In particular, when $\kappa+\eta\le \tau_1 t^2$, we will need to use the estimate \eqref{secondderivsim}.
\end{proof}

To get the strong local laws in \eqref{averin} and \eqref{averout}, we need a stronger bound on $[Z]$ in (\ref{selfcons_improved}), which is given by the following fluctuation averaging lemma. 

\begin{lemma}[Fluctuation averaging] \label{abstractdecoupling}
Suppose that $\theta\prec \Phi$, where $\Phi$ is a positive, $n$-dependent deterministic function on $\cal D_{\vartheta}^1$ satisfying that
$$\frac{1}{n\eta} \le \Phi \le  \frac{t+\im m_{w,t}}{(\log n)^2}.$$  
Then for $z \in \cal D_{\vartheta}^1$ we have
\begin{equation}\label{flucaver_ZZ}
\left|[Z]\right|  \prec  \frac{1}{n\eta}\frac{\im m_{w,t} +\Phi}{t+\im m_{w,t} +\Phi}.
\end{equation}
\end{lemma}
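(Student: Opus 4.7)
The plan is to control $\E \|[Z]\|^{2k}$ for any large fixed integer $k\ge 1$ and then invoke Markov's inequality. This is the standard high-moment fluctuation averaging argument, adapted from the Wigner/sample covariance setting (as in the works \cite{EKYY1,Anisotropic,Delocal}) to the rectangular, block-matrix situation here. A useful preliminary decomposition is $Z_{[i]}=A_{[i]}+B_{[i]}$, with
\[
A_{[i]} \;=\; \sqrt{zt}\begin{pmatrix} 0 & X_{i\overline i} \\ X_{i\overline i} & 0 \end{pmatrix},\qquad B_{[i]} \;=\; Z_{[i]}-A_{[i]},
\]
just as in the proof of Lemma \ref{lemm_selfcons_weak}. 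The $A$-piece is a sum of independent centered Gaussians weighted by $\Pi^w_{[ii]}$, so a direct one-step large deviation estimate via Lemma \ref{largedeviation} together with the bound \eqref{eq_dtheta1extension} already gives the target estimate for $p^{-1}\sum_i \Pi^w_{[ii]}A_{[i]}\Pi^w_{[ii]}$, exactly as in \eqref{ZAZ}. The real work is the $B$-contribution.

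For the $B$-piece I would expand
\[
\E\Bigl|\tfrac{1}{p}\sum_{i\in\mathcal I_1}\Pi^w_{[ii]} B_{[i]} \Pi^w_{[ii]}\Bigr|^{2k}
\]
as a sum over $2k$-tuples $(i_1,\dots,i_{2k})\in\mathcal I_1^{2k}$ of products of factors of the form $\Pi^w_{[i_a i_a]}B_{[i_a]}\Pi^w_{[i_a i_a]}$. The key feature is that each $B_{[i_a]}$ carries a factor $(1-\E_{[i_a]})$ acting on a quadratic form in the $i_a$-th and $\overline{i}_a$-th rows of $X$. I would then use the resolvent identity \eqref{eq_res3} iteratively to rewrite every $R^{[i_a]}$ appearing inside $B_{[i_a]}$ in terms of the common minor $R^{[\{i_1,\dots,i_{2k}\}]}$, picking up corrections that are products of off-diagonal quantities of the form $R_{[ii]}^{-1}R_{[ij]}(R_{[jj]}^{[i]})^{-1}$ controlled by $\Lambda_o$. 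Once every $R^{[i_a]}$ has been replaced by the common minor, the resulting ``base'' resolvent is independent of $i_1,\dots,i_{2k}$, so the partial expectations $\E_{[i_a]}$ can be evaluated explicitly, and by the standard matching argument only index configurations in which every $i_a$ is matched with another occurrence survive to leading order.

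The combinatorial bookkeeping then reduces to estimating sums of the form $p^{-\ell}\sum_{i_1,\dots,i_\ell}\prod_a \|\Pi^w_{[i_a i_a]}\|^{r_a}$ over matched index configurations, which by \eqref{eq_dtheta1extension} are controlled by $(t+\sqrt{\kappa+\eta})/(t^2+\im\zeta_t)$-type factors. Each unmatched occurrence of an index costs an extra factor of $\Psi_\theta$ via the large deviation bound \eqref{Zestimate1} (and $\Lambda_o \prec t\Psi_\theta$ via \eqref{Zestimate2}, valid on $\Xi_0$ which holds by the hypothesis $\theta\prec\Phi$ together with $\Phi\le(t+\im m_{w,t})/(\log n)^2$). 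Feeding $\theta\prec\Phi$ into $\Psi_\theta^2\lesssim (\im m_{w,t}+\Phi)/(n\eta)$ and combining with the weighted sum of $|\Pi^w_{[ii]}|^2$, I expect to obtain
\[
\E\bigl|[B]\bigr|^{2k}\;\le\; C_k\left(\frac{1}{n\eta}\cdot\frac{\im m_{w,t}+\Phi}{t+\im m_{w,t}+\Phi}\right)^{2k},
\]
which together with the $A$-bound and Markov's inequality gives \eqref{flucaver_ZZ}.

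The main obstacle will be twofold. First, $Z_{[i]}$ is matrix-valued and the block in \eqref{Zii} mixes ``$1$-type'' and ``$2$-type'' indices (summations over both $\mathcal I_1$ and $\mathcal I_2$ inside the quadratic forms), so the resolvent-graph expansion produces more families of terms than in the scalar Wigner case; fortunately all four entry types satisfy the uniform bound from Lemma \ref{Z_lemma}, so once a uniform notation is fixed the counting parallels the existing literature. Second, the deterministic weight $\|\Pi^w_{[ii]}\|\sim |d_i-\zeta_t|^{-1}$ is non-uniform, and in order to recover the precise prefactor $1/(t+\im m_{w,t}+\Phi)$ in the target bound one has to distribute these weights carefully across each matched pair and invoke the square-root edge regularity of $\mu_V$ through the sharp form of \eqref{eq_dtheta1extension}. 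This last step is where the $\eta_*$-regularity assumption on $V$ is genuinely used.
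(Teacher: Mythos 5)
Your proposal follows essentially the same route as the paper: the paper proves this lemma by invoking the standard high-moment fluctuation-averaging argument (it cites Lemma 5.8 of \cite{edgedbm} and, for matrix-valued $Z$ variables, Lemma 4.9 of \cite{XYY_circular}), which is precisely the expansion-and-matching scheme you outline, with your $A$/$B$ splitting, the vanishing of lone-index partial expectations, and the weighted use of \eqref{eq_dtheta1extension} playing the same roles as in \eqref{ZAZ}--\eqref{ZBZ}. So the approach is correct and matches the paper's; like the paper, you defer the combinatorial bookkeeping (and the precise emergence of the prefactor $(t+\im m_{w,t}+\Phi)^{-1}$) to the standard literature rather than carrying it out.
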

\begin{proof}
The proof is similar to the one for Lemma 5.8 of \cite{edgedbm}, where the only difference is that \cite{edgedbm} dealt with an average of scalar value $Z$ variables, while here $[Z]$ is an average of matrix value $Z$ variables. However, this only brings into some minor notational changes. For a fluctuation averaging estimate for matrix value $Z$ variables, one can also refer to \cite[Lemma 4.9]{XYY_circular}. 
\end{proof}

Now we have all the ingredients for the proof of Proposition \ref{region11}.

\begin{proof}[Proof of Proposition  \ref{region11}]
The proof of Proposition  \ref{region11} follows from an iteration argument using the stronger self-consistent equation estimate (\ref{selfcons_improved}), Lemma \ref{alem_weak} and Lemma \ref{abstractdecoupling}. The proof is exactly the same as the one in Section 5.1.4 of \cite{edgedbm}, so we omit the details.
\end{proof}

%

Next we prove the averaged local law on the region $ \cal D_{\vartheta}^2$. 

\begin{proposition}\label{region12}
Under the assumptions of Theorem \ref{thm_local}, \eqref{averout} holds uniformly in $z\in D_{\vartheta}^2$. 
\end{proposition}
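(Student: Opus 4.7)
The plan is to adapt the self-consistent equation approach from the proof of Proposition \ref{region11}, but now using the estimates of Lemma \ref{lem_domian2control} in place of Lemma \ref{lem_domian1control}, since we are on the right of the edge at distance $\ge C_1 t^2$. Throughout, $\kappa+\eta \gtrsim t^2$ so many error terms will be significantly tamer than in the interior regime.

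First, I would re-run the derivation of the self-consistent equation in Lemma \ref{lemm_selfcons_weak} on $\mathcal{D}_{\vartheta}^2$. The bookkeeping is identical through equations \eqref{Gmumu}--\eqref{Gii22}; the only changes are the deterministic bounds on $\Pi^w_{[ii]}$, the bound \eqref{eq_dtheta1extension} on $\int d\mu_{w,0}/|x-\zeta|^a$, and the estimates on $\Phi_t'(\zeta_t),\Phi_t''(\zeta_t)$. Using Lemma \ref{lem_domian2control} together with the analogue of Lemma \ref{lem derivPhi}, these read
\begin{equation*}
\|\Pi^w_{[ii]}\| \lesssim (\kappa+\eta+t^2)^{-1},\qquad |\Phi_t'(\zeta_t)|\sim 1,\qquad |\Phi_t''(\zeta_t)|,\,|\Phi_t^{(3)}(\zeta_t)|\lesssim (\kappa+\eta+t^2)^{-1/2}
\end{equation*}
(the last follows from \eqref{eq_dtheta2bound2} applied to $|m_{w,0}^{(k)}(u)|$). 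Repeating the computations in \eqref{ZAZ}--\eqref{ZBZ} with $\int d\mu_{w,0}/|x-\zeta|^a\lesssim (\kappa+\eta+t^2)^{-(a-3/2)}$ then yields, on the event $\Xi$ and with high probability, the refined self-consistent equation
\begin{equation*}
\left| \Phi_t'(\zeta_t)(u_t-\zeta_t) + \tfrac12 \Phi_t''(\zeta_t)(u_t-\zeta_t)^2 \right|
\le \frac{\theta^2}{\log n} + n^\e t\|[Z]\| + n^\e\, t\!\left( t\Psi_\theta^2 + n^{-1}\right),
\end{equation*}
where I used $|u_t-\zeta_t|\lesssim t\theta$ and $|\Phi_t''|\le 1$. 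Since $|\Phi_t'(\zeta_t)|\sim 1$ and the quadratic term is dominated by a small multiple of the linear one, this immediately gives $|u_t-\zeta_t|\prec \|[Z]\| + t\Psi_\theta^2 + n^{-1}$ and consequently
\begin{equation*}
\theta \prec t^{-1}\|[Z]\| + \Psi_\theta^2 + (nt)^{-1}.
\end{equation*}

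Second, I would establish a weak bound $\theta\prec \Psi(z)$ on $\mathcal{D}_{\vartheta}^2$ by a continuity/bootstrap argument in $\eta$ starting from large $\eta$ (where Lemma \ref{Z_lemma}\eqref{Zestimate3} gives a trivial bound), exactly as for Lemma \ref{alem_weak}. Here the use of $|\Phi_t'|\sim 1$ makes the argument strictly simpler than in region \texttt{1}: no inversion of a near-degenerate quadratic is required.

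Third, with $\theta\prec \Psi$ in hand, I would invoke the fluctuation averaging Lemma \ref{abstractdecoupling}, whose statement and proof extend to $\mathcal{D}_{\vartheta}^2$ essentially verbatim using \eqref{eq_dtheta2bound2}. This upgrades $\|[Z]\|$ to the improved bound
\begin{equation*}
\|[Z]\| \prec \frac{1}{n\eta}\cdot\frac{\im m_{w,t}+\Psi}{t+\im m_{w,t}+\Psi} \lesssim \frac{\im m_{w,t}}{t\,n\eta} + \frac{1}{(n\eta)^2\, t},
\end{equation*}
using $\im m_{w,t}\sim \eta/\sqrt{\kappa+\eta}\ll t$ in $\mathcal D_\vartheta^2$. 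Plugging this back into the self-consistent bound and simplifying with $\im m_{w,t}\sim \eta/\sqrt{\kappa+\eta}$ yields the desired
\begin{equation*}
\theta \prec \frac{1}{n(\kappa+\eta)} + \frac{1}{(n\eta)^2\sqrt{\kappa+\eta}}.
\end{equation*}

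I expect the main technical obstacle to be the bookkeeping in Step one: verifying that every error term in the derivation of the self-consistent equation genuinely gains the extra powers of $(\kappa+\eta+t^2)$ that Lemma \ref{lem_domian2control} provides, so that the right-hand side matches the target $\frac{1}{n(\kappa+\eta)}+\frac{1}{(n\eta)^2\sqrt{\kappa+\eta}}$ rather than the weaker interior bound $\frac{1}{n\eta}$. In particular, the term $t\,n^{-1}/(t^2+\im\zeta_t)$ from Lemma \ref{lemm_selfcons_weak} must be replaced by $(n(\kappa+\eta))^{-1}$ using $\im\zeta_t\sim t\sqrt{\kappa+\eta}$ outside the edge, and the $t\Psi_\theta^2$ term must be absorbed into $(n\eta)^{-2}(\kappa+\eta)^{-1/2}$ after one iteration step. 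Once these are tracked, the conclusion follows as above.
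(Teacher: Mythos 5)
Your Steps 1 and 2 follow the paper's route: the deterministic inputs are the outer-region analogue of Lemma \ref{lem derivPhi} (the paper's Lemma \ref{lem derivPhi2}), the self-consistent estimate is the paper's Lemma \ref{lemm_selfcons_weak2}, and the weak law is Lemma \ref{alem_weak2}. (One misstatement there: $|\Phi_t''(\zeta_t)|$ is \emph{not} $\lesssim (\kappa+\eta+t^2)^{-1/2}$, let alone $\le 1$; the correct bound is $|\Phi_t''|\lesssim t/(t^2+\kappa+\eta)^{3/2}$, which can be as large as $t^{-2}$. This is harmless only because $|\Phi_t''(\zeta_t)||u_t-\zeta_t|^2\lesssim \frac{t^3\theta^2}{(t^2+\kappa+\eta)^{3/2}}\le \theta^2$, which is the term the paper keeps.)

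The genuine gap is in Step 3. The paper does \emph{not} extend Lemma \ref{abstractdecoupling} verbatim to $\cal D_\vartheta^2$; it proves a strictly stronger outer-region fluctuation averaging estimate (Lemma \ref{abstractdecoupling2}),
\begin{equation*}
\left|[Z]\right|\prec \frac{1}{n(\kappa+\eta)}+\frac{\Phi}{n\eta\sqrt{\kappa+\eta}},
\end{equation*}
which exploits the improved deterministic bounds of Lemma \ref{lem_domian2control} (in particular $\|\Pi^w_{[ii]}\|\lesssim(\kappa+\eta+t^2)^{-1}$ and \eqref{eq_dtheta2bound2}). The bound you import from the interior region, $[Z]\prec \frac{1}{n\eta}\,\frac{\im m_{w,t}+\Phi}{t+\im m_{w,t}+\Phi}$, simplifies on $\cal D_\vartheta^2$ (where $\im m_{w,t}\sim \eta/\sqrt{\kappa+\eta}\ll t$) to $\frac{1}{nt\sqrt{\kappa+\eta}}+\frac{\Phi}{nt\eta}$, and feeding this into the self-consistent relation $\theta\prec \|[Z]\|+\frac{1}{(n\eta)^2\sqrt{\kappa+\eta}}+\frac{1}{n(\kappa+\eta)}$ gives $\theta\prec \frac{1}{nt\sqrt{\kappa+\eta}}+\cdots$, which misses the target \eqref{averout} by the factor $\sqrt{\kappa+\eta}/t\gg 1$ (recall $\kappa>C_1t^2$ on $\cal D_\vartheta^2$, and $\kappa$ can be of order one); your claim that it ``yields the desired'' bound is an algebra slip at exactly this point. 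Moreover the $\Phi$-dependent coefficient $\frac{1}{n\eta t}$ need not be small on $\cal D_\vartheta^{out}$, where only $n\eta\sqrt{\kappa+\eta}\ge n^{\vartheta}$ is assumed (e.g.\ $\eta\sim n^{-1+\vartheta}$, $t\sim n^{-1/3}$ gives $\frac{1}{n\eta t}\gg1$), so the self-improving iteration does not even contract with your version of the lemma. To close the argument you must prove the outer-region fluctuation averaging bound with coefficient $\frac{1}{n\eta\sqrt{\kappa+\eta}}\le n^{-\vartheta}$ in front of $\Phi$, as in Lemma \ref{abstractdecoupling2}, and then iterate \eqref{simpleiter1} to reach $\theta\prec (n\eta)^{-1}$ before substituting $\Phi=n^{\e}/(n\eta)$ to obtain \eqref{averout}.
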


Our proof of Proposition \ref{region12} follows the same strategy as for the proof of Proposition \ref{region11}.  In this case, using the estimates in Lemma \ref{lem_domian2control} we can prove the following version of Lemma \ref{lem derivPhi}.

\begin{lemma}\label{lem derivPhi2}
For $\mathfrak z\in \C_+$ satisfying 
\be\label{choosem2} |\mathfrak z-m_{w,t}|\le \frac{t}{(\log n)^2},\ee
and $u= (1+c_n t \mathfrak z)^2 z - (1+c_n t \mathfrak z)t(1-c_n)$, we have
\be\label{firstsecondderiv}
|\Phi_t'(u)| \sim 1,\quad |\Phi_t^{(2)}(u)| \lesssim \frac{t}{(t^2 +\kappa+\eta)^{3/2}}.
\ee
\end{lemma}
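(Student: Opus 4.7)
The plan is to follow the same strategy as in the proof of Lemma \ref{lem derivPhi}, but using the stronger input estimates from Lemma \ref{lem_domian2control} (which replaces Lemma \ref{lem_domian1control}) that apply on $\mathcal{D}_\vartheta^2$. The key observation is that on $\mathcal{D}_\vartheta^2$ the regime $E>\lambda_{+,t}+C_1 t^2$ forces $\min_i |d_i - \zeta_t| \gtrsim t^2 +\kappa + \eta$, and the Stieltjes-type moment integrals are controlled by the power $(\kappa+\eta+t^2)^{-(a-3/2)}$.

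First I would compare $u$ with $\zeta_t$. From the definitions $u = (1+c_n t \mathfrak z)^2 z - (1+c_n t\mathfrak z)t(1-c_n)$ and $\zeta_t=(1+c_n t m_{w,t})^2 z - (1+c_n t m_{w,t})t(1-c_n)$, subtraction yields the bound $|u-\zeta_t| \lesssim t |\mathfrak z - m_{w,t}|$ using $|z|\lesssim 1$ and $t\ll 1$. Combined with the hypothesis \eqref{choosem2}, this gives $|u-\zeta_t| \lesssim t^2/(\log n)^2$. Since \eqref{eq_dtheta2bound1} yields $\min_i |d_i - \zeta_t| \gtrsim t^2 +\kappa+\eta \geq t^2$, we conclude
\begin{equation*}
|u-\zeta_t| \lesssim \frac{\min_i |d_i-\zeta_t|}{(\log n)^2},
\end{equation*}
so $|x-u|\sim |x-\zeta_t|$ uniformly in $x\in\operatorname{supp}(\mu_{w,0})$. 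Consequently the moment bound \eqref{eq_dtheta2bound2} transfers from $\zeta_t$ to $u$ and delivers $|m_{w,0}^{(k)}(u)|\lesssim (\kappa+\eta+t^2)^{-(k-1/2)}$ for any fixed $k\geq 1$. A similar comparison with the estimate \eqref{c_nt m} gives $|c_n t m_{w,0}(u)|\lesssim t^{1/2}$, hence $|1-c_n t m_{w,0}(u)|\sim 1$.

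Next I would plug these estimates into the explicit formula for $\Phi_t'$ obtained by differentiating \eqref{eq_subcompansion}:
\begin{equation*}
\Phi_t'(u) = (1-c_n t m_{w,0}(u))^2 - 2 c_n t m_{w,0}'(u)\, u (1-c_n t m_{w,0}(u)) - c_n(1-c_n)t^2 m_{w,0}'(u).
\end{equation*}
The leading term has modulus $\sim 1$, while the remaining two terms are $\OO(t(\kappa+\eta+t^2)^{-1/2})+\OO(t^2(\kappa+\eta+t^2)^{-1/2})=\OO(t/\sqrt{\kappa+\eta+t^2})$. Since we are in the regime $\kappa+\eta\geq C_1 t^2$, this correction is $\OO(C_1^{-1/2})$, so choosing $C_1$ large enough in the definition of $\mathcal{D}_\vartheta^2$ yields $|\Phi_t'(u)|\sim 1$ by the triangle inequality (both from above and from below).

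For the second derivative I would use the formula \eqref{Phit''0} (with $\zeta$ replaced by $u$) and bound the four terms separately: $|c_n t m_{w,0}'(u)|\lesssim t/\sqrt{\kappa+\eta+t^2}$ and $|c_n t m_{w,0}''(u)|\lesssim t/(\kappa+\eta+t^2)^{3/2}$. Together with $|1-c_n t m_{w,0}(u)|\sim 1$ and $|u|\lesssim 1$, each of the four contributions is bounded by $t/(\kappa+\eta+t^2)^{3/2}$, using the crude inequality $(\kappa+\eta+t^2)\leq C$. The main obstacle is simply bookkeeping the different powers of $(\kappa+\eta+t^2)$ correctly and verifying that the lower bound on $|\Phi_t'(u)|$ is not destroyed by the correction terms; this is precisely what forces the constant $C_1$ in the definition of $\mathcal{D}_\vartheta^2$ to be chosen sufficiently large.
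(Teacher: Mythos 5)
Your proposal is correct, and its skeleton matches the paper's: verify that \eqref{choosem2} together with \eqref{eq_dtheta2bound1} gives $|u-\zeta_t|\lesssim t|\mathfrak z - m_{w,t}| \ll \min_i|d_i-\zeta_t|$ (i.e.\ \eqref{mclose1}), transfer the moment bounds \eqref{eq_dtheta2bound2} from $\zeta_t$ to $u$, and then bound $\Phi_t''(u)$ termwise via \eqref{Phit''}, exactly as the paper does. The one place where you genuinely deviate is the lower bound $|\Phi_t'(u)|\gtrsim 1$: the paper gets it by ``repeating the proof of \eqref{Phit'}'', i.e.\ re-running the real-part case analysis of Lemma \ref{lem_derivativecontrol} (Case 2, via the representation \eqref{eq_firstpart}--\eqref{eq_reducedform}) with $u$ in place of $\zeta_t$, whereas you argue directly from the formula for $\Phi_t'$: the leading term $(1-c_ntm_{w,0}(u))^2 = 1+\OO(t^{1/2})$ (transferring \eqref{c_nt m} to $u$), and the remaining terms are $\OO(t(\kappa+\eta+t^2)^{-1/2}) = \OO(C_1^{-1/2})$ because $\kappa+\eta\ge C_1t^2$ on $\cal D_\vartheta^2$. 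Your route is more elementary and self-contained, but it only works in this off-spectrum regime where $C_1$ can be taken large (consistent with how $C_1$ is introduced in \eqref{eq_dtheta12}); it would not recover the finer behavior $|\Phi_t'|\sim\min\{1,\sqrt{\kappa+\eta}/t\}$ needed in Lemma \ref{lem derivPhi}, which is why the paper routes both lemmas through the same case analysis. Your bookkeeping for $\Phi_t''(u)$ (all four terms bounded by $t/(t^2+\kappa+\eta)^{3/2}$ using $\kappa+\eta+t^2\lesssim 1$ and $t|m_{w,0}'(u)|\lesssim t(\kappa+\eta+t^2)^{-1/2}$, $t|m_{w,0}''(u)|\lesssim t(\kappa+\eta+t^2)^{-3/2}$) is accurate.
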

\begin{proof}
By \eqref{eq_dtheta2bound1}, we have that for $\mathfrak z$ satisfying \eqref{choosem2}, \eqref{mclose1} holds. Using this estimate, we can repeat the proof for \eqref{Phit'} and get the first estimate in \eqref{firstsecondderiv}. Then using \eqref{Phit''} and \eqref{eq_dtheta2bound2}, we can obtain the second estimate in \eqref{firstsecondderiv}. \end{proof}

\nc

Next we prove the following counterpart of Lemma \ref{lemm_selfcons_weak}. We define the event
$$\Xi:=\left\{ \theta \le \frac{t}{(\log n)^2}\right\}.$$
\begin{lemma}\label{lemm_selfcons_weak2}
For $z\in \cal D_{\vartheta}^2$ and on the event $\Xi$, we have that for any constant $\e>0$,
\begin{equation}
 \left| \Phi_t'(\zeta_t) (u_{t}-\zeta_t) \right|\lesssim  \theta^2 +n^\e t  \Psi_\theta  +  \frac{n^\e t  }{n(\kappa+\eta) }  \label{selfcons_lemm2}
\end{equation}
with high probability. Moreover, we have the finer estimate: for any constant $\e>0$,
\begin{equation}
 \left| \Phi_t'(\zeta_t) (u_{t}-\zeta_t)\right|\lesssim  \theta^2 + n^\e t \| [ Z]\| + \frac{n^\e t}{(n\eta)^2 \sqrt{\kappa+\eta}}+\frac{n^\e t}{n(\kappa+\eta)}  \label{selfcons_improved2}
\end{equation}
with high probability. 
\end{lemma}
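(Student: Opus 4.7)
The plan is to follow the same Schur-complement/self-consistent-equation strategy as in Lemma \ref{lemm_selfcons_weak}, replacing the region-1 estimates of Lemma \ref{lem_domian1control} by their region-2 analogues in Lemma \ref{lem_domian2control}, and exploiting the fact that in the outer region $|\Phi_t'(\zeta_t)|\sim 1$ by Lemma \ref{lem derivPhi2}. The upshot is that only the linear Taylor term matters; the quadratic term $\Phi_t''(\zeta_t)(u_t-\zeta_t)^2$ will be absorbed into the $\theta^2$ on the right-hand side using $|u_t-\zeta_t|\lesssim t\theta$, $|\Phi_t''|\lesssim t(\kappa+\eta)^{-3/2}$, and the crucial lower bound $\kappa+\eta\ge C_1 t^2$ in $\mathcal{D}_\vartheta^2$.

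First, on the event $\Xi=\{\theta\le t/(\log n)^2\}$, I apply \eqref{eq_res11} to write $(R_{[ii]})^{-1}=\pi_{[i]}^{-1}+\widetilde\varepsilon_{[i]}$ as in \eqref{Gii2}, with $\|\widetilde\varepsilon_{[i]}\|\prec t\Psi_\theta+t^{1/2}n^{-1/2}$ from Lemma \ref{Z_lemma}. By \eqref{eq_dtheta2bound1} together with $|u_t-\zeta_t|\lesssim t\theta\le t^2/(\log n)^2\ll \kappa+\eta$, one gets $\|\pi_{[i]}\|\lesssim (\kappa+\eta+t^2)^{-1}$ and $\|\pi_{[i]}\widetilde\varepsilon_{[i]}\|=\OO((\log n)^{-2})$, so matrix inversion yields $R_{[ii]}=\pi_{[i]}-\pi_{[i]}\widetilde\varepsilon_{[i]}\pi_{[i]}+\OO(\|\pi_{[i]}\|^3\|\widetilde\varepsilon_{[i]}\|^2)$. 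Taking the trace of the $(1,1)$-block and repeating the manipulations leading to \eqref{Giim1t}, I obtain
\begin{equation*}
z=\Phi_t(u_t)+\OO_\prec(t\mathcal E_2),\qquad \mathcal E_2:=\|[Z]\|+\frac{1}{(\kappa+\eta)^{1/2}}\Big[\theta\big(t\Psi_\theta+t^{1/2}n^{-1/2}\big)+t\Psi_\theta^2+n^{-1}\Big],
\end{equation*}
where the prefactor $(\kappa+\eta)^{-1/2}$ comes from \eqref{eq_dtheta2bound2} with $a=2$ (which also controls the region-2 analogue of \eqref{bdd m-mi}: $|m-m^{[i]}|\lesssim(\kappa+\eta)^{-1/2}(n^{-1}+t\Psi_\theta^2)$).

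Second, subtracting $z=\Phi_t(\zeta_t)$ and Taylor expanding to first order around $\zeta_t$, together with Lemma \ref{lem derivPhi2} giving $|\Phi_t''|\lesssim t(\kappa+\eta)^{-3/2}$, yields
\begin{equation*}
\big|\Phi_t'(\zeta_t)(u_t-\zeta_t)\big|\lesssim \frac{t\cdot t^2\theta^2}{(\kappa+\eta)^{3/2}}+\OO_\prec(t\mathcal E_2)\lesssim \theta^2+\OO_\prec(t\mathcal E_2),
\end{equation*}
since $\kappa+\eta\ge C_1 t^2$. For the weaker bound \eqref{selfcons_lemm2}, I discard the $\|[Z]\|$ term using the crude estimate $\|[Z]\|\lesssim t\Psi_\theta/(\kappa+\eta)^{1/2}+n^{-1}/(\kappa+\eta)$, obtained by invoking $\|\Pi^w_{[ii]}\|\lesssim (\kappa+\eta)^{-1}$ and the integrals $\int d\mu_{w,0}/|x-\zeta_t|^j\lesssim (\kappa+\eta)^{-(j-3/2)}$ for $j=2,4$. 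The remaining cross term $t\theta(t\Psi_\theta+t^{1/2}n^{-1/2})/(\kappa+\eta)^{1/2}$ is split by Young's inequality, $ab\le n^{-\varepsilon}a^2+n^\varepsilon b^2$, to produce a piece bounded by $n^{-\varepsilon}\theta^2$ (absorbable into the existing $\theta^2$) and a piece of the form $n^\varepsilon t\Psi_\theta+n^\varepsilon t/(n(\kappa+\eta))$, giving \eqref{selfcons_lemm2}.

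For the finer bound \eqref{selfcons_improved2}, I keep $\|[Z]\|$ explicit and only bound the remaining terms in $\mathcal E_2$. Expanding $\Psi_\theta^2=\im m_{w,t}/(n\eta)+(n\eta)^{-2}$ and using the region-2 estimate $\im m_{w,t}\sim \eta/\sqrt{\kappa+\eta}$ from \eqref{eq_imasymptoics}, the term $t\Psi_\theta^2/(\kappa+\eta)^{1/2}$ decomposes exactly into $t/(n(\kappa+\eta))+t/((n\eta)^2(\kappa+\eta)^{1/2})$, which matches the two non-$\|[Z]\|$ contributions stated in \eqref{selfcons_improved2}; the remaining $\theta$-dependent cross terms are again split by Young's inequality with a factor $n^{-\varepsilon}$ to be reabsorbed into $\theta^2$. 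The main technical obstacle will be the careful bookkeeping showing that every error contribution reassembles into one of the three explicit terms on the right-hand side of \eqref{selfcons_improved2}; in particular, verifying that the subtle $(n\eta)^{-2}(\kappa+\eta)^{-1/2}$ piece is generated precisely by the fluctuation part of $\Psi_\theta^2$, and no spurious $\|[Z]\|$-independent term of worse order $n^{-1}(\kappa+\eta)^{-1/2}$ survives.
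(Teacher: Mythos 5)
Your overall strategy is exactly the paper's: rerun the Schur-complement/self-consistent-equation argument of Lemma \ref{lemm_selfcons_weak} on $\Xi=\{\theta\le t/(\log n)^2\}$, replace the region-1 inputs by Lemma \ref{lem_domian2control}, Taylor expand $\Phi_t$ around $\zeta_t$ using Lemma \ref{lem derivPhi2} (so only the linear term matters and the quadratic one is absorbed into $\theta^2$ via $|u_t-\zeta_t|\lesssim t\theta$, $|\Phi_t''|\lesssim t(t^2+\kappa+\eta)^{-3/2}$ and $\kappa+\eta\ge C_1t^2$), and finally bound $[Z]$ through the $A_{[i]}/B_{[i]}$ split as in \eqref{ZAZ}--\eqref{ZBZ} to pass from the finer to the weaker estimate. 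All of that matches the paper's proof.

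There is, however, a concrete quantitative error in your intermediate bookkeeping. Your $\mathcal E_2$ carries the prefactor $(\kappa+\eta)^{-1/2}$, justified by \eqref{eq_dtheta2bound2} with $a=2$, and you likewise claim $|m-m^{[i]}|\lesssim(\kappa+\eta)^{-1/2}(n^{-1}+t\Psi_\theta^2)$. Neither follows from the cited estimates: redoing \eqref{bdd m-mi} and the average of \eqref{Gii22} in region 2, the terms in question involve $\frac1p\sum_i|d_i-\zeta_t|^{-3}\lesssim(t^2+\kappa+\eta)^{-3/2}$ (and $a=4$ for $[Z]$) together with the extra factors of $t$ coming from $\|\widetilde\e_{[i]}\|$, $zc_nt(m-m^{[i]})$ and $\|\pi_{[i]}-\Pi^w_{[ii]}\|\lesssim t\theta|d_i-\zeta_t|^{-2}$, and they produce the prefactor $t/(t^2+\kappa+\eta)^{3/2}$ (this is \eqref{Gii3(2)} in the paper), while the analogue of \eqref{bdd m-mi} is $|m-m^{[i]}|\prec(t^2+\kappa+\eta)^{-1}\bigl(n^{-1}+t\Psi_\theta^2\bigr)$. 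Since $t/(t^2+\kappa+\eta)^{3/2}$ exceeds $(\kappa+\eta)^{-1/2}$ by the factor $t/(\kappa+\eta)$, which is as large as $\sim 1/t$ when $\kappa+\eta\sim C_1t^2$, your stated $\mathcal E_2$ underestimates the error your own derivation produces and is not obtainable from $a=2$ alone. The gap is repairable without changing the architecture: with the correct prefactor, the bounds $t^2\lesssim\kappa+\eta$ and $\im m_{w,t}\sim\eta/\sqrt{\kappa+\eta}$ (from \eqref{eq_imasymptoics}) still place every contribution inside the right-hand side of \eqref{selfcons_improved2} — e.g. $\frac{t^3}{(\kappa+\eta)^{3/2}}\Psi_\theta^2\lesssim\frac{t}{n(\kappa+\eta)}+\frac{t}{(n\eta)^2\sqrt{\kappa+\eta}}+n^{-\e}\theta^2$ after the same Young splitting of the $\theta/(n\eta)$ piece — which is precisely how the paper concludes; but as written, your $\mathcal E_2$ and the claimed $|m-m^{[i]}|$ bound need to be replaced before the subsequent absorptions are legitimate.
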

\begin{proof}
In this case, with \eqref{eq_dtheta2bound2} the estimate \eqref{Gii3} becomes 
\be\label{Gii3(2)}
\frac{1}{p}\sum_{i\in \cal I_1}R_{[ii]}=\frac{1}{p}\sum_{i\in \cal I_1} \pi_{[i]} - [Z] + \OO_\prec\left[  \frac{t}{(t^2+\kappa+ \eta)^{3/2}} \left(\theta \left( t\Psi_\theta +\frac{t^{1/2}}{n^{1/2}}\right)+ t \Psi^2_\theta + \frac 1n \right)\right].
\ee
Then estimate \eqref{Giim1t} now holds with 
$$\cal E:= \| [ Z]\| + \frac{t}{(t^2+\kappa+ \eta)^{3/2}}\left(\theta \left( t\Psi_\theta +\frac{t^{1/2}}{n^{1/2}}\right)+ t \Psi^2_\theta + \frac 1 n \right).
$$
Repeating the proof below \eqref{Giim1t}, we can obtain \eqref{Gii4}. Again performing Taylor expansion and using \eqref{firstsecondderiv}, we obtain that for any constant $\e>0$,
\be \nonumber 
 \left|\Phi_t'(\zeta_t) (u_{t}-\zeta_t) \right|  \lesssim \frac{t^3}{(t^2 +\kappa+\eta)^{3/2}} \theta^2 + \OO_\prec\left( t\cal E\right) \lesssim \theta^2 + \OO_\prec\left( t \| [ Z]\| + \frac{n^{\e}t}{(t^2+\kappa+ \eta)^{3/2}}\left(t^2 \Psi^2_\theta + \frac tn \right)\right)
 \ee
 with high probability. This concludes \eqref{selfcons_improved2} using the definition of $\Psi_\theta$,  the estimate \eqref{eq_imasymptoics}, and $\kappa\ge C_1t^2$ for $z\in \cal D_\vartheta^2$. For the term $[Z]$, we can bound it as in \eqref{ZAZ} and \eqref{ZBZ}: 
\begin{align}\nonumber
\frac1p\sum_{i\in \cal I_1}\Pi^w_{[ii]}A_{[i]}\Pi^w_{[ii]} \prec n^{-1}\left( \frac1p\sum_i \frac{t}{|d_i -\zeta_t|^4}\right)^{1/2} \lesssim \frac{1}{n}\left( \frac{t}{(t^2+\kappa+\eta)^{5/2}}\right)^{1/2} \lesssim \frac{1}{n(t^2+\kappa+\eta)},
\end{align}
and 
\begin{align}\nonumber
\frac1p\sum_{i\in \cal I_1}\Pi^w_{[ii]}B_{[i]}\Pi^w_{[ii]} \prec \frac1p\sum_i \frac{t\Psi_{\theta} }{|d_i -\zeta_t|^2}\lesssim  \frac{t\Psi_\theta}{(t^2+\kappa+\eta)^{1/2}}\lesssim \Psi_\theta .
\end{align}
Plugging these two estimates into \eqref{selfcons_improved2} we conclude \eqref{selfcons_lemm2}.
\end{proof}

From Lemma \ref{lemm_selfcons_weak2}, we can then derive  a similar weak local law as in Lemma \ref{alem_weak}.
\begin{lemma}[Weak local law]\label{alem_weak2} 
For all $z\in \cal D_{\vartheta}^2$, we have
$$\theta \prec \Psi(z). $$ 
\end{lemma}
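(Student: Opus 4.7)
The plan is to combine the self-consistent equation estimate \eqref{selfcons_lemm2} with the derivative bound $|\Phi_t'(\zeta_t)|\sim 1$ from Lemma \ref{lem derivPhi2} (applied with $\mathfrak z = m_{w,t}$), and then run a standard continuity/bootstrap argument in $\eta$ to remove the a priori assumption that $\theta \le t/(\log n)^2$ (i.e.\ that the event $\Xi$ holds).

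The first step is to relate $u_t-\zeta_t$ to $m-m_{w,t}$. Using the definitions $u_t = (1+c_ntm)^2 z - (1+c_ntm)t(1-c_n)$ and $\zeta_t = (1+c_nt m_{w,t})^2 z - (1+c_nt m_{w,t})t(1-c_n)$, a direct factorization gives
\[
u_t - \zeta_t = c_n t (m-m_{w,t})\bigl[z\bigl(2+c_nt(m+m_{w,t})\bigr) - t(1-c_n)\bigr].
\]
On $\Xi$ together with the bound $|m_{w,t}|\lesssim 1$ from Lemma \ref{lem_asymdensitysquare2}, the bracket above is $\sim 1$ (using $|z|\sim 1$ and $t=\oo(1)$), so $|u_t-\zeta_t|\sim t\theta$. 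Combined with $|\Phi_t'(\zeta_t)|\sim 1$ from \eqref{firstsecondderiv}, the self-consistent equation estimate \eqref{selfcons_lemm2} becomes, with high probability on $\Xi$,
\[
t\theta \;\lesssim\; \theta^2 + n^\e t\,\Psi_\theta + \frac{n^\e t}{n(\kappa+\eta)}\, .
\]
On $\Xi$ we have $\theta^2 \le t\theta/(\log n)^2$, so the $\theta^2$ term is absorbed into the left-hand side, yielding $\theta \lesssim n^\e \Psi_\theta + n^\e/(n(\kappa+\eta))$. Since $\Psi_\theta = \sqrt{(\im m_{w,t}+\theta)/(n\eta)} + 1/(n\eta)$, standard quadratic-inequality manipulation (splitting into the cases $\theta \le \im m_{w,t}$ and $\theta > \im m_{w,t}$) gives $\theta \prec \Psi + 1/(n(\kappa+\eta))$ on $\Xi$. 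For $z\in \cal D_\vartheta^2$ we have $\kappa \gtrsim t^2$ and $\im m_{w,t} \sim \eta/\sqrt{\kappa+\eta}$ by \eqref{eq_imasymptoics}, from which one checks that $1/(n(\kappa+\eta))\lesssim \Psi(z)$; hence $\theta \prec \Psi$ on $\Xi$.

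The remaining issue, which is the main obstacle, is to show that the a priori bound $\theta \le t/(\log n)^2$ defining $\Xi$ actually holds with high probability throughout $\cal D_\vartheta^2$. This is a standard continuity/bootstrap argument. Initialize at $\eta = 10$: the trivial bound \eqref{eq_gbound} and the estimates \eqref{Zestimate3} give $\theta(E+10\ii) \prec n^{-1/2}$, which is well inside $\Xi$. Along a fine lattice of $z$-values in $\cal D_\vartheta^2$ with spacing $n^{-10}$ (say), one uses the Lipschitz continuity $|\partial_z m(z)|\le \eta^{-2}$ and $|\partial_z m_{w,t}(z)|$ bounded by Lemma \ref{lem_partialm} to transfer the estimate from one lattice point to neighboring ones. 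At each step, the conclusion $\theta \prec \Psi \ll t/(\log n)^2$ (which holds on $\cal D_\vartheta^2$ since $\Psi \lesssim (n\eta)^{-1/2}(\im m_{w,t})^{1/2} + (n\eta)^{-1} \ll t$) is strictly stronger than the input bound, closing the bootstrap. A union bound over the $n^{\OO(1)}$ lattice points then promotes the pointwise statement to the uniform estimate $\theta \prec \Psi$ on $\cal D_\vartheta^2$.
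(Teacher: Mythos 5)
Your overall route is the one the paper intends (it omits the details, deferring to Proposition 5.6 of \cite{edgedbm}): use \eqref{selfcons_lemm2} together with the stability bound $|\Phi_t'(\zeta_t)|\sim 1$, absorb the $\theta^2$ term on $\Xi$, and propagate in $\eta$ by continuity. Your algebra on $\Xi$ is correct: the factorization giving $|u_t-\zeta_t|\sim t\theta$, the absorption $\theta^2\le t\theta/(\log n)^2$, and the reduction $\theta\prec \Psi+\frac{1}{n(\kappa+\eta)}\lesssim\Psi$ (the last step using $\kappa\gtrsim t^2\gtrsim n^{-2/3+\epsilon}$) are all fine.

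The gap is in the step that closes the bootstrap. You assert that $\Psi(z)\ll t/(\log n)^2$ on all of $\mathcal D_\vartheta^2$, and in particular that $(n\eta)^{-1}\ll t$. This is false on part of the domain: the only lower bound on $\eta$ built into $\mathcal D^{out}_\vartheta$ is $n\eta\sqrt{\kappa+\eta}\ge n^\vartheta$, so at energies with $\kappa\sim 1$ the domain allows $\eta\sim n^{\vartheta-1}$, where $(n\eta)^{-1}\sim n^{-\vartheta}$. Since $t\le n^{-\epsilon/2}$ (and may be as small as $n^{-1/3+\epsilon/2}$) while $\vartheta>0$ is an arbitrary constant, one has $(n\eta)^{-1}\gg t$ whenever $n^\vartheta t\ll\sqrt{\kappa+\eta}$ (e.g.\ whenever $\vartheta<\epsilon/2$). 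At such points the conclusion $\theta\prec\Psi$ is \emph{weaker} than the input bound $\theta\le t/(\log n)^2$ defining $\Xi$, so the lattice/continuity step cannot re-establish $\Xi$ at the next (smaller) value of $\eta$: at fixed $E$ far outside the edge your bootstrap stalls at $\eta\sim(\log n)^2/(nt)$ and never reaches the bottom of $\mathcal D_\vartheta^2$.

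This corner can be handled, but it needs an extra ingredient you have not supplied. One repair is to run the argument with a $z$-dependent threshold: the linearization behind \eqref{selfcons_lemm2} and Lemma \ref{lem derivPhi2} only requires $t\theta\ll\min_i|d_i-\zeta_t|$, which by \eqref{eq_dtheta2bound1} is $\gtrsim t^2+\kappa+\eta$; on the event $\{\theta\le(\kappa+\eta)/(t(\log n)^2)\}$ the quadratic term, whose true coefficient is $t^3(t^2+\kappa+\eta)^{-3/2}$, is still absorbed into $t\theta$, and one checks that $\Psi+\frac{1}{n(\kappa+\eta)}\ll(\kappa+\eta)/t$ holds on all of $\mathcal D_\vartheta^2$ (using $n\eta\sqrt{\kappa+\eta}\ge n^\vartheta$, $\sqrt{\kappa+\eta}\gtrsim t$ and $nt^3\gg 1$), so the bootstrap closes everywhere. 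Alternatively, first prove the lemma for $\eta\gtrsim(\log n)^C/(nt)$ and then extend to smaller $\eta$ at fixed $E$ by the standard monotonicity argument for Stieltjes transforms. As written, your sketch does not cover this part of $\mathcal D_\vartheta^2$.
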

\begin{proof}
The proof of this lemma is similar to the one for Proposition 5.6 of \cite{edgedbm}. In fact, it is much simpler because the proof uses the estimate \eqref{selfcons_lemm2}, which takes a much simpler form than \eqref{selfcons_lemm}. We omit the details.
\end{proof}

Similarly to Lemma \ref{abstractdecoupling}, we have the following fluctuation averaging estimate on $[Z]$. 

\begin{lemma}[Fluctuation averaging] \label{abstractdecoupling2}
Suppose that $\theta\prec \Phi$, where $\Phi$ is a positive, $n$-dependent deterministic function on $\cal D_{\vartheta}^2$ satisfying that
$$\frac{1}{n\eta} \le \Phi \le  \frac{t}{(\log n)^2}.$$   
Then for all $z \in \cal D_{\vartheta}^2$ we have
\begin{equation}\label{flucaver_ZZ2}
\left|[Z]\right|  \prec  \frac{1}{n(\kappa+\eta)}+ \frac{\Phi}{n\eta\sqrt{\kappa+\eta}}.
\end{equation}
\end{lemma}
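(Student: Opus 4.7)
The plan is to mirror the strategy used to establish Lemma \ref{abstractdecoupling}, replacing the deterministic inputs coming from Lemma \ref{lem_domian1control} with their $\mathcal{D}_{\vartheta}^2$-counterparts in Lemma \ref{lem_domian2control}, and keeping in mind that throughout $\mathcal{D}_{\vartheta}^2$ we have the cleaner bounds $\kappa+\eta \geq C_1 t^2$, $|d_i-\zeta_t| \gtrsim \kappa+\eta+t^2$, and the integral estimate $\int |x-\zeta_t|^{-a}\dd\mu_{w,0}(x) \lesssim (\kappa+\eta+t^2)^{-(a-3/2)}$. As in the proof of Lemma \ref{lemm_selfcons_weak}, I would split $Z_{[i]} = A_{[i]} + B_{[i]}$, where $A_{[i]} = \sqrt{zt}\bigl(\begin{smallmatrix} 0 & X_{i\bar i}\\ X_{i\bar i} & 0\end{smallmatrix}\bigr)$ collects the linear-in-$X$ piece of \eqref{Zii} and $B_{[i]}$ collects the (quadratic, $\mathbb{E}_{[i]}$-centered) remainder. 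Correspondingly write $[Z] = [Z]_A + [Z]_B$.

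For $[Z]_A = \frac{1}{p}\sum_i \Pi^w_{[ii]} A_{[i]} \Pi^w_{[ii]}$, the $X_{i\bar i}$'s are i.i.d. centered with variance $n^{-1}$ and the matrix factors give a deterministic weight of order $|d_i-\zeta_t|^{-2}$. A single application of Lemma \ref{largedeviation} combined with \eqref{eq_dtheta2bound2} for $a=4$ yields
\[
\|[Z]_A\| \prec \frac{1}{n}\sqrt{\frac{t}{(\kappa+\eta+t^2)^{5/2}}} \lesssim \frac{1}{n(\kappa+\eta)},
\]
where the last step uses $t \le \sqrt{\kappa+\eta}$ on $\mathcal{D}_{\vartheta}^2$. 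This produces the first term in \eqref{flucaver_ZZ2}.

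For $[Z]_B$ I would compute the moment $\mathbb{E}\|[Z]_B\|^{2q}$ for arbitrary even $q$ via the standard iterative resolvent expansion of Erd\H{o}s--Knowles--Yau--Yin, used in exactly the matrix-valued form appearing in the proof of Lemma \ref{abstractdecoupling}. Expanding each $B_{[i]}$ and each residual $R^{[i]}$ using \eqref{resolvent8}--\eqref{eq_res3} and invoking the $\mathbb{E}_{[i]}$-centeredness together with the partial expectation structure, one obtains a sum of graphs in which each resolvent entry is controlled by the input $\theta \prec \Phi$ and Lemma \ref{Z_lemma} (which gives $\|Z_{[i]}\|+|Z_\mu| \prec t\Psi_\Phi$ and $\Lambda_o \prec t\Psi_\Phi$ on the event $\{\theta \lesssim \Phi\}$). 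Bounding each graph term by the integral estimates of Lemma \ref{lem_domian2control} produces a uniform bound
\[
\|[Z]_B\| \prec \frac{t\,\Psi_\Phi^2}{\sqrt{\kappa+\eta}} \lesssim \frac{\Phi}{n\eta\sqrt{\kappa+\eta}},
\]
where the last inequality uses $\Psi_\Phi^2 \lesssim \Phi/(n\eta) + (n\eta)^{-2}$, the hypothesis $\Phi \ge (n\eta)^{-1}$, and $t \le \sqrt{\kappa+\eta}$. Adding the $A$- and $B$-contributions gives \eqref{flucaver_ZZ2}.

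The principal obstacle is the combinatorial bookkeeping of the graph expansion. Unlike \cite[Lemma 5.8]{edgedbm}, where $Z$ is a scalar, our $Z_{[i]}$ is a $2\times 2$ matrix sandwiched between $\Pi^w_{[ii]}$ factors; however, since $\|\Pi^w_{[ii]}\|\lesssim|d_i-\zeta_t|^{-1}$ absorbs all matrix structure into a scalar weight as in \eqref{ZAZ}--\eqref{ZBZ}, this is essentially the notational extension already used in the proof of Lemma \ref{abstractdecoupling}. The real work is to verify that the same counting of free indices that produced \eqref{flucaver_ZZ} in the $\mathcal D_\vartheta^1$ regime now produces the claimed exponent $\sqrt{\kappa+\eta}^{\,-1}$ when each spectral integral is replaced by $(\kappa+\eta)^{-(a-3/2)}$ from \eqref{eq_dtheta2bound2}. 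Once the graph terms are weighted correctly, the result follows by Markov's inequality and the definition of $\prec$.
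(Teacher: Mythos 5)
Your proposal takes essentially the same route as the paper, which proves this lemma by invoking the standard high-moment fluctuation-averaging argument of \cite{edgedbm} (Lemma 5.13 there) adapted to the matrix-valued $[Z]$ using the $\mathcal D_{\vartheta}^2$ estimates of Lemma \ref{lem_domian2control}; your split $[Z]=[Z]_A+[Z]_B$ with a direct large-deviation bound for the linear part reproduces exactly the computation the paper does in \eqref{ZAZ} and its $\mathcal D_\vartheta^2$ analogue, and deferring the graph/combinatorial bookkeeping to that reference is precisely what the paper itself does. One small quibble: in your final display the step $t\Psi_\Phi^2/\sqrt{\kappa+\eta}\lesssim \Phi/(n\eta\sqrt{\kappa+\eta})$ silently drops the $\im m_{w,t}/(n\eta)\sim 1/(n\sqrt{\kappa+\eta})$ contribution to $\Psi_\Phi^2$, which produces an extra term of size $t/(n(\kappa+\eta))$ — harmless, since it is absorbed by the first term $1/(n(\kappa+\eta))$ on the right-hand side of \eqref{flucaver_ZZ2}.
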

\begin{proof}
The proof is similar to the one for Lemma 5.13 of \cite{edgedbm}. We omit the details. 
\end{proof}

Now we give the proof of Proposition \ref{region12}.

\begin{proof}[Proof of Proposition  \ref{region12}]
With (\ref{selfcons_improved2}), \eqref{firstsecondderiv} and Lemma \ref{abstractdecoupling2}, we get
\be\label{simpleiter1} \theta\prec  \frac{n^\e }{(n\eta)^2\sqrt{\kappa+\eta}}+\frac{n^\e }{n(\kappa+\eta)} + \frac{n^\e\Phi}{n\eta\sqrt{\kappa+\eta}}.\ee
Iterating this estimate, we get $\theta\prec n^\e /(n\eta)$. Plugging $\Phi=n^\e /(n\eta)$ into \eqref{simpleiter1}, we conclude Proposition \ref{region12} since $\e$ can be arbitrarily small.
\end{proof}

\subsection{The anisotropic local law}\label{sec region22}

In this subsection, we complete the proof Theorem \ref{thm_local}. Note that Proposition \ref{region11} and Proposition \ref{region12} already conclude the averaged local laws \eqref{averin} and \eqref{averout} by \eqref{Yt=Wt}. It remains to prove the anisotropic local law \eqref{aniso_law}. Since most arguments are standard, we only give an outline of the proof and refer the reader to relevant references for more details. 

By \eqref{Yt=Wt}, we see that to prove \eqref{aniso_law}, it suffices to prove that for any deterministic unit vectors $\mathbf u, \mathbf v \in \mathbb R^{p+n}$,
\begin{equation}\label{aniso_laww}
\left|  \mathbf u^\top (\Pi^w(z))^{-1}\left[R(z)-\Pi^w(z)\right] (\Pi^w(z))^{-1}\mathbf v   \right|\prec  t\Psi(z) + \frac{t^{1/2}}{n^{1/2}}.
\end{equation}
We first prove an entrywise version of this result, that is, for any $\fa,\fb\in \cal I$,
 \begin{equation}\label{entry_laww}
\left|  \left[(\Pi^w(z))^{-1}\left[R(z)-\Pi^w(z)\right] (\Pi^w(z))^{-1}\right]_{\fa\fb}  \right|\prec  t\Psi(z) + \frac{t^{1/2}}{n^{1/2}}.
\end{equation}

First, plugging \eqref{averin} into \eqref{eiemu}, we get
 \be\nonumber \|\e_{[i]}\|+|\e_{[\mu]}| \prec  t\Psi  +t^{1/2}n^{-1/2}  .\ee 
 Note that by \eqref{eq_dtheta1control} and \eqref{eq_dtheta2bound1}, we can check that (recall \eqref{boundpsi})
 $$ \|\Pi_{[ii]}^w(z)\| \cdot \left(t\Psi  +t^{1/2}n^{-1/2} \right)\le \frac{t\Psi  +t^{1/2}n^{-1/2} }{\min_i |d_i-\zeta_t|}\lesssim n^{-\vartheta/2},\quad z\in \cal D_\vartheta.$$
Now using \eqref{Gmumu} and \eqref{Gii}, we obtain that for $\mu \ge 2p+1$,
\be\label{Rmumu}
R_{\mu\mu}=\frac{1}{ - z- z c_n t m_{w,t} +\OO_\prec ( t\Psi  +t^{1/2}n^{-1/2} )}= \Pi_{\mu\mu}^w + \OO_\prec ( t\Psi  +t^{1/2}n^{-1/2} ),
\ee
 and that for $i\in \cal I_1$,
 \begin{align}
 (\Pi_{[ii]}^w)^{-1} \left( R_{[ii]}- \Pi_{[ii]}^w\right)(\Pi_{[ii]}^w)^{-1}&=(\Pi_{[ii]}^w)^{-1}\left[\left( (\Pi_{[ii]}^w)^{-1} +\OO_\prec ( t\Psi  +t^{1/2}n^{-1/2} ) \right)^{-1}-\Pi_{[ii]}^w\right](\Pi_{[ii]}^w)^{-1} \nonumber\\
 &\prec t\Psi  +t^{1/2}n^{-1/2}.\label{Rii}
 \end{align}
 These two estimates give the diagonal estimates in \eqref{entry_laww}. Combining \eqref{Rmumu} and \eqref{Rii} with \eqref{Zestimate2}, we can also obtain the off-diagonal estimates, which conclude the entrywise local law \eqref{entry_laww}.
 
Next we prove the anisotropic local law \eqref{aniso_law} using \eqref{entry_laww}. Due to the polarization identity 
$$\bu^\top M \bv = \frac{1}{2}\left(\bu+\bv\right)^\top M \left(\bu+\bv\right)-  \frac{1}{2}\left(\bu-\bv\right)^\top M \left(\bu-\bv\right),$$
for any symmetric matrix $M$, it suffices to take $\bu=\bv$ in \eqref{aniso_laww}. For any vector $\bu\in \R^{p+n}$ and $i\in \cal I_1$, we denote $u_{[i]}:=\begin{pmatrix}u_i \\ u_{\overline i} \end{pmatrix}$. Then with \eqref{entry_laww}, we have that for any deterministic unit vector $\bu\in \R^{p+n}$,
\begin{equation}\label{eq_iso_1}
\begin{split}
\left|  \bu ^\top (\Pi^w)^{-1}(R(z)-\Pi^w(z))(\Pi^w)^{-1}\bu \right| \prec t\Psi  +t^{1/2}n^{-1/2} + \Big| \sum_{i\ne j \in \cal I_1}  u^\top_{[i]}  \left[(\Pi^w)^{-1}R(\Pi^w)^{-1}\right]_{[ij]} u_{[j]}  \Big| \\
+  \Big| {\sum_{\mu \ne \nu \ge 2p+1} {u_{ \mu} \left[(\Pi^w)^{-1}R(\Pi^w)^{-1}\right]_{ \mu\nu } u_{\nu} } } \Big|   +2 \Big| {\sum_{i\in \cal I_1 ,\mu \ge 2p+1 }  u_{[i]}^\top \left[(\Pi^w)^{-1}R(\Pi^w)^{-1}\right]_{[i] \mu} u_{ \mu } } \Big| .
\end{split}
\ee
We need to bound each sum on the right hand side. With Markov's inequality, it reduces to proving the following high moment bounds: for any fixed $a\in \N$,
\begin{align}
& \mathbb E\Big|  \sum_{i\ne j \in \cal I_1}  u^\top_{[i]}  \left[(\Pi^w)^{-1}R(\Pi^w)^{-1}\right]_{[ij]} u_{[j]} \Big|^{2a} \prec \left(t\Psi  +t^{1/2}n^{-1/2}\right)^{2a}; \label{eqiso1} \\
& \mathbb E\Big| {\sum_{\mu \ne \nu \ge 2p+1} {u_{ \mu} \left[(\Pi^w)^{-1}R(\Pi^w)^{-1}\right]_{ \mu\nu } u_{\nu} } }\Big|^{2a} \prec \left(t\Psi  +t^{1/2}n^{-1/2}\right)^{2a}; \label{eqiso2}  \\
&\mathbb E \Big| {\sum_{i\in \cal I_1 ,\mu \ge 2p+1 }  u_{[i]}^\top \left[(\Pi^w)^{-1}R(\Pi^w)^{-1}\right]_{[i] \mu} u_{ \mu } }  \Big|^{2a} \prec \left(t\Psi  +t^{1/2}n^{-1/2}\right)^{2a}.\label{eqiso3} 
\end{align}
The proof of these estimates is based on a polynomialization method developed in \cite[section 5]{isotropic}. The core of the proof is some combinatorial results, which have been discussed in details in \cite{isotropic,XYY_circular,XYY_VESD,yang_CCA3}. In particular, the setting in \cite{XYY_circular} is most close to the setting here, where the main difference is that the $d_i$'s are all replaced with a fixed complex number $w\in \C$ in \cite{XYY_circular}. However, in the proof we only need the bound $|w|=\OO(1)$ in the proof, which also holds for $d_i$'s, i.e. $\max_i |d_i|=\OO(1)$. So we omit the details. This concludes the proof of \eqref{aniso_laww}, which further implies \eqref{aniso_law}.

\section{Proof of Theorem \ref{thm_local2}}\label{sec localregular}

As in the proof of Theorem \ref{thm_local}, in order to prove Theorem \ref{thm_local2}, it suffices to use the resolvent $R(z)$ in \eqref{eqn_defGd} and prove \eqref{averin2} and \eqref{averout2} for $m$ and $\mm$ in \eqref{defn_md}. 

Corresponding to \eqref{eq_defnzeta} and \eqref{simplePhizeta}, we define the following functions with $m_{c,t}$:
\be\label{eq_zetaex}
\zeta_c\equiv \zeta_c(t):= z(1+c_ntm_{c,t})^2 -(1-c_n)t(1+c_ntm_{c,t}),\ee
and
$$\Phi_c(\zeta)\equiv \Phi_{c}(\zeta,t):=\zeta(1-c_nt m_{c}(\zeta))^2+(1-c_n)t(1-c_ntm_{c}(\zeta)).$$
First with Theorem \ref{thm_local}, we can prove that Theorem \ref{thm_local2} holds for $t\gg n^{-1/3}$.
\begin{proposition}\label{prop_local}
For any fixed constants $\e_0>0$, Theorem \ref{thm_local2} holds for all $t\ge n^{-1/3+\e_0}$.
\end{proposition}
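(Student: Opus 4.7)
The plan is to reduce Theorem \ref{thm_local2} in the regime $t\ge n^{-1/3+\e_0}$ to Theorem \ref{thm_local} applied conditionally on $Y$, combined with a deterministic comparison $|m_{w,t}-m_{c,t}|\ll (n\eta)^{-1}$. The argument has three steps.

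\textbf{Step 1: Regularity of $V$.} Conditioning on $Y$, so that $m_V$ is treated as deterministic, I would first verify that Assumption \ref{assm regularW} implies $V$ is $\eta_*$-regular in the sense of Definition \ref{assumption_edgebehavior} with high probability, for some choice $\eta_*=n^{-2/3+\vartheta'}$ with $\vartheta'>0$ arbitrarily small. Indeed, the square-root behavior \eqref{sqrtrhoc} of $\rho_c$ makes $m_c$ trivially $0$-regular, and the local-law hypothesis $|m_V-m_c|\prec(n\eta)^{-1}+\cdots$ transfers the square-root estimates \eqref{regular1}--\eqref{regular2} from $m_c$ to $m_V$ on all scales $\eta\gtrsim n^{-2/3+\vartheta'}$. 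Since $t\ge n^{-1/3+\e_0}$, we may choose $\vartheta'<2\e_0$ so that $n^{\epsilon}\eta_*\le t^2\le n^{-\epsilon}$ for some small $\epsilon>0$, thereby satisfying the quantitative hypothesis of Theorem \ref{thm_local}.

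\textbf{Step 2: Apply Theorem \ref{thm_local} conditionally.} With $V$ satisfying the regularity condition on a high-probability event, Theorem \ref{thm_local} yields, uniformly on the associated domain $\mathcal{D}_\vartheta$,
\begin{equation*}
|m(z)-m_{w,t}(z)|\prec\frac{1}{n\eta}, \qquad |m(z)-m_{w,t}(z)|\prec\frac{1}{n(\kappa+\eta)}+\frac{1}{(n\eta)^2\sqrt{\kappa+\eta}} \text{ on }\mathcal{D}_\vartheta^{out}.
\end{equation*}
Since $t\ge n^{-1/3+\e_0}$ and $\lambda_{+,t}$ will be shown in Step 3 to differ from $\lambda_{c,t}$ by at most $\OO_\prec(n^{-2/3+\vartheta''})$, the domain $\mathcal{D}^c_\vartheta$ from \eqref{eq_domanthetas} is contained in $\mathcal{D}_{\vartheta/2}\cup\mathcal{D}_{\vartheta/2}^{out}$ (after adjusting $\vartheta$ slightly), so these bounds apply on $\mathcal{D}^c_\vartheta$.

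\textbf{Step 3: Deterministic comparison $m_{w,t}\approx m_{c,t}$.} This is the main obstacle. The idea is to exploit that both $\zeta_t(z)$ and $\zeta_c(z)$ satisfy $\Phi_t(\zeta_t)=z=\Phi_c(\zeta_c)$, where $\Phi_t$ and $\Phi_c$ differ only through the replacement of $m_{w,0}=m_V$ by $m_c$. Writing
\begin{equation*}
\Phi_c(\zeta_t)-\Phi_c(\zeta_c)=\Phi_c(\zeta_t)-\Phi_t(\zeta_t),
\end{equation*}
the right-hand side is, by the explicit form \eqref{eq_subcompansion}, a polynomial in $m_V(\zeta_t)-m_c(\zeta_t)$ with coefficients of size $\OO(t)$. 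By Assumption \ref{assm regularW} and the control of $\im\zeta_t$ from Lemma \ref{lem_xigeneral} and \eqref{zetasimeta+t2}, we have $|m_V(\zeta_t)-m_c(\zeta_t)|\prec(n\im\zeta_t)^{-1}$ with appropriate edge corrections. On the left, by the mean value theorem and the lower bound $|\Phi_c'|\gtrsim \min\{1,\sqrt{\kappa+\eta}/t\}$ established (with $m_c$ replacing $m_{w,0}$) exactly as in Lemma \ref{lem_derivativecontrol} and Lemma \ref{lem_xigeneral}, we can solve for $\zeta_t-\zeta_c$ and hence for $m_{w,t}-m_{c,t}$ via \eqref{eq_keyequation11}. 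Pushing the $(n\eta)^{-1}$-type estimates through the algebra, I expect to obtain
\begin{equation*}
|m_{w,t}(z)-m_{c,t}(z)|\prec \frac{t}{n\eta(t+\sqrt{\kappa+\eta})}\quad\text{on }\mathcal{D}^c_\vartheta,
\end{equation*}
which is indeed $o((n\eta)^{-1})$ inside the bulk and matches the refined error on the outside domain.

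The principal technical difficulty is the propagation of the local-law-type bound on $m_V-m_c$ through the subordination equation near the edge, where $\Phi_c'(\zeta_c)$ vanishes; there one must use the stronger near-edge expansion \eqref{Tayloredge2} and the quadratic coefficient estimate \eqref{secondderivsim} to avoid losing more than $n^{\epsilon}$. Combining Steps 2 and 3 by the triangle inequality gives \eqref{averin2} and \eqref{averout2}, completing the proof.
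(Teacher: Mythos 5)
Your overall strategy is exactly the paper's: check that Assumption \ref{assm regularW} plus \eqref{Immc_c} makes $V$ $\eta_*$-regular with $\eta_*=n^{-2/3+\delta}$, invoke Theorem \ref{thm_local} for $t\ge n^{-1/3+\e_0}$, and then reduce everything to the deterministic comparison of $m_{w,t}$ and $m_{c,t}$ through the identity $\Phi_t(\zeta_t)=z=\Phi_c(\zeta_c)$, i.e.\ \eqref{trivialPhieq}, using the derivative bounds of Lemmas \ref{lem derivPhi} and \ref{lem derivPhi2} and the near-edge quadratic expansion when $\Phi_c'$ degenerates. However, there are two concrete gaps in your Step 3. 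First, the bound you announce, $|m_{w,t}-m_{c,t}|\prec \frac{t}{n\eta(t+\sqrt{\kappa+\eta})}$, is not strong enough for the outside law \eqref{averout2}: take $\kappa\sim 1$, $\eta\sim n^{-2/3+\vartheta}$, $t=n^{-1/3+\e_0}$; your bound is of order $n^{-2/3+\e_0-\vartheta}$, whereas \eqref{averout2} requires $n^{-1}+n^{-2/3-2\vartheta}$, which is smaller by a factor $n^{\e_0+\vartheta}$. The correct comparison on $E\ge\lambda_{c,t}+C_1t^2$ must use the \emph{outside} form of Assumption \ref{assm regularW} evaluated at $\zeta_t$ (legitimate because $|\zeta_t-\lambda_+|\gtrsim t^2+\kappa+\eta$ and $\im\zeta_t\gtrsim\eta$ by \eqref{Dtheta2est} and \eqref{zetasimeta+t2}), which gives the error $\frac{t}{n(\kappa+\eta)}+\frac{t}{(n\eta)^2\sqrt{\kappa+\eta}}$ for $\Phi_t(\zeta_t)-\Phi_c(\zeta_t)$ and hence, after dividing by $t|\Phi_c'(\zeta_c)|\sim t$, exactly the bounds \eqref{averout3} and \eqref{selfprior33}; your ``$(n\eta)^{-1}$-type'' bookkeeping loses this refinement.

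Second, the mean-value/Taylor step is not self-justifying: to expand $\Phi_c(\zeta_t)-\Phi_c(\zeta_c)$ around $\zeta_c$ with the stated derivative bounds you need the a priori input $|\zeta_t-\zeta_c|\ll \min_{x\in\supp\rho_{c,t}}|x-\zeta_c|$, equivalently a priori closeness of $m_{w,t}$ and $m_{c,t}$ as in \eqref{priorassm1}, \eqref{priorassm2}, \eqref{priorassm33}. The paper closes this loop by first establishing the estimate at $\eta\gtrsim 1$ (cf.\ \eqref{etasim1}) and then running a self-improving continuity argument in $\eta$ (and across the thresholds $\kappa+\eta=\tau_1 t^2$ and $E=\lambda_{c,t}+C_1t^2$); your proposal omits this bootstrap, and without it the expansion near the edge, where $\Phi_c'(\zeta_c)\sim\sqrt{\kappa+\eta}/t$ is small and the quadratic term with $|\Phi_c''(\zeta_c)|\sim t^{-2}$ must be retained, does not yield a closed estimate. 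With the outside form of the assumption used at $\zeta_t$ and the continuity argument added, your outline coincides with the paper's proof.
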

\begin{proof}
  As a consequence of the square root behavior of $\rho_c$ around $\lambda_+$ in \eqref{sqrtrhoc}, we get that for any $z$ such that $\lambda_+ - 3c_V/4\le E \le \lambda_+ - 3c_V/4$ and $0< \eta \le 10$, 
\begin{equation}\label{Immc_c}
 \im m_c(z) \sim \begin{cases}
    {\eta}/{\sqrt{\kappa+\eta}}, & \text{ if } E\geq \lambda_+ \\
    \sqrt{\kappa+\eta}, & \text{ if } E \le \lambda_+\\
  \end{cases}.
\end{equation}
Let $\eta_*=n^{-2/3+\delta}$ for a sufficiently small constant $\delta>0$. Combining \eqref{Immc_c} with Assumption \ref{assm regularW}, we obtain that $W$ is $\eta_*$-regular. Now using Theorem \ref{thm_local}, we get that for all $t\ge n^{-1/3+\e_0}$, the averaged local laws \eqref{averin} and \eqref{averout} hold. It remains to show that $m_{w,t}$ and $m_{c,t}$ are close to each other. More precisely, we will show that for $z\in \cal D^c_\vartheta$, 
\be\label{averin3}
|m_{w,t}-m_{c,t}|\prec \frac1{n\eta},\quad \text{for }\ \ E\le\lambda_{c,t},
\ee
and
\be\label{averout3}
|m_{w,t}-m_{c,t}|\prec \frac{1}{n(\kappa+\eta)}+\frac{1}{(n\eta)^2\sqrt{\kappa+\eta}},\quad \text{for }\ \ E\ge\lambda_{c,t}.
\ee
Combining these estimates with \eqref{averin} and \eqref{averout}, we conclude Proposition \ref{prop_local}.

 Recall that $\zeta_t$ and $\zeta_{c}$ satisfy $\Phi_t(\zeta_t)=z$ and $\Phi_c(\zeta_c)=z$, respectively. Thus we have the equation
\be\label{deterself}
0=\Phi_t(\zeta_t)- \Phi_c(\zeta_c).
\ee
We first consider the case $\eta\ge c_0$ for some constant $c_0>0$. In this case, using Assumption \ref{assm regularW} it is easy to check the following estimates:
$$|\zeta_t|\sim \im \zeta_t \sim 1, \quad |\zeta_c|\sim \im \zeta_c\sim  1,\quad | \zeta_t - \zeta_c|\gtrsim t|m_{w,t}-m_{c,t}|,$$
and
$$ |\Phi_t(\zeta_t)-\Phi_t(\zeta_c)|\gtrsim |\zeta_t-\zeta_c|,\quad   |\Phi_c(\zeta_t)-\Phi_c(\zeta_c)|\gtrsim |\zeta_t-\zeta_c|, \quad |\Phi_t(\zeta_t)-\Phi_c(\zeta_t)|+|\Phi_t(\zeta_c)-\Phi_c(\zeta_c)| \prec tn^{-1}.$$
Thus from equation \eqref{deterself} we get
\be\label{etasim1}
t|m_{w,t}-m_{c,t}|\lesssim |\zeta_t-\zeta_c| \lesssim  |\Phi_c(\zeta_t)-\Phi_c(\zeta_c)| =|\Phi_t(\zeta_t)-\Phi_c(\zeta_t)| \prec tn^{-1}.
\ee
This concludes \eqref{averin2} and \eqref{averout2} for the $\eta\gtrsim 1$ case.

 Now we consider the case $E\le \lambda_{c,t}+ C_1t^2$ for some constant $C_1>0$. As in the proof of Proposition \ref{region11}, we first assume that the following estimate holds:
\be\label{priorassm1}|m_{w,t}-m_{c,t}|\le \frac{t+\im m_{c,t}}{(\log n)^2}.\ee
Under this estimate, we have $|x-\zeta_c|\gg |\zeta_c-\zeta_t|$ for $x\in \supp \rho_{c,t}$ by \eqref{eq_dtheta1control}. Then we rewrite \eqref{deterself} as
\be\label{trivialPhieq}\left[\Phi_t(\zeta_t)- \Phi_c(\zeta_t) \right]+\left[\Phi_c(\zeta_t) - \Phi_c(\zeta_c)\right]=0 .\ee
Now performing the Taylor expansion to $\Phi_c(\zeta_t) - \Phi_c(\zeta_c)$ as in \eqref{Gii5}, and applying Assumption \ref{assm regularW} to $\Phi_t(\zeta_t)- \Phi_c(\zeta_t)$, we obtain from \eqref{trivialPhieq} that 
\be\label{selfdeter1}\left|\Phi_c'(\zeta_c) (\zeta_t-\zeta_c) +\frac12 \Phi_c''(\zeta_c) (\zeta_{t}-\zeta_c)^2\right|  \lesssim   \frac{t^2 +t \sqrt{\kappa+\eta}}{(t^2 + \im \zeta_c )^{3}} \cdot t^3 |m_{w,t}-m_{c,t}|^3 + \OO_\prec\left( \frac{t}{n\im \zeta_c}\right),\ee
where we used \eqref{secondderiv}. For $\kappa+\eta\ge \tau_1 t^2$, we have $|\Phi_c'(\zeta_c)|\sim 1$. Then using Lemma \ref{lem derivPhi} and \eqref{selfdeter1}, we obtain that
\be\label{selfprior1}
t|m_{w,t}-m_{c,t}|\lesssim |\zeta_t-\zeta_c| \prec \frac{t}{n\im \zeta_t}  \ \Rightarrow \ |m_{w,t}-m_{c,t}|\prec \frac{1}{n(\eta+ t\sqrt{\kappa+\eta})},
\ee
as long as \eqref{priorassm1} holds. Note that \eqref{selfprior1} implies \eqref{priorassm1} since $(n\eta)^{-1} \le n^{-\vartheta}\sqrt{\kappa+\eta} \le (\log n)^{-2}(t+\im m_{c,t})$ for  $z\in \cal D^c_\vartheta$ with $E\le \lambda_{c,t}+ C_1t^2$. Thus starting from \eqref{etasim1} for the $\eta\gtrsim 1$ case, using a standard continuity argument, we obtain that \eqref{selfprior1} holds for $\kappa+\eta\ge \tau_1 t^2$ without assuming \eqref{priorassm1}. On the other hand, if $\kappa+\eta\le \tau_1 t^2$, we have $|\Phi_c'(\zeta_c)|\sim \sqrt{\kappa+\eta}/t$ and $|\Phi_c''(\zeta_c)|\sim t^{-2}$ by Lemma \ref{lem derivPhi}. Suppose that the following estimate holds:
\be\label{priorassm2}|m_{w,t}-m_{c,t}|\le \frac{\sqrt{\kappa+\eta}}{(\log n)^2}.\ee
Then from \eqref{selfdeter1}, we obtain that
\be\label{selfprior2}
|m_{w,t}-m_{c,t}|\lesssim   \frac{|m_{w,t}-m_{c,t}|^2}{\sqrt{\kappa+\eta}} + \OO_\prec\left( \frac{t}{n\im \zeta_c\cdot \sqrt{\kappa+\eta}}\right) \ \Rightarrow \ |m_{w,t}-m_{c,t}|\prec \frac{1}{n(\kappa+\eta)}.
\ee
Note that \eqref{selfprior2} implies both \eqref{priorassm1} and \eqref{priorassm2}. Moreover, we have shown that \eqref{priorassm1} and \eqref{priorassm2} hold when $\kappa+\eta=\tau_1 t^2$. Thus using a standard continuity argument, we obtain that \eqref{selfprior2} holds for $\kappa+\eta\le \tau_1 t^2$ without assuming \eqref{priorassm1} and \eqref{priorassm2}. Combining \eqref{selfprior1} and \eqref{selfprior2}, we conclude \eqref{averin3} and \eqref{averout3} for $E\le \lambda_{c,t}+ C_1t^2$ (here we also used that $\eta+ t\sqrt{\kappa+\eta}\gtrsim \kappa+\eta$ for $\lambda_{c,t}\le E\le \lambda_{c,t}+ C_1t^2$). 

Next we consider the case $E\ge \lambda_{c,t}+C_1 t^2$. We first assume that the following estimate holds:
\be\label{priorassm33}|m_{w,t}-m_{c,t}|\le \frac{t}{(\log n)^2}.\ee
Under this estimate, we have $|x-\zeta_c|\gg |\zeta_c-\zeta_t|$ for $x\in \supp \rho_{c,t}$. As above, using \eqref{firstsecondderiv} and Assumption \ref{assm regularW} we can derive from \eqref{trivialPhieq} that
\be\label{selfdeter2}
\left|\Phi_c'(\zeta_c) (\zeta_t-\zeta_c)\right|  \lesssim   \frac{t}{(t^2 + \kappa+\eta)^{3/2}} \cdot t^2 |m_{w,t}-m_{c,t}|^2 + \OO_\prec\left( \frac{t}{n|\zeta_c-\lambda_+|}+ \frac{t}{(n\im \zeta_c)^2|\zeta_c-\lambda_+|^{1/2} }\right).\ee
Together with $|\Phi_c'(\zeta_c)|\sim 1$ and $|\zeta_c-\lambda_+|\gtrsim  t^2 + \kappa +\eta $ by \eqref{eq_dtheta2bound1}, \eqref{selfdeter2} implies that 
\be\label{selfprior33}
t|m_{w,t}-m_{c,t}|\lesssim |\zeta_t-\zeta_c| \prec \frac{t}{n(\kappa+\eta)}+\frac{t}{(n\eta)^2\sqrt{\kappa+\eta}}\Rightarrow |m_{w,t}-m_{c,t}| \prec \frac{1}{n(\kappa+\eta)}+\frac{1}{(n\eta)^2\sqrt{\kappa+\eta}} .
\ee
Note that \eqref{selfprior33} implies \eqref{priorassm33} since $\eta\ge n^{-2/3+\vartheta}$ and $t\gg n^{-1/3}$. Moreover, we have shown that \eqref{selfprior33} holds for $E=\lambda_{c,t}+C_1t^2$. Hence using a standard continuity argument, we obtain that \eqref{selfprior33} holds for $E\ge \lambda_{c,t}+C_1 t^2$ without assuming \eqref{priorassm33}.
This concludes \eqref{averout3}  for $E\ge \lambda_{c,t}+C_1 t^2$.
\end{proof}

Hence it remains to consider the case where
\be\label{smalltrange}0\le t\le n^{-1/3+\e_0},\quad 0<\e_0\le \vartheta/100.\ee
Define the spectral domains
\begin{equation}\label{eq_dthetanot1}
\widehat{\mathcal{D}}_{\vartheta}^1:=\mathcal{D}^c_{\vartheta} \cap \{E+\ii \eta: E \leq \lambda_{c,t}+n^{-2/3+\vartheta}\},\quad \widehat{\mathcal{D}}_{\vartheta}^2:=\mathcal{D}^c_{\vartheta} \cap \{E+\ii \eta: E > \lambda_{c,t}+n^{-2/3+\vartheta}\}.
\end{equation}
Then we have the following deterministic estimates. 

\begin{lemma}
Let $0\le t \le n^{-1/3+\e_0}$. For $z\in \widehat{\mathcal{D}}_{\vartheta}^1$, we have 
\be\label{Dtheta1est}
|d_i-\zeta_c| \gtrsim \eta+t \sqrt{\kappa+\eta},
\ee
and for any fixed $a\ge 2$,
\begin{equation} \label{Dtheta1est2}
\frac1p\sum_{i=1}^p\frac1{|d_i-\zeta_c|^a} \lesssim \frac{\sqrt{\kappa+\eta}}{(\eta+t\sqrt{\kappa+\eta})^{a-1}}. 
\end{equation}
On the other hand, for $z\in \widehat{\mathcal{D}}_{\vartheta}^2$, we have 
\be\label{Dtheta2est} |d_i-\zeta_c| \gtrsim \kappa+\eta, \ee
and for any fixed $a\ge 2$,
\begin{equation} \label{Dtheta2est2}
\frac1p\sum_{i=1}^p\frac1{|d_i-\zeta_c|^a} \lesssim    \frac{1}{(\kappa+\eta)^{a-3/2}}. 
\end{equation}
\end{lemma}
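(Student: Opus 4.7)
The plan proceeds in two steps: first establish the pointwise lower bounds on $|d_i-\zeta_c|$ from the real and imaginary parts of $\zeta_c$, then derive the averaged bounds via the elementary inequality $\int \dd\mu(x)/|x-\zeta|^a\le \im m_\mu(\zeta)/(\im\zeta)^{a-1}$ (valid for $a\ge 2$ since $|x-\zeta|\ge\im\zeta$) combined with the subordination identity between $m_c(\zeta_c)$ and $m_{c,t}(z)$. The key observation enabling this approach is that, because of \eqref{sqrtrhoc} and \eqref{Immc_c}, $m_c$ is $0$-regular in the sense of Definition \ref{assumption_edgebehavior}, so every deterministic result from Section \ref{sec anafree} applies to $m_c$ and $\zeta_c$ with $\eta_*=0$.

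Taking the imaginary part of \eqref{eq_zetaex} and using $|c_n t m_{c,t}|\lesssim t^{1/2}$ from \eqref{rough mt}, exactly as in the derivation of \eqref{zetasimeta+t2}, I obtain $\im\zeta_c\sim \eta+t\im m_{c,t}(z)$. Applying the square-root estimates $\im m_{c,t}(z)\sim \sqrt{\kappa+\eta}$ for $E\le\lambda_{c,t}$ and $\im m_{c,t}(z)\sim \eta/\sqrt{\kappa+\eta}$ for $E\ge\lambda_{c,t}$ (from Lemma \ref{lem_asymdensitysquare2} applied to $m_{c,t}$), and noting that $t\sqrt{\kappa+\eta}\le n^{-2/3+\epsilon_0+\vartheta/2}\ll n^{-2/3+\vartheta}\le \eta$ in the short strip $\lambda_{c,t}\le E\le \lambda_{c,t}+n^{-2/3+\vartheta}$, gives $\im\zeta_c\sim\eta+t\sqrt{\kappa+\eta}$ throughout $\widehat{\mathcal{D}}_\vartheta^1$. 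Since $d_i\in\mathbb R$, the lower bound \eqref{Dtheta1est} is immediate from $|d_i-\zeta_c|\ge\im\zeta_c$. For \eqref{Dtheta2est} I will additionally invoke the $m_c$-version of Lemma \ref{lem_xigeneral}: since $\kappa\ge n^{-2/3+\vartheta}\gg t^2$ in $\widehat{\mathcal{D}}_\vartheta^2$, the same Taylor-expansion argument gives $\re(\zeta_c-\lambda_+)=\alpha_c(E)\sim t^2+\kappa\sim\kappa$, which combined with the eigenvalue rigidity $d_1\le \lambda_++\OO_\prec(n^{-2/3+o(1)})$ implied by Assumption \ref{assm regularW} yields $\re\zeta_c-d_i\gtrsim \kappa$, hence $|d_i-\zeta_c|\gtrsim \max(\kappa,\im\zeta_c)\gtrsim \kappa+\eta$.

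For the averaged estimates, the crucial input is the subordination identity $m_c(\zeta_c)=m_{c,t}(z)/(1+c_n t m_{c,t}(z))$ (the $\rho_c$-analog of \eqref{eq_keyequation11}); rationalizing and using $|1+c_n t m_{c,t}|\sim 1$ yields $\im m_c(\zeta_c)=\im m_{c,t}(z)/|1+c_n t m_{c,t}|^2\sim \im m_{c,t}(z)$. Assumption \ref{assm regularW} then transfers this to $m_V$: evaluating at $z=\zeta_c$ (whose real and imaginary parts lie in the admissible domain since $|\zeta_c-z|\lesssim t$ and $\im \zeta_c\ge \eta$), the error $|m_V(\zeta_c)-m_c(\zeta_c)|\prec 1/(n\im\zeta_c)$ is dominated by $\sqrt{\kappa+\eta}$ thanks to the spectral domain condition $n\eta\sqrt{\kappa+\eta}\ge n^\vartheta$. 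The resulting bounds $\im m_V(\zeta_c)\lesssim \sqrt{\kappa+\eta}$ on $\widehat{\mathcal{D}}_\vartheta^1$ and $\im m_V(\zeta_c)\lesssim \eta/\sqrt{\kappa+\eta}$ on $\widehat{\mathcal{D}}_\vartheta^2$, substituted into
\begin{equation*}
\frac{1}{p}\sum_i \frac{1}{|d_i-\zeta_c|^a} \;\le\; \frac{\im m_V(\zeta_c)}{(\im\zeta_c)^{a-1}},
\end{equation*}
give \eqref{Dtheta1est2}. For \eqref{Dtheta2est2}, the sharper pointwise bound $|d_i-\zeta_c|\gtrsim \kappa+\eta$ lets me replace $(\im\zeta_c)^{a-2}$ by $(\kappa+\eta)^{a-2}$, which combined with $\im m_V(\zeta_c)/\im\zeta_c\sim 1/\sqrt{\kappa+\eta}$ yields the claimed $1/(\kappa+\eta)^{a-3/2}$.

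The main difficulty I foresee is verifying that the deterministic analysis of Section \ref{sec anafree}, developed for $V$ that is $\eta_*$-regular with $\eta_*\ll t^2$, adapts uniformly to $\rho_c$ and $\zeta_c$ over the full range $0\le t\le n^{-1/3+\epsilon_0}$, including the degenerate boundary $t=0$ where $\zeta_c=z$ and several Taylor expansions in Section \ref{sec_contour} collapse. A related subtlety is that when $\re\zeta_c>\lambda_+$ but $\im\zeta_c<n^{-2/3+\vartheta}$ (possible only in the narrow regime $\kappa\lesssim t^2$), the second branch of Assumption \ref{assm regularW} does not literally apply at $z=\zeta_c$; this gap is bridged by a Lipschitz transfer from the original $z$ using $|\zeta_c-z|=\OO(t)$ and the analyticity of $m_V-m_c$, at the cost of slightly shrinking the domain constants $c_V$ and $\vartheta$.
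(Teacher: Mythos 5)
Your treatment of the pointwise bounds \eqref{Dtheta1est} and \eqref{Dtheta2est} is essentially the paper's: you use $\im \zeta_c \sim \eta + t\,\im m_{c,t}$ from \eqref{eq_zetaex} (as in \eqref{zetasimeta+t2}) together with \eqref{Immc_c}, and the real-part separation from the $m_c$-version of Lemma \ref{lem_xigeneral} plus $d_1\le \lambda_+ + \OO_\prec(n^{-2/3+o(1)})$. For the averaged bounds you genuinely diverge: the paper applies Lemma \ref{lem_immanalysis} to $\mu_V$ (whose $\eta_*$-regularity with $\eta_*=n^{-2/3+\delta}$ is inherited from Assumption \ref{assm regularW}) directly at $\zeta_c$, combined with $|\xi_c|\lesssim \kappa+\eta$ from \eqref{eq_ccc} and the bounds \eqref{eq_boundnear}, \eqref{eq_boundfarawayregion}; you instead use the elementary inequality $\int \dd\mu_V/|x-\zeta_c|^a\le \im m_V(\zeta_c)/(\im\zeta_c)^{a-1}$ and control $\im m_V(\zeta_c)$ via the subordination identity $m_c(\zeta_c)=m_{c,t}(z)/(1+c_ntm_{c,t}(z))$ plus the closeness $|m_V-m_c|$ at the shifted point $\zeta_c$ supplied by Assumption \ref{assm regularW}. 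This is a workable alternative and gives the correct powers in \eqref{Dtheta1est2} and \eqref{Dtheta2est2}, at the price of having to verify Assumption \ref{assm regularW}'s domain conditions at $\zeta_c$ rather than the (equally glossed-over) admissibility of $\zeta_c$ for Lemma \ref{lem_immanalysis}.

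That verification is where your argument, as written, has a real hole. You justify admissibility by "$|\zeta_c-z|\lesssim t$ and $\im\zeta_c\ge \eta$", and your fallback is a "Lipschitz transfer over a distance $\OO(t)$". But on these domains $t$ can be vastly larger than $\kappa+\eta$ (e.g.\ $t\sim n^{-1/3+\e_0}$ while $\kappa+\eta\sim n^{-2/3+\vartheta}$), so proximity of $\zeta_c$ to $z$ at scale $t$ says nothing about whether $(\re\zeta_c,\im\zeta_c)$ satisfies $\sqrt{|\re\zeta_c-\lambda_+|+\im\zeta_c}\ge n^{\vartheta'}/(n\,\im\zeta_c)$ or $\im\zeta_c\ge n^{-2/3+\vartheta'}$, and a Lipschitz bound over a window of size $t$ would destroy the edge estimates entirely. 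The correct repair is exactly the geometric input the paper leans on: from Lemma \ref{lem_xigeneral} (with $t^2\ll\kappa+\eta$ on both $\widehat{\cal D}^1_\vartheta$ and $\widehat{\cal D}^2_\vartheta$) one runs the dichotomy $\kappa\ge C\eta$ versus $\kappa\le C\eta$, getting $|\re\zeta_c-\lambda_+|\gtrsim\kappa$ in the first case and $\im\zeta_c\gtrsim\eta\gtrsim\kappa+\eta$ in the second, whence $|\re\zeta_c-\lambda_+|+\im\zeta_c\gtrsim\kappa+\eta$ and the domain conditions follow from $n\eta\sqrt{\kappa+\eta}\ge n^\vartheta$ (resp.\ $\eta\ge n^{-2/3+\vartheta}$). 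Relatedly, your blanket claim $\re(\zeta_c-\lambda_+)\sim t^2+\kappa$ on all of $\widehat{\cal D}^2_\vartheta$ overstates Lemma \ref{lem_xigeneral}: \eqref{eq_ccc} is only an upper bound, and \eqref{eq_derivativebound}--\eqref{eq_absbound} allow the real part to drift by $\OO(\eta)$, so the lower bound $\re\zeta_c-\lambda_+\gtrsim\kappa$ is only available when $\kappa\gtrsim\eta$; your conclusion \eqref{Dtheta2est} survives because $\im\zeta_c\gtrsim\eta$ covers the complementary regime, but the case split must be made explicit, as in the paper.
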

\begin{proof}
By \eqref{Immc_c}, we see that $m_c$ is $\eta_*$-regular with $\eta_*=0$. So the analysis in Section \ref{sec anafree} goes through for any $0< t\ll 1$. In particular, by \eqref{eq_imasymptoics} we have
$$\im m_{c,t} \gtrsim \sqrt{\kappa+\eta},\quad \text{for}\ \ z\in \widehat{\mathcal{D}}_{\vartheta}^1.$$
Then using \eqref{eq_zetaex}, we can get
$
|d_i-\zeta_c| \geq \im \zeta_c \gtrsim \eta+t \sqrt{\kappa+\eta}.
$
 Furthermore, by (\ref{eq_ccc}) we get that 
\begin{equation*}
 |\zeta_c-\lambda_+| \lesssim t^ 2+ \kappa +\eta \lesssim  \kappa+\eta 
\end{equation*}
for $z\in \widehat{\mathcal{D}}_{\vartheta}^1$ and $t\le n^{-1/3+\e_0}$. Consequently, using Lemma \ref{lem_immanalysis} we obtain that 
\begin{equation*}
\frac1p\sum_{i=1}^p\frac1{|d_i-\zeta_c|^a} \lesssim \frac{|\xi_c|^{1/2}}{(\im \xi_c)^{a-1}} \lesssim \frac{\sqrt{\kappa+\eta}}{(\eta+t\sqrt{\kappa+\eta})^{a-1}}. 
\end{equation*}

In the domain $\widehat{\mathcal{D}}_{\vartheta}^2$, by Lemma \ref{lem_xigeneral} we have that if $\kappa\ge C\eta$ for some large enough constant $C>0$, then $|\re \zeta_c -d_i|\ge  c\kappa$ for some constant $c>0$, which gives \eqref{Dtheta2est}. Furthermore, with \eqref{eq_boundfarawayregion} we can get \eqref{Dtheta2est2}.
On the other hand, if $\kappa\le C\eta$, then with \eqref{eq_imasymptoics} we get
$$ |d_i-\zeta_c| \geq \im \zeta_c \gtrsim \eta \gtrsim \kappa+\eta,$$
and with \eqref{eq_boundfarawayregion} we can conclude \eqref{Dtheta2est2}.
\end{proof}  

Similar to the proof of Theorem \ref{thm_local}, we divide the proof into two parts according to the two regions $\wh{\cal D}_{\vartheta}^{1}$ and $\wh{\cal D}_{\vartheta}^{2}$. We first prove the following proposition.


%

\begin{proposition}\label{region21}
 Theorem \ref{thm_local2} holds for $z\in \wh{\cal D}_{\vartheta}^1$. 
\end{proposition}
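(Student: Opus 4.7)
The plan is to adapt the argument of Section \ref{sec region11} to this setting, replacing the reference functions $(\zeta_t, m_{w,t}, \Phi_t)$ by their deterministic counterparts $(\zeta_c, m_{c,t}, \Phi_c)$. Since $m_c$ satisfies \eqref{sqrtrhoc}, it is $0$-regular in the sense of Definition \ref{assumption_edgebehavior}, so the deterministic analysis of Section \ref{sec anafree} carries over to $\Phi_c$ without change. Combined with the bounds \eqref{Dtheta1est}--\eqref{Dtheta1est2}, this yields direct analogues of \eqref{firstderiv}, \eqref{secondderiv} and \eqref{secondderivsim} for $|\Phi_c'(\zeta_c)|$, $|\Phi_c''(\zeta_c)|$ and $|\Phi_c^{(3)}(\zeta_c)|$, and the same bounds at any $u \in \mathbb{C}_+$ sufficiently close to $\zeta_c$.

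The core step is to derive a self-consistent equation for $m$ around $m_{c,t}$. Starting from the resolvent identities \eqref{Gmumu} and \eqref{Gii} exactly as in the proof of Lemma \ref{lemm_selfcons_weak}, on the event $\Xi := \{\theta \le (t+\im m_{c,t})/(\log n)^2\}$ with $\theta := |m - m_{c,t}|$, I would obtain
\begin{equation*}
m = (1+c_n t m)\, m_V(u_t) + \OO_\prec(\mathcal E),
\end{equation*}
where $u_t$ is as in \eqref{defn ut} and $\mathcal E$ is the error functional appearing in Lemma \ref{lemm_selfcons_weak} but with $\im \zeta_t$ replaced by $\im \zeta_c$. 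The new ingredient needed here is Assumption \ref{assm regularW}: for $z \in \wh{\mathcal D}^1_\vartheta$ one has $\im u_t \gtrsim \im \zeta_c \gtrsim \eta + t\sqrt{\kappa+\eta}$ on $\Xi$, which is well above the threshold in Assumption \ref{assm regularW}, and hence $|m_V(u_t) - m_c(u_t)| \prec 1/(n\im u_t)$. Substituting this replacement yields $\Phi_c(u_t) = z + \OO_\prec\bigl(t \mathcal E + t/(n\im \zeta_c)\bigr)$, and a Taylor expansion of $\Phi_c(u_t) - \Phi_c(\zeta_c) = 0$ around $\zeta_c$ produces the direct counterpart of \eqref{selfcons_improved},
\begin{equation*}
\left|\Phi_c'(\zeta_c)(u_t - \zeta_c) + \tfrac12 \Phi_c''(\zeta_c)(u_t - \zeta_c)^2\right| \le \frac{\theta^2}{\log n}\frac{t^2}{t^2+\im \zeta_c} + n^\epsilon t\|[Z]\| + n^\epsilon \frac{t\bigl(t\Psi_\theta^2 + n^{-1}\bigr)}{t^2+\im \zeta_c},
\end{equation*}
where the $\|[Z]\|$ contribution is controlled exactly as in \eqref{ZAZ}--\eqref{ZBZ} using \eqref{Dtheta1est2} in place of \eqref{eq_dtheta1extension}.

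With this self-consistent equation in hand, the rest of the proof is a direct transcription of Section \ref{sec region11}. I would first establish the weak local law $\theta \prec \Psi(z)$ in the regime $\kappa+\eta \gtrsim t^2$ and $\theta \prec t^{2/3}(n\eta)^{-1/3}$ in the regime $\kappa+\eta \lesssim t^2$ by the standard stability plus continuity argument, using the initial input $\theta \prec t n^{-1}$ at $\eta \sim 1$ obtained as in \eqref{etasim1}. Next I would prove the fluctuation averaging bound $|[Z]| \prec (n\eta)^{-1}(\im m_{c,t}+\Phi)/(t+\im m_{c,t}+\Phi)$ by copying the proof of Lemma \ref{abstractdecoupling} verbatim (the argument only uses off-diagonal decay of $\Pi^w$, provided by \eqref{Dtheta1est}, and partial expectation decoupling). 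Finally, iterating the self-consistent equation above with this improved $[Z]$ bound delivers the strong local law $\theta \prec 1/(n\eta)$ in $\wh{\mathcal D}^1_\vartheta \cap \{E \le \lambda_{c,t}\}$ and the sharper estimate \eqref{averout2} in $\wh{\mathcal D}^1_\vartheta \cap \{E \ge \lambda_{c,t}\}$.

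The main obstacle is bookkeeping at the threshold $\kappa+\eta \sim t^2$ where the two regimes of $|\Phi_c'(\zeta_c)|$ meet, and ensuring that the additional $1/(n\im u_t)$ error coming from Assumption \ref{assm regularW} is consistently absorbed into the existing error structure at every stage of the bootstrap. Since Assumption \ref{assm regularW} already matches the target rate $1/(n\eta)$ for $E \le \lambda_+$ and the sharper averaged scale for $E > \lambda_+$, the final rate is not degraded; the delicate point is verifying that $u_t$ remains in the validity region of Assumption \ref{assm regularW} through the continuity argument, which uses the a priori control $\theta \le (t+\im m_{c,t})/(\log n)^2$ on $\Xi$ together with the derivative bound $|\partial_m u_t| \lesssim t$.
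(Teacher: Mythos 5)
Your proposal is correct and follows essentially the same route as the paper: derive the analogue of the self-consistent equation of Lemma \ref{lemm_selfcons_weak} with $(\zeta_c,m_{c,t},\Phi_c)$ in place of $(\zeta_t,m_{w,t},\Phi_t)$, using \eqref{Dtheta1est}--\eqref{Dtheta1est2} for the deterministic input and Assumption \ref{assm regularW} to trade $m_V(u_t)$ for $m_c(u_t)$ at cost $\OO_\prec(t/(n\im\zeta_c))$, then run weak law, fluctuation averaging and iteration; this is exactly Lemmas \ref{lemm_selfcons_weak3}--\ref{abstractdecoupling3}. The one difference is that you carry the quadratic Taylor term and a two-regime analysis around $\kappa+\eta\sim t^2$, and identify the matching at that threshold as the main obstacle; in fact this regime is empty here. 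Proposition \ref{region21} is only needed for $t$ as in \eqref{smalltrange} (the range $t\ge n^{-1/3+\e_0}$ having been settled by Proposition \ref{prop_local}), and on $\wh{\cal D}_\vartheta^1$ the domain constraints force $\sqrt{\kappa+\eta}\gg t$, so $|\Phi_c'(\zeta_c)|\sim\min\{1,\sqrt{\kappa+\eta}/t\}\sim 1$ throughout; the paper therefore keeps only the linear term, absorbs the quadratic contribution into the error $t\theta^2/\sqrt{\kappa+\eta}$, and the weak law in Lemma \ref{alem_weak3} is immediate without the $\theta\prec t^{2/3}(n\eta)^{-1/3}$ branch. Your extra machinery is harmless but superfluous, and the delicate point you flag does not arise; the genuinely new ingredient is the one you did identify, namely the use of Assumption \ref{assm regularW} at $u_t$ (together with checking $\im u_t\gtrsim\eta+t\sqrt{\kappa+\eta}$ so the assumption applies), and your handling of it is consistent with the paper's \eqref{eq compareTayloe0}.
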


The strategy for the proof of Proposition \ref{region21} is similar to the one for Propositions \ref{region11}, that is, we first derive a self-consistent equation estimate, with which we can prove a weak averaged local law. Then using a fluctuation averaging estimate together with a stronger self-consistent equation estimate, we can prove the strong local law in Proposition \ref{region21}. 

As in \eqref{eqn_randomerror} and \eqref{eq_defpsitheta}, we define
$$ \theta(z):= |m-m_{c,t}| ,\quad  \Psi _\theta(z)  : =  \sqrt {\frac{{\Im \, m_{c,t}(z)  + \theta(z) }}{{n\eta }}} + \frac{1}{n\eta}.$$
Moreover, we define the asymptotic matrix limit of $R$ as
\be \label{defnPic}
\Pi^c(z):=\begin{bdmatrix} \frac{-(1+c_n t m_{c,t})}{z(1+c_n t m_{c,t})(1+t \underline m_{c,t} )-WW^\top} &  \frac{-z^{-1/2} }{z(1+c_n t m_{c,t})(1+t \underline m_{c,t} )-WW^\top}W \\ 
W^\top \frac{-z^{-1/2} }{z(1+c_n t m_{c,t})(1+t \underline m_{c,t} )-WW^\top} & \frac{-(1+t \underline m_{c,t} )}{z(1+c_n t m_{c,t})(1+t \underline m_{c,t} )-W^\top W}  ,
\end{bdmatrix},
\ee
where as in \eqref{undeline m} we denoted
\be \nonumber 
\mm_{c,t}: = c_n m_{c,t} - \frac{1-c_n}{z}.\ee
First we claim the following self-consistent equation estimates. Recall that $u_t$ was defined in \eqref{defn ut}.

\begin{lemma}\label{lemm_selfcons_weak3}
For $z\in \wh{\cal D}_{\vartheta}^1$ and on the event 
$$\Xi:=\left\{  \theta \le \frac{\im m_{c,t}}{(\log n)^2}\right\},$$
we have that for any constant $\e>0$, 
\begin{equation}
 \left| \Phi_c'(\zeta_c) (u_{t}-\zeta_c) \right|\lesssim  \frac{t\theta^2}{\sqrt{\kappa+\eta}} +n^\e t  \Psi_\theta,   \label{selfcons_lemm3}
\end{equation}
with high probability. Moreover, we have the finer estimate: for any constant $\e>0$,
\begin{equation}
\left| \Phi_c'(\zeta_c) (u_{t}-\zeta_c)\right|\lesssim  \frac{t\theta^2}{\sqrt{\kappa+\eta}}  + n^\e t \| [ Z]\| + \frac{n^\e t}{n\eta},   \label{selfcons_improved3}
\end{equation}
with high probability.
\end{lemma}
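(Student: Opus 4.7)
The proof plan closely mirrors Lemma \ref{lemm_selfcons_weak}, with three modifications: the centering point moves from $\zeta_t$ to $\zeta_c$, the regime $t \le n^{-1/3+\e_0}$ replaces $t^2 \ge n^\e\eta_*$, and Assumption \ref{assm regularW} must be invoked to pass from the random Stieltjes transform $m_V$ to its deterministic limit $m_c$ at an intermediate spectral parameter. The event $\Xi = \{\theta \le \im m_{c,t}/(\log n)^2\}$ plays the same role as the corresponding event in Lemma \ref{lemm_selfcons_weak}: it guarantees that $|u_t - \zeta_c| \lesssim t\theta$ is much smaller than $\min_i |d_i - \zeta_c|$, which is at least of order $\eta + t\sqrt{\kappa+\eta}$ by \eqref{Dtheta1est}, so that the matrix inversions of \eqref{Gii}--\eqref{Gii22} go through with $\Pi^w_{[ii]}$ replaced by its $m_{c,t}$-analogue $\Pi^c_{[ii]}$.

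First I would repeat the derivation from \eqref{Gmumu}--\eqref{Giim1t} to obtain $m = (1+c_ntm)\, m_V(u_t) + \OO_\prec(\mathcal E)$ with
$$\mathcal E = \|[Z]\| + \frac{1}{\eta+t\sqrt{\kappa+\eta}}\Bigl(\theta(t\Psi_\theta + t^{1/2}n^{-1/2}) + t\Psi_\theta^2 + n^{-1}\Bigr),$$
where the deterministic estimate \eqref{Dtheta1est2} plays the role that \eqref{eq_dtheta1extension} played before. Next I would invoke Assumption \ref{assm regularW} at the spectral parameter $u_t$: since $\im u_t \sim \eta + t\im m_{c,t}$ and $u_t$ lies within distance $t\theta$ of $\zeta_c$, the assumption yields $|m_V(u_t) - m_c(u_t)| \prec 1/(n\eta)$, which after multiplying by the overall factor $t$ coming from the $\Phi_c$-equation becomes exactly the $n^\e t/(n\eta)$ term in \eqref{selfcons_improved3}. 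Substituting $m_c(u_t)$ for $m_V(u_t)$ and inverting produces $z = \Phi_c(u_t) + \OO_\prec(t\mathcal E + t/(n\eta))$.

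Finally I would subtract the defining identity $z = \Phi_c(\zeta_c)$ and Taylor-expand about $\zeta_c$. The analogues of Lemma \ref{lem derivPhi} for $\Phi_c$, which follow from the analysis of Section \ref{sec anafree} applied with $\eta_* = 0$, yield $|\Phi_c'(\zeta_c)| \sim 1$ and $|\Phi_c''(\zeta_c)| \lesssim 1/(t\sqrt{\kappa+\eta})$ throughout $\wh{\cal D}_\vartheta^1$ (using also that $\kappa+\eta \gg t^2$ in this domain by the choice $\e_0 \le \vartheta/100$). Combined with $|u_t - \zeta_c| \lesssim t\theta$, the quadratic Taylor term is of size at most $t\theta^2/\sqrt{\kappa+\eta}$, which is precisely the first term kept on the right hand side of \eqref{selfcons_improved3}. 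This establishes the finer bound; the cruder \eqref{selfcons_lemm3} then follows by estimating $\|[Z]\| \prec \Psi_\theta$ through the same split into a Gaussian off-diagonal part and a quadratic-in-$X$ diagonal part used in \eqref{ZAZ}--\eqref{ZBZ}, with the sums now controlled by \eqref{Dtheta1est2}. The main technical obstacle is verifying that $u_t$ always remains inside the admissible region for Assumption \ref{assm regularW} with $\im u_t$ large enough that $1/(n\im u_t)$ is absorbable into $1/(n\eta)$; this reduces to checking $\im u_t \gtrsim \eta$, which holds because the leading-order contribution to $\im u_t$ is $\eta\, \re(1+c_ntm)^2 \sim \eta$ on the event $\Xi$.
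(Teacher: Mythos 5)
Your proposal follows essentially the same route as the paper's proof: rerun the self-consistent-equation derivation of Lemma \ref{lemm_selfcons_weak} with the deterministic inputs \eqref{Dtheta1est}--\eqref{Dtheta1est2}, invoke Assumption \ref{assm regularW} at $u_t$ (where $\im u_t \gtrsim \eta$) to trade $\Phi_t$ for $\Phi_c$ at the cost of an $n^\e t/(n\eta)$ error, Taylor-expand $\Phi_c$ about $\zeta_c$ with $|\Phi_c'(\zeta_c)|\sim 1$ and the quadratic term bounded by $t\theta^2/\sqrt{\kappa+\eta}$, and finally bound $[Z]$ via the same $A_{[i]}/B_{[i]}$ split as in \eqref{ZAZ}--\eqref{ZBZ}. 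Your slightly cruder prefactor $1/(\eta+t\sqrt{\kappa+\eta})$ in $\mathcal E$ (versus the paper's $t\sqrt{\kappa+\eta}/(\eta+t\sqrt{\kappa+\eta})^2$) still absorbs into the $n^\e t/(n\eta)$ term on $\wh{\cal D}_\vartheta^1$, so no gap results.
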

\begin{proof}
Note that by \eqref{Dtheta1est}, we have that on $\Xi$,
\be\label{mclose2}
|u_{t}-\zeta_c| \lesssim t\theta\lesssim \left\{ \frac{\im \zeta_c}{(\log n)^2}, \frac{\min_{i=1}^p |d_i-\zeta_c| }{(\log n)^2}\right\}. 
\ee
Then we can repeat the proof of Lemma \ref{lemm_selfcons_weak}. In particular, using \eqref{Dtheta1est2} we can get a similar estimate as in \eqref{Gii3}:
\be\label{Gii3(3)}
\frac{1}{p}\sum_{i\in \cal I_1}R_{[ii]}=\frac{1}{p}\sum_{i\in \cal I_1} \pi_{[i]} - [Z] + \OO_\prec\left[ \frac{t\sqrt{\kappa+\eta}}{(\eta+t\sqrt{\kappa+\eta})^{2}}\left(\theta \left( t\Psi_\theta +\frac{t^{1/2}}{n^{1/2}}\right)+ t \Psi^2_\theta + \frac 1n \right)\right].
\ee
Then estimate \eqref{Giim1t} holds with 
$$\cal E:= \| [ Z]\| + \frac{t\sqrt{\kappa+\eta}}{(\eta+t\sqrt{\kappa+\eta})^{2}}\left(\theta \left( t\Psi_\theta +\frac{t^{1/2}}{n^{1/2}}\right)+ t \Psi^2_\theta + \frac 1n \right).
$$
Repeating the proof below \eqref{Giim1t}, we can get \eqref{Gii4}. Now comparing \eqref{Gii4} with equation $z=\Phi_c(\zeta_c)$, we get that
\be\label{eq compareTayloe0} 
 | \Phi_c(u_{t}) - \Phi_c(\zeta_{c})|=|\Phi_t(u_{t})-\Phi_c(u_{t})| + \OO_\prec(t\cal E)\prec \frac{t}{n\im \zeta_c} + t\cal E,
\ee
where we used Assumption \ref{assm regularW} and \eqref{mclose2} in the second step. Then performing Taylor expansion and using \eqref{Dtheta1est2}, we obtain that for any constant $\e>0$,
\be \nonumber 
\begin{split}
& \left|\Phi_c'(\zeta_c) (u_{t}-\zeta_c) \right|  \lesssim \frac{t\sqrt{\kappa+\eta}}{(\eta +t\sqrt{\kappa+\eta})^{2}} t^2\theta^2 + \OO_\prec\left( t\cal E + \frac{t}{n\im \zeta_c} \right) \\
& \lesssim \frac{t\theta^2}{ \sqrt{\kappa+\eta} } + \OO_\prec\left( t \| [ Z]\| + \frac{n^\e t\sqrt{\kappa+\eta}}{(\eta+t\sqrt{\kappa+\eta})^{2}}\left(t^2 \Psi^2_\theta + \frac tn \right)+\frac{t}{n\eta}\right) =\frac{t\theta^2}{ \sqrt{\kappa+\eta} } + \OO_\prec\left( t \| [ Z]\|    +\frac{n^\e t}{n\eta}\right),
 \end{split}
 \ee
where we used the definition of $\Psi_\theta$ in the last step. For the term $[Z]$, we can bound it as in \eqref{ZAZ} and \eqref{ZBZ}: 
\begin{align}\nonumber
\frac1p\sum_{i\in \cal I_1}\Pi_{[ii]}^c A_{[i]}\Pi_{[ii]}^c \prec \frac1n\left( \frac1p\sum_i \frac{t}{|d_i -\zeta_c|^4}\right)^{1/2} \lesssim \frac{1}{n}\left( \frac{t\sqrt{\kappa+\eta}}{(\eta+t\sqrt{\kappa+\eta})^{3}}\right)^{1/2} \lesssim \frac{1}{n\eta},
\end{align}
and 
\begin{align}\nonumber
\frac1p\sum_{i\in \cal I_1}\Pi_{[ii]}^cB_{[i]}\Pi_{[ii]}^c \prec \frac1p\sum_i \frac{t\Psi_{\theta} }{|d_i -\zeta_c|^2}\lesssim \frac{t\sqrt{\kappa+\eta}}{\eta+t\sqrt{\kappa+\eta} }\Psi_\theta\lesssim \Psi_\theta .
\end{align}
Plugging these two estimates into \eqref{selfcons_improved3} we conclude \eqref{selfcons_lemm3}.
\end{proof}

With \eqref{selfcons_lemm3}, we can prove the following weak local law as in Lemma \ref{alem_weak}.
\begin{lemma}[Weak local law]\label{alem_weak3} 
For any $z\in \wh{\cal D}_{\vartheta}^1$, we have $\theta \prec  \Psi(z) .$ 
\end{lemma}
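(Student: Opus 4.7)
The plan is to mirror the proof of Lemma~\ref{alem_weak} (and of Proposition~5.6 of \cite{edgedbm}), using the self-consistent equation estimate \eqref{selfcons_lemm3} as the main dynamical input and the deterministic estimates from Section~\ref{sec anafree} applied to $m_c$ as the main deterministic input. The key observation that unlocks the argument is that, by \eqref{sqrtrhoc}, $m_c$ is $\eta_*$-regular with $\eta_*=0$, so all the estimates of Section~\ref{sec anafree} hold for $\zeta_c$ and $\Phi_c$ with no scale restriction. In particular, Lemma~\ref{lem_xigeneral} (applied to $m_c$) supplies the crucial lower bound
\begin{equation*}
|\Phi_c'(\zeta_c)| \;\sim\; \min\!\Big\{1,\;\tfrac{\sqrt{\kappa+\eta}}{t}\Big\},
\end{equation*}
while a direct computation using the definitions of $u_t$ and $\zeta_c$ gives $|u_t-\zeta_c|\sim t\theta$ (the correction $c_n t(m+m_{c,t})$ is $\OO(t^{1/2})$ and hence negligible against the leading $2z$).

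First, at the initial scale $\eta=10$, the bound \eqref{Zestimate3}, together with the resolvent expansions \eqref{Gmumu}--\eqref{Gii} and \eqref{eq_dtheta1control} applied to $m_c$, immediately gives $\theta\prec t^{1/2}n^{-1/2}\ll \Psi_c$, so the claim holds at the starting point. I will then run a continuity argument in $\eta$, lowering $\eta$ in small increments along vertical lines $\{E+\ii\eta:\eta\in[\eta_1,10]\}$ while keeping $E$ fixed. At each new $\eta$, the continuity of $R(z)$ in $z$ and the prior control at the previous scale place us on the event $\Xi=\{\theta\le \im m_{c,t}/(\log n)^2\}$, so \eqref{selfcons_lemm3} applies and, after dividing by $|\Phi_c'(\zeta_c)|$ and using $|u_t-\zeta_c|\sim t\theta$, yields the polynomial inequality
\begin{equation*}
\theta \;\lesssim\; \frac{\theta^2}{\max\{t,\sqrt{\kappa+\eta}\}} \;+\; n^\epsilon \,\frac{\Psi_\theta}{\min\{1,\sqrt{\kappa+\eta}/t\}}.
\end{equation*}

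I would then split into the two regimes $\sqrt{\kappa+\eta}\gtrsim t$ and $\sqrt{\kappa+\eta}\lesssim t$. In the first regime, the inequality becomes $\theta\lesssim \theta^2/\sqrt{\kappa+\eta}+n^\epsilon\Psi_\theta$, and since $\Psi_\theta\lesssim \Psi_c+\sqrt{\theta/(n\eta)}$ by its definition, unfolding $\Psi_\theta$ and iterating reduces the bound to $\theta\prec \Psi_c$. In the second regime, multiplying back by $\sqrt{\kappa+\eta}/t$ on both sides and using that on $\wh{\mathcal D}^1_\vartheta$ we have $n\eta\sqrt{\kappa+\eta}\ge n^\vartheta$ along with $t\le n^{-1/3+\e_0}$, the $\theta^2$-term is subleading against the $\Psi_\theta$-term after one iteration, again giving $\theta\prec \Psi_c$. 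Combining the two regimes yields the conclusion on all of $\wh{\mathcal D}^1_\vartheta$.

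The main obstacle is to show that the quadratic term $t\theta^2/\sqrt{\kappa+\eta}$ in \eqref{selfcons_lemm3} can be absorbed in the small-$t$ regime where $t\gtrsim \sqrt{\kappa+\eta}$: there the prefactor $|\Phi_c'(\zeta_c)|\sim \sqrt{\kappa+\eta}/t$ is small, so a naive division seems to spoil the self-improving loop. The point is that on $\wh{\mathcal D}^1_\vartheta$ the lower bound $n\eta\sqrt{\kappa+\eta}\ge n^\vartheta$ (together with $t^2\le n^{-\epsilon}$ and $\theta\prec 1$ from the previous continuity step) makes the quadratic term $n^{-\vartheta/2}$-smaller than the linear term, so the continuity bootstrap closes. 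Once this delicate case is handled, the rest of the argument is entirely parallel to the proof of Lemma~\ref{alem_weak}, and we obtain $\theta\prec \Psi_c(z)$ uniformly on $\wh{\mathcal D}^1_\vartheta$.
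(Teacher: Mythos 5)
Your handling of the regime $\sqrt{\kappa+\eta}\gtrsim t$ is essentially the paper's proof: on $\Xi$ one has $|u_t-\zeta_c|\sim t\theta$, the quadratic term $t\theta^2/\sqrt{\kappa+\eta}$ is absorbed since $\theta\lesssim \im m_{c,t}/(\log n)^2\lesssim \sqrt{\kappa+\eta}/(\log n)^2$, and unfolding $\Psi_\theta$ gives $\theta\prec\Psi$, with the continuity argument in $\eta$ propagating $\Xi$ from $\eta\sim 1$ downward. What you have missed, however, is the one observation that makes the lemma a one-liner and that your written argument actually needs: under the standing assumption \eqref{smalltrange}, the regime $\sqrt{\kappa+\eta}\lesssim t$ is \emph{empty} on $\wh{\cal D}_{\vartheta}^1$. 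Indeed, on the part of $\mathcal{D}^c_{\vartheta}$ with $E\le\lambda_{c,t}$ one has both $\kappa+\eta\ge\eta$ and $\sqrt{\kappa+\eta}\ge n^{\vartheta}/(n\eta)$, whence $\kappa+\eta\gtrsim n^{-2/3+2\vartheta/3}$, while on the part with $E\ge\lambda_{c,t}$ one has $\eta\ge n^{-2/3+\vartheta}$; since $t^2\le n^{-2/3+2\e_0}$ with $\e_0\le\vartheta/100$, this forces $\sqrt{\kappa+\eta}\gg t$, hence $|\Phi_c'(\zeta_c)|\sim\min\{1,\sqrt{\kappa+\eta}/t\}\sim 1$ everywhere on $\wh{\cal D}_{\vartheta}^1$. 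This is exactly the paper's (one-sentence) proof, and it is the reason why Lemma \ref{lemm_selfcons_weak3} only keeps the first-order term $\Phi_c'(\zeta_c)(u_t-\zeta_c)$.

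Your treatment of the supposed second regime is a genuine gap as written. There $|\Phi_c'(\zeta_c)|\sim\sqrt{\kappa+\eta}/t$, so \eqref{selfcons_lemm3} gives $\sqrt{\kappa+\eta}\,\theta\lesssim t\theta^2/\sqrt{\kappa+\eta}+n^{\e}t\Psi_\theta$; the ratio of the quadratic term to the left-hand side is $t\theta/(\kappa+\eta)$, and on $\Xi$ (i.e.\ $\theta\lesssim\sqrt{\kappa+\eta}/(\log n)^2$) this is only $\lesssim t/((\log n)^2\sqrt{\kappa+\eta})$, which is \emph{not} small precisely when $t\gtrsim\sqrt{\kappa+\eta}$. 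Neither the domain condition $n\eta\sqrt{\kappa+\eta}\ge n^{\vartheta}$ nor the crude bound $\theta\prec 1$ from the previous continuity step rescues the absorption, and your displayed polynomial inequality also mis-states the quadratic term in that regime (it should be $t\theta^2/(\kappa+\eta)$, not $\theta^2/t$). This is exactly why, in the parallel Lemma \ref{alem_weak} for Theorem \ref{thm_local}, the regime $\kappa+\eta\le\tau_1 t^2$ requires retaining the second-order term $\tfrac12\Phi_t''(\zeta_t)(u_t-\zeta_t)^2$ from Lemma \ref{lemm_selfcons_weak} (with $\Phi_t''\sim t^{-2}$) and only yields the weaker bound $\theta\prec t^{2/3}(n\eta)^{-1/3}$ there. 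Your proof is repaired simply by deleting that branch and noting, as above, that it does not occur on $\wh{\cal D}_{\vartheta}^1$ under \eqref{smalltrange}.
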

\begin{proof}
Note that by \eqref{Phit'}, we have $ |\Phi_c'(\zeta_c)| \sim \min \left\{1, \sqrt{\kappa+\eta}/t \right\} \sim 1$, since $\sqrt{\kappa+\eta}\gg t$ for the choice of $t$ in \eqref{smalltrange}. Thus if $\Xi$ holds, then with \eqref{selfcons_lemm3} we immediately obtain that 
$$t\theta \lesssim |u_{t}-\zeta_c| \prec t\Psi_\theta \ \Rightarrow \theta \prec  \Psi(z).$$
To complete the proof, we will use the estimate for the $\eta\gtrsim 1$ case together with a standard continuity argument as the one for Proposition 5.6 of \cite{edgedbm}. We omit the details.
\end{proof}

Then we have the following fluctuation averaging estimate for $[Z]$. 

\begin{lemma}[Fluctuation averaging] \label{abstractdecoupling3}
Suppose that $\theta\prec \Phi$, where $\Phi$ is a positive, $n$-dependent deterministic function on $\wh{\cal D}_{\vartheta}^1$ satisfying that
$$\frac{1}{n\eta} \le \Phi \le  \frac{\im m_{c,t}}{(\log n)^2}.$$   
Then for any $z \in \wh{\cal D}_{\vartheta}^1$, we have
\begin{equation}\label{flucaver_ZZ3}
\left|[Z]\right|  \prec  \frac{1}{n\eta} .
\end{equation}
\end{lemma}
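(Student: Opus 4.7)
The plan is to mirror the high-moment fluctuation-averaging scheme behind Lemma \ref{abstractdecoupling}, which is adapted from \cite[Lemma 5.8]{edgedbm} and, for the matrix-valued setting, \cite[Lemma 4.9]{XYY_circular}. For any fixed $a\in\N$, I would estimate $\E \|[Z]\|^{2a}$ and then conclude by Markov's inequality. To begin, I would expand $\|[Z]\|^{2a}$ as a sum over tuples $(i_1,\ldots,i_{2a})\in\cal I_1^{2a}$ of products of matrix blocks of the form $\Pi^c_{[i_k i_k]}\,Z_{[i_k]}\,\Pi^c_{[i_k i_k]}$ (using $\Pi^c$ in place of $\Pi^w$, exactly as in the proof of Lemma \ref{lemm_selfcons_weak3}). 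Using the identity $Z_{[i_k]}=(1-\E_{[i_k]})R_{[i_k i_k]}^{-1}$ together with the resolvent expansions \eqref{eq_res3}, I would replace each minor $R^{[i_k]}$ appearing in the explicit formula \eqref{Zii} by the deeper minor $R^{[\{i_1,\ldots,i_{2a}\}]}$. Only tuples in which each index $i_k$ appears at least twice can survive, since a solitary factor $Z_{[i_k]}$ is annihilated by $\E_{[i_k]}$.

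Next, I would bound the surviving monomials using the deterministic inputs available on $\wh{\cal D}_\vartheta^1$: (i) the operator bound $\|\Pi^c_{[ii]}\|\lesssim |d_i-\zeta_c|^{-1}$; (ii) the summation estimate \eqref{Dtheta1est2}, which gives $\frac1p\sum_i|d_i-\zeta_c|^{-a}\lesssim \sqrt{\kappa+\eta}/(\eta+t\sqrt{\kappa+\eta})^{a-1}$; (iii) the large-deviation bound $\|Z_{[i]}\|\prec t\Psi_\theta + t^{1/2}n^{-1/2}$ from Lemma \ref{Z_lemma}; and (iv) the entrywise form of the local law (derivable from Lemma \ref{alem_weak3} by the argument of Section \ref{sec region22}), which controls off-diagonal entries of $R$ by $\Psi_\theta$. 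The combinatorial weight from the index-pairing structure is then compensated by each free index summation, which produces either a factor $p\cdot\frac1p\sum_i|d_i-\zeta_c|^{-1}$ or a Ward-identity gain via \eqref{eq_gsq1}--\eqref{eq_gsq3}.

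The key simplification over Lemma \ref{abstractdecoupling} comes from the domain restriction: on $\wh{\cal D}_\vartheta^1$ one has $t\le n^{-1/3+\e_0}\ll \sqrt{\kappa+\eta}$ by \eqref{smalltrange}, and hence $\im m_{c,t}\sim \sqrt{\kappa+\eta}\gg t$ by \eqref{Immc_c}. Consequently the ratio $(\im m_{c,t}+\Phi)/(t+\im m_{c,t}+\Phi)$ that naturally appears when tracking the proof collapses to $\OO(1)$, which is precisely why the clean estimate $1/(n\eta)$ replaces the more delicate fraction of \eqref{flucaver_ZZ}. Feeding in the a priori bound $\theta\prec\Phi$ (which is needed to ensure we remain in the regime where the entrywise local law and the estimates of Lemma \ref{lemm_selfcons_weak3} apply) then closes the estimate without any further iteration.

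The main obstacle I anticipate is the careful bookkeeping inside the expansion step: one must simultaneously track the polynomial order in $\Psi_\theta$ produced by the large-deviation estimates on matrix-valued $Z$-variables, the order in $n^{-1}$ produced by the count of available index summations, and the scaling in $|d_i-\zeta_c|$ coming from $\Pi^c_{[ii]}$, and then verify that these three inputs balance exactly at $1/(n\eta)$ on $\wh{\cal D}_\vartheta^1$. The matrix (rather than scalar) nature of $Z_{[i]}$ doubles the combinatorial overhead but introduces no genuinely new phenomenon beyond that already handled in \cite[Section 4]{XYY_circular}, so I expect this to be laborious but routine.
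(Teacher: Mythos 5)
Your proposal is correct and follows essentially the same route as the paper: the paper's proof of this lemma is simply a deferral to the standard high-moment fluctuation-averaging argument (Lemma 6.6 of \cite{edgedbm}, with the matrix-valued $Z$-variables handled as in \cite{XYY_circular}), which is exactly the expansion-plus-parity-and-Ward-identity scheme you describe. Your observation that $t\ll\sqrt{\kappa+\eta}\sim\im m_{c,t}$ on $\wh{\cal D}_\vartheta^1$ collapses the ratio in \eqref{flucaver_ZZ} to $\OO(1)$ is precisely why the clean bound $1/(n\eta)$ emerges here.
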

\begin{proof}
The proof is similar to the one for Lemma 6.6 of \cite{edgedbm}. We omit the details. 
\end{proof}

With this estimate, we can immediately conclude the proof of Proposition \ref{region21}.

\begin{proof}[Proof of Proposition  \ref{region21}]
With the stronger self-consistent equation estimate (\ref{selfcons_improved3}) and Lemma \ref{abstractdecoupling3}, we immediately get $\theta\prec  (n\eta)^{-1}.$ This concludes Proposition  \ref{region22} by noticing that $(n(\kappa+\eta))^{-1}\lesssim (n\eta)^{-1}$ for $z\in \wh{\cal D}_\vartheta^{1}$ with $E\ge \lambda_{c,t}$.
\end{proof}


Next we prove \eqref{averout2} on the domain $\wh{\cal D}_{\vartheta}^2$.

\begin{proposition}\label{region22}
Theorem \ref{thm_local2} holds for $z\in \wh{\cal D}_{\vartheta}^2$. 
\end{proposition}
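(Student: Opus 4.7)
The plan is to follow the same three-step strategy as in the proof of Proposition \ref{region12}: first derive a self-consistent equation on $\wh{\mathcal D}_\vartheta^2$, then use it to establish a weak averaged local law, and finally combine a fluctuation averaging estimate with the sharper form of the self-consistent equation to conclude. All the Schur-complement identities (Lemma \ref{lemm_resolvent}), large deviation bounds (Lemma \ref{largedeviation}) and $Z$-variable estimates (Lemma \ref{Z_lemma}) remain intact since they depend only on the resolvent $R(z)$ and the Gaussian matrix $X$; what changes is the asymptotic equation that $m$ must satisfy, which now involves $m_{c,t}$ rather than $m_{w,t}$.

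First I would derive the analogue of Lemma \ref{lemm_selfcons_weak2}. Set $u_t := (1+c_n t m)^2 z - (1+c_n t m) t(1-c_n)$ and work on the event $\Xi := \{\theta \le t / (\log n)^2\}$ (or, if $t$ is tiny, on $\{\theta \le \sqrt{\kappa+\eta}/(\log n)^2\}$, using $\sqrt{\kappa+\eta}\gg t$ on $\wh{\mathcal D}_\vartheta^2$ in view of \eqref{smalltrange}). Using \eqref{Dtheta2est}–\eqref{Dtheta2est2}, the expansion of $p^{-1}\sum_i R_{[ii]}$ carried out in the proof of Lemma \ref{lemm_selfcons_weak} gives an identity of the form
\begin{equation*}
z \;=\; \Phi_t(u_t) + \mathcal O_\prec(t\mathcal E),\qquad \mathcal E := \|[Z]\| + \frac{1}{(\kappa+\eta)^{3/2}}\Bigl(\theta(t\Psi_\theta + t^{1/2}n^{-1/2}) + t\Psi_\theta^2 + n^{-1}\Bigr).
\end{equation*}
Subtracting $z = \Phi_c(\zeta_c)$ and writing $\Phi_t(u_t) - \Phi_c(\zeta_c) = [\Phi_t(u_t) - \Phi_c(u_t)] + [\Phi_c(u_t) - \Phi_c(\zeta_c)]$, the first bracket is controlled by Assumption \ref{assm regularW} (giving a term $\OO_\prec(1/(n\eta) + 1/(n^2\eta^2\sqrt{\kappa+\eta}))$ after multiplying by the Stieltjes-transform normalizations), while the second bracket is handled by Taylor expansion together with $|\Phi_c'(\zeta_c)|\sim 1$ and $|\Phi_c''(\zeta_c)| \lesssim t/(\kappa+\eta)^{3/2}$ (from Lemma \ref{lem derivPhi2} adapted to the subordination function for $m_c$, which is available because $m_c$ is $\eta_*$-regular with $\eta_*=0$). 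The resulting weak form is
\begin{equation*}
|\Phi_c'(\zeta_c)(u_t-\zeta_c)| \;\lesssim\; \theta^2 + n^\e t \Psi_\theta + \frac{n^\e t}{n(\kappa+\eta)},
\end{equation*}
and the sharper form, after bounding $[Z]$ by the fluctuation averaging lemma below, is
\begin{equation*}
|\Phi_c'(\zeta_c)(u_t-\zeta_c)| \;\lesssim\; \theta^2 + n^\e t\|[Z]\| + \frac{n^\e t}{(n\eta)^2\sqrt{\kappa+\eta}} + \frac{n^\e t}{n(\kappa+\eta)}.
\end{equation*}

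Next I would use the weak form together with $|\Phi_c'(\zeta_c)|\sim 1$ and a standard continuity argument, starting from the bound at $\eta\sim 1$ established along the lines of \eqref{etasim1}, to prove the weak local law $\theta \prec \Psi$ on $\wh{\mathcal D}_\vartheta^2$ (analogue of Lemma \ref{alem_weak2}). Then I would prove the fluctuation-averaging estimate analogous to Lemma \ref{abstractdecoupling2}: if $\theta \prec \Phi$ for a deterministic control parameter $\Phi$ with $1/(n\eta) \le \Phi \le t/(\log n)^2$, then
\begin{equation*}
|[Z]| \;\prec\; \frac{1}{n(\kappa+\eta)} + \frac{\Phi}{n\eta\sqrt{\kappa+\eta}}.
\end{equation*}
This is proved exactly as in \cite{edgedbm}, using the decomposition $[Z] = p^{-1}\sum_i \Pi^c_{[ii]} Z_{[i]} \Pi^c_{[ii]}$ together with the deterministic bound \eqref{Dtheta2est2}, since the polynomialization/graph-expansion machinery only needs the size of the entries of $\Pi^c_{[ii]}$, which is again $\OO(1/(\kappa+\eta))$.

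Feeding this into the sharper self-consistent equation and iterating exactly as in the proof of Proposition \ref{region12} yields $\theta \prec 1/(n(\kappa+\eta)) + 1/((n\eta)^2\sqrt{\kappa+\eta})$, which is the claim.

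The main obstacle I anticipate is controlling the discrepancy $\Phi_t(u_t) - \Phi_c(u_t)$ uniformly down to $\eta = n^{-2/3+\vartheta}$ using only Assumption \ref{assm regularW}. This is delicate because Assumption \ref{assm regularW} gives a local law for $m_V$ with an improved error term $1/(n(|E-\lambda_+|+\eta)) + 1/((n\eta)^2\sqrt{|E-\lambda_+|+\eta})$ in the outside-the-spectrum regime, and one must verify that once $u_t$ is plugged into $m_V - m_c$ the resulting shift $\zeta_c \to u_t$ lies in a domain where this bound is applicable — i.e.~that $\im u_t \gtrsim \eta$ and $|\re u_t - \lambda_+| \gtrsim \kappa+\eta$ with high probability. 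Once this book-keeping is done, the rest of the argument mirrors the Gaussian-case proof verbatim.
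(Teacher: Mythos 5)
Your route is the same as the paper's: a self-consistent equation estimate on $\wh{\cal D}_{\vartheta}^2$, a weak averaged law via continuity, a fluctuation averaging bound for $[Z]$, and an iteration of the sharper equation (this is exactly Lemmas \ref{lemm_selfcons_weak4}, \ref{alem_weak4}, \ref{abstractdecoupling4} and the short concluding argument). The problem is that your quantitative statements are calibrated for the regime of Proposition \ref{region12}, where $t\ge n^{-1/3+\e}$, whereas Proposition \ref{region22} is precisely about the complementary regime \eqref{smalltrange}, where $t$ may be far below $n^{-1/3}$ or even $0$. Two consequences. First, the event $\{\theta\le t/(\log n)^2\}$ and, more importantly, the fluctuation-averaging hypothesis $1/(n\eta)\le\Phi\le t/(\log n)^2$ become vacuous for small $t$: the weak-law bound you want to feed in as $\Phi$ need not be $\le t/(\log n)^2$. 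You hedge the event with $\{\theta\le\sqrt{\kappa+\eta}/(\log n)^2\}$, but you leave the fluctuation-averaging hypothesis unadjusted; the paper uses the $t$-independent threshold $n^{-1/3}$ in both places (admissible since $n^{-1/3}\le n^{-\vartheta/2}\sqrt{\kappa+\eta}$ on $\wh{\cal D}_{\vartheta}^2$), so that the weak law \eqref{eq alem_weak4} both re-verifies the event and satisfies the hypothesis of Lemma \ref{abstractdecoupling4}.

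Second, your error term $\mathcal E$ is missing a factor of $t$: carrying out the expansion with \eqref{Dtheta2est2} gives the prefactor $t/(\kappa+\eta)^{3/2}$, as in \eqref{Gii3(4)}, not $1/(\kappa+\eta)^{3/2}$. This is not cosmetic. Since $|\Phi_c'(\zeta_c)|\sim 1$ and $|u_t-\zeta_c|\sim t\,\theta$, every term on the right must eventually be divided by $t$; your version then leaves a contribution $1/(n(\kappa+\eta)^{3/2})$ to $\theta$, which exceeds the claimed bound $1/(n(\kappa+\eta))+1/((n\eta)^2\sqrt{\kappa+\eta})$ whenever $\kappa\gg\max\{\eta,n\eta^2\}$ (e.g. $\eta=n^{-2/3+\vartheta}$, $\kappa=n^{-1/3}$ gives $n^{-1/2}$ versus $n^{-2/3}+n^{-1/2-2\vartheta}$). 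With the correct prefactor one uses $t\ll\sqrt{\kappa+\eta}$ on this domain to absorb it into $1/(n(\kappa+\eta))$, which is how the paper's bound closes. The same recalibration removes the bare $\theta^2$ from your weak form: the Taylor remainder is $t^3\theta^2/(\kappa+\eta)^{3/2}\le n^{-\vartheta/2}\,t\theta^2/\sqrt{\kappa+\eta}$, which is absorbed into the left-hand side $\sim t\theta$ on the $t$-independent event, whereas a bare $\theta^2$ can only be absorbed when $\theta\lesssim t$, i.e. on the event you cannot maintain for small $t$. These are repairable bookkeeping errors rather than a different argument, but as written your lemmas do not cover the small-$t$ regime that this proposition is actually responsible for.
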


Again we first prove the following self-consistent equation estimates. 

\begin{lemma}\label{lemm_selfcons_weak4}
 For $z\in \wh{\cal D}_{\vartheta}^2$ and on the event  $\Xi:=\left\{  \theta \le n^{-1/3}\right\},$
we have that for any constant $\e>0$, 
\begin{equation}
 \left| \Phi_c'(\zeta_c) (u_{t}-\zeta_c) \right|\le \frac{n^\e t}{n^{1/2}(\kappa+\eta)^{1/4}}+\frac{n^\e t}{n\eta}  \label{selfcons_lemm4}
\end{equation}
with high probability. Moreover, we have the finer estimate: for any constant $\e>0$ 
\begin{equation}
\left| \Phi_c'(\zeta_t) (u_{t}-\zeta_c)\right|\le  \frac{t\theta^2}{\sqrt{\kappa+\eta}}  + n^\e t \| [ Z]\| + \frac{n^\e t}{(n\eta)^2\sqrt{\kappa+\eta}}+\frac{n^\e t}{n(\kappa+\eta)},    \label{selfcons_improved4}
\end{equation}
with high probability.
\end{lemma}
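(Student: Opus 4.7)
The plan is to adapt the proof of Lemma \ref{lemm_selfcons_weak3} to the outer spectral domain $\wh{\cal D}_{\vartheta}^2$, replacing the deterministic estimates (\ref{Dtheta1est})--(\ref{Dtheta1est2}) with the outer-spectrum bounds (\ref{Dtheta2est})--(\ref{Dtheta2est2}), and invoking the outer-region part of Assumption \ref{assm regularW} to compare $m_V(u_t)$ with $m_c(u_t)$. First I would perform the same Schur complement expansion as in Lemma \ref{lemm_selfcons_weak} to obtain the averaged identity (\ref{Gii2}) with error term $\widetilde\epsilon_{[i]}$ bounded by (\ref{bdd wte}). On the event $\Xi=\{\theta\le n^{-1/3}\}$, combined with $t\le n^{-1/3+\epsilon_0}$ (and $\epsilon_0\le\vartheta/100$) and $\kappa+\eta\ge n^{-2/3+\vartheta}$, one verifies that $|u_t-\zeta_c|\lesssim t\theta\le n^{-2/3+\epsilon_0}\ll\min_i|d_i-\zeta_c|$ and $|u_t-\zeta_c|\ll \im\zeta_c\sim\eta$. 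Substituting (\ref{Dtheta2est2}) for (\ref{Dtheta1est2}) in the derivation of (\ref{Gii3(3)}) gives the analogous matrix identity on $\wh{\cal D}_{\vartheta}^2$, from which the self-consistent relation $z=\Phi_t(u_t)+\OO_\prec(t\mathcal{E})$ follows, with
\[
\mathcal{E}:=\|[Z]\|+\frac{1}{(\kappa+\eta)^{1/2}}\Big(\theta(t\Psi_\theta+t^{1/2}n^{-1/2})+t\Psi_\theta^2+\frac{1}{n}\Big).
\]

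Next, using $z=\Phi_c(\zeta_c)$, I would split $\Phi_c(u_t)-\Phi_c(\zeta_c)=[\Phi_c(u_t)-\Phi_t(u_t)]+\OO_\prec(t\mathcal{E})$. The direct calculation from the definitions of $\Phi_c$ and $\Phi_t$ yields $|\Phi_c(u_t)-\Phi_t(u_t)|\lesssim t|m_V(u_t)-m_c(u_t)|$. Since $|u_t-\lambda_+|\sim\kappa+\eta$ and $\im u_t\sim\eta$, the outer-region bound in Assumption \ref{assm regularW} produces
\[
|\Phi_c(u_t)-\Phi_t(u_t)|\prec \frac{t}{n(\kappa+\eta)}+\frac{t}{(n\eta)^2\sqrt{\kappa+\eta}}.
\]
Then Taylor expanding $\Phi_c$ around $\zeta_c$, exploiting $|\Phi_c'(\zeta_c)|\sim 1$ and $|\Phi_c''(\zeta_c)|\lesssim t/(\kappa+\eta)^{3/2}$ from (\ref{firstsecondderiv}), and noting that the quadratic remainder is bounded by $t^3\theta^2/(\kappa+\eta)^{3/2}\lesssim t\theta^2/\sqrt{\kappa+\eta}$ (using $t^2\le\kappa+\eta$ on $\wh{\cal D}_{\vartheta}^2$), delivers the finer estimate (\ref{selfcons_improved4}).

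For the weak estimate (\ref{selfcons_lemm4}), I would bound $\|[Z]\|$ directly by large deviations (as in (\ref{ZAZ})--(\ref{ZBZ}) but with (\ref{Dtheta2est2})), producing $\|[Z]\|\prec t^{1/2}/(n(\kappa+\eta)^{5/4})+t\Psi_\theta/\sqrt{\kappa+\eta}$. On $\Xi$ one checks, using the outer-region bound $\im m_{c,t}\lesssim \eta/\sqrt{\kappa+\eta}$ from (\ref{Immc_c}) and $\theta\le n^{-1/3}$, that $\Psi_\theta\lesssim 1/(n^{1/2}(\kappa+\eta)^{1/4})+1/(n\eta)$. Inserting these bounds into (\ref{selfcons_improved4}) and verifying that each of the four terms is controlled by $n^\epsilon t/(n^{1/2}(\kappa+\eta)^{1/4})+n^\epsilon t/(n\eta)$ yields (\ref{selfcons_lemm4}).

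The main obstacle is the bookkeeping required to ensure that all three scales $t$, $t\theta$, and $t\Psi_\theta$ are genuinely dominated by $\sqrt{\kappa+\eta}$ uniformly on $\wh{\cal D}_{\vartheta}^2$, so that the Taylor expansion, the smallness of $u_t-\zeta_c$ relative to $\min_i|d_i-\zeta_c|$, and the collapse of the quadratic remainder $t^3\theta^2/(\kappa+\eta)^{3/2}$ into $t\theta^2/\sqrt{\kappa+\eta}$ all hold simultaneously. This dominance hinges on the interplay between the restriction $t\le n^{-1/3+\epsilon_0}$ with $\epsilon_0\le\vartheta/100$ and the lower bound $\kappa+\eta\ge n^{-2/3+\vartheta}$ defining $\wh{\cal D}_{\vartheta}^2$, so special care is needed when combining the random term $\|[Z]\|$ with the deterministic Assumption \ref{assm regularW} error and the Taylor remainder.
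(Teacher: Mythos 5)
Your route is the same as the paper's: verify \eqref{mclose2} on $\Xi$ using \eqref{Dtheta2est} together with $\min\{\kappa,\eta\}\ge n^{-2/3+\vartheta}$, rerun the expansion of Lemma \ref{lemm_selfcons_weak} with the outer-domain bounds, compare $\Phi_t(u_t)$ with $\Phi_c(u_t)$ through Assumption \ref{assm regularW} (using $|\lambda_+-u_t|\gtrsim\kappa+\eta$), Taylor expand $\Phi_c$ around $\zeta_c$ with \eqref{firstsecondderiv}, and bound $[Z]$ as in \eqref{ZAZ}--\eqref{ZBZ}. One transcription issue: the substitution you describe does not produce your $\mathcal E$. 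Redoing \eqref{Gii3(3)} with \eqref{Dtheta2est2} gives the bracket prefactor $t/(\kappa+\eta)^{3/2}$ (this is \eqref{Gii3(4)} in the paper), not $1/(\kappa+\eta)^{1/2}$; since $t$ can exceed $\kappa+\eta$ on $\wh{\cal D}_{\vartheta}^2$ (e.g.\ $t\sim n^{-1/3+\e_0}$, $\kappa+\eta\sim n^{-2/3+\vartheta}$), your version is an unjustified strengthening. It is harmless for the final statement, because after multiplying by $t$ and using $t^2\le\kappa+\eta$ the larger (correct) prefactor still closes the argument, but it should be recorded correctly.

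The genuine gap is in your verification of \eqref{selfcons_lemm4}: the standalone claim that $\theta\le n^{-1/3}$ and $\im m_{c,t}\lesssim\eta/\sqrt{\kappa+\eta}$ imply $\Psi_\theta\lesssim n^{-1/2}(\kappa+\eta)^{-1/4}+(n\eta)^{-1}$ is false on part of $\wh{\cal D}_{\vartheta}^2$. The piece $\sqrt{\theta/(n\eta)}$ can be as large as $n^{-2/3}\eta^{-1/2}$, and for $\kappa\sim 1$, $\eta\sim n^{-2/3+\vartheta}$ this exceeds both of your terms by roughly $n^{1/6-\vartheta/2}$; inserting that lossy bound would not recover \eqref{selfcons_lemm4}, whose right-hand side only tolerates $n^\e$ for arbitrarily small $\e$. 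The repair (and what the paper does) is to never detach the damping factor: bound $t\Psi_\theta/\sqrt{\kappa+\eta}$ as a whole, and for the $\theta$-piece use $t\le n^{-1/3+\e_0}$ to get $t(\kappa+\eta)^{-1/2}\sqrt{\theta/(n\eta)}\le n^{-1+\e_0}(\kappa+\eta)^{-1/2}\eta^{-1/2}$, which is $\le n^{-1/2}(\kappa+\eta)^{-1/4}$ because $(\kappa+\eta)^{1/4}\eta^{1/2}\ge\eta^{3/4}\ge n^{-1/2+3\vartheta/4}$ and $\e_0\le\vartheta/100$. With that one correction your bookkeeping coincides with the paper's proof.
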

\begin{proof}
Note that by \eqref{Dtheta2est} and $\min\{\kappa,\eta\}\ge n^{-2/3+\vartheta}$ for $z\in \wh{\cal D}_{\vartheta}^2$, we have that \eqref{mclose2} holds on $\Xi$. Then we can repeat the proof of Lemma \ref{lemm_selfcons_weak}. In particular, using \eqref{Dtheta2est2} we can obtain a similar estimate as in \eqref{Gii3}: 
\be\label{Gii3(4)}
\frac{1}{p}\sum_{i\in \cal I_1}R_{[ii]}=\frac{1}{p}\sum_{i\in \cal I_1} \pi_{[i]} - [Z] + \OO_\prec\left[ \frac{t}{(\kappa+\eta)^{3/2}}\left(\theta \left( t\Psi_\theta +\frac{t^{1/2}}{n^{1/2}}\right)+ t \Psi^2_\theta + \frac 1n \right)\right].
\ee
Then estimate \eqref{Giim1t} holds with 
$$\cal E:= \| [ Z]\| +\frac{t}{(\kappa+\eta)^{3/2}}\left(\theta \left( t\Psi_\theta +\frac{t^{1/2}}{n^{1/2}}\right)+ t \Psi^2_\theta + \frac 1n \right).
$$
Repeating the proof below \eqref{Giim1t}, we can get \eqref{Gii4}. Now we subtract $z=\Phi_c(\zeta_c)$ from \eqref{Gii4} and get that
\be\label{eq compareTayloe02} 
 | \Phi_c(u_{t}) - \Phi_c(\zeta_{c})|=|\Phi_t(u_{t})-\Phi_c(u_{t})| + \OO_\prec(t\cal E)\prec \frac{t}{(n\eta)^2\sqrt{\kappa+\eta}}+\frac{ t}{n(\kappa+\eta)} + t\cal E,
\ee
where in the second step we used Assumption \ref{assm regularW} and  $|\lambda_+-u_t|\sim  |\lambda_+-\zeta_c| \gtrsim \kappa+\eta$ by \eqref{Dtheta2est}. With \eqref{Dtheta2est2}, it is easy to check that
$$|\Phi_t''(u)|\lesssim \frac{t}{(\kappa+\eta)^{3/2}}, $$
for all $u$ lying between $u_t$ and $\zeta_c$. 
Then performing Taylor expansion to \eqref{eq compareTayloe02}, we obtain that for small enough constant $\e>0$,
\be \nonumber 
\begin{split}
 &\left|\Phi_c'(\zeta_c) (u_{t}-\zeta_c) \right|  \lesssim \frac{t}{(\kappa+\eta)^{3/2}} t^2\theta^2 + \OO_\prec\left( t\cal E + \frac{t}{(n\eta)^2\sqrt{\kappa+\eta}}+\frac{ t}{n(\kappa+\eta)}\right) \\
& \le \frac{n^{-\vartheta/2}t\theta^2}{ \sqrt{\kappa+\eta} } + \OO_\prec\left( t \| [ Z]\| + \frac{n^\e t}{(\kappa+\eta)^{3/2}}\left(t^2 \Psi^2_\theta + \frac tn \right)+ \frac{t}{(n\eta)^2\sqrt{\kappa+\eta}}+\frac{ t}{n(\kappa+\eta)}\right)\\
&\le \frac{t\theta^2}{ \sqrt{\kappa+\eta} } + \OO_\prec\left( t \| [ Z]\| + \frac{t}{(n\eta)^2\sqrt{\kappa+\eta}}+\frac{ t}{n(\kappa+\eta)}\right),
 \end{split}
 \ee
 where we used the definition of $\Psi_\theta$ is the last step. This concludes \eqref{selfcons_improved4}. For the term $[Z]$, we can bound it as in \eqref{ZAZ} and \eqref{ZBZ}: 
\begin{align}\nonumber
\frac1p\sum_{i\in \cal I_1}\Pi_{[ii]}^c A_{[i]}\Pi_{[ii]}^c \prec \frac1n\left( \frac1p\sum_i \frac{t}{|d_i -\zeta_c|^4}\right)^{1/2} \lesssim \frac{1}{n}\left( \frac{t }{(\kappa+\eta)^{5/2}}\right)^{1/2} \le  \frac{1}{n(\kappa+\eta)},
\end{align}
and 
\begin{align}\nonumber
\frac1p\sum_{i\in \cal I_1}\Pi_{[ii]}^c B_{[i]}\Pi_{[ii]}^c\prec \frac1p\sum_i \frac{t\Psi_{\theta} }{|d_i -\zeta_c|^2}\lesssim \frac{t }{\sqrt{\kappa+\eta} }\Psi_\theta \le  \frac{1}{n^{1/2}(\kappa+\eta)^{1/4}}+\frac{1}{n\eta}.
\end{align}
Plugging these two estimates into \eqref{selfcons_improved4}, we conclude \eqref{selfcons_lemm4}.
\end{proof}

With this lemma, we can prove the following weak local law.
\begin{lemma}[Weak local law]\label{alem_weak4} 
For all $z\in \wh{\cal D}_{\vartheta}^2$, we have
\be\label{eq alem_weak4}\theta \prec  \frac{1}{n^{1/2}(\kappa+\eta)^{1/4}}+\frac{1}{n\eta}  .\ee
\end{lemma}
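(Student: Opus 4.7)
The plan is to combine the self-consistent equation estimate \eqref{selfcons_lemm4} with a deterministic bound on $|\Phi_c'(\zeta_c)|$ and the elementary relation $|u_t-\zeta_c|\sim t\theta$, and then remove the a priori constraint encoded in the event $\Xi=\{\theta\le n^{-1/3}\}$ by a standard continuity argument in $\eta$.

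First I would control the coefficient in front of $(u_t-\zeta_c)$ in \eqref{selfcons_lemm4}. For $z\in\wh{\cal D}_{\vartheta}^2$ we have $\sqrt{\kappa+\eta}\ge n^{-1/3+\vartheta/2}\gg t$ by \eqref{smalltrange} (with $\e_0\le\vartheta/100$), so the same computation as in the proof of Lemma \ref{alem_weak3} based on \eqref{Phit'} gives $|\Phi_c'(\zeta_c)|\sim\min\{1,\sqrt{\kappa+\eta}/t\}\sim 1$. Next, since $|z|\sim 1$ in this domain (recall $\lambda_{c,t}\sim 1$) and $1+c_ntm$ is bounded, the definition $u_t=(1+c_ntm)^2z-(1+c_ntm)t(1-c_n)$ and $\zeta_c=(1+c_ntm_{c,t})^2z-(1+c_ntm_{c,t})t(1-c_n)$ yield $|u_t-\zeta_c|\sim t\theta$ whenever $\theta\le n^{-1/3}$. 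Feeding these two facts into \eqref{selfcons_lemm4} and dividing by $t$ gives, on $\Xi$,
\begin{equation*}
\theta\;\prec\;\frac{1}{n^{1/2}(\kappa+\eta)^{1/4}}+\frac{1}{n\eta},
\end{equation*}
since $\e>0$ is arbitrary. This is \eqref{eq alem_weak4} under the additional assumption $\theta\le n^{-1/3}$.

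To remove that assumption I would use the standard continuity (bootstrap) scheme in the imaginary part $\eta$, exactly as invoked at the end of the proof of Lemma \ref{alem_weak3} (and modeled on Proposition 5.6 of \cite{edgedbm}). Concretely: at $\eta=10$ the crude bound $\|R(z)\|\le\eta^{-1}$ together with Lemma \ref{Z_lemma} and the same self-consistent computation gives $\theta\prec n^{-1/2}$, which is much better than $n^{-1/3}$; hence $\Xi$ holds there with high probability. One then introduces a discrete grid of $z$-values in $\wh{\cal D}_{\vartheta}^2$ with spacing $n^{-10}$, uses the deterministic Lipschitz bounds $|\partial_\eta m|,|\partial_\eta m_{c,t}|\le C\eta^{-2}\le Cn^{20}$ to transfer the high-probability estimate between neighboring grid points, and argues by induction downwards in $\eta$: at every step the inductive bound is of order $(n\eta)^{-1/2}\cdot n^{o(1)}$, which stays well below $n^{-1/3}$ throughout $\wh{\cal D}_{\vartheta}^2$ (since $\eta\ge n^{-2/3+\vartheta}$), so $\Xi$ is preserved and the improved bound from the previous paragraph applies.

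The main (mild) obstacle is the continuity step: one has to be careful that the target bound $n^{-1/2}(\kappa+\eta)^{-1/4}+(n\eta)^{-1}$ really is below $n^{-1/3}$ uniformly on the grid, which is where the restriction $\kappa+\eta\ge n^{-2/3+\vartheta}$ defining $\wh{\cal D}_{\vartheta}^2$ is used essentially. Everything else is a routine repetition of the argument already carried out for Lemma \ref{alem_weak3}, so I would refer to \cite{edgedbm} for the details and only write out the two-line derivation on $\Xi$ explicitly.
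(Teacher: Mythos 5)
Your proposal is correct and follows essentially the same route as the paper: on the event $\Xi$ you use $|\Phi_c'(\zeta_c)|\sim 1$ together with \eqref{selfcons_lemm4} and $t\theta\lesssim|u_t-\zeta_c|$ to get \eqref{eq alem_weak4}, and then remove the a priori bound by the standard continuity/bootstrap argument starting from the $\eta\gtrsim 1$ estimate, exactly as the paper does (it merely cites Proposition 5.6 of \cite{edgedbm} for the details you spell out, including the check that the target bound stays below $n^{-1/3}$ on $\wh{\cal D}_{\vartheta}^2$).
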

\begin{proof}
In this case we have $ |\Phi_c'(\zeta_c)| \sim 1$. Thus if $\Xi$ holds, with \eqref{selfcons_lemm4} we immediately obtain \eqref{eq alem_weak4}. To complete the proof, we will use the estimate at $\eta\gtrsim 1$ together with a standard continuity argument as the one for Proposition 5.6 of \cite{edgedbm}. We omit the details.
\end{proof}

Then we have the following fluctuation averaging estimate for $[Z]$. 

\begin{lemma}[Fluctuation averaging] \label{abstractdecoupling4}
Suppose that $\theta\prec \Phi$, where $\Phi$ is a positive, $n$-dependent deterministic function on $\wh{\cal D}_{\vartheta}^2$ satisfying that
$$ (n\eta)^{-1} \le \Phi \le  n^{-1/3}.$$   
Then for any $z \in \wh{\cal D}_{\vartheta}^2$, we have
\begin{equation}\label{flucaver_ZZ4}
\left|[Z]\right|  \prec \frac{1}{n (\kappa+\eta)}+\frac{\Phi}{n\eta \sqrt{\kappa+\eta}} .
\end{equation}
\end{lemma}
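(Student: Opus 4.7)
The plan is to mirror the fluctuation averaging arguments developed in the proof of Lemma \ref{abstractdecoupling2} (which itself follows Lemma 5.13 of \cite{edgedbm}), with the appropriate modifications for the current domain $\wh{\cal D}_{\vartheta}^2$ and the deterministic profile $m_{c,t}$ in place of $m_{w,t}$. As in those references, I would bound the $2a$-th moment $\mathbb{E}|[Z]|^{2a}$ for arbitrary fixed $a\in\mathbb{N}$, and then conclude via Markov's inequality.

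First I would expand $[Z]^a \overline{[Z]}^a$ as a sum over $2a$-tuples $(i_1,\ldots,i_{2a})\in\cal I_1^{2a}$ of products of $\Pi^c_{[i_k i_k]} Z_{[i_k]} \Pi^c_{[i_k i_k]}$, using \eqref{Zii} to write each $Z_{[i_k]}$ as an off-diagonal random contribution plus a centered quadratic form in the entries of $X$. Because $\mathbb{E}_{[i_k]} Z_{[i_k]} = 0$, each tuple contributes nonzero expectation only when we use Lemma \ref{lemm_resolvent} (specifically the identities \eqref{eq_res3} and the cumulant-style expansions) to propagate all resolvent minors onto the common minor $[\{i_1,\ldots,i_{2a}\}]$, thereby exposing independence between indices and the $(i_k)$-entries of $X$. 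Each such expansion step produces off-diagonal factors of order $\Lambda_o$, which are bounded using \eqref{Zestimate2} and the assumption $\theta\prec\Phi$ by $\Lambda_o \prec t\Psi_\Phi$, where $\Psi_\Phi := \sqrt{(\mathrm{Im}\,m_{c,t}+\Phi)/(n\eta)} + (n\eta)^{-1}$.

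The quantitative inputs I will use are: (i) the pointwise bound $\|\Pi^c_{[ii]}\| \lesssim |d_i-\zeta_c|^{-1}$ together with the averaged bound $p^{-1}\sum_i |d_i-\zeta_c|^{-2} \lesssim (\kappa+\eta)^{-1/2}$ from \eqref{Dtheta2est2}; (ii) the edge scaling $\mathrm{Im}\,m_{c,t} \sim \eta/\sqrt{\kappa+\eta}$ for $E\ge\lambda_{c,t}$ from \eqref{eq_imasymptoics}; and (iii) the elementary bound $t\le n^{-1/3+\e_0}$ from \eqref{smalltrange}, which lets us absorb all $t$-prefactors. The leading contributions then split into two types: the "diagonal" piece coming from the $X_{i\overline i}$ term in $Z_{[i]}$, which by Lemma \ref{largedeviation} contributes $n^{-1}(p^{-1}\sum_i \|\Pi^c_{[ii]}\|^4)^{1/2} \lesssim n^{-1}(\kappa+\eta)^{-1}$; and the "off-diagonal" piece, which after the fluctuation averaging gains an extra $\sqrt{n\eta}^{-1}$ compared to the pointwise estimate on $Z_{[i]}$, producing the $\Phi/(n\eta\sqrt{\kappa+\eta})$ term.

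The main obstacle will be the combinatorial bookkeeping in the high-moment expansion: one must show that every graph of index contractions either (a) has enough off-diagonal factors $\Lambda_o$ to beat the naive bound by the desired factor, or (b) is dominated by a tree structure whose contribution factorizes and yields precisely the two leading terms above, without accumulating large prefactors from the non-leading terms. Since this exact argument has been executed in considerable detail for matrix-valued $Z$-variables in \cite[Lemma 4.9]{XYY_circular} and \cite[Lemma 5.13]{edgedbm}, and the only substantive change here is replacing the averaging bound $p^{-1}\sum_i|d_i-\zeta_t|^{-2}$ with the analogous one for $\zeta_c$ via \eqref{Dtheta2est2}, I would simply indicate which ingredients are replaced and defer the detailed combinatorial counting to those references.
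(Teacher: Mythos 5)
Your proposal is correct and follows essentially the same route as the paper, which itself only says the proof is analogous to the fluctuation averaging lemma of \cite{edgedbm} (with the matrix-valued variant as in \cite{XYY_circular}), the sole substantive change being the deterministic inputs \eqref{Dtheta2est2} and \eqref{eq_imasymptoics} for $\zeta_c$ on $\wh{\cal D}_{\vartheta}^2$. One small bookkeeping point: in the diagonal piece you should keep the $\sqrt{zt}$ prefactor of $A_{[i]}$, since $\bigl(p^{-1}\sum_i\|\Pi^c_{[ii]}\|^4\bigr)^{1/2}\lesssim(\kappa+\eta)^{-5/4}$ alone is not $\lesssim(\kappa+\eta)^{-1}$; it is the extra factor $t^{1/2}$ together with $t^2\lesssim\kappa+\eta$ on $\wh{\cal D}_{\vartheta}^2$ that yields the term $\frac{1}{n(\kappa+\eta)}$.
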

\begin{proof}
The proof is similar to the one for Lemma 6.11 of \cite{edgedbm}. We omit the details. 
\end{proof}

Now we can complete the proof of Proposition \ref{region22}.

\begin{proof}[Proof of Proposition  \ref{region22}]
With the self-consistent equation estimate (\ref{selfcons_improved4}) and Lemma \ref{abstractdecoupling4}, we get that 
\be\label{simpleiter2}\theta\prec   \frac{1}{(n\eta)^2\sqrt{\kappa+\eta}}+\frac{1}{n(\kappa+\eta)} + \frac{\Phi}{n\eta \sqrt{\kappa+\eta}}.\ee
Iterating this estimate, we get $\theta\prec (n\eta)^{-1}$. Plugging $\Phi=n^\e /(n\eta)$ into \eqref{simpleiter2}, we conclude Proposition \ref{region22} since $\e$ can be arbitrarily small.
\end{proof}

Finally, combining Proposition \ref{region21}
and Proposition \ref{region22}, we conclude the proof of Theorem \ref{thm_local2}.

\section{Proof of Theorem \ref{thm_local_gen} and Theorem \ref{thm_implication}}\label{sec_pfgen}

With Theorem \ref{thm_local_gen}, we can prove Theorem \ref{thm_implication} with some standard arguments. Hence we will not write down all the details, and refer the reader to the known arguments in previous papers instead. 
\begin{proof}[Proof of Theorem \ref{thm_implication}]
The estimate \eqref{rigidity} follows from the averaged local laws \eqref{averin_gen} and \eqref{averout_gen} combined with a standard argument using Helffer-Sj\"ostrand calculus. The reader can refer to e.g. \cite[Theorem 2.13]{EKYY1}, \cite[Theorem 2.2]{EYY} and \cite[Theorem 3.3]{pillai2014} for more details.  

The estimate \eqref{edgeuniv} can be proved with a resolvent comparison argument as in the proof of \cite[Theorem 2.4]{EYY}. We have collected all the necessary inputs for this argument, including the rigidity of eigenvalues \eqref{rigidity}, the averaged local laws \eqref{averin_gen} and \eqref{averout_gen}, and the anisotropic local law \eqref{aniso_law_gen}. We remark that in order for these arguments to work, we need to know that the local laws hold at $z=E+\ii n^{-2/3-\delta}$ for $E$ around $\lambda_{+,t}$, where $\delta>0$ is some small constant. Our domain $\cal D_{\vartheta}$ contains such $z$ by the assumption $t\ge n^{\e/2}\sqrt{\eta_*} \ge n^{-1/3+\e/2}$ as long as $\vartheta$ is taken sufficiently small.

For \eqref{delocal}, we shall use the following spectral decomposition of $G$: for $i,j\in \cal I_1$ and $\mu,\nu\in \cal I_2$,
\begin{align}
G_{ij} = \sum_{k = 1}^{p} \frac{\bm{\xi}_k(i) \bm{\xi}_k^\top(j)}{\lambda_k-z},\ \quad \ &G_{\mu\nu} = \sum_{k = 1}^{n} \frac{\bm{\zeta}_k(\mu) \bm{\zeta}_k^\top(\nu)}{\lambda_k-z}. \label{spectral1}
\end{align}
Choose $\wt z_k=\lambda_k+\ii n^\e \eta_l(\lambda_k)$. Using \eqref{rigidity}, it is easy to check that $\wt z_k \in \cal D_\vartheta$. Then with (\ref{aniso_law_gen}) and \eqref{spectral1}, we obtain that
\begin{equation}\nonumber 
 \frac{ \vert   \mathbf u^\top \bm{\xi}_k \vert^2}{n^{\e}\eta_l(\lambda_k) }  \le \sum_{j=1}^p \frac{n^\e\eta_l(\lambda_k) \vert  \mathbf u^\top \bm{\xi}_k \vert^2}{(\lambda_j-\lambda_k)^2+n^{2\e}\eta_l^2(\lambda_k) }  = \im \mathbf u^\top {G}(\wt z_k) {\mathbf u}= \im \mathbf u^\top\Pi(\wt z_k) \mathbf u + \OO_\prec( \Phi(\wt z_k)\cdot \|\mathbf u\|_{\Pi}(\wt z_k)) .
\end{equation}
This estimate immediately gives that 
\begin{align*}\vert  \mathbf u^\top \bm{\xi}_k \vert^2 \le n^{\e}\eta_l(\lambda_k) \left[ \im \mathbf u^\top\Pi(\wt z_k) \mathbf u + \OO_\prec( \Phi(\wt z_k)\cdot \|\mathbf u\|_{\Pi}(\wt z_k)) \right]\\
\lesssim n^{\e}\eta_l(\gamma_k) \left[ \im \mathbf u^\top\Pi(z_k) \mathbf u+ \OO_\prec( \Phi(z_k)\cdot \|\mathbf u\|_{\Pi}(z_k)) \right]
\end{align*}
with high probability, where in the second step we used the rigidity estimate \eqref{rigidity} to replace $\lambda_k$ with $\gamma_k$. Since $\epsilon$ is arbitrary, we conclude the first estimate in \eqref{delocal}. The second estimate of \eqref{delocal} can be proved in the same way.
\end{proof}

For the rest of this section, we focus on the proof of Theorem \ref{thm_local_gen}. 
Our proof mainly uses a self-consistent comparison argument developed in \cite{Anisotropic}. We take the proof of the anisotropic local law \eqref{aniso_law_gen} as an example. Since Theorem \ref{thm_local} already implies that \eqref{aniso_law} holds when $X$ is Gaussian, it suffices to prove that for $X$ satisfying the assumptions in Theorem \ref{thm_local_gen}, 
\begin{equation*}
\left| \mathbf u^\top \left[ G(X,z) -  G(X^{Gauss},z)\right] \mathbf v\right| \prec \Phi(z)\|\mathbf u\|_\Pi^{1/2} \|\mathbf v\|_\Pi^{1/2} ,
\end{equation*}
for any deterministic unit vectors $\mathbf u,\mathbf v\in{\mathbb R}^{\mathcal I}$, where the entries of $X^{Gauss}$ are i.i.d. Gaussian random variables satisfying \eqref{assm1}. For simplicity of notations, in the proof we shall use the following notion of generalized entries. For $\mathbf v,\mathbf w \in \mathbb R^{\mathcal I}$ and $\fa\in \mathcal I$, we denote
\begin{equation}
G_{\mathbf{vw}}:= \mathbf v^\top G\mathbf w , \quad G_{\mathbf{v}a}:= \mathbf v^\top G\mathbf e_a , \quad G_{a\mathbf{w}}:= \mathbf e_a^\top G\mathbf w ,
\end{equation}
where $\mathbf e_a$ is the standard unit vector along $a$-th axis. 

\subsection{The anisotropic local law}

This subsection is devoted to the proof of the anisotropic local law \eqref{aniso_law_gen}. Our proof is an extension of the arguments in \cite[Section 7]{Anisotropic} and \cite[Section 6]{yang20190}. We will not give all the details, but mainly focus on the main differences from the previous arguments. 
The proof consists of a bootstrap argument from larger scales to smaller scales in multiplicative increments of $n^{-\delta}$, where $\delta$ is a positive constant satisfying
\begin{equation}
 \delta \in\left(0,\frac{\vartheta}{2C_a}\right). \label{assm_comp_delta}
\end{equation}
Here $\vartheta>0$ is the constant in the definition of $\cal D_\vartheta$ and $C_a> 0$ is an absolute constant that will be chosen large enough in the proof (for example, $C_a=100$ will work). For any $z=E+\ii \eta\in \cal D_\vartheta$, we define
\begin{equation}\label{eq_comp_eta}
\eta_l:=\eta n^{\delta l} \ \text{ for } \ l=0,...,L-1,\ \ \ \eta_L:=1.
\end{equation}
where
$L\equiv L(\eta):=\max\left\{l\in\mathbb N|\ \eta n^{\delta(l-1)}<1\right\}.$
Note that $L\le \delta^{-1}$ since $\eta\gg n^{-1}$ for $z=E+\ii \eta\in \cal D_\vartheta$. By (\ref{eq_gbound}), the function $z\mapsto G(z)- \Pi(z)$ is Lipschitz continuous in $\cal D_\vartheta$ with Lipschitz constant bounded by $n^2$. Thus to prove (\ref{aniso_law_gen}) for all $z\in \cal D_\vartheta$, it suffices to show that (\ref{aniso_law_gen}) holds for all $z$ in some discrete but sufficiently dense subset ${\mathbf S} \subset \cal D_\vartheta$. We will use the following discretized domain:
$\mathbf S$ is an $n^{-10}$-net of $\cal D_\vartheta$ such that $ |\mathbf S |\le n^{20}$ and
\[E+\ii\eta\in\mathbf S \ \Rightarrow \ E+\ii\eta_l\in\mathbf S\text{ for }l=1,...,L(\eta).\]

The bootstrapping is formulated in terms of two scale-dependent properties ($\bA_k$) and ($\bC_k$) defined on the subsets
\[\mathbf S_k:=\left\{z\in\mathbf S\mid\text{Im} \, z\ge n^{-\delta k}\right\}.\]
{\bf Property} ${(\bA_k)}$: For all $z\in\mathbf S_k$, any deterministic unit vectors $\mathbf x \in \mathbb R^{\mathcal I}$ and any $X$ satisfying the assumptions in Theorem \ref{thm_local_gen}, we have
\begin{equation}\label{eq_comp_Am}
 \im  G_{\mathbf x\mathbf x}(z) \prec \im \Pi_{\bx_1\bx_1}(z)+\im \Pi_{\bx_2\bx_2}(z) +n^{C_a\delta}\Phi(z) \|\mathbf x\|_\Pi (z),
\end{equation}
where $\bx_1\in \R^{\cal I_1}$ and $\bx_2\in \R^{\cal I_2}$ are defined such that 
\be\label{x12}\bx=\begin{pmatrix} \bx_1 \\ \bx_2\end{pmatrix}.\ee
{\bf Property} ${(\bC_k)}$: For all $z\in\mathbf S_k$, any deterministic unit vectors $\mathbf x, \mathbf y\in \mathbb R^{\mathcal I}$, and any $X$ satisfying the assumptions in Theorem \ref{thm_local_gen}, 
we have
\begin{equation}\label{eq_comp_Cm}
 \left|G_{\mathbf x\mathbf y}(z)-\Pi_{\mathbf x\mathbf y}(z)\right|\prec n^{C_a\delta}\Phi(z)\|\mathbf x\|_\Pi^{1/2}(z)\|\mathbf y\|_\Pi^{1/2}(z).
\end{equation}
It is easy to see that ${(\mathbf A_0)}$ holds by \eqref{eq_gbound} and $ \im \Pi_{\bx_1\bx_1}(z)+\im \Pi_{\bx_2\bx_2}(z)\sim 1$. Moreover, it is not hard to check that
\begin{equation}\label{lemm_boot2}
\text{for any $k$, property ${(\mathbf C_k)}$ implies property $(\mathbf A_k)$.}
\end{equation}
The key step is the following induction result.
\begin{lemma}\label{lemm_boot}
For any $1\le k\le \delta^{-1}$, property $(\mathbf A_{k-1})$ implies property $(\mathbf C_k)$.
\end{lemma}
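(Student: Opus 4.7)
The plan is to follow the self-consistent comparison strategy developed in \cite[Section 7]{Anisotropic} and \cite[Section 6]{yang20190}, adapted to the block structure of $H_t$. Since Theorem \ref{thm_local} already establishes $(\mathbf C_k)$ when the entries of $X$ are i.i.d.\ Gaussian, it suffices to control the change in high moments of $F_{\bx\by}(z):=G_{\bx\by}(z)-\Pi_{\bx\by}(z)$ under a Lindeberg-type entrywise replacement of $X$ by $X^{\text{Gauss}}$. Concretely, I would prove, for each fixed integer $p\ge 1$, the moment bound
\begin{equation*}
\E\,|F_{\bx\by}(z)|^{2p}\prec \bigl(n^{C_a\delta}\Phi(z)\bigr)^{2p}\|\bx\|_\Pi^{p}(z)\|\by\|_\Pi^{p}(z),
\end{equation*}
uniformly for $z\in \mathbf S_k$; property $(\mathbf C_k)$ then follows by Markov's inequality and the arbitrariness of $p$.

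The key tool is a Lindeberg swap: order the independent entries of $X$ and replace them one at a time with independent Gaussians $\wh x_{i\mu}$ having the same variance. Writing the telescoping difference and performing a fourth-order Taylor expansion of $\E\,|F_{\bx\by}|^{2p}$ in each $x_{i\mu}$, the zeroth-, first-, and second-order contributions cancel between $x_{i\mu}$ and $\wh x_{i\mu}$ by moment matching, and the third-order contribution vanishes thanks to \eqref{condition_3rd}, which enforces $\E x_{i\mu}^3=0=\E \wh x_{i\mu}^3$. What remains is a fourth-order term involving $\partial_{i\mu}^4 G_{\bx\by}$, and this is where the a priori input $(\mathbf A_{k-1})$ enters: each derivative produces an extra resolvent factor, and via the Ward identities in Lemma \ref{Ward_id} (which yield $\sum_a |G_{\bx a}|^2 \sim \eta^{-1}\im G_{\bx\bx}$), together with \eqref{eq_comp_Am} applied at scale $\eta_{k-1}$ and the elementary monotonicity $\eta\mapsto \eta\im G_{\bx\bx}(E+\ii\eta)$, such sums are bounded in terms of $\im \Pi_{\bx_1\bx_1}+\im \Pi_{\bx_2\bx_2}$ plus $n^{C_a\delta}\Phi\|\bx\|_\Pi$. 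The finite-moment assumption \eqref{conditionA2} controls the individual fourth moments $\E x_{i\mu}^4$, and a careful counting shows that the total fourth-order error, summed over $(i,\mu)$ and propagated down from scale $\eta_{k-1}$ to $\eta_k = n^{-\delta}\eta_{k-1}$, is bounded by the target $(n^{C_a\delta}\Phi)^{2p}\|\bx\|_\Pi^p\|\by\|_\Pi^p$, provided $C_a$ in \eqref{assm_comp_delta} is taken sufficiently large.

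The main obstacle, and the main departure from the scalar-additive setting of \cite{edgedbm}, is twofold. First, because $\Pi$ is a genuinely anisotropic $(p+n)\times(p+n)$ block matrix and the natural norm $\|\cdot\|_\Pi$ in \eqref{upi} has nontrivial index-dependent weights, one cannot simply port the entrywise estimates of \cite{Anisotropic}; instead one must use polarization (as in Section \ref{sec region22}) to reduce anisotropic bounds to diagonal ones, and then invoke the block version of Lemma \ref{Ward_id}. Second, the $t$-dependence of all error terms must be tracked sharply: the quantities $\|\Pi\bx\|$ are bounded by $\varpi^{-1}(z)$ via \eqref{rough Pi}, and $\Phi$ in \eqref{defnPhi} absorbs precisely one factor of $\varpi^{-1}$, so the fourth-order contributions fit within $n^{C_a\delta}\Phi$ only when the improved estimates of Lemmas \ref{lem_domian1control}--\ref{lem_domian2control} are used separately in the two subregions $E\le \lambda_{+,t}+Ct^2$ and $E>\lambda_{+,t}+Ct^2$, mirroring the decomposition used in the Gaussian proof of Propositions \ref{region11}--\ref{region12}. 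I expect the bulk of the technical bookkeeping to split along those same two regimes, and the extra averaged error $\Phi/(n\eta)$ in \eqref{averout_gen} (absent from \eqref{averout}) to be the precise residue of this fourth-order comparison that cannot be removed without matching higher moments.
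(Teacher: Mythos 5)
Your overall scheme (Green function comparison with the Gaussian case, third-moment matching via \eqref{condition_3rd}, Ward identities plus the monotonicity of $\eta\mapsto\eta\im G_{\mathbf x\mathbf x}$ to transfer $(\mathbf A_{k-1})$ from scale $\eta_{k-1}$ down to $\eta_k$) is the same family of argument as the paper, which implements it via a continuous interpolation $X^\theta$ and the formula of Lemma \ref{lemm_comp_3} rather than your entry-by-entry Lindeberg swap; that difference alone is cosmetic. But two steps in your proposal have genuine gaps. First, a fourth-order Taylor expansion is not sufficient. Since the fourth moments of $x_{i\mu}$ and the Gaussian do \emph{not} match, the $r=4$ term must be \emph{bounded}, not cancelled, and the fifth-order remainder you discard is not negligible: per entry it is of size $n^{-5/2}$ times derivative factors which, by \eqref{rough Pi}, carry weights $\|\mathbf e_i\|_\Pi\|\mathbf e_\mu\|_\Pi\lesssim \varpi^{-2}$, and $\varpi^{-1}$ can be as large as $t^{-2}\sim n^{2/3-2\omega}$; summing over the $\sim n^2$ entries this does not beat the target $(n^{C_a\delta}\Phi)^{a}\|\mathbf u\|_\Pi^{a/2}\|\mathbf v\|_\Pi^{a/2}$. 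The paper's expansion \eqref{eq_comp_taylor} goes to order $4a+4$ precisely because each extra order only gains one factor $\Phi\lesssim n^{-\vartheta/2}$ through $\|\mathbf x\|_\Pi\,|y|\prec\Phi$ in \eqref{xsmall}, and one needs $a+4$ such factors before the remainder is harmless.

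Second, the surviving terms of order $r\ge 4$ cannot be bounded directly by the target, as you assert. In the word decomposition \eqref{eq_comp_goal3}, the factors $A_{\mathbf{uv},i,\mu}(w_0)$ with $\ell(w_0)=0$ are powers of $F_{\mathbf u\mathbf v}$ itself, so the comparison only yields a \emph{self-consistent} estimate of the form $\OO\bigl([n^{C_a\delta}\Phi\|\mathbf u\|_\Pi^{1/2}\|\mathbf v\|_\Pi^{1/2}]^a+\mathbb E F^a_{\mathbf u\mathbf v}\bigr)$ (Lemma \ref{lemm_comp_4}), which must then be closed by a Gr\"onwall argument along the interpolation (or, in your discrete setting, an absorption/iteration over the $np$ swaps); your proposal omits this structure entirely. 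Closing even that self-consistent bound requires the sharper inputs you gloss over: the weighted Ward-type estimate \eqref{eq_comp_r2}, which converts $\frac{t}{n}\sum_i|G_{\mathbf u i}|^2$ into $\varpi\Phi^2\|\mathbf u\|_\Pi$ (one factor of $\Phi$ per off-diagonal resolvent entry is not enough), the summability $\frac{t}{n}\sum_i\|\mathbf e_i\|_\Pi^2\lesssim 1$ of \eqref{eiemu_12}, and a case split according to how many length-one words occur (Cases 1 and 2 in the paper), rather than the crude bound \eqref{eq_comp_A1} applied uniformly. Finally, the extra term $\Phi/(n\eta)$ in \eqref{averout_gen} arises in the separate comparison for the \emph{averaged} law, not as a residue of the anisotropic fourth-order term, though this last point is minor.
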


Combining \eqref{lemm_boot2} and Lemma \ref{lemm_boot}, we conclude that (\ref{eq_comp_Cm}) holds for all $z\in\mathbf S$. Since $\delta$ can be chosen arbitrarily small under the condition (\ref{assm_comp_delta}), we conclude that (\ref{aniso_law_gen}) holds for all $z\in\mathbf S$, which further gives \eqref{aniso_law_gen} for all $z\in \cal D_\vartheta$. What remains now is the proof of Lemma \ref{lemm_boot}. For any deterministic unit vectors $\bu,\bv\in \R^{\cal I}$, we denote
\begin{equation}\label{eq_comp_F(X)}
 F_{\mathbf u\mathbf v}(X,z):=\left|G_{\mathbf{uv}}(X,z)-\Pi_{\mathbf {uv}}(z)\right|.
\end{equation}
By Markov's inequality, it suffices to prove the following lemma.
\begin{lemma}\label{lemm_comp_0}
 Suppose that the assumptions of Theorem \ref{thm_local_gen} and property $(\mathbf A_{k-1})$ hold. Then for any fixed $a\in 2\N$, we have that
 \begin{equation}
  \mathbb EF_{\mathbf u\mathbf v}^a(X,z)\lesssim \left[ n^{C_a\delta}\Phi(z)\|\mathbf u\|_\Pi^{1/2}(z)\|\mathbf v\|_\Pi^{1/2}(z) \right]^a
 \end{equation}
 for all $z\in{\mathbf S}_k$ and any deterministic unit vectors $\bu,\bv\in \R^{\cal I}$.
\end{lemma}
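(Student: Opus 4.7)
The plan is to prove Lemma \ref{lemm_comp_0} by a Lindeberg-type continuous interpolation/Green's function comparison argument between the general matrix $X$ and its Gaussian counterpart $X^{Gauss}$, using that the Gaussian case is already settled by Theorem \ref{thm_local} (giving the identity $G\stackrel{d}{=}\mathrm{blkdiag}(O_1,O_2)R\,\mathrm{blkdiag}(O_1,O_2)^\top$ and hence an anisotropic bound in terms of $\Pi$). More precisely, I would introduce an interpolating family $X^{(\gamma)}$, $\gamma\in[0,1]$, or equivalently perform a one-by-one entry replacement, so that at $\gamma=0$ the matrix is $X$ and at $\gamma=1$ it is $X^{Gauss}$; the goal is to show $|\tfrac{\mathrm d}{\mathrm d\gamma}\mathbb E F_{\mathbf u\mathbf v}^a(X^{(\gamma)},z)|$ is summably small.

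The key technical step is the cumulant expansion: writing $F_{\mathbf u\mathbf v}^a = |G_{\mathbf{uv}}-\Pi_{\mathbf{uv}}|^a$ and applying Stein/Cumulant identities to $\mathbb E[x_{ij}\,\partial_{x_{ij}}(\cdots)]$, the first-moment matches give cancellation, the variance contribution coincides with the Gaussian, and by the assumption $\mathbb E x_{ij}^3=0$ the leading correction vanishes. The first nontrivial contribution is of order $\mathbb E x_{ij}^4=O(n^{-2})$, multiplied by a fourth derivative $\partial_{x_{ij}}^4 F^a_{\mathbf{uv}}$, and tail contributions from $\mathbb E|x_{ij}|^k$, $k\ge 5$, which are summable by \eqref{conditionA2}. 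Resolvent identities \eqref{eq_res11}--\eqref{eq_res3} give $\partial_{x_{ij}} G_{\mathbf{ab}}$ as a bilinear combination of entries $G_{\mathbf{a}\bullet}G_{\bullet\mathbf{b}}$, and iterating four times yields a sum over products of Green's function entries whose sizes must be controlled pointwise.

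Here the main obstacle is obtaining the right \emph{a priori} bound for entries $G_{\mathbf{a\cdot}}$ with one index being a deterministic vector and the other being a coordinate vector, in the anisotropic regime where $\Pi$ has nontrivial block structure inherited from $Y$. The key input is property $(\mathbf A_{k-1})$: applied at the raised scale $\eta_{k-1}=n^\delta\eta$, together with Lemma \ref{Ward_id} and the identity $\im G_{\mathbf x\mathbf x}(z)/\eta = \sum_\fa |G_{\mathbf x\fa}|^2/\eta$, it gives $|G_{\mathbf{xa}}|^2 \prec \eta^{-1}[\im\Pi_{\bx_1\bx_1}+\im\Pi_{\bx_2\bx_2}+n^{C_a\delta}\Phi\|\bx\|_\Pi]$, which via the monotonicity $\eta\mapsto \eta\,\im G_{\mathbf{xx}}(E+\ii\eta)$ transfers from $\eta_{k-1}$ to $\eta_k$. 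Substituting these into the cumulant-expansion estimate, together with the bounds on $\|\Pi\mathbf u\|$ from \eqref{rough Pi} and the power-counting of the parameter $\Phi(z)$ in \eqref{defnPhi}, one checks that every term in the fourth-order contribution is bounded by $(n^{C_a\delta}\Phi(z)\|\mathbf u\|_\Pi^{1/2}\|\mathbf v\|_\Pi^{1/2})^a/n$, which is summable over the $O(n^2)$ interpolation steps.

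Integrating from $\gamma=0$ to $\gamma=1$ and combining with the Gaussian bound (which by Theorem \ref{thm_local} satisfies $\mathbb E F_{\mathbf{uv}}^a(X^{Gauss},z)\lesssim (\Phi(z)\|\mathbf u\|_\Pi^{1/2}\|\mathbf v\|_\Pi^{1/2})^a$ at the same scale, after invoking the rotation-invariance \eqref{Yt=Wt} and the polarization identity) yields the desired moment bound. The expected bottleneck is the bookkeeping of the anisotropic weights $\|\Pi\mathbf u\|$ in the fourth-order cumulant terms: one must avoid losing factors of $\varpi(z)^{-1}$ that would spoil the smallness in $\Phi$. This is handled by carefully grouping each derivative with a matching $\Pi$-weight using Ward identities \eqref{eq_gsq1}--\eqref{eq_gsq3} and noting that sums $\sum_{\fa}|\Pi_{\mathbf u\fa}|^2=\|\Pi\mathbf u\|^2\lesssim \|\mathbf u\|_\Pi^2$, so that the crude norm bound \eqref{rough Pi} is used only once per factor and the remaining factors are absorbed into the Ward-identity gain of $\im m_{w,t}/\eta$ appearing in $\Phi$.
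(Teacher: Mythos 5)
Your overall strategy is the right family of argument --- a Green's function comparison with the Gaussian ensemble settled by Theorem \ref{thm_local}, third-moment matching via \eqref{condition_3rd}, a priori control of $G_{\mathbf u \fa}$-type entries obtained by transferring property $(\mathbf A_{k-1})$ from the scale $n^{\delta}\eta$ down to $\eta$ (this is exactly the content of Lemmas \ref{lemm_comp_1}--\ref{lemm_comp_2}, with the $n^{2\delta}$ loss), and Ward-identity bookkeeping of the anisotropic weights via \eqref{rough Pi}, \eqref{eq_comp_r2} and \eqref{eiemu_12}. The paper implements this through an interpolation of the \emph{laws}, $\rho_{i\mu}^\theta=(1-\theta)\rho^0_{i\mu}+\theta\rho^1_{i\mu}$, and single-entry resolvent/Taylor expansions to order $4a+4$, rather than a continuous matrix interpolation with a cumulant expansion, but that difference is cosmetic.

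The genuine gap is the closure of the argument. You assert that the derivative in the interpolation parameter can be made ``summably small'', i.e.\ that every fourth-order term is bounded by the target $\bigl(n^{C_a\delta}\Phi\|\mathbf u\|_\Pi^{1/2}\|\mathbf v\|_\Pi^{1/2}\bigr)^a$ divided by the number of replacement steps. This is not achievable, and it is also not how the estimate can be organized: when the $r$ derivatives fall on only $b<a$ of the $a$ factors of $F_{\mathbf u\mathbf v}=|G_{\mathbf{uv}}-\Pi_{\mathbf{uv}}|$, the untouched factors survive as $F_{\mathbf u\mathbf v}^{a-b}$, and after summing over $(i,\mu)$ and applying H\"older one unavoidably produces terms of the form $\mathbb E F_{\mathbf u\mathbf v}^{a}(X^\theta,z)$ --- the very quantity one is trying to bound (compare \eqref{eq_comp_goal3} and the conclusion of Lemma \ref{lemm_comp_4}). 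The correct statement is therefore a \emph{self-consistent} bound
\[
\Bigl|\tfrac{\dd}{\dd\theta}\mathbb E F^a_{\mathbf u\mathbf v}(X^\theta,z)\Bigr|
\lesssim \bigl[n^{C_a\delta}\Phi(z)\|\mathbf u\|_\Pi^{1/2}\|\mathbf v\|_\Pi^{1/2}\bigr]^a+\mathbb E F^a_{\mathbf u\mathbf v}(X^\theta,z),
\]
which must then be closed by a Gr\"onwall argument in $\theta$ (using Theorem \ref{thm_local} at $\theta=0$). Your proposal omits this absorption step entirely, and its power counting is also internally inconsistent: a per-entry contribution of size $\mathrm{target}^a/n$ summed over $O(n^2)$ entries gives $n\cdot\mathrm{target}^a$, not $\mathrm{target}^a$. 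A second, smaller omission: before estimating derivatives at the replaced entry one must pass from the expansion point $\lambda=0$ back to $\lambda=X^\theta_{i\mu}$ (the analogue of the step from \eqref{eq_comp_est} to \eqref{eq_comp_selfest}), and the Taylor expansion must be carried to order $4a+4$ with a controlled remainder, not stopped at the fourth cumulant; the contributions of orders $r=4,\dots,4a+4$ are all estimated by the same word-decomposition and Ward-identity mechanism, with the smallness coming from the prefactor $t^{r/2}n^{-r/2}$ combined with \eqref{eq_comp_r2} and \eqref{eiemu_12}, not from each $(i,\mu)$ term being individually tiny.
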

The rest of this subsection is devoted to proving Lemma \ref{lemm_comp_0}. 
First, in order to make use of the assumption $(\mathbf A_{k-1})$, which has spectral parameters in ${\mathbf S}_{k-1}$, to get some estimates for $G(z)$ with spectral parameters in ${\mathbf S}_{k}$, we shall use the following rough bounds on $ G_{\mathbf{xy}}$.

\begin{lemma}\label{lemm_comp_1}
 For any $z=E+\ii\eta\in\mathbf S$ and unit vectors $\mathbf x,\mathbf y\in \R^{\mathcal I}$,  we have 
\begin{align*}
\left|G_{\mathbf x\mathbf y}(z)-\Pi_{\mathbf x\mathbf y}(z)\right|\prec & n^{2\delta}\left(\sum_{l=1}^{L(\eta)} \sum_{\al=1}^2  \im   G_{\mathbf x_\al\mathbf x_\al}(E+\ii\eta_l) \right)^{1/2}\left(\sum_{l=1}^{L(\eta)}\sum_{\al=1}^2  \im  G_{\mathbf y_\al \mathbf y_\al}(E+\ii\eta_l) \right)^{1/2},
\end{align*}
where ${\mathbf x}_1,{\mathbf y}_1\in\mathbb R^{\mathcal I_1}$ and ${\mathbf x}_2,{\mathbf y}_2\in\mathbb R^{\mathcal I_2}$ are defined as in \eqref{x12}, and $\eta_l$ is defined in (\ref{eq_comp_eta}).
\end{lemma}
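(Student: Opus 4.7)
The plan is to use the integral representation of the resolvent along the imaginary axis, combined with the Ward identity and the monotonicity property of $\xi \mapsto \xi\,\Im G_{\bx\bx}(E+\ii\xi)$, to reduce $G_{\bx\by}(z)-\Pi_{\bx\by}(z)$ at scale $\eta$ to a sum over the larger scales $\eta_l$ that appear in the statement. Writing $z = E+\ii\eta$ and using $\partial_z G(z) = G(z)^2$, we have the deterministic identity
\begin{equation*}
[G-\Pi]_{\bx\by}(E+\ii\eta) \;=\; [G-\Pi]_{\bx\by}(E+\ii\eta_L) \;-\;\ii\int_{\eta}^{\eta_L}\!\bigl[(G^2)_{\bx\by}(E+\ii\xi) - \partial_\xi \Pi_{\bx\by}(E+\ii\xi)\bigr]\dd\xi .
\end{equation*}

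The first task is the boundary term at $\eta_L=1$, where $\|G(E+\ii\eta_L)\|+\|\Pi(E+\ii\eta_L)\|=\OO(1)$, so $|[G-\Pi]_{\bx\by}(E+\ii\eta_L)|\lesssim 1$. Since the spectrum of $\mathcal{Q}_t$ is contained in a bounded interval and $\eta_L=1$, we also have $\Im G_{\bx_\al \bx_\al}(E+\ii\eta_L)\gtrsim\|\bx_\al\|^2$, hence this boundary contribution can be absorbed into the $l=L$ summand of the right-hand side. The deterministic piece $\partial_\xi \Pi$ is handled the same way: since $\Pi(z)$ is, by the form \eqref{defnPi}, analytic on $\C_+$ and $\Im\Pi$ is positive semi-definite (a matrix-valued Stieltjes transform), $\xi\mapsto \xi\,\Im\Pi_{\bx\bx}(E+\ii\xi)$ is nondecreasing and the same Cauchy--Schwarz/Ward-type bound $|\partial_\xi\Pi_{\bx\by}| \le \sqrt{\Im\Pi_{\bx\bx}/\xi}\sqrt{\Im\Pi_{\by\by}/\xi}$ applies; in turn, pointwise $\Im\Pi\le \OO(1)\cdot \Im G$ via the averaged local law up to scale $\eta_L$, so this piece produces at worst an extra factor of $n^{\delta}$ and is absorbed into the final $n^{2\delta}$.

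The main estimate is for the resolvent integral. First I would apply Cauchy--Schwarz followed by the vectorial Ward identity from Lemma~\ref{Ward_id} (or equivalently $G G^\ast=\Im G/\eta$), which gives
\begin{equation*}
|(G^2)_{\bx\by}(E+\ii\xi)| \;\le\; \|G(E+\ii\xi)\bx\|\,\|G(E+\ii\xi)\by\| \;=\; \sqrt{\tfrac{\Im G_{\bx\bx}(E+\ii\xi)}{\xi}}\,\sqrt{\tfrac{\Im G_{\by\by}(E+\ii\xi)}{\xi}} .
\end{equation*}
Then I would dyadically split $\int_\eta^{\eta_L}=\sum_{l=0}^{L-1}\int_{\eta_l}^{\eta_{l+1}}$ and on each piece use monotonicity: $\Im G_{\bx\bx}(E+\ii\xi)\le (\eta_{l+1}/\xi)\,\Im G_{\bx\bx}(E+\ii\eta_{l+1})$ for $\xi\in[\eta_l,\eta_{l+1}]$. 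A direct calculation gives $\int_{\eta_l}^{\eta_{l+1}}(\eta_{l+1}/\xi^2)\dd\xi = (\eta_{l+1}/\eta_l)-1 \lesssim n^\delta$, so each dyadic piece contributes at most $n^\delta\sqrt{\Im G_{\bx\bx}(\eta_{l+1})\,\Im G_{\by\by}(\eta_{l+1})}$.

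Summing over $l$ and applying Cauchy--Schwarz to the sum yields
\begin{equation*}
\int_\eta^{\eta_L}|(G^2)_{\bx\by}(E+\ii\xi)|\dd\xi \;\lesssim\; n^{\delta}\Bigl(\sum_{l=1}^{L}\Im G_{\bx\bx}(E+\ii\eta_l)\Bigr)^{1/2}\Bigl(\sum_{l=1}^{L}\Im G_{\by\by}(E+\ii\eta_l)\Bigr)^{1/2}.
\end{equation*}
Because $\Im G(E+\ii\xi)$ is positive semi-definite and $\bx=\bx_1+\bx_2$, the elementary inequality $(u+v)^\ast M(u+v)\le 2u^\ast Mu+2v^\ast Mv$ (for $M\ge0$) gives $\Im G_{\bx\bx}\le 2\sum_\al \Im G_{\bx_\al\bx_\al}$, yielding the stated bound after accounting for the extra $n^{\delta}$ used for $\partial_\xi\Pi$, producing the overall prefactor $n^{2\delta}$.

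The main obstacle is handling the deterministic contribution $\partial_\xi\Pi$ cleanly: it must be shown that $\Im\Pi$ inherits a monotonicity of the same form as $\Im G$ (which follows once one identifies $\Pi$ as a matrix-valued Stieltjes transform via \eqref{defnPi}--\eqref{pi_i} and the existence/uniqueness structure of Lemma~\ref{existuniq}), and that $\Im\Pi$ is controlled by $\Im G$ on each scale $\eta_l$, which is where the weak averaged law at scale $\eta_{l-1}$ is used to keep the estimate self-consistent as one iterates down the scales.
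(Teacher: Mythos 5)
Your overall strategy---integrating $\partial_\eta G$ from the working scale up to $\eta_L=1$ and combining a Ward identity with the monotonicity of $\xi\,\Im G(E+\ii\xi)$---is a standard alternative to the paper's route (which goes through the spectral decomposition of $G$, as in the cited Lemma 7.12 of \cite{Anisotropic}), but as written it has two genuine gaps. First, the identities your main estimate rests on are false for the resolvent used here. Since $G(z)=(z^{1/2}H_t-z)^{-1}$ as in \eqref{eqn_defG}, the spectral parameter also enters through the factor $z^{1/2}$ multiplying $H_t$; a direct computation gives $\partial_z G=\tfrac12 G^2-\tfrac{1}{2z}G$, not $G^2$, and likewise $GG^*\ne \Im G/\eta$: the clean Ward relation holds only within the diagonal blocks $\cal G=(\mathcal Q_t-z)^{-1}$ and $\uG=(\uQ_t-z)^{-1}$, while the mixed blocks carry the correction terms displayed in \eqref{eq_gsq3} of Lemma \ref{Ward_id}. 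For the same reason, the positive semi-definiteness of $\Im G$ as an $\cal I\times\cal I$ matrix and the monotonicity of $\xi\mapsto\xi\,\Im G_{\bx\bx}(E+\ii\xi)$ for a vector $\bx$ mixing the two blocks are not justified. These defects are repairable---split $\bx=\bx_1+\bx_2$, run your scheme on $\cal G_{\bx_1\bx_1}$ and $\uG_{\bx_2\bx_2}$, and treat the off-diagonal blocks $z^{-1/2}\bx_1^\top\cal G Y_t\by_2$ of \eqref{green2} via the singular value decomposition of $Y_t$---but at that point you are essentially carrying out the paper's spectral-decomposition argument, and this block splitting is precisely why the right-hand side of the lemma is phrased in terms of $\Im G_{\bx_\al\bx_\al}$, $\al=1,2$, rather than $\Im G_{\bx\bx}$.

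Second, your treatment of the deterministic part is circular. You bound the $\partial_\xi\Pi$ contribution by asserting ``$\Im\Pi\le \OO(1)\cdot\Im G$ via the averaged local law,'' and you say at the end that controlling $\Im\Pi$ by $\Im G$ ``is where the weak averaged law at scale $\eta_{l-1}$ is used.'' But this lemma is an a priori estimate that must hold for every $X$ satisfying only the moment assumptions (in particular for the interpolating matrices $X^\theta$) and for all $z\in\mathbf S$, with no bootstrap or local-law hypothesis---establishing such a law for $X^\theta$ is the very purpose of the comparison argument. Moreover, the bootstrap hypothesis $(\bA_{k-1})$ is assumed only in Lemma \ref{lemm_comp_2}, not here, and even there it gives the opposite inequality, $\Im G\lesssim\Im\Pi+\cdots$ as in \eqref{eq_comp_Am}, never $\Im\Pi\lesssim\Im G$. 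Note also that in this paper $\|\Pi(z)\|$ can be as large as $\varpi^{-1}(z)$ by \eqref{rough Pi}, so the deterministic term cannot be dismissed as $\OO(1)$ either; it has to be handled by a deterministic argument (or kept out of this purely spectral estimate), not by an appeal to the unproven local law.
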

\begin{proof} The proof uses the spectral decomposition of $G$, and is the same as the one for \cite[Lemma 7.12]{Anisotropic}.\end{proof}

\begin{lemma}\label{lemm_comp_2}
Suppose $(\mathbf A_{k-1})$ holds. Then for any deterministic unit vector $\mathbf x,\mathbf y \in \mathbb R^{\mathcal I}$ and $z \in \mathbf S_k$, we have 
 \begin{equation}\label{eq_comp_apbound}
  |G_{\mathbf x\mathbf y}(z)-\Pi_{\mathbf x\mathbf y}(z)|\prec n^{2\delta} \|\mathbf x\|_\Pi^{1/2}(z)\|\mathbf y\|_\Pi^{1/2}(z),
 \end{equation}
 and
\begin{equation}\label{eq_comp_apbound2}
\im  G_{\mathbf x\mathbf x}(z) \prec n^{2\delta}\left[\im \Pi_{\bx_1\bx_1}(z)+\im \Pi_{\bx_2\bx_2}(z) +n^{C_a\delta}\Phi(z) \|\mathbf x\|_\Pi (z)\right].
\end{equation}
\end{lemma}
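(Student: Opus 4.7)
The plan is to deduce both inequalities from Lemma \ref{lemm_comp_1} together with the inductive hypothesis $(\mathbf A_{k-1})$ applied at the dyadic scales $z_l := E + \ii\eta_l$, $\eta_l := \eta \cdot n^{\delta l}$, for $l = 1, \ldots, L(\eta)$. The key observation is that for $z = E + \ii \eta \in \mathbf S_k$, every $z_l$ with $l \ge 1$ satisfies $\im z_l \ge n^{-\delta(k-1)}$ and therefore lies in $\mathbf S_{k-1}$, so $(\mathbf A_{k-1})$ is at our disposal at each $z_l$.

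For \eqref{eq_comp_apbound}, I would first apply Lemma \ref{lemm_comp_1} to bound the left-hand side by $n^{2\delta}\, A_\mathbf{x}^{1/2}\, A_\mathbf{y}^{1/2}$, where
$$A_\mathbf{x} := \sum_{l=1}^{L(\eta)} \sum_{\alpha=1}^2 \im G_{\mathbf{x}_\alpha \mathbf{x}_\alpha}(z_l),$$
and similarly for $A_\mathbf{y}$. Then I would invoke $(\mathbf A_{k-1})$ term by term to obtain
$$\im G_{\mathbf{x}_\alpha \mathbf{x}_\alpha}(z_l) \prec \im \Pi_{\mathbf{x}_\alpha \mathbf{x}_\alpha}(z_l) + n^{C_a \delta}\, \Phi(z_l)\, \|\mathbf{x}_\alpha\|_\Pi(z_l).$$
Using the bound $\Phi(z) \lesssim n^{-\vartheta/2}$ from \eqref{boundpsi} together with $C_a \delta < \vartheta/2$ (guaranteed by \eqref{assm_comp_delta}), the term $n^{C_a \delta} \Phi\, \|\mathbf{x}_\alpha\|_\Pi$ can be absorbed into $\|\mathbf{x}_\alpha\|_\Pi$ up to a constant factor. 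What remains is a comparison of $\im \Pi_{\mathbf{x}_\alpha \mathbf{x}_\alpha}(z_l)$ and $\|\mathbf{x}_\alpha\|_\Pi(z_l)$ with their values at $z$; since there are only at most $\delta^{-1}$ dyadic levels, it suffices to show that each of these quantities is inflated by no more than a polynomial factor $n^{O(\delta)}$ as $\eta$ is replaced by $\eta_l \le 1$, so that all losses can be absorbed into the prefactor $n^{2\delta}$. Taking square roots then yields \eqref{eq_comp_apbound}.

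For \eqref{eq_comp_apbound2}, the argument is even more direct: I would apply $(\mathbf A_{k-1})$ at the single point $z_1 = E + \ii \eta n^\delta \in \mathbf S_{k-1}$ and transport the resulting bound on $\im G_{\mathbf{x}\mathbf{x}}(z_1)$ back to $z$ using the monotonicity of $\eta \mapsto \eta \cdot \im G_{\mathbf{x}\mathbf{x}}(E+\ii\eta)$, which follows from the block decomposition \eqref{green} and the spectral representations of $\cal G$ and $\uG$. Specifically, this monotonicity gives $\im G_{\mathbf{x}\mathbf{x}}(z) \le n^\delta \im G_{\mathbf{x}\mathbf{x}}(z_1)$, and the same comparison of deterministic quantities as above then produces \eqref{eq_comp_apbound2}, with the additive error $n^{C_a\delta}\Phi\|\mathbf{x}\|_\Pi$ retained (since this time we do not take a square root).

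The principal obstacle is the comparison of the deterministic quantities $\im \Pi(z_l)$, $\Phi(z_l)$, and $\|\cdot\|_\Pi(z_l)$ with their values at $z$ across the dyadic scale. This requires a case analysis depending on whether $E$ lies below, near, or above $\lambda_{+,t}$, and on the relative sizes of $t^2$, $\kappa$, and $\eta$. The detailed estimates collected in Section \ref{sec anafree}---notably Lemmas \ref{lem_domian1control}, \ref{lem_domian2control}, and \ref{lem_partialm}, together with the square-root behavior \eqref{eq_imasymptoics} of $\im m_{w,t}$---are tailored to exactly these comparisons, so the argument amounts to a careful regime-by-regime check rather than requiring any substantially new analytic ideas.
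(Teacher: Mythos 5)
Your overall architecture is the right one and matches the paper's: the paper's proof is exactly ``Lemma \ref{lemm_comp_1} at the dyadic scales $z_l=E+\ii\eta_l\in\mathbf S_{k-1}$, the hypothesis $(\mathbf A_{k-1})$ there, and the Stieltjes-transform monotonicity ($\eta\,\im s$ increasing, $\eta^{-1}\im s$ decreasing)'' following \cite[Lemma 7.13]{Anisotropic}. However, the step you lean on for \eqref{eq_comp_apbound} is false as stated. You claim that $\im \Pi_{\mathbf x_\alpha\mathbf x_\alpha}(z_l)$ is inflated by at most $n^{O(\delta)}$ relative to $\im \Pi_{\mathbf x_\alpha\mathbf x_\alpha}(z)$ for every dyadic level $l\le L$. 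Take $E=\lambda_{+,t}$, $\eta= n^{-2/3+\vartheta}$, $t=n^{-1/3+\omega}$ and $\mathbf x_\alpha=\mathbf e_i$ with $d_i=\lambda_+$: then $\im\Pi_{ii}(z)\sim \im\zeta_t(z)/|d_i-\zeta_t(z)|^2\sim t\sqrt{\eta}/t^4$, while at the level $\eta_l\sim t^2$ one has $\im\Pi_{ii}(z_l)\sim t^{-2}$, so the ratio is $\sim t/\sqrt{\eta}$, a fixed positive power of $n$ unrelated to $\delta$. Thus a regime-by-regime verification of your claim would fail precisely in the near-edge, small-$\eta$ regime. The comparison that is both true and sufficient is against the target quantity, not against $\im\Pi(z)$: use $\im \Pi_{\mathbf x_\alpha\mathbf x_\alpha}(z_l)\le |\Pi_{\mathbf x_\alpha\mathbf x_\alpha}(z_l)|\le \|\mathbf x_\alpha\|_\Pi(z_l)\lesssim \|\mathbf x_\alpha\|_\Pi(z)$ (together with $n^{C_a\delta}\Phi(z_l)\lesssim 1$), and then the sum over $l\le L\lesssim \delta^{-1}$ is harmless; in the same vein, a loss $n^{c\delta}$ cannot be ``absorbed into'' the prefactor $n^{2\delta}$ or into $\prec$, so the bookkeeping must avoid such losses rather than absorb them. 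The paper's device for all these cross-scale comparisons is the Stieltjes-transform monotonicity applied to the blockwise quantities (both $G_{\mathbf x_\alpha\mathbf x_\alpha}$ and $\Pi_{\mathbf x_\alpha\mathbf x_\alpha}$ are Stieltjes transforms of positive measures), which avoids the Section~\ref{sec anafree} case analysis entirely.

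A secondary point concerns \eqref{eq_comp_apbound2}: you assert that $\eta\mapsto\eta\,\im G_{\mathbf x\mathbf x}(E+\ii\eta)$ is increasing for a general $\mathbf x\in\R^{\cal I}$ ``from the block decomposition \eqref{green} and the spectral representations of $\cal G$ and $\uG$''. For a mixed vector this does not follow from those representations, because $G_{\mathbf x\mathbf x}=G_{\mathbf x_1\mathbf x_1}+G_{\mathbf x_2\mathbf x_2}+2z^{-1/2}\mathbf x_1^\top\cal G Y_t\mathbf x_2$ contains the off-diagonal block term, which is not a Stieltjes transform (and enters with a sign-indefinite weight in the spectral decomposition). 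The monotonicity is exact only for the blockwise diagonal entries $G_{\mathbf x_1\mathbf x_1}$, $G_{\mathbf x_2\mathbf x_2}$, which is precisely why Lemma \ref{lemm_comp_1}, the right-hand side of \eqref{eq_comp_Am}, and the later application \eqref{eq_comp_r2} are all formulated blockwise; the descent $\im G(z)\le n^\delta \im G(z_1)$ should be carried out for these block quantities, with the cross contribution treated separately (as in \cite[Lemma 7.13]{Anisotropic}), rather than asserted for the full $G_{\mathbf x\mathbf x}$.
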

\begin{proof} 
This lemma is an easy consequence of Lemma \ref{lemm_comp_1}, together with the following property for Stieltjes transforms. Let
$$ s(z)=\int \frac{\dd\mu(z)}{x-z},\quad z\in \C_+,$$
be the Stieltjes transform of a measure $\mu$. Then for any fixed $E$,  $\eta^{-1}\im s(E+\ii \eta)$ is decreasing in $\eta$, while $\eta\im s(E+\ii \eta)$ is increasing in $\eta$. The reader can refer to  \cite[Lemma 7.13]{Anisotropic} for more details.
\end{proof}

The self-consistent comparison is performed using the following interpolating matrices between $X$, which satisfies the assumptions of Theorem \ref{thm_local_gen}, and the Gaussian matrix $X^{Gauss}$.
\begin{definition}[Interpolating matrices]
Introduce the notations $X^0:=X^{Gauss}$ and $X^1:=X$. Let $\rho_{i\mu}^0$ and $\rho_{i\mu}^1$ be the laws of $X_{i\mu}^0$ and $X_{i\mu}^1$, respectively. For $\theta\in [0,1]$, we define the interpolated law
$$\rho_{i\mu}^\theta := (1-\theta)\rho_{i\mu}^0+\theta\rho_{i\mu}^1.$$
We shall work on the probability space consisting of triples $(X^0,X^\theta, X^1)$ of independent $\mathcal I_1\times \mathcal I_2$ random matrices, where the matrix $X^\theta=(X_{i\mu}^\theta)$ has law
\begin{equation}\label{law_interpol}
\prod_{i\in \mathcal I_1}\prod_{\mu\in \mathcal I_2} \rho_{i\mu}^\theta(dX_{i\mu}^\theta).
\end{equation}
For $\lambda \in \mathbb R$, $i\in \mathcal I_1$ and $\mu\in \mathcal I_2$, we define the matrix $X_{(i\mu)}^{\theta,\lambda}$ through
\[\left(X_{(i\mu)}^{\theta,\lambda}\right)_{j\nu}:=\begin{cases}X_{i\mu}^{\theta}, &\text{ if }(j,\nu)\ne (i,\mu)\\ \lambda, &\text{ if }(j,\nu)=(i,\mu)\end{cases}.\]
We also introduce the matrices \[G^{\theta}(z):=G\left(X^{\theta},z\right),\ \ \ G^{\theta, \lambda}_{(i\mu)}(z):=G\left(X_{(i\mu)}^{\theta,\lambda},z\right).\]
\end{definition}


By Theorem \ref{thm_local}, we have that 
Lemma \ref{lemm_comp_0} holds if $X=X^0$.
Moreover, using (\ref{law_interpol}) and fundamental calculus, we can get the following basic interpolation formula.
\begin{lemma}\label{lemm_comp_3}
 For any differentiable function $F:\mathbb R^{\mathcal I_1 \times\mathcal I_2}\rightarrow \mathbb C$, we have that
\begin{equation}\label{basic_interp}
\frac{\dd}{\dd\theta}\mathbb E F(X^\theta)=\sum_{i\in\mathcal I_1}\sum_{\mu\in\mathcal I_2}\left[\mathbb E F\left(X^{\theta,X_{i\mu}^1}_{(i\mu)}\right)-\mathbb E F\left(X^{\theta,X_{i\mu}^0}_{(i\mu)}\right)\right]
\end{equation}
 provided all the expectations exist.
\end{lemma}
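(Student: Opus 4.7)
The plan is to prove the formula by differentiating the product density defining the law of $X^\theta$ and using the product (Leibniz) rule, since by construction the entries of $X^\theta$ are independent with marginals $\rho^\theta_{i\mu}=(1-\theta)\rho^0_{i\mu}+\theta\rho^1_{i\mu}$. Concretely, I would write
\[
\mathbb{E}F(X^\theta)=\int F(X)\prod_{(j,\nu)\in\mathcal I_1\times\mathcal I_2}\rho^\theta_{j\nu}(\dd X_{j\nu}),
\]
so that differentiating under the integral sign (justified by assuming, as is harmless for this argument, that $F$ together with finitely many of its derivatives has sufficient moments — the application in the paper only uses this lemma for smooth cutoffs of resolvent entries, where such moment bounds are automatic) gives a sum over $(i,\mu)$ in which the single factor $\rho^\theta_{i\mu}$ is replaced by $\frac{\dd}{\dd\theta}\rho^\theta_{i\mu}=\rho^1_{i\mu}-\rho^0_{i\mu}$.

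The second step is to identify each resulting term with the expectations appearing on the right-hand side of \eqref{basic_interp}. For fixed $(i,\mu)$, the term with $\rho^1_{i\mu}$ reads
\[
\int F(X)\,\rho^1_{i\mu}(\dd X_{i\mu})\prod_{(j,\nu)\ne(i,\mu)}\rho^\theta_{j\nu}(\dd X_{j\nu})
=\mathbb E F\!\left(X^{\theta,X^1_{i\mu}}_{(i\mu)}\right),
\]
because, by the definition of $X^{\theta,\lambda}_{(i\mu)}$ and independence of entries, integrating out the $(i,\mu)$-entry against $\rho^1_{i\mu}$ while keeping every other entry distributed as under $X^\theta$ is exactly the expectation with the $(i,\mu)$-entry replaced by an independent copy of $X^1_{i\mu}$. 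The $\rho^0_{i\mu}$ term is identified analogously with $\mathbb E F(X^{\theta,X^0_{i\mu}}_{(i\mu)})$, and subtracting the two gives the claimed identity.

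There is essentially no obstacle here beyond justifying the interchange of $\dd/\dd\theta$ and the integral; this is standard and follows from dominated convergence under mild integrability on $F$, which will hold for every application of the lemma later in Section~\ref{sec_pfgen} because one inserts $F=\prod G_{\mathbf u\mathbf v}$ composed with bounded cutoffs in the bounds $\|G\|\lesssim\eta^{-1}$. The only bookkeeping subtlety is the convention that $X^0$, $X^\theta$, $X^1$ live on a common probability space as an independent triple: since $F(X^{\theta,\lambda}_{(i\mu)})$ depends on $X^\theta$ only through $\{X^\theta_{j\nu}\}_{(j,\nu)\ne(i,\mu)}$ and on the deterministic value $\lambda$, the conditional expectation $\mathbb E[F(X^{\theta,X^1_{i\mu}}_{(i\mu)})\mid X^1_{i\mu}=\lambda]$ coincides with the integral written above, so taking a further expectation over $X^1_{i\mu}\sim\rho^1_{i\mu}$ gives $\mathbb E F(X^{\theta,X^1_{i\mu}}_{(i\mu)})$ as desired.
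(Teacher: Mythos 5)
Your argument is correct and is exactly the standard computation the paper alludes to when it says the formula follows from \eqref{law_interpol} and ``fundamental calculus'': differentiate the product measure in $\theta$, use $\frac{\dd}{\dd\theta}\rho^\theta_{i\mu}=\rho^1_{i\mu}-\rho^0_{i\mu}$, and identify each term with $\mathbb E F\bigl(X^{\theta,X^u_{i\mu}}_{(i\mu)}\bigr)$ via the independence of the triple $(X^0,X^\theta,X^1)$. No genuinely different route is taken, and your handling of the interchange of derivative and expectation is adequate under the lemma's hypothesis.
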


We shall apply Lemma \ref{lemm_comp_3} with $F(X)=F_{\mathbf u\mathbf v}^a(X,z)$ for $F_{\mathbf u\mathbf v}(X,z)$ defined in (\ref{eq_comp_F(X)}). The main work is devoted to proving the following self-consistent estimate for the right-hand side of (\ref{basic_interp}).

\begin{lemma}\label{lemm_comp_4}
 Suppose $(\mathbf A_{k-1})$ holds. Then for any fixed $a\in 2\N$, we have that 
 \begin{equation}
  \sum_{i\in\mathcal I_1}\sum_{\mu\in\mathcal I_2}\left[\mathbb EF_{\mathbf u\mathbf v}^a\left(X^{\theta,X_{i\mu}^1}_{(i\mu)},z\right)-\mathbb EF_{\mathbf u\mathbf v}^a\left(X^{\theta,X_{i\mu}^0}_{(i\mu)},z\right)\right]=
  \OO\left(\left[n^{C_a\delta}\Phi(z)\|\mathbf u\|_\Pi^{1/2}\|\mathbf v\|_\Pi^{1/2}\right]^a+\mathbb EF_{\mathbf u \mathbf v}^a(X^\theta,z)\right),
 \end{equation}
 for all $\theta\in[0,1]$, $z\in\mathbf S_k$ and any deterministic unit vectors $\mathbf u,\mathbf v\in \R^{\cal I}$.
\end{lemma}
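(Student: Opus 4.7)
The plan is to use the standard self-consistent comparison argument based on a Taylor expansion in each entry $X_{i\mu}$, exploiting that $X^0_{i\mu}$ and $X^1_{i\mu}$ share the same first three moments (mean zero, variance $n^{-1}$ by \eqref{assm1}, and vanishing third moment by \eqref{condition_3rd}). Fix $(i,\mu) \in \cal I_1 \times \cal I_2$ and Taylor expand
\[
F_{\bu\bv}^a(X^{\theta,\lambda}_{(i\mu)}) = \sum_{k=0}^{4} \frac{\lambda^k}{k!}\, \partial_\lambda^k F_{\bu\bv}^a(X^{\theta,0}_{(i\mu)}) + R_5(\lambda)
\]
with the obvious integral remainder. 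Upon taking expectations and subtracting the $X^0$ and $X^1$ versions, the contributions from $k=0,1,2,3$ cancel because of moment matching, leaving only the fourth-moment discrepancy $\Delta_4^{i\mu} := \E(X^1_{i\mu})^4 - \E(X^0_{i\mu})^4 = \OO(n^{-2})$ (guaranteed by \eqref{conditionA2}) and the Taylor remainders, which are controlled by $\E|X^\bullet_{i\mu}|^5 \prec n^{-5/2}$ together with a uniform bound on $\sup_\xi |\partial_\lambda^5 F^a_{\bu\bv}(X^{\theta,\xi}_{(i\mu)})|$.

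Next I will compute the derivatives explicitly. Since $Y_t = Y + \sqrt{t}X$, we have $\partial_{X_{i\mu}}H_t = \sqrt{t}(E_{i\mu} + E_{\mu i})$, which yields
\[
\partial_\lambda G_{\bu\bv} \;=\; -\sqrt{t}\, z^{1/2}\bigl(G_{\bu i}G_{\mu \bv} + G_{\bu \mu}G_{i \bv}\bigr),
\]
and by iteration $\partial_\lambda^k G_{\bu\bv}$ is a finite sum of products of $k+1$ resolvent entries with indices from $\{\bu,\bv,i,\mu\}$, each product carrying a prefactor of $(\sqrt{t}\, z^{1/2})^k$. The product rule applied to $F^a_{\bu\bv} = |G_{\bu\bv}-\Pi_{\bu\bv}|^a$ then expresses $\partial_\lambda^4 F^a_{\bu\bv}$ and $\partial_\lambda^5 F^a_{\bu\bv}$ as sums of monomials of the schematic form $\mathcal M \cdot F^{a-r}_{\bu\bv}$ where $\mathcal M$ is a product of $r$ resolvent-entry factors with indices in $\{\bu,\bv,i,\mu\}$ and $r\in\{1,\dots,5\}$.

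The essential step is to bound $\sum_{i,\mu}|\mathcal M|$. For entries of the form $G_{i \mu}$, $G_{\bu i}$, $G_{\mu \bv}$ etc., I will use Lemma \ref{lemm_comp_2} from the assumption $(\mathbf A_{k-1})$ to replace off-diagonal entries by $n^{2\delta}\Phi\|\bu\|_\Pi^{1/2}\|\bv\|_\Pi^{1/2}$ whenever $\bu$ or $\bv$ is involved, and by $n^{2\delta}\Phi$ when neither is involved. The summations $\sum_{\fa}|G_{\bu \fa}|^2$ are replaced by $|z|^{-1}\im G_{\bu\bu}/\eta$ plus similar contributions via the Ward-type identities \eqref{eq_gsq1}--\eqref{eq_gsq3}, which are in turn bounded via \eqref{eq_comp_apbound2} by $n^{2\delta}(\im \Pi_{\bu_1\bu_1}+\im \Pi_{\bu_2\bu_2}+n^{C_a\delta}\Phi\|\bu\|_\Pi)/\eta$. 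Collecting the $n^{-2}$ from $\Delta_4^{i\mu}$ with the $t^{2}$ from the prefactors and the resulting summed resolvent bounds, each monomial is of order $n^{-c}(t\Psi + t^{1/2}n^{-1/2})^r \varpi^{-r} \|\bu\|_\Pi^{r/2}\|\bv\|_\Pi^{r/2} = n^{-c}(n^{C_a\delta}\Phi\|\bu\|_\Pi^{1/2}\|\bv\|_\Pi^{1/2})^r$ up to slack absorbed into $C_a\delta$. Finally Young's inequality $A^r\mathbb E F^{a-r}_{\bu\bv} \leq A^a + \mathbb E F^{a}_{\bu\bv}$ with $A = n^{C_a\delta}\Phi\|\bu\|_\Pi^{1/2}\|\bv\|_\Pi^{1/2}$ delivers the claimed bound.

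The hard part is the bookkeeping in the third step: one must identify, for each monomial $\mathcal M$ coming from differentiating $|G_{\bu\bv}-\Pi_{\bu\bv}|^a$ up to four or five times, how the free indices $i,\mu$ are distributed among the factors so that every Ward summation can be performed, and verify that the powers of $t$, $\Psi$, and the normalizing $\varpi^{-1}$ that constitute $\Phi$ assemble correctly together with the $\|\bu\|_\Pi, \|\bv\|_\Pi$ norms. The presence of the $t^{1/2}/n^{1/2}$ term in $\Phi$, which originates from the $X_{i\overline i}$ diagonal contribution in $Z_{[i]}$ (see \eqref{estimate_Zmu2}), requires particular care: this term must be traced through the Taylor expansion via the derivatives landing on resolvent factors $G_{i\overline i}$ rather than via Ward-summed quantities, and is the reason the anisotropic bound in \eqref{aniso_law_gen} is weaker than its averaged counterpart.
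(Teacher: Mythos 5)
Your overall strategy (Lindeberg-type entrywise replacement, Taylor expansion in $X_{i\mu}$, third-moment matching, then Ward identities plus the bootstrap input $(\mathbf A_{k-1})$) is the same as the paper's, but there is a genuine quantitative gap in the expansion step. You Taylor expand only to fourth order and control the fifth-order remainder by $\E|X_{i\mu}|^5\prec n^{-5/2}$ times a sup bound on $\partial_\lambda^5 F^a_{\bu\bv}$. After summing over the $\sim n^2$ pairs $(i,\mu)$, this remainder has a size that is independent of $a$ (schematically $n^{-1/2}t^{5/2}\|\be_i\|_\Pi^{5/2}\|\be_\mu\|_\Pi^{5/2}$ up to $n^{\OO(\delta)}$ factors), whereas the target error is $[n^{C_a\delta}\Phi]^a$ with $\Phi$ only guaranteed to satisfy $\Phi\lesssim n^{-\vartheta/2}$ for an arbitrarily small constant $\vartheta$; near the edge with $\eta\sim n^{-2/3}$ and $t\sim n^{-1/3+\epsilon}$ one has $\Phi\sim n^{-\epsilon}$, so $\Phi^a$ is $n^{-a\epsilon}$ while your remainder is bounded below by a fixed (possibly not even small) power of $n$. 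No expansion to an order fixed independently of $a$ can beat this; the order of the expansion must grow with $a$, which is exactly why the paper expands to order $4a+4$ (each additional order gains one factor $|y|\,\|\be_i\|_\Pi^{1/2}\|\be_\mu\|_\Pi^{1/2}\prec\Phi$ via \eqref{xsmall}), and then treats every term $4\le r\le 4a+4$ — not just $r=4$ — by the words/Ward analysis.

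Two further points where your sketch is incomplete. First, your Ward summations $\sum_{i}|G_{\bu i}|^2$, $\sum_\mu|G_{\bu\mu}|^2$ are applied to resolvents of $X^{\theta,0}_{(i\mu)}$ (or of intermediate matrices in the remainder), which change with $(i,\mu)$; the identities \eqref{eq_gsq1}--\eqref{eq_gsq3} require a single fixed matrix. The paper inserts a separate step (the lemma reducing \eqref{eq_comp_est} to \eqref{eq_comp_selfest}) replacing $X^{\theta,0}_{(i\mu)}$ by the common $X^\theta$ before summing; you need this or an equivalent argument. Second, your claim that each monomial is of size $n^{-c}(n^{C_a\delta}\Phi\|\bu\|_\Pi^{1/2}\|\bv\|_\Pi^{1/2})^r$ does not follow without the case analysis the paper performs (its Cases 1 and 2, distinguishing how many factors carry $\bu$ or $\bv$): for $r=4$ the moment factor only supplies $t^2n^{-2}$ against an $n^2$-fold sum, so one must exhibit at least two Ward-summable squares (or use \eqref{eiemu_12} plus the relation between $r$ and the number $b$ of differentiated factors, e.g.\ $r\ge b+2$ in the paper's Case 2) to close the estimate; this is precisely the bookkeeping you defer, and it is where the bound \eqref{eq_comp_r2} and the norms $\|\be_i\|_\Pi,\|\be_\mu\|_\Pi$ enter in an essential, not routine, way.
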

Combining Lemma \ref{lemm_comp_3} and Lemma \ref{lemm_comp_4} with a Gr\"onwall's argument, we get Lemma \ref{lemm_comp_0}, which concludes the proof of \eqref{aniso_law_gen}. 
It remains to prove Lemma \ref{lemm_comp_4}. For this purpose, we compare $X^{\theta,X_{i\mu}^0}_{(i\mu)}$ and $X^{\theta,X_{i\mu}^1}_{(i\mu)}$ via a common $X^{\theta,0}_{(i\mu)}$, i.e. we will prove that
\begin{equation}\label{lemm_comp_5}
\sum_{i\in\mathcal I_1}\sum_{\mu\in\mathcal I_2}\left[\mathbb EF_{\mathbf u\mathbf v}^a\left(X^{\theta,X_{i\mu}^u}_{(i\mu)},z\right)-\mathbb EF_{\mathbf u\mathbf v}^a\left(X^{\theta,0}_{(i\mu)},z\right)\right]= \OO\left(\left[n^{C_a\delta}\Phi(z)\|\mathbf u\|_\Pi^{1/2}\|\mathbf v\|_\Pi^{1/2}\right]^a+\mathbb EF_{\mathbf {uv}}^a(X^\theta,z)\right),
 \end{equation}
for all $u\in \{0,1\}$, $\theta\in[0,1]$, $w\in \mathbf{S}_k$, and any deterministic unit vectors $\mathbf u,\mathbf v\in \R^{\cal I}$.

Underlying the proof of (\ref{lemm_comp_5}) is an expansion approach which we will describe below. 
During the proof, we always assume that $(\mathbf A_{k-1})$ holds. Also the rest of the proof is performed at a fixed $z\in \mathbf S_k$. We introduce the $\mathcal I \times \mathcal I$ matrix $\Delta_{(i\mu)}:=\mathbf e_i \mathbf e_\mu^{\top}+\mathbf e_\mu \mathbf e_i^{\top}$, whose $(i,\mu)$ and $(\mu,i)$-th entries are equal to 1, and all other entries are zero. Then we have the resolvent expansion
\begin{equation}\label{eq_comp_expansion}
\G_{(i \mu)}^{\theta,\lambda'} = G_{(i\mu)}^{\theta,\lambda}+\sum_{k=1}^{K}  (\lambda-\lambda')^k G_{(i\mu)}^{\theta,\lambda}\left( \Delta_{(i\mu)} G_{(i\mu)}^{\theta,\lambda}\right)^k+({\lambda-\lambda'})^{K+1} G_{(i\mu)}^{\theta,\lambda'}\left(\Delta_{(i\mu)} G_{(i\mu)}^{\theta,\lambda}\right)^{K+1}.
\end{equation}
With this expansion, we can obtain the following bound on the entries of $G_{(i\mu)}^{\theta,\lambda}$.
\begin{lemma}\label{lemm_comp_6}
 Suppose that $y$ is a random variable satisfying $|y|\prec t^{1/2}n^{-1/2}$. Then  for any deterministic unit vector $\mathbf x,\mathbf y \in \mathbb R^{\mathcal I}$, we have 
 \begin{equation}\label{comp_eq_apriori}
 \left|  \left(G_{(i\mu)}^{\theta,y}\right)_{\mathbf x\mathbf y}-\Pi_{\mathbf x\mathbf y}\right| \prec n^{2\delta} \|\mathbf x\|_\Pi^{1/2} \|\mathbf y\|_\Pi^{1/2},\quad i\in\sI_1, \ \mu\in\sI_2.
 \end{equation}
\end{lemma}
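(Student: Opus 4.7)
The plan is to apply the resolvent expansion \eqref{eq_comp_expansion} with $\lambda = X_{i\mu}^\theta$ and $\lambda' = y$, which expresses $G_{(i\mu)}^{\theta,y}$ in terms of $G^\theta \equiv G_{(i\mu)}^{\theta,X_{i\mu}^\theta}$. For this to give quantitative control, I first need the bound $|X_{i\mu}^\theta - y| \prec n^{-1/2}$. This follows from the moment condition \eqref{conditionA2} via Markov's inequality, which yields $|X_{i\mu}^\theta| \prec n^{-1/2}$ (since $X^\theta$ is a mixture of $X^0$ and $X^1$, both of which satisfy the same moment bounds), together with the hypothesis $|y| \prec t^{1/2}n^{-1/2} \le n^{-1/2}$ since $t \ll 1$.

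Next, for a fixed large integer $K$ (to be chosen depending on the required $\prec$-exponent), the expansion \eqref{eq_comp_expansion} gives
\begin{equation*}
\left(G_{(i\mu)}^{\theta,y}\right)_{\mathbf{xy}} = G^\theta_{\mathbf{xy}} + \sum_{k=1}^{K} (X_{i\mu}^\theta - y)^k \left[G^\theta(\Delta_{(i\mu)} G^\theta)^k\right]_{\mathbf{xy}} + R_{K+1},
\end{equation*}
with remainder $R_{K+1}$ carrying the factor $(X_{i\mu}^\theta - y)^{K+1}$. Each $k$-th term expands, upon writing $\Delta_{(i\mu)} = \mathbf{e}_i\mathbf{e}_\mu^\top + \mathbf{e}_\mu\mathbf{e}_i^\top$, as a sum of $2^k$ products of $k+1$ generalized entries of $G^\theta$ at indices of the form $\mathbf{x},\mathbf{y},\mathbf{e}_i,\mathbf{e}_\mu$. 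I would then apply the a priori bound \eqref{eq_comp_apbound} from Lemma \ref{lemm_comp_2} (which is a consequence of the assumed $(\mathbf{A}_{k-1})$) to each such generalized entry, writing
\begin{equation*}
|G^\theta_{\mathbf{ab}}| \le |\Pi_{\mathbf{ab}}| + n^{2\delta}\|\mathbf{a}\|_\Pi^{1/2}\|\mathbf{b}\|_\Pi^{1/2}.
\end{equation*}
For the leading term $G^\theta_{\mathbf{xy}}$, the same bound directly yields the claim. For the remainder $R_{K+1}$, I would use the crude operator bound $\|G_{(i\mu)}^{\theta,y}\| \le \eta^{-1} \le n$ from \eqref{eq_gbound}, combined with the $n^{-(K+1)/2}$ savings from $(X_{i\mu}^\theta - y)^{K+1}$; choosing $K$ large enough makes $R_{K+1}$ negligible, bounded by any prescribed $n^{-D}$.

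The main technical obstacle is the careful bookkeeping of $\Pi$-norms evaluated at the unit vectors $\mathbf{e}_i$ and $\mathbf{e}_\mu$, which can be large (up to $\|\Pi\| \lesssim \varpi^{-1}(z)$ by \eqref{rough Pi}). The saving principle is that each factor $(X_{i\mu}^\theta - y) \Delta_{(i\mu)}$ contributes a $n^{-1/2}$ and introduces one pair of indices $(\mathbf{e}_i,\mathbf{e}_\mu)$, each appearing in exactly one generalized-entry bound. Since $z \in \mathcal{D}_\vartheta$ forces $\varpi(z) \ge n^{-C}$ for some fixed $C$, the polynomial growth $\|\mathbf{e}_i\|_\Pi\|\mathbf{e}_\mu\|_\Pi \lesssim n^{C'}$ is dominated by $n^{-k/2}$ once $k$ is large enough (and for $k=1,2$ it is controlled using $\|\mathbf{e}_a\|_\Pi^{1/2}\|\mathbf{e}_b\|_\Pi^{1/2}$ only once per factor, so the growth stays manageable). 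The factorization must be arranged so that, at the end, the $\mathbf{x}$-dependence produces $\|\mathbf{x}\|_\Pi^{1/2}$ and the $\mathbf{y}$-dependence produces $\|\mathbf{y}\|_\Pi^{1/2}$, while all index insertions of $i,\mu$ get absorbed into the $n^{2\delta}$ factor. This bookkeeping is essentially the same as in \cite{Anisotropic, yang20190}, and I would follow those proofs closely, adapting the bounds to our deformed rectangular setting using \eqref{eq_dtheta1extension} and \eqref{eq_dtheta2bound2}.
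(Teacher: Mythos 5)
There is a genuine gap in the way you account for the smallness gained at each order of the expansion \eqref{eq_comp_expansion}. Two related problems. First, the difference you expand in is not of size $n^{-1/2}$ but of size $t^{1/2}n^{-1/2}$: the linearization $H_t$ depends on $X_{i\mu}$ only through $\sqrt{t}X_{i\mu}$, so each insertion of $\Delta_{(i\mu)}$ comes with a factor $\sqrt{zt}$ (equivalently, in the paper's convention the expansion variable is the $H$-entry, which is exactly why the hypothesis of the lemma is $|y|\prec t^{1/2}n^{-1/2}$ rather than $n^{-1/2}$, and why the later Taylor expansion carries the explicit factors $t^{r/2}$). You drop this $t^{1/2}$ entirely. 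Second, your claim that the $\Pi$-norm losses are a fixed polynomial $n^{C'}$ that is ``dominated by $n^{-k/2}$ once $k$ is large enough'' misrepresents the structure of the $k$-th term: expanding $G^\theta(\Delta_{(i\mu)}G^\theta)^k$ produces $k+1$ generalized entries containing $2k$ appearances of $\mathbf e_i,\mathbf e_\mu$, so after applying \eqref{eq_comp_apbound} the loss is $(\|\mathbf e_i\|_\Pi\|\mathbf e_\mu\|_\Pi)^{k/2}\lesssim \varpi^{-k}$, i.e. it is incurred \emph{once per order} and compounds with $k$. Hence one needs the per-order factor itself to be small, and this is exactly what the paper uses: by \eqref{rough Pi} and \eqref{boundpsi}, $|y|\cdot\|\mathbf e_i\|_\Pi^{1/2}\|\mathbf e_\mu\|_\Pi^{1/2}\prec t^{1/2}n^{-1/2}\varpi^{-1}\le \Phi\lesssim n^{-\vartheta/2}$ (see \eqref{xsmall}), which together with \eqref{assm_comp_delta} makes every term of order $k\ge 1$, and the remainder for $K\gtrsim \vartheta^{-1}$, negligible relative to $n^{2\delta}\|\mathbf x\|_\Pi^{1/2}\|\mathbf y\|_\Pi^{1/2}$.

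With your accounting the argument does not close: the per-order factor $n^{-1/2}\varpi^{-1}$ can be as large as $n^{1/6-\e}$ in the relevant regime (e.g. $t^2\sim n^{\e}\eta_*$ with $\eta_*$ close to $n^{-2/3}$, where $\varpi\gtrsim t^2\sim n^{-2/3+\e}$), so already the $k=1$ term exceeds the claimed bound by a power of $n$, and increasing $K$ makes the remainder estimate worse, not better, since the losses grow with the order. The fix is precisely to restore the $t^{1/2}$ per insertion and to organize the bound so that each order beyond the zeroth gains a factor $\Phi\lesssim n^{-\vartheta/2}$; once this is done, your outline (leading term via Lemma \ref{lemm_comp_2}, finitely many expansion terms, crude $\eta^{-1}$ bound on the remainder with $K$ large depending on $\vartheta$) coincides with the paper's proof, which follows \cite[Lemma 7.14]{Anisotropic}.
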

\begin{proof} The proof is based on the expansion \eqref{eq_comp_expansion} with $\lambda=X^\theta_{i\mu}$ and $\lambda'=y$, and Lemma \ref{lemm_comp_2} for $G^\theta$. 
In the proof, we need to use that for any deterministic unit vector $\bx\in \R^{\cal I}$, 
\be\label{xsmall} \|\mathbf x\|_\Pi(z) \lesssim \varpi^{-1}(z),\quad  \|\mathbf x\|_\Pi(z) \cdot |y|\prec \Phi(z)\lesssim n^{-\vartheta/2},\ee
by \eqref{rough Pi} and \eqref{boundpsi}. For more details of the proof, we refer the reader to \cite[Lemma 7.14]{Anisotropic}. \end{proof}

In the following proof, for simplicity of notations, we denote $f_{(i\mu)}(\lambda):=F_{\mathbf u\mathbf v}^a(X_{(i\mu)}^{\theta, \lambda})$. We shall use $f_{(i\mu)}^{(r)}$ to denote the $r$-th derivative of $f_{(i\mu)}$ with respect to $\lambda$. With Lemma \ref{lemm_comp_6} and (\ref{eq_comp_expansion}), it is easy to prove the following result.
\begin{lemma}\label{lemm_comp_6_add}
Suppose that $y$ is a random variable satisfying $|y|\prec t^{1/2}n^{-1/2}$. Then for any fixed $r\in\bbN$,
  \begin{equation}
  \left|f_{(i\mu)}^{(r)}(y)\right|\prec n^{2\delta(r+a)} \|\mathbf u\|_\Pi^{a/2} \|\mathbf v\|_\Pi^{a/2}\|\mathbf e_i\|_\Pi^{r/2} \|\mathbf e_\mu\|_\Pi^{r/2}.
 \end{equation}
\end{lemma}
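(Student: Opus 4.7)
The plan is to compute $f^{(r)}_{(i\mu)}$ by combining Leibniz's formula with the standard differentiation identity for resolvents, and then to bound each resulting generalized entry of $G$ via Lemma \ref{lemm_comp_6}. Since $a\in 2\bbN$, I first rewrite
$$f_{(i\mu)}(\lambda)=\bigl(\bigl(G^{\theta,\lambda}_{(i\mu)}\bigr)_{\mathbf u\mathbf v}-\Pi_{\mathbf u\mathbf v}\bigr)^{a/2}\overline{\bigl(\bigl(G^{\theta,\lambda}_{(i\mu)}\bigr)_{\mathbf u\mathbf v}-\Pi_{\mathbf u\mathbf v}\bigr)}^{a/2},$$
and apply Leibniz's rule to express $f^{(r)}_{(i\mu)}(y)$ as a sum, with combinatorial coefficients depending only on $r$ and $a$, of products $\prod_{j=1}^a h_j$, where each $h_j$ is either the undifferentiated factor $(G^{\theta,y}_{(i\mu)})_{\mathbf u\mathbf v}-\Pi_{\mathbf u\mathbf v}$ (or its complex conjugate) or a higher derivative $\partial_\lambda^{k_j}(G^{\theta,\lambda}_{(i\mu)})_{\mathbf u\mathbf v}\big|_{\lambda=y}$ (or its conjugate) with $k_j\geq 1$ and $\sum_j k_j=r$.

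Next, I use the resolvent derivative identity. Since changing $X^\theta_{i\mu}$ to $\lambda$ shifts $z^{1/2}H_t(Y+\sqrt{t}X^{\theta,\lambda}_{(i\mu)})-z$ linearly in $\lambda$ with rate $\sqrt{zt}\,\Delta_{(i\mu)}$, standard resolvent calculus gives
$$\partial_\lambda^{k} G^{\theta,\lambda}_{(i\mu)}=(-\sqrt{zt})^{k}\,k!\, G^{\theta,\lambda}_{(i\mu)}\bigl(\Delta_{(i\mu)} G^{\theta,\lambda}_{(i\mu)}\bigr)^{k}.$$
Expanding each copy of $\Delta_{(i\mu)}=\mathbf e_i\mathbf e_\mu^\top+\mathbf e_\mu\mathbf e_i^\top$ into its two rank-one pieces, $\mathbf u^\top G(\Delta_{(i\mu)} G)^k\mathbf v$ becomes a sum of $2^k$ products of $k+1$ generalized entries of $G^{\theta,y}_{(i\mu)}$, each with first index $\mathbf u$, last index $\mathbf v$, and all internal indices drawn from $\{\mathbf e_i,\mathbf e_\mu\}$ (contributing $k$ copies of $\mathbf e_i$ and $k$ copies of $\mathbf e_\mu$ in each product). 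Lemma \ref{lemm_comp_6} (applied to $X^{\theta,y}_{(i\mu)}$, which differs from $X^\theta$ only in the single entry $y$ with $|y|\prec t^{1/2}n^{-1/2}$) yields $|(G^{\theta,y}_{(i\mu)})_{\mathbf x\mathbf y}|\prec n^{2\delta}\|\mathbf x\|_\Pi^{1/2}\|\mathbf y\|_\Pi^{1/2}$ for each generalized entry with $\mathbf x,\mathbf y\in\{\mathbf u,\mathbf v,\mathbf e_i,\mathbf e_\mu\}$, once one absorbs the deterministic piece $\Pi_{\mathbf x\mathbf y}$ via the elementary estimate $|\Pi_{\mathbf x\mathbf y}|\lesssim\|\mathbf x\|_\Pi^{1/2}\|\mathbf y\|_\Pi^{1/2}$ that follows from the explicit block form \eqref{defnPi}.

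Multiplying the $k_j+1$ entry bounds in each of the $2^{k_j}$ monomials, and using $|\sqrt{zt}|\lesssim 1$ together with the fact that $k_j!$ and $2^{k_j}$ are finite constants depending only on $r$, yields
$$\bigl|\partial_\lambda^{k_j}\bigl(G^{\theta,\lambda}_{(i\mu)}\bigr)_{\mathbf u\mathbf v}\bigl|_{\lambda=y}\bigr|\prec n^{2\delta(k_j+1)}\|\mathbf u\|_\Pi^{1/2}\|\mathbf v\|_\Pi^{1/2}\bigl(\|\mathbf e_i\|_\Pi\|\mathbf e_\mu\|_\Pi\bigr)^{k_j/2}$$
for every $k_j\geq 1$, while the $k_j=0$ case gives $n^{2\delta}\|\mathbf u\|_\Pi^{1/2}\|\mathbf v\|_\Pi^{1/2}$; both cases match the uniform formula $n^{2\delta(k_j+1)}\|\mathbf u\|_\Pi^{1/2}\|\mathbf v\|_\Pi^{1/2}(\|\mathbf e_i\|_\Pi\|\mathbf e_\mu\|_\Pi)^{k_j/2}$. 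Taking the product over $j=1,\ldots,a$ and using $\sum_j k_j=r$ gives exactly the target bound $n^{2\delta(r+a)}\|\mathbf u\|_\Pi^{a/2}\|\mathbf v\|_\Pi^{a/2}\|\mathbf e_i\|_\Pi^{r/2}\|\mathbf e_\mu\|_\Pi^{r/2}$. The main (and essentially only) obstacle is verifying the Cauchy--Schwarz-type control $|\Pi_{\mathbf x\mathbf y}|\lesssim\|\mathbf x\|_\Pi^{1/2}\|\mathbf y\|_\Pi^{1/2}$ needed to combine the deterministic and fluctuation parts inside Lemma \ref{lemm_comp_6}; everything else is bookkeeping that absorbs finite constants depending only on $a$ and $r$ into the $\prec$ relation.
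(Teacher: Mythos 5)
Your argument is correct and matches the paper's intent: the paper gives no written proof beyond stating that the lemma follows from Lemma \ref{lemm_comp_6} and the resolvent expansion \eqref{eq_comp_expansion}, and your Leibniz-plus-resolvent-derivative computation (with the $\Delta_{(i\mu)}$ expansion and the entrywise bound from Lemma \ref{lemm_comp_6}) is exactly that argument spelled out. The one point you flagged, $|\Pi_{\mathbf x\mathbf y}|\lesssim \|\mathbf x\|_\Pi^{1/2}\|\mathbf y\|_\Pi^{1/2}$, does hold, since $\Pi$ is symmetric so $|\mathbf x^\top \Pi \mathbf y|\le \min\left(\|\Pi\mathbf x\|,\|\Pi\mathbf y\|\right)\le \|\mathbf x\|_\Pi^{1/2}\|\mathbf y\|_\Pi^{1/2}$ for unit vectors.
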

Let $y$ be a random variable satisfying $|y|\prec t^{1/2}n^{-1/2}$. Using the Taylor expansion of $f_{(i\mu)}$ and \eqref{xsmall}, we get that
\begin{equation}\label{eq_comp_taylor}
f_{(i\mu)}(y)=\sum_{r=0}^{4a+4}\frac{y^r}{r!}f^{(r)}_{(i\mu)}(0)+\OO_\prec\left( \Phi^{a+4}\|\mathbf x\|_\Pi^{a/2} \|\mathbf y\|_\Pi^{a/2}\right),
\end{equation}
provided $C_a$ is chosen large enough in (\ref{assm_comp_delta}). Therefore we have for $u\in\{0,1\}$,
\begin{align*}
&\mathbb EF_{\mathbf u\mathbf v}^a\left(X^{\theta,X_{i\mu}^u}_{(i\mu)}\right)-\mathbb EF_{\mathbf u \mathbf v}^a\left(X^{\theta,0}_{(i\mu)}\right)=\bbE\left[f_{(i\mu)}\left(X_{i\mu}^u\right)-f_{(i\mu)}(0)\right]\\
&=\bbE f_{(i\mu)}(0)+\frac{t}{2n}\bbE f_{(i\mu)}^{(2)}(0)+\sum_{r=4}^{4a+4}\frac{t^{r/2}}{r!}\bbE f^{(r)}_{(i\mu)}(0)\bbE\left(X_{i\mu}^u\right)^r+\OO_\prec\left( \Phi^{a+4}\|\mathbf x\|_\Pi^{a/2} \|\mathbf y\|_\Pi^{a/2}\right),
\end{align*}
where we used that $X_{i\mu}^u$ has vanishing third moment. We remark that this is the only place where we need the condition \eqref{condition_3rd}. By \eqref{conditionA2}, we have that for any fixed $r\in \N$,
\be\label{moment-4}
\left|\bbE\left(X_{i\mu}^u\right)^r\right| \lesssim n^{-r/2}.
\ee
Thus to show (\ref{lemm_comp_5}), we only need to prove that for $r=4,5,...,4a+4$,
\begin{equation}\label{eq_comp_est}
\frac{t^{r/2}}{n^{r/2}}\sum_{i\in\mathcal I_1}\sum_{\mu\in\mathcal I_2}\left|\bbE f^{(r)}_{(i\mu)}(0)\right|=\OO\left(\left[n^{C_a\delta}\Phi(z)\|\mathbf u\|_\Pi^{1/2}\|\mathbf v\|_\Pi^{1/2}\right]^a+\mathbb EF_{\mathbf u \mathbf v}^a(X^\theta,z)\right).\end{equation}
In order to get an estimate in terms of the matrix $X^\theta$ only as on the right-hand side of (\ref{eq_comp_est}), we want to replace $X^{\theta,0}_{(i\mu)}$ in $f_{(i\mu)}(0)=F_{\mathbf u \mathbf v}^a(X_{(i\mu)}^{\theta, 0})$ with $X^\theta = X_{(i\mu)}^{\theta, X_{i\mu}^\theta}$ on the left-hand side. 
\begin{lemma}
Suppose that
\begin{equation}\label{eq_comp_selfest}
\frac{t^{r/2}}{n^{r/2}}\sum_{i\in\mathcal I_1}\sum_{\mu\in\mathcal I_2}\left|\bbE f^{(r)}_{(i\mu)}(X_{i\mu}^\theta)\right|=\OO\left(\left[n^{C_a\delta}\Phi(z)\|\mathbf u\|_\Pi^{1/2}\|\mathbf v\|_\Pi^{1/2}\right]^a+\mathbb EF_{\mathbf u \mathbf v}^a(X^\theta,z)\right)
\end{equation}
holds for $r=4,...,4a+4$. Then (\ref{eq_comp_est}) holds for $r=4,...,4a+4$.
\end{lemma}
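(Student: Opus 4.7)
The plan is to Taylor expand $f^{(r)}_{(i\mu)}$ around the origin, exploit the independence of its Taylor coefficients from $X^\theta_{i\mu}$ to swap the two evaluation points, and control the residual terms by a combination of moment decay, the a priori estimate of Lemma \ref{lemm_comp_6_add}, and a downward induction on $r$.

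Concretely, fix $r\in\{4,\ldots,4a+4\}$ and, for each $(i,\mu)$, apply Taylor's theorem:
\[
 f^{(r)}_{(i\mu)}(X^\theta_{i\mu}) \;=\; \sum_{s=0}^{K}\frac{(X^\theta_{i\mu})^s}{s!}\,f^{(r+s)}_{(i\mu)}(0) \;+\; \mathcal R_{K}(i,\mu),
\]
with $K$ a large fixed integer. The critical structural observation is that $f^{(r+s)}_{(i\mu)}(0)=F_{\mathbf u\mathbf v}^a\bigl(X^{\theta,0}_{(i\mu)}\bigr)$ depends only on $\{X^\theta_{j\nu}:(j,\nu)\ne(i,\mu)\}$ and is therefore independent of $X^\theta_{i\mu}$. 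Combining this with the moment identities $\mathbb E X^\theta_{i\mu}=0$, $\mathbb E(X^\theta_{i\mu})^2=n^{-1}$, and $\mathbb E(X^\theta_{i\mu})^3=0$---the last holding because the interpolating law $\rho^\theta_{i\mu}$ is a convex combination of two laws with vanishing third moment (the Gaussian $\rho^0_{i\mu}$, and $\rho^1_{i\mu}$ by assumption \eqref{condition_3rd})---together with the bound $|\mathbb E(X^\theta_{i\mu})^s|\lesssim n^{-s/2}$ for $s\ge 4$, taking expectations collapses the expansion so that solving for $\mathbb E f^{(r)}_{(i\mu)}(0)$ yields
\[
 \mathbb E f^{(r)}_{(i\mu)}(0) = \mathbb E f^{(r)}_{(i\mu)}(X^\theta_{i\mu}) - \frac{\mathbb E f^{(r+2)}_{(i\mu)}(0)}{2n} - \sum_{s=4}^{K}\frac{\mathbb E(X^\theta_{i\mu})^s}{s!}\,\mathbb E f^{(r+s)}_{(i\mu)}(0) - \mathbb E\mathcal R_{K}(i,\mu).
\]

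Multiplying by $t^{r/2}/n^{r/2}$ and summing over $(i,\mu)\in\sI_1\times\sI_2$, the first term on the right is exactly the quantity controlled by hypothesis \eqref{eq_comp_selfest}. The correction terms are handled by a downward induction on $r$, starting at $r=4a+4$: at the base step every index $r+s$ exceeds $4a+4$, so $|\mathbb E f^{(r+s)}_{(i\mu)}(0)|$ is bounded directly via Lemma \ref{lemm_comp_6_add} and absorbed into the target using the slack $n^{C_a\delta}$ together with $\Phi(z)\le n^{-\vartheta/2}$ on $\mathcal D_\vartheta$; at the inductive step we invoke \eqref{eq_comp_est} at order $r+s\in\{4,\ldots,4a+4\}$. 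The Taylor remainder is bounded by Lemma \ref{lemm_comp_6_add} applied uniformly along the segment between $0$ and $X^\theta_{i\mu}$, and taking $K$ sufficiently large (e.g.\ $K=100a$) makes its contribution negligible.

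The main obstacle is the delicate bookkeeping of $t$-factors: converting the prefactor $\frac{t^{r/2}}{n^{r/2}}\cdot|\mathbb E(X^\theta_{i\mu})^s|\lesssim \frac{t^{r/2}}{n^{(r+s)/2}}$ into the induction target's form $\frac{t^{(r+s)/2}}{n^{(r+s)/2}}$ costs a factor $t^{-s/2}$. Under our assumption $t\ge n^{\epsilon/2}\sqrt{\eta_*}\ge n^{-1/3+\epsilon/2}$ this is at most $n^{s/6}$, which is polynomial of fixed degree and can be absorbed by enlarging $C_a$ in \eqref{assm_comp_delta} so that $aC_a\delta$ dominates the loss, while still respecting the constraint $\delta\in(0,\vartheta/(2C_a))$ through a suitable joint choice of the two constants. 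The rest of the argument is routine and mirrors the corresponding step in \cite[Section 7]{Anisotropic}, the only novel feature being the careful tracking of the $t$-dependent scales peculiar to our non-standard prefactor $t^{r/2}/n^{r/2}$.
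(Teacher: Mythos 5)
Your skeleton is exactly the argument the paper invokes from \cite{Anisotropic} (Lemma 7.16 there): expand $f^{(r)}_{(i\mu)}$ at $X^\theta_{i\mu}$ around $0$, use that the coefficients $f^{(r+s)}_{(i\mu)}(0)$ are independent of $X^\theta_{i\mu}$ so that the expectation factorizes, use that the interpolated law keeps $\bbE X^\theta_{i\mu}=0$, $\bbE (X^\theta_{i\mu})^2=n^{-1}$, $\bbE (X^\theta_{i\mu})^3=0$ (by \eqref{condition_3rd} and linearity of moments in the mixture) and $|\bbE (X^\theta_{i\mu})^s|\lesssim n^{-s/2}$, solve for $\bbE f^{(r)}_{(i\mu)}(0)$, and close a downward induction in $r$, treating orders beyond $4a+4$ and the Taylor remainder by the rough bound of Lemma \ref{lemm_comp_6_add} (the summation over $(i,\mu)$ there needs \eqref{eiemu_12}-type bounds, not only $\Phi\le n^{-\vartheta/2}$, but that is a detail). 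So far this is the paper's route.

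The gap is precisely in the point you single out as the novelty, the $t$-bookkeeping. First, the loss you describe is not there. The entry $X_{i\mu}$ enters $H_t$ only through $\sqrt{t}X_{i\mu}$, so each $\lambda$-derivative produces a factor $z^{1/2}t^{1/2}$; with $f^{(r)}$ normalized as it is used in \eqref{eq_comp_selfest} and \eqref{eq_comp_est} (the chain-rule factors stripped out into the explicit prefactor $t^{r/2}$), each additional order of the Taylor expansion of $f^{(r)}_{(i\mu)}$ therefore carries $t^{s/2}$ together with $(X^\theta_{i\mu})^s$. Hence the correction terms come with $\frac{t^{r/2}}{n^{r/2}}\cdot t^{s/2}\,|\bbE (X^\theta_{i\mu})^s|\lesssim \frac{t^{(r+s)/2}}{n^{(r+s)/2}}$, which is exactly the prefactor of the induction target at order $r+s$: no factor $t^{-s/2}$ is lost. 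Your expansion mixes conventions, pairing bare increments $(X^\theta_{i\mu})^s$ with the stripped derivatives $f^{(r+s)}$. Second, and more seriously, the repair you propose would not work if the loss were real: under \eqref{assm_comp_delta} one has $C_a\delta<\vartheta/2$, so $n^{C_a\delta}\le n^{\vartheta/2}$ no matter how large $C_a$ is taken, and it cannot dominate a fixed power $n^{s/6}$; moreover the second term $\bbE F^a_{\mathbf u\mathbf v}(X^\theta,z)$ in the target carries no $n^{C_a\delta}$ factor at all, so multiplying the order-$(r+s)$ bound by $t^{-s/2}$ could never be absorbed into the order-$r$ target by adjusting constants. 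Once the $t$-factors are tracked consistently the issue disappears, and your argument reduces to the cited proof with no new difficulty.
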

\begin{proof}
The proof is the same as the one for \cite[Lemma 7.16]{Anisotropic}. 
\end{proof}


It remains to prove (\ref{eq_comp_selfest}). For simplicity, in the following proof we abbreviate $X^\theta \equiv X$. 
In order to exploit the detailed structure of the derivatives on the left-hand side of (\ref{eq_comp_selfest}), we introduce the following algebraic objects.

\begin{definition}[Words]\label{def_comp_words}
Given $i\in \mathcal I_1$ and $\mu\in \mathcal I_2$. Let $\sW$ be the set of words of even length in two letters $\{\mathbf i, \bm{\mu}\}$. We denote the length of a word $w\in\sW$ by $2\ell(w)$ with $\ell(w)\in \mathbb N$. We use bold symbols to denote the letters of words. For instance, $w=\mathbf t_1\mathbf s_2\mathbf t_2\mathbf s_3\cdots\mathbf t_r\mathbf s_{r+1}$ denotes a word of length $2r$.
Define $\sW_r:=\{w\in \mathcal W: \ell(w)=r\}$ to be the set of words of length $2r$, and such that
each word $w\in \sW_r$ satisfies that $\mathbf t_l\mathbf s_{l+1}\in\{\mathbf i\bm{\mu},\bm{\mu}\mathbf i\}$ for all $1\le l\le r$. Next we assign to each letter a value $[\cdot]$ through $[\mathbf i]:=i$ and $[\bm {\mu}]:=\mu,$ which are regarded as summation indices. 
Finally, to each word $w$ we assign a random variable $A_{\mathbf {uv}, i, \mu}(w)$ as follows. If $\ell(w)=0$ we define
 $$A_{\mathbf {uv}, i, \mu}(w):=G_{\mathbf u\mathbf v}-\Pi_{\mathbf u\mathbf v}.$$
 If $\ell(w)\ge 1$, say $w=\mathbf t_1\mathbf s_2\mathbf t_2\mathbf s_3\cdots\mathbf t_r\mathbf s_{r+1}$, we define
 \begin{equation}\label{eq_comp_A(W)}
 A_{\mathbf {uv}, i, \mu}(w):=G_{\bu[\mathbf t_1]} G_{[\mathbf s_2][\mathbf t_2]}\cdots G_{[\mathbf s_r][\mathbf t_r]} G_{[\mathbf s_{r+1}]\bv}.
 \end{equation}
\end{definition}

Notice the words are constructed such that, by (\ref{eq_comp_expansion}),  
\[\left(\frac{\partial}{\partial X_{i\mu}}\right)^r \left( G_{\mathbf u\mathbf v}-\Pi_{\mathbf u\mathbf v}\right)=(-1)^r r!\sum_{w\in \mathcal W_r} A_{\mathbf {uv}, i, \mu}(w), \]
with which we get that 
\begin{align*}
f^{(r)}_{(i\mu)}=(-1)^r & \sum_{\ell_1+\cdots+\ell_a=r}\prod_{s=1}^{a/2}\left(\ell_s! \ell_{s+a/2}!\right) \left(\sum_{w_s\in\sW_{\ell_s}}\sum_{w_{s+a/2}\in\sW_{\ell_{s+a/2}}}A_{\mathbf {uv}, i, \mu}(w_s)\overline{A_{\mathbf {uv}, i, \mu}(w_{s+a/2})}\right).
\end{align*}
Then to prove (\ref{eq_comp_selfest}), it suffices to show that
\begin{equation}
\frac{t^{r/2}}{n^{r/2}}\sum_{i\in\mathcal I_1}\sum_{\mu\in\mathcal I_2}\left|\bbE\prod_{s=1}^{a/2}A_{\mathbf {uv}, i, \mu}(w_s)\overline{A_{\mathbf {uv}, i, \mu}(w_{s+a/2})}\right|=\OO\left(\left[n^{C_a\delta}\Phi(z)\|\mathbf u\|_\Pi^{1/2}\|\mathbf v\|_\Pi^{1/2}\right]^a+\mathbb EF_{\mathbf u \mathbf v}^a(X,z)\right)\label{eq_comp_goal1}
\end{equation}
for  $4\le r\le 4a+4$ and all words $w_1,...,w_a\in \sW$ satisfying $\ell(w_1)+\cdots+\ell(w_a)=r$.
To avoid the unimportant notational complications associated with the complex conjugates, we will actually prove that
\begin{equation}\label{eq_comp_goal2}
\frac{t^{r/2}}{n^{r/2}}\sum_{i\in\mathcal I_1}\sum_{\mu\in\mathcal I_2}\left|\bbE\prod_{s=1}^{a}A_{\mathbf {uv}, i, \mu}(w_s)\right|=\OO\left(\left[n^{C_a\delta}\Phi(z)\|\mathbf u\|_\Pi^{1/2}\|\mathbf v\|_\Pi^{1/2}\right]^a+\mathbb EF_{\mathbf u \mathbf v}^a(X,z)\right).
\end{equation}
The proof of $(\ref{eq_comp_goal1})$ is essentially the same but with slightly heavier notations. Treating empty words separately, we find that it suffices to prove
\begin{equation}
\label{eq_comp_goal3}
\frac{t^{r/2}}{n^{r/2}}\sum_{i\in\mathcal I_1}\sum_{\mu\in\mathcal I_2}\bbE\left|A^{a-b}_{\mathbf {uv}, i, \mu}(w_0)\prod_{s=1}^{b}A_{\mathbf {uv}, i, \mu}(w_s)\right|=\OO\left(\left[n^{C_a\delta}\Phi(z)\|\mathbf u\|_\Pi^{1/2}\|\mathbf v\|_\Pi^{1/2}\right]^a+\mathbb EF_{\mathbf u \mathbf v}^a(X,z)\right)
\end{equation}
for  $4\le r\le 4a+4$, $1\le b \le a$, and words such that $\ell(w_0)=0$, $\sum_{s=1}^b \ell(w_s)=r$ and $\ell(w_s)\ge 1$ for $s\ge 1$.

Finally, it remains to prove estimate (\ref{eq_comp_goal3}). Using Lemma \ref{lemm_comp_2} and \eqref{xsmall}, we obtain the following bounds for any word $w$:
  \begin{equation}
  |A_{\mathbf {uv}, i, \mu}(w)|\prec n^{2\delta(\ell(w)+1)}  \|\mathbf e_i\|_\Pi^{\ell(w)/2}\|\mathbf e_\mu\|_\Pi^{\ell(w)/2}  \|\mathbf u\|_\Pi^{1/2}\|\mathbf v\|_\Pi^{1/2};\label{eq_comp_A1}
  \end{equation}
  for $\ell(w)=1$, we have 
  \begin{equation}
  |A_{\mathbf {uv}, i, \mu}(w)|\prec  |G_{\mathbf u i}||G_{\mathbf v \mu}|+|G_{\mathbf u \mu}||G_{\mathbf v i}|;\label{eq_comp_A3}
  \end{equation}
  for $\ell(w)\ge 2$, we have
  \begin{equation}
  \begin{split}
  |A_{\mathbf {uv}, i, \mu}(w)| &\prec n^{2\delta(\ell(w)-1)} \left[|G_{\mathbf u i}||G_{\mathbf v i}|\|\mathbf e_i\|_\Pi^{(\ell(w)-2)/2}\|\mathbf e_\mu\|_\Pi^{\ell(w)/2} +|G_{\mathbf u \mu}||G_{\mathbf v \mu}|\|\mathbf e_i\|_\Pi^{\ell(w)/2}\|\mathbf e_\mu\|_\Pi^{(\ell(w)-2)/2}\right] \\
   &+n^{2\delta(\ell(w)-1)} \left(|G_{\mathbf u i}||G_{\mathbf v \mu}|+|G_{\mathbf u \mu}||G_{\mathbf v i}|\right)\|\mathbf e_i\|_\Pi^{(\ell(w)-1)/2}\|\mathbf e_\mu\|_\Pi^{(\ell(w)-1)/2} ;\label{eq_comp_A2}
   \end{split}
  \end{equation}
   Define $\bu_1,\bv_1 \in\mathbb R^{\mathcal I_1}$ and $\bu_2,\bv_2 \in\mathbb R^{\mathcal I_2}$ as in \eqref{upi}. Then with Lemma \ref{Ward_id}, we can get that
\begin{align}
 &\frac{t}{n}\sum_{i\in\sI_1}|G_{\mathbf u i}|^2+ \frac{t}{n}\sum_{\mu\in\sI_2}|G_{\mathbf u \mu}|^2  \lesssim t\frac{\im  G_{\mathbf u_1\mathbf u_1}  + \im  G_{\mathbf u_2\mathbf u_2} +\eta \left|G_{\mathbf u_1\mathbf u_1}\right|+\eta \left|G_{\mathbf u_2\mathbf u_2}\right|}{n\eta} \nonumber\\
 & \prec  tn^{2\delta}\frac{ \im  \Pi_{\mathbf u_1\mathbf u_1}  + \im  \Pi_{\mathbf u_2\mathbf u_2}  + n^{C_a\delta} \Phi(z) \|\mathbf u\|_\Pi + \eta \|\mathbf u\|_\Pi }{n\eta} \nonumber\\
 &\prec n^{(C_a+2)\delta}\|\mathbf u\|_\Pi  \left( t^2 \Psi^2(z) \|\mathbf u\|_\Pi + \frac{t}{n}\|\mathbf u\|_\Pi  + \frac{t}{n\eta}\Phi\right) \lesssim n^{(C_a+2)\delta} \|\mathbf u\|_\Pi \cdot \varpi \Phi^2 ,\label{eq_comp_r2}
\end{align}
where in the second step we used the two bounds in Lemma \ref{lemm_comp_2}, in the third step we used
$$\im  \Pi_{\mathbf u_\al\mathbf u_\al} \lesssim \left(\eta+ t\im m_{w,t}\right)\|\mathbf u\|_\Pi^2 ,\quad \al=1,2, $$
by the definition of $\Pi$, and in the last step we used the definition of $\Phi$ in \eqref{defnPhi} and $\|\mathbf u\|_\Pi\lesssim \varpi^{-1}$ by \eqref{rough Pi}.
We have a similar estimate by replacing $\bu$ with $\bv$. Using \eqref{eq_dtheta1extension} and \eqref{eq_dtheta2bound2}, we can get that
\be\label{eiemu_12}
\frac tn\sum_{i\in \cal I_1}\|\mathbf e_i\|_\Pi^2+ \frac{t}n\sum_{\mu\in \cal I_2}\|\mathbf e_\mu\|_\Pi^2\lesssim \frac{t}{p}\sum_{i=1}^p\frac{1}{|d_i-\zeta_t|^2} \lesssim 1.
\ee

Now we consider the following cases for the left-hand side of (\ref{eq_comp_goal3}).

\vspace{5pt}

\noindent{\bf Case 1}: There exist at least two words $w_s$ with $\ell(w_s)=1$. Then we have $b\le r\le 2b -2$. Furthermore, using \eqref{eq_comp_A1}-\eqref{eq_comp_r2}, we can bound 
\begin{align}
&\frac{t^{r/2}}{n^{r/2}}\sum_{i\in\mathcal I_1}\sum_{\mu\in\mathcal I_2} \left|A^{a-b}_{\mathbf {uv}, i, \mu}(w_0)\prod_{s=1}^{b}A_{\mathbf {uv}, i, \mu}(w_s)\right| \nonumber\\
&\prec  n^{2\delta(r-4+b)}F_{\mathbf u\mathbf v}^{a-b}(X) \|\mathbf u\|_\Pi^{b/2-1}\|\mathbf v\|_\Pi^{b/2-1} \varpi^{-(r-2)}\frac{t^{r/2-2}}{n^{r/2-2}}\frac{t^2}{n^2}\sum_{i\in\mathcal I_1}\sum_{\mu\in\mathcal I_2} \left(|G_{\mathbf u i}|^2|G_{\mathbf v \mu}|^2+|G_{\mathbf u \mu}|^2|G_{\mathbf v i}|^2\right) \nonumber\\
&\prec n^{2\delta(r+b)}F_{\mathbf u\mathbf v}^{a-b}(X)  \|\mathbf u\|_\Pi^{b/2}\|\mathbf v\|_\Pi^{b/2} \varpi^{-(r-2)}\left( \varpi\Phi\right)^{r-4}\left( n^{C_a\delta} \varpi \Phi^2 \right)^2 \nonumber\\
& \le F_{\mathbf u\mathbf v}^{a-b}(X)   \|\mathbf u\|_\Pi^{b/2}\|\mathbf v\|_\Pi^{b/2} \left(n^{C_a\delta/2+4\delta}\Phi\right)^{r}\label{eq_comp_r1_add}
\end{align}
where we also used $t^{1/2}n^{-1/2}\le \varpi \Phi$ in the second step. If we take $C_a= 100$, then under (\ref{assm_comp_delta}) we have $n^{C_a\delta/2+4\delta}\Phi \ll 1$, so we can bound \eqref{eq_comp_r1_add} as
$$\eqref{eq_comp_r1_add}\prec  F_{\mathbf u\mathbf v}^{a-b}(X)  \left(n^{3C_a\delta/4} \|\mathbf u\|_\Pi^{1/2}\|\mathbf v\|_\Pi^{1/2} \Phi\right)^b.$$ 


\noindent{\bf Case 2}: There is at most one word $w_s$ with $\ell(w_s)=1$. Then we have $r\ge \max\{b,2b-1\}$. Furthermore, using \eqref{eq_comp_A1}-\eqref{eiemu_12}, we can bound 
\begin{align}
&\frac{t^{r/2}}{n^{r/2}}\sum_{i\in\mathcal I_1}\sum_{\mu\in\mathcal I_2} \left|A^{a-b}_{\mathbf {uv}, i, \mu}(w_0)\prod_{s=1}^{b}A_{\mathbf {uv}, i, \mu}(w_s)\right| \nonumber\\
&\prec  n^{2\delta(r-2+b)}F_{\mathbf u\mathbf v}^{a-b}(X) \|\mathbf u\|_\Pi^{b/2-1/2}\|\mathbf v\|_\Pi^{b/2-1/2} \varpi^{-(r-4)} \frac{t^{r/2-2}}{n^{r/2-2}} \nonumber\\
&\times \frac{t^2}{n^2}\sum_{i\in\mathcal I_1}\sum_{\mu\in\mathcal I_2} \left[|G_{\mathbf u i}||G_{\mathbf v i}|\|\mathbf e_\mu\|_\Pi^{2}  +|G_{\mathbf u \mu}||G_{\mathbf v \mu}| \|\mathbf e_i\|_\Pi^{2} + \left(|G_{\mathbf u i}||G_{\mathbf v \mu}|+|G_{\mathbf u \mu}||G_{\mathbf v i}|\right)\|\mathbf e_i\|_\Pi \|\mathbf e_\mu\|_\Pi \right] \nonumber\\
&\prec n^{2\delta(r+b)}F_{\mathbf u\mathbf v}^{a-b}(X)  \|\mathbf u\|_\Pi^{b/2}\|\mathbf v\|_\Pi^{b/2} \varpi^{-(r-4)}\left( \varpi\Phi\right)^{r-4} \left(n^{C_a\delta}  \varpi \Phi^2\right) \nonumber\\
& \le F_{\mathbf u\mathbf v}^{a-b}(X)   \|\mathbf u\|_\Pi^{b/2}\|\mathbf v\|_\Pi^{b/2} \left(n^{C_a\delta/2+8\delta}\Phi\right)^{r-2} . \label{eq_comp_r2_add}
\end{align}
If we take $C_a= 100$, then under (\ref{assm_comp_delta}) we have $n^{C_a\delta/2+8\delta}\Phi \ll 1$. Moreover, if $r\ge 4$ and $r\ge 2b-1$, then $r\ge b+2$. Thus we can bound \eqref{eq_comp_r2_add} as
$$\eqref{eq_comp_r2_add}\prec  F_{\mathbf u\mathbf v}^{a-b}(X)  \left(n^{3C_a\delta/4} \|\mathbf u\|_\Pi^{1/2}\|\mathbf v\|_\Pi^{1/2} \Phi\right)^b.$$ 

Combining the above two cases, we conclude that 
\begin{equation}\nonumber
\frac{t^{r/2}}{n^{r/2}}\sum_{i\in\mathcal I_1}\sum_{\mu\in\mathcal I_2}\bbE\left|A^{a-b}_{\mathbf {uv}, i, \mu}(w_0)\prod_{s=1}^{b}A_{\mathbf {uv}, i, \mu}(w_s)\right| \le \bbE F_{\bv}^{a-b}(X)\left[n^{C_a\delta} \|\mathbf u\|_\Pi^{1/2}\|\mathbf v\|_\Pi^{1/2}\Phi\right]^b.
\end{equation}
Now (\ref{eq_comp_goal3}) follows from H\"older's inequality. This concludes the proof of (\ref{eq_comp_selfest}), which further implies (\ref{lemm_comp_5}), which gives Lemma \ref{lemm_comp_4}, which then concludes the proof of Lemma \ref{lemm_comp_0}, which finally implies Lemma \ref{lemm_boot}. Hence we have concluded the proof of the anisotropic local law \eqref{aniso_law_gen}. 


\subsection{The averaged local law}

In this section, we prove the averaged local laws \eqref{averin_gen} and \eqref{averout_gen}. 
The proof is similar to that for \eqref{aniso_law_gen} in previous subsection, and we only explain the main differences. A great simplification is that the bootstrapping argument is not necessary, since we already have a good bound on the resolvent entries by \eqref{aniso_law_gen}.
In analogy to (\ref{eq_comp_F(X)}), we define
\begin{align*}
\wt F(X,z) : &= |m(z)-m_{w,t}(z)| =\left|\frac{1}{p}\sum_{i\in\sI_1} \left(G_{ii}(X,z)- \Pi_{ii}(z)\right)\right|.
\end{align*}
Moreover, by Theorem \ref{thm_local}, \eqref{averin_gen} and \eqref{averout_gen} hold for Gaussian $X$. 
For simplicity of notations, we denote 
$$ \phi (z):=\frac{1}{n\eta}\mathbf 1_{E\le \lambda_{+,t}}+ \left( \frac{1}{n(\kappa +\eta)} + \frac{1}{(n\eta)^2\sqrt{\kappa +\eta}}  + \frac{\Phi}{n\eta}\right)\mathbf 1_{E>\lambda_{+,t}},\quad \text{for} \ \ z=E+\ii \eta.$$
Then following the argument in previous subsection, analogous to (\ref{eq_comp_selfest}), we only need to prove that for any fixed $a\in 2\N$ and $r=4,...,4a+4$, 
\begin{equation}\label{eq_comp_selfestAvg}
\frac{t^{r/2}}{n^{r/2}}\sum_{i\in\mathcal I_1}\sum_{\mu\in\mathcal I_2}\left|\bbE \left(\frac{\partial}{\partial X_{i\mu}}\right)^r\wt F^a(X)\right|=\OO\left(\left(n^{\delta}\phi(z)\right)^a+\mathbb E\wt F^a(X)\right),
\end{equation}
for any small constant $\delta>0$. Then similar to (\ref{eq_comp_goal2}), it suffices to prove that
\begin{equation}\label{eq_comp_goalAvg}
\begin{split}
\frac{t^{r/2}}{n^{r/2}}\sum_{i\in\mathcal I_1}\sum_{\mu\in\mathcal I_2}\left|\bbE \left(\frac{1}{n}\sum_{j\in\sI_1}A_{ \mathbf e_j\mathbf e_j, i, \mu}(w_0)\right)^{a-b}\prod_{s=1}^{b}\left(\frac{1}{n}\sum_{j\in\sI_1}A_{ \mathbf e_j\mathbf e_j, i, \mu}(w_s)\right)\right|=\OO\left(\left(n^{\delta}\phi(z)\right)^a+\mathbb E\wt F^a(X)\right)
\end{split}
\end{equation}
for $4\le r\le 4a+4$, $1\le b \le a$, and words such that $\ell(w_0)=0$, $\sum_{s=1}^b \ell(w_s)=r$ and $\ell(w_s)\ge 1$ for $s\ge 1$.
\nc
Using \eqref{aniso_law_gen}, similar to \eqref{eq_comp_r2}, we can get that for any deterministic unit vector $\bu\in \R^{\cal I}$,
\begin{equation}\label{eq_comp_r22}
\begin{split}
\frac{1}{n}\sum_{i\in\sI_1}|G_{\mathbf u i}|^2+ \frac{1}{n}\sum_{\mu\in\sI_2}|G_{\mathbf u \mu}|^2 & \prec \|\mathbf u\|_\Pi  \left( \frac{t}{\varpi} \Psi^2(z)  + \frac{1}{n\varpi}   + \frac{\Phi}{n\eta}\right) \lesssim  \phi(z)\|\mathbf u\|_\Pi.
 \end{split}
\end{equation}
where in the second step we used \eqref{eq_defpsi}, \eqref{Immc} and the definition of $\varpi$ in \eqref{rough Pi}. Now using \eqref{aniso_law_gen} and \eqref{eq_comp_r22}, we can bound that for any $j\in \cal I_1$ and word $w$ with $\ell(w)\ge 1$, 
\begin{equation}\label{average_bound}
\left|\frac{1}{n}\sum_{j\in\sI_1}A_{ \mathbf e_j \mathbf e_j, i, \mu}(w)\right| \prec \|\mathbf e_i\|_\Pi^{\ell(w)/2}\|\mathbf e_i\|_\Pi^{\ell(w)/2} \phi(z). 
\end{equation}
With (\ref{average_bound}), for any $r\ge 4$, we can bound that
\begin{align*}
\frac{t^{r/2}}{n^{r/2}}\sum_{i\in\mathcal I_1}\sum_{\mu\in\mathcal I_2}\left|\prod_{s=1}^{b}\left(\frac{1}{n}\sum_{j\in\sI_1}A_{ \mathbf e_j\mathbf e_j, i, \mu}(w_s)\right)\right|  & \prec \frac{t^{r/2}}{n^{r/2}}\sum_{i\in\cal I_1}\sum_{\mu\in\cal I_2}\|\mathbf e_i\|_\Pi^{r/2}\|\mathbf e_i\|_\Pi^{r/2}\phi^b \prec \frac{t^{r/2-2}}{n^{r/2-2}}\varpi^{-(r-4)} \phi^b \le \phi^b ,
\end{align*}
where we used \eqref{eiemu_12} in the second step. Thus the left-hand side of (\ref{eq_comp_goalAvg}) can be bounded as
\begin{equation}\nonumber
\begin{split}
\frac{t^{r/2}}{n^{r/2}}\sum_{i\in\mathcal I_1}\sum_{\mu\in\mathcal I_2}\left|\bbE \left(\frac{1}{n}\sum_{j\in\sI_1}A_{ \mathbf e_j\mathbf e_j, i, \mu}(w_0)\right)^{a-b}\prod_{s=1}^{b}\left(\frac{1}{n}\sum_{j\in\sI_1}A_{ \mathbf e_j\mathbf e_j, i, \mu}(w_s)\right)\right| \prec \mathbb E\wt F^{a-b}(X) \phi^b.
\end{split}
\end{equation}
Applying Holder's inequality, we get \eqref{eq_comp_selfestAvg}, which completes the proof of \eqref{averin_gen} and \eqref{averout_gen}.


\appendix

\section{Matching properties for rectangular free convolutions}\label{sec comparison}

In this section, with the estimates proved in Section \ref{sec anafree}, we compare the edge behaviors of two free rectangular convolutions when their initial measures at $t=0$ satisfy certain matching properties. The estimates proved in this section will be used in \cite{DY20201} to study the evolution of the rectangular DBM. We also expect these estimates to be of independent interest from the point of view of free probability theory. We remark that similar matching properties for additive free convolution have been established in \cite[Section 7.3]{edgedbm}.

Let $t_0= n^{-1/3+\omega_0}$ for some constant $0<\omega_0<1/3$, and let $\psi>0$ be a fixed constant. We consider two probability measures $\rho_1$ and $\rho_2$ having densities on the interval $[0,2\psi],$ such that for some constant $c_\psi>0$, 
\begin{equation}\label{eq_closerho}
\rho_1(\psi-x)=\rho_2(\psi-x)\left(1+\OO\left( \frac{|x|}{t_0^2} \right) \right), \quad 0 \leq x \leq c_\psi t_0^2,
\end{equation}
and 
\be\label{sqrt12}
\rho_1(x)=\rho_2(x)=0 \ \ \text{on}\ \ [\psi,2\psi],\quad \rho_2(x) \sim \sqrt{\psi-x} \ \ \text{on}\ \ [\psi- c_\psi,\psi].
\ee 
Let $\rho_{1,t}$ and $\rho_{2,t}$ be the rectangular free convolutions of the MP law with $\rho_1$ and $\rho_2$, respectively. We denote the Stieltjes transform of $\rho_{i,t}$ by $m_{i,t}$. By \eqref{eq_keyequation11}, $m_{i,t}$ satisfies the equation
\begin{equation}\label{eq m12}
\frac{1}{c_n t}\left(1-\frac{1}{b_{i,t}}\right)=\int\frac{\rho_i(x)}{x-\zeta_{i}(z,t)}\dd x ,\quad i=1,2,
\end{equation}
where
\be\label{defnbit} b_{i,t}(z):=1+c_nt m_{i,t}(z),\quad  \zeta_{i}(z,t):=b_{i,t}^2 z-b_{i,t}t(1-c_n) . \ee
Moreover, as in Section \ref{sec anafree}, we introduce the notations $\xi_{i}(z,t):=\zeta_{i}(z,t) -\psi$ and $ \zeta_{i}(z,t)=\al_{i}(z,t)+\ii \beta_{i}(z,t)$. 


Throughout this section, we assume that $t\le t_0$ and $|E-\psi|+\eta \leq \tau t_0^2$ for some small constant $\tau>0$. Then with the square root behavior of $\rho_{i}$, $i=1,2,$ one can check that 
\begin{equation}\label{eq_partialzmcontrolcontrol000}
|m_i(z)|\lesssim 1,\quad \left| \partial_z m_i(z) \right| \leq  (|E-\psi|+\eta)^{-1/2}, \quad z=E+\ii \eta,
\end{equation}
and that \eqref{regular1} and \eqref{regular2} hold for $m_{i}(z)$ with $\eta_*=0$. Hence by replacing $\lambda_+$ with $\psi$, Lemma \ref{lem_immanalysis} can be applied to any $z$ with $\psi-3c_\psi/4 \le E \le \psi+3c_\psi/4$ and $0\le \eta\le 10$. Moreover, all the analysis in Section \ref{sec anafree} goes through for any $0< t\le t_0$. In particular, $\rho_{i,t}$ has a right edge at, say, $\lambda_{i,t}$, and we have 
\be\label{eq_edgeappd}\xi_{+,i}(t):=\xi_{i}(\lambda_{i,t},t) \sim t^2\ee 
by \eqref{eq_edgebound}. Moreover, for
$E \leq \lambda_{i,t},$ we have 
\begin{equation}\label{eq_contourproperty}
|\alpha_{i}-\xi_{i,t}| \sim |E-\lambda_{i,t}|, \quad \beta_{i} \sim t|E-\lambda_{i,t}|^{1/2},
\end{equation}    
by \eqref{alEsqrt} and \eqref{betasimal}. 
Using Lemma \ref{lem_partialm} and \eqref{eq_partialzmcontrolcontrol000}, we can obtain the following estimate.

\begin{lemma}
For $z$ satisfying $|E-\psi|+\eta \leq \tau t_0^2$, we have
\begin{equation}\label{eq_partialzmcontrolcontrol}
\left| \partial_z m_{i,t}(z) \right| \lesssim  (\kappa_i+\eta)^{-1/2} ,
\end{equation}
where we denoted $\kappa_i:= |E-\lambda_{i,t}|$. 
\end{lemma}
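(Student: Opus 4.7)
The plan is to decompose the argument according to the relative size of $\kappa_i + \eta$ and $t^2$. Since $\rho_i$ is $\eta_*$-regular with $\eta_* = 0$, all the estimates of Section \ref{sec anafree} carry over verbatim to $m_{i,t}$. In particular, Lemma \ref{lem_partialm} (applied with $m_{i,t}$ in place of $m_{w,t}$) already yields the desired bound $(\kappa_i+\eta)^{-1/2}$ in two of the three regimes it covers: when $\kappa_i + \eta \le t^2$ by \eqref{eq_bound1}, and when $\kappa_i+\eta \ge t^2$ with $E \ge \lambda_{i,t}$ by \eqref{eq_bound2}. The only remaining regime is $\kappa_i + \eta \ge t^2$ with $E \le \lambda_{i,t}$, in which the generic bound \eqref{eq_bound3} gives only $\sqrt{\kappa_i+\eta}/(t\sqrt{\kappa_i+\eta}+\eta)$, and a refined argument is needed to upgrade it.

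For that remaining regime I will exploit the subordination identity $m_{i,t}(z) = m_i(\zeta_i(z,t))/(1 - c_n t\, m_i(\zeta_i(z,t)))$, which follows from \eqref{eq m12}--\eqref{defnbit}. Differentiating in $z$ and using $\Phi_{i,t}(\zeta_i(z,t)) = z$ to get $\partial_z \zeta_i = 1/\Phi_{i,t}'(\zeta_i)$, one obtains
\[
\partial_z m_{i,t}(z) \;=\; \frac{m_i'(\zeta_i(z,t))}{(1 - c_n t\, m_i(\zeta_i(z,t)))^2\,\Phi_{i,t}'(\zeta_i(z,t))}.
\]
The prefactor $(1 - c_n t m_i)^{-2}$ is of order one by $|m_i|\lesssim 1$ from \eqref{eq_partialzmcontrolcontrol000} and $t=o(1)$. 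The bound $|m_i'(\zeta_i)| \lesssim (|\alpha_i|+\beta_i)^{-1/2}$ comes from applying \eqref{eq_partialzmcontrolcontrol000} at the point $\zeta_i = \psi + \alpha_i + \ii\beta_i$, which is legitimate because the analog of \eqref{eq_ccc} gives $|\alpha_i| + \beta_i \lesssim t^2 + \kappa_i + \eta \lesssim \tau t_0^2$. Finally, in the present regime $\kappa_i+\eta \ge t^2$, the analog of Lemma \ref{lem_xigeneral} yields $|\Phi_{i,t}'(\zeta_i)| \sim 1$.

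What remains, and what I expect to be the main obstacle, is a lower bound $|\alpha_i| + \beta_i \gtrsim \kappa_i + \eta$. For $\beta_i$, the analog of \eqref{zetasimeta+t2} together with Lemma \ref{lem_asymdensitysquare2} gives $\beta_i \sim \eta + t\,\mathrm{Im}\,m_{i,t}(z) \sim \eta + t\sqrt{\kappa_i+\eta}$ since $E \le \lambda_{i,t}$. I would then split into sub-cases: if $\kappa_i \le C_1 t^2$, then $\kappa_i + \eta \ge t^2$ forces $\eta \gtrsim t^2 \gtrsim \kappa_i$, whence $\beta_i \gtrsim \eta \gtrsim \kappa_i + \eta$; if $\kappa_i \ge C_1 t^2$ for $C_1$ sufficiently large, then by the analog of Lemma \ref{eq_edgeclose}, $\alpha_i - \xi_{+,i} \sim -\kappa_i$ while $\xi_{+,i} \sim t^2 \ll \kappa_i$, so $|\alpha_i| \gtrsim \kappa_i$, and combined with $\beta_i \gtrsim \eta$ this gives $|\alpha_i|+\beta_i \gtrsim \kappa_i + \eta$. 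The delicate point is the transition window $\kappa_i \sim t^2$, where neither $\alpha_i$ nor the individual contributions to $\beta_i$ are individually sharp; there the uniform estimate $\beta_i \gtrsim t\sqrt{\kappa_i+\eta} \gtrsim \kappa_i + \eta$, valid because $t^2$ dominates $\kappa_i+\eta$ up to constants, should close the gap and complete the argument.
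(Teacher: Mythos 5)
Your proposal is essentially the paper's own argument: the paper also differentiates through the subordination relation, using $\partial_z \zeta_i = [\Phi_i'(\zeta_i)]^{-1}$, bounds $|\partial_\zeta m_i(\zeta_i)| \lesssim (|\alpha_i|+\beta_i)^{-1/2}$ by \eqref{eq_partialzmcontrolcontrol000}, and controls $|\Phi_i'(\zeta_i)|$ by \eqref{Phit'}, then splits into $\kappa_i+\eta \lessgtr \tau_1 t^2$. Your reuse of Lemma \ref{lem_partialm} for the regimes $\kappa_i+\eta\le t^2$ and $\{\kappa_i+\eta\ge t^2,\ E\ge\lambda_{i,t}\}$ is a legitimate shortcut, since the appendix states that all of Section \ref{sec anafree} applies with $\eta_*=0$; the paper instead redoes the near-edge case directly, using $|\alpha_i|\gtrsim t^2$ together with $|\Phi_i'(\zeta_i)|\sim \sqrt{\kappa_i+\eta}/t$, and disposes of the far regime by asserting $|\alpha_i|+\beta_i \sim |\alpha_i-\xi_{+,i}|+\beta_i \sim \kappa_i+\eta$ via Lemma \ref{lem_xigeneral}.

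One step in your treatment of the remaining regime is mis-justified as written: in the transition window $\kappa_i\sim t^2$ you claim $\beta_i \gtrsim t\sqrt{\kappa_i+\eta} \gtrsim \kappa_i+\eta$ ``because $t^2$ dominates $\kappa_i+\eta$,'' but $\eta$ is only restricted by $\eta \lesssim t_0^2$, so when $\eta \gg t^2$ the inequality $t\sqrt{\kappa_i+\eta}\gtrsim\kappa_i+\eta$ is false. The repair is already in your toolkit: in that sub-case $\eta \gtrsim \kappa_i$, so $\beta_i \gtrsim \eta \gtrsim \kappa_i+\eta$, exactly as in your first sub-case; the bound $t\sqrt{\kappa_i+\eta}$ is only needed when $\eta\lesssim t^2$, where $\kappa_i+\eta\sim t^2$ indeed holds. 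Similarly, your claim $\alpha_i-\xi_{+,i}\sim -\kappa_i$ quotes Lemma \ref{eq_edgeclose}, which is a statement on the real axis; for $\eta>0$ it can fail when $\eta\gg\kappa_i$, and one should either invoke the derivative bounds of Lemma \ref{lem_xigeneral} to propagate it in $\eta$ (restricting to $\eta\le c\,\kappa_i$) or again fall back on $\beta_i\gtrsim\eta$ in the complementary range. With these small patches the argument closes and matches the paper's.
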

\begin{proof}
Taking the derivative of equation \eqref{eq m12} with respect to $z$, we get that 
\be\label{addpartialm}\left| \partial_z m_{i,t}(z) \right| \lesssim |\partial_{\zeta}m_{i}(\zeta_i)|\left|\partial_z \zeta_i\right| \lesssim \frac{|\Phi_i'(\zeta_i)|^{-1}}{\sqrt{|\al_i|+\beta_i}},\ee
 where we used \eqref{eq_partialzmcontrolcontrol000} and equation $\Phi_i(\zeta_i)=z$ by \eqref{simplePhizeta} with
$$ \Phi_i(\zeta):=\zeta(1-c_nt m_{i}(\zeta))^2+(1-c_n)t(1-c_nt m_{i}(\zeta)),\quad i=1,2.$$
If $\kappa_i + \eta \le \tau_1 t^2$ for some small contant $\tau_1>0$, we have $|\al_i - \xi_{+,i}|+\beta_i \sim t\sqrt{\kappa_i + \eta}$ by \eqref{eq_approximate}. Together with \eqref{Phit'}, we get from \eqref{addpartialm} that
$$\left| \partial_z m_{i,t}(z) \right| \lesssim |\partial_{\zeta}m_{i}(\zeta_i)|\left|\partial_z \zeta_i\right| \lesssim \frac{t}{\sqrt{|\al_i|+\beta_i} \sqrt{\kappa_i+\eta}} \lesssim \frac{1}{\sqrt{\kappa_i+\eta}},$$ 
where in the second step we used $|\al_i|\gtrsim t^2$ by \eqref{eq_edgeappd}. On the other hand, if $\kappa_i + \eta > \tau_1 t^2$, we have  $|\al_i|+\beta_i \sim |\al_i -  \xi_{+,i}|+\beta_i \sim \kappa_i + \eta$ by Lemma \ref{lem_xigeneral}. Again using \eqref{Phit'}, we get from \eqref{addpartialm} that
$$\left| \partial_z m_{i,t}(z) \right| \lesssim \frac{1}{\sqrt{\kappa_i+\eta}}.$$
This concludes the proof.
 \end{proof}
 Due to the matching condition \eqref{eq_closerho}, we can show that $\xi_{+,1}$ and $\xi_{+,2}$ are close to each other. 

\begin{lemma}\label{lem edgemoving}
We have that 
 \begin{equation}\label{eq_edgecomparebound}
| \xi_{+,1}- \xi_{+,2}| \lesssim \frac{ t}{t_0}t^2,
\end{equation}
and
 \begin{equation}\label{eq_edgecomparebound2}
|\lambda_{1,t}-\psi|+|\lambda_{2,t}-\psi| \lesssim t.
\end{equation}
 \end{lemma}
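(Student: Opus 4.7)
The bound \eqref{eq_edgecomparebound2} will follow by direct substitution. Since $\xi_{+,i} \sim t^2$ by \eqref{eq_edgeappd} and the square-root behavior of $\rho_i$ near $\psi$ implies $|m_i(\zeta_{+,i})| = \OO(1)$, we have $c_n t m_i(\zeta_{+,i}) = \OO(t)$. Substituting into $\lambda_{i,t} = \Phi_i(\zeta_{+,i}) = \zeta_{+,i}(1 - c_n t m_i(\zeta_{+,i}))^2 + (1-c_n)t(1 - c_n t m_i(\zeta_{+,i}))$ immediately yields $\lambda_{i,t} = \psi + \OO(t)$.

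For \eqref{eq_edgecomparebound}, my strategy is to compare the two edge equations $\Phi_i'(\zeta_{+,i}) = 0$ at the common point $\zeta_{+,1}$. Since $\Phi_1'(\zeta_{+,1}) = \Phi_2'(\zeta_{+,2}) = 0$, Taylor's theorem at $\zeta_{+,2}$ yields
\begin{equation*}
-(\Phi_1' - \Phi_2')(\zeta_{+,1}) = \Phi_2'(\zeta_{+,1}) = \Phi_2''(\zeta_{+,2})(\zeta_{+,1} - \zeta_{+,2}) + \OO\bigl(t^{-4}(\zeta_{+,1} - \zeta_{+,2})^2\bigr),
\end{equation*}
where the estimates $|\Phi_2''(\zeta_{+,2})| \sim t^{-2}$ and $|\Phi_2^{(3)}| \lesssim t^{-4}$ follow from the analysis in Section \ref{sec anafree} (applicable since $\rho_2$ has exact square-root behavior near $\psi$, i.e.\ is $\eta_*$-regular with $\eta_* = 0$). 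Starting from the trivial base case $t = 0$ (where $\zeta_{+,i} = \psi$ for both $i$) and propagating by continuity in $t$, I would obtain the a priori bound $|\zeta_{+,1} - \zeta_{+,2}| \ll t^2$, which absorbs the quadratic error and reduces the problem to $|\zeta_{+,1} - \zeta_{+,2}| \sim t^2\,|(\Phi_1' - \Phi_2')(\zeta_{+,1})|$.

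The next step is to estimate $(\Phi_1' - \Phi_2')(\zeta_{+,1})$. Using the explicit form $\Phi_i'(\zeta) = (1 - c_n t m_i)^2 - 2\zeta c_n t m_i'(1 - c_n t m_i) - c_n(1-c_n)t^2 m_i'$ and collecting terms, with $|m_i| \lesssim 1$ and $|t m_i'| \lesssim 1$ at $\zeta_{+,1}$, the leading contribution is $-2c_n t\,\Delta - 2\zeta_{+,1} c_n t\,\Delta'$ with $\Delta := m_1 - m_2$; the remaining terms are lower order. Hence $|(\Phi_1' - \Phi_2')(\zeta_{+,1})| \lesssim t\,(|\Delta(\zeta_{+,1})| + |\Delta'(\zeta_{+,1})|)$, and to obtain the target $|\zeta_{+,1} - \zeta_{+,2}| \lesssim t^3/t_0$ it suffices to show $|\Delta(\zeta_{+,1})| + |\Delta'(\zeta_{+,1})| \lesssim 1/t_0$. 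I would split each integral into a near-edge portion on $[\psi - c_\psi t_0^2, \psi]$ and a far portion on $[0, \psi - c_\psi t_0^2]$. On the near part, the matching \eqref{eq_closerho} combined with $\rho_2(\psi - y) \sim \sqrt{y}$ gives $|\rho_1 - \rho_2|(\psi - y) \lesssim y^{3/2}/t_0^2$; writing $|x - \zeta_{+,1}| = |y + \xi_{+,1}|$ with $\xi_{+,1} \sim t^2$ real and splitting the $y$-integral at $y = \xi_{+,1}$, elementary computation yields $|\Delta^{\mathrm{near}}(\zeta_{+,1})| \lesssim t_0$ and $|(\Delta^{\mathrm{near}})'(\zeta_{+,1})| \lesssim 1/t_0$. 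The far-region integrand is analytic in $\zeta$ with $|x - \zeta_{+,1}| \gtrsim t_0^2$, so smooth-kernel estimates on each $m_i$ and $m_i'$ separately control the far contributions by $\OO(1)$ for $\Delta$ and $\OO(1/t_0)$ for $\Delta'$.

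The principal obstacle will be controlling the far-region contribution to $\Delta'(\zeta_{+,1})$: the hypotheses leave $\rho_1$ essentially unconstrained outside the window of width $t_0^2$ around $\psi$, so the estimate must invoke additional structural regularity on both densities (as arises automatically in the intended application to \cite{DY20201}, where $\rho_1$ and $\rho_2$ are spectral measures with square-root edge behavior throughout $[\psi - c_\psi, \psi]$), ensuring that $\rho_i(x)/(x - \zeta_{+,1})^2$ decays properly for $x$ in the far region. Once this is in place, the pieces assemble into $|\xi_{+,1} - \xi_{+,2}| \lesssim t^3/t_0$ as required.
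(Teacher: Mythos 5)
Your proof of \eqref{eq_edgecomparebound2} by direct substitution into $\Phi_i$ is fine and is essentially what the paper does. For \eqref{eq_edgecomparebound}, however, the reduction ``$|\zeta_{+,1}-\zeta_{+,2}|\sim t^2\,|(\Phi_1'-\Phi_2')(\zeta_{+,1})|$'' has a genuine gap as written. To absorb the Taylor remainder $\OO(t^{-4}|\zeta_{+,1}-\zeta_{+,2}|^2)$ into the main term $\Phi_2''(\zeta_{+,2})(\zeta_{+,1}-\zeta_{+,2})\sim t^{-2}(\zeta_{+,1}-\zeta_{+,2})$ you need $|\zeta_{+,1}-\zeta_{+,2}|\le c\,t^2$ with a \emph{small} constant $c$; but \eqref{eq_edgeappd} only gives $|\zeta_{+,1}-\zeta_{+,2}|\le \xi_{+,1}+\xi_{+,2}\le C t^2$ with an uncontrolled constant, so a priori the remainder is of the same order as the main term. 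Your continuity-in-$t$ argument is supposed to supply the missing smallness, but the ``trivial base case $t=0$'' does not exist: at $t=0$ the inequality $|\zeta_{+,1}-\zeta_{+,2}|\ll t^2$ is vacuous ($0/0$), and what is actually required is $\limsup_{t\downarrow 0}|\xi_{+,1}(t)-\xi_{+,2}(t)|/t^2\le c$, i.e.\ that the leading coefficients in $\xi_{+,i}(t)=c_i t^2(1+\oo(1))$ coincide. This is true (it follows from the matching of the edge coefficients in \eqref{eq_closerho} together with the edge equation \eqref{eq_firstderivative}), but it is an argument of the same nature as the estimate you are trying to prove, not a triviality. In addition, the self-improvement $|\zeta_{+,1}-\zeta_{+,2}|\le c\,t^2\Rightarrow |\zeta_{+,1}-\zeta_{+,2}|\le C't^3/t_0$ only beats the hypothesis when $t\le (c/C')t_0$; you should say explicitly that for $t\gtrsim t_0$ the claim is trivial since then $t^3/t_0\sim t^2$. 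These points are fixable, but they are exactly the delicate part of your route and cannot be waved through.

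It is worth noting that the paper's own proof is structured precisely to avoid this issue. Subtracting the two edge equations $\Phi_1'(\zeta_{+,1})=\Phi_2'(\zeta_{+,2})=0$ and rearranging yields the upper bound \eqref{eq_ccccbegin}, $|m_2'(\zeta_{+,2})-m_1'(\zeta_{+,1})|\lesssim t^{-1}|\zeta_{+,2}-\zeta_{+,1}|+1$, while the exact two-point identity \eqref{eq_decompose2} expresses the same difference as $(\xi_{+,1}-\xi_{+,2})$ times a \emph{nonnegative} kernel integral bounded below by $t^{-3}$, plus an $\OO(1/t_0)$ mismatch term; since $t^{-3}\gg t^{-1}$, absorption is immediate with no Taylor expansion, no a priori closeness $\ll t^2$, and no continuity in $t$. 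Your common-point estimate via $\Delta=m_1-m_2$ and $\Delta'$ with the near/far splitting is essentially equivalent to the paper's treatment of the mismatch term, and your observation that the region $[\psi-c_\psi,\psi-c_\psi t_0^2]$ requires $\rho_1(\psi-y)\lesssim\sqrt{y}$ beyond what \eqref{eq_closerho}--\eqref{sqrt12} literally state applies equally to the unlabeled bound following \eqref{eq_decompose2} in the paper, so flagging it is fair rather than a defect of your argument. If you either adopt the paper's positive-kernel identity or carry out the base case and the restricted range of the bootstrap carefully, your proof closes with the same final bound $|\xi_{+,1}-\xi_{+,2}|\lesssim t^3/t_0$.
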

 \begin{proof}
 By Lemma \ref{lem_eigenvalueslarger}, $\zeta_{+,1}=\psi+\xi_{+,1}$ and $\zeta_{+,2}=\psi+\xi_{+,2}$ satisfy the equations $\Phi_1'(\zeta_{+,1})=\Phi_2'(\zeta_{+,2})=0,$ from which we can derive the following equation of $\zeta_{+,1}$ and $\zeta_{+,2}$:  
\begin{align}
0 &=c_nt(m_{2}(\zeta_{+,2})-m_{1}(\zeta_{+,1})) \left[2-c_nt(m_{2}(\zeta_{+,2})+m_{1}(\zeta_{+,1}))\right]+ c_n(1-c_n)t^2 (m'_{2}(\zeta_{+,2})-m'_{1}(\zeta_{+,1})) \tag{T1} \label{eq_t1} \\ 
& + 2c_nt \zeta_{+,2} (1-c_nt m_2(\zeta_{+,2})) m'_{2}(\zeta_{+,2})-2c_nt \zeta_{+,1}  (1-c_nt m_1(\zeta_{+,1}))  m'_{1}(\zeta_{+,1})   \tag{T2} \label{eq_t2}.
\end{align}
For the term (\ref{eq_t2}), we decompose it as
\begin{align*}
(\ref{eq_t2})&= 2c_nt (\zeta_{+,2}-\zeta_{+,1}) (1-c_nt m_2(\zeta_{+,2})) m'_{2}(\zeta_{+,2})+2c_n^2t^2 \zeta_{+,1} (m_{1}(\zeta_{+,1})-m_{2}(\zeta_{+,2}))m'_{2}(\zeta_{+,2}) \\
&+2c_nt \zeta_{+,1}(1-c_nt m_1(\zeta_{+,1}))(m'_{2}(\zeta_{+,2})-m'_{1}(\zeta_{+,1})) . 
\end{align*}
Hence from \eqref{eq_t1} and \eqref{eq_t2}, we obtain that
\begin{align}
&c_n t\left[ 2 \zeta_{+,1}(1-c_nt m_1(\zeta_{+,1})) + (1-c_n)t \right](m'_{2}(\zeta_{+,2})-m'_{1}(\zeta_{+,1}))= -2c_nt m'_{2}(\zeta_{+,2})(1-c_nt m_2(\zeta_{+,2}))  (\zeta_{+,2}-\zeta_{+,1}) \nonumber\\
&\qquad \qquad\qquad \qquad -c_nt(m_{2}(\zeta_{+,2})-m_{1}(\zeta_{+,1}))\left[2-c_n t(m_{2}(\zeta_{+,2})+m_{1}(\zeta_{+,1})) - 2c_nt \zeta_{+,1} m'_{2}(\zeta_{+,2})\right].\label{eq_decompose1}
\end{align}
By \eqref{eq_imasymptoics}, we have $b_{i,t}=1+\OO(t)$, which gives $m_{i}(\zeta_{+,i})=\OO(1)$ by \eqref{eq_keyequation11}. With this estimate and \eqref{eq_m'norm}, we can obtain from \eqref{eq_decompose1} the following bound
\begin{equation}\label{eq_ccccbegin}
\left|m'_{2}(\zeta_{+,2})-m'_{1}(\zeta_{+,1})\right| \lesssim t^{-1}|\zeta_{+,2}-\zeta_{+,1}| +  |m_{2}(\zeta_{+,2})-m_{1}(\zeta_{+,1})|\lesssim t^{-1}|\zeta_{+,2}-\zeta_{+,1}| + 1 .
\end{equation}
On the other hand, we have
\begin{align}\label{eq_decompose2}
m'_{2}(\zeta_{+,2})-m_{1}'(\zeta_{+,1})=(\xi_{+,1}-\xi_{+,2}) \int_0^\psi \frac{((2\psi + \xi_{+,1}+\xi_{+,2})-2x) \rho_1(x)\dd x}{(x-\xi_{+,1}-\psi)^2(x-\xi_{+,2}-\psi)^2}+  \int \frac{\rho_2(x)-\rho_1(x)}{(x-\xi_{+,2}-\psi)^2}\dd x .
\end{align} 
With (\ref{eq_closerho}), we can bound the second term on the RHS as
\begin{align*}
\left|\int \frac{\rho_2(x)-\rho_1(x)}{(x-\xi_{+,2}-\psi)^2}\dd x\right| \lesssim 1 + \frac1{t_0^2}\int_0^{c_\psi t_0^2} \frac{x^{3/2}\dd x}{(x + t^2)^2} +\int_{c_\psi t_0^2}^{c_\psi} \frac{\sqrt{x}\dd x}{(x + t^2)^2} \lesssim \frac{1}{t_0}.
\end{align*}
On the other hand, for the first term on the RHS of \eqref{eq_decompose1}, we have
\begin{align*}
\int_0^\psi \frac{((2\psi + \xi_{+,1}+\xi_{+,2})-2x) \rho_1(x)\dd x}{(x-\xi_{+,1}-\psi)^2(x-\xi_{+,2}-\psi)^2} \gtrsim t^2 \int_0^{c_\psi} \frac{\sqrt{x}\dd x}{(x+t^2)^4} \gtrsim t^{-3}.
\end{align*}
Plugging the above two estimates into \eqref{eq_ccccbegin}, we get \eqref{eq_edgecomparebound}.
The estimate \eqref{eq_edgecomparebound2} can be obtained using $\lambda_{1,t}-\lambda_{2,t}=\Phi_1(\zeta_{+,1})-\Phi_2(\zeta_{+,2})$ and \eqref{eq_edgecomparebound}.
\end{proof}

Next we show that $ \xi_{1}(\lambda_{1,t}-x)$ and $\xi_{2}(\lambda_{2,t}-x)$ match each other up to a negligible error.

\begin{lemma}\label{lem_keydensitycontrol} 
Let $0 < t \leq t_0 n^{-\epsilon_0}$ for a constant $\epsilon_0>0$. Suppose that $0 \leq x \leq \tau n^{-2\epsilon} t_0^2$ for some small enough constants $\tau,\e >0$. 
Then we have that for any fixed $D>0$,
\begin{equation}\label{beta-beta}
\left|\beta_{1}(\lambda_{1,t}-x)-\beta_{2}(\lambda_{2,t}-x) \right| \lesssim \left(\frac{n^{\epsilon} t }{t_0} + n^{-D}\right)t\sqrt{x},
\end{equation}
and
\begin{equation} \label{al-al}
\left| [\xi_{+,1}-\al_1 (\lambda_{1,t}-x)]-  [\xi_{+,2}-\al_{2}(\lambda_{2,t}-x) ] \right| \lesssim \left( \frac{n^{\epsilon}t}{t_0}+n^{-D}\right) x.
\end{equation}
\end{lemma}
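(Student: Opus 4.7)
The plan is to expand $\xi_i(\lambda_{i,t} - x) - \xi_{+,i}$ via the self-consistent equation $\Phi_i(\xi_i + \psi) = z$ around the edge, using $\Phi_i'(\xi_{+,i}) = 0$ from Lemma \ref{lem_partialedge}. Setting $z = \lambda_{i,t} - x$ in the cubic Taylor expansion \eqref{eq_taylorbasic} and iteratively inverting (as in the derivation of \eqref{Tayloredge2}) gives
\begin{equation*}
\xi_i(\lambda_{i,t}-x) - \xi_{+,i} = \mathrm{i}\sqrt{\tfrac{2x}{\Phi_i''(\xi_{+,i})}} - \tfrac{\Phi_i^{(3)}(\xi_{+,i})}{3(\Phi_i''(\xi_{+,i}))^2}\, x + (\text{higher-order corrections}),
\end{equation*}
so that to leading order $\beta_i(\lambda_{i,t}-x) = \sqrt{2x/\Phi_i''(\xi_{+,i})} \sim t\sqrt{x}$ and $\xi_{+,i} - \alpha_i(\lambda_{i,t}-x) = \Phi_i^{(3)}(\xi_{+,i})\, x/[3(\Phi_i''(\xi_{+,i}))^2] \sim x$, consistent with \eqref{eq_contourproperty}. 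Both \eqref{beta-beta} and \eqref{al-al} therefore reduce to showing that the coefficients $\Phi_i''(\xi_{+,i})$ and $\Phi_i^{(3)}(\xi_{+,i})/(\Phi_i''(\xi_{+,i}))^2$ agree between $i=1,2$ up to relative error $O(t/t_0)$.

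The core comparison is $|\Phi_1''(\xi_{+,1}) - \Phi_2''(\xi_{+,2})|$; the $\Phi_i^{(3)}$ case is parallel. Using \eqref{Phit''} together with \eqref{rough mt} and \eqref{eq_boundederivative}, the dominant term in $\Phi_i''(\xi_{+,i})$ is $-2c_n t(\psi+\xi_{+,i}) m_i''(\psi+\xi_{+,i})$, so it suffices to bound
\begin{equation*}
m_1''(\psi+\xi_{+,1}) - m_2''(\psi+\xi_{+,2}) = \bigl[m_1''(\psi+\xi_{+,1}) - m_1''(\psi+\xi_{+,2})\bigr] + \bigl[m_1''(\psi+\xi_{+,2}) - m_2''(\psi+\xi_{+,2})\bigr].
\end{equation*}
The first bracket is controlled by the mean value theorem: $|\xi_{+,1}-\xi_{+,2}|\lesssim t^3/t_0$ from Lemma \ref{lem edgemoving}, combined with $|m_i'''|\lesssim t^{-5}$ (applying \eqref{eq_boundfarawayregion} to $\rho_i$ at the subordination point $\psi+\xi_{+,i}$), yields a contribution $O(1/(t_0 t^2))$. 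The second bracket equals $2\int(\rho_1-\rho_2)(y)(y-\psi-\xi_{+,2})^{-3}dy$; I split the integration at $y=\psi-c_\psi t_0^2$, using $|\rho_1-\rho_2|(\psi-u)\lesssim u^{3/2}/t_0^2$ on the matching region (a consequence of \eqref{eq_closerho} together with the square-root behavior \eqref{sqrt12}) and the uniform $L^\infty$-bound on the complement. After a further split at $u=t^2$, direct integration yields $O(1/(t t_0^2))$ from the matching part and $O(1/t_0^6)$ from the complement, both dominated under $t\leq t_0$ by the shift contribution $1/(t_0 t^2)$. Consequently $|\Phi_1''(\xi_{+,1})-\Phi_2''(\xi_{+,2})|\lesssim 1/(t_0 t)$, which is $O(t/t_0)$ relative to $|\Phi_i''|\sim t^{-2}$; the same scheme applied to the $m_i'''$-comparison yields $|\Phi_1^{(3)}(\xi_{+,1})/(\Phi_1''(\xi_{+,1}))^2 - \Phi_2^{(3)}(\xi_{+,2})/(\Phi_2''(\xi_{+,2}))^2|\lesssim t/t_0$.

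Plugging these relative-error estimates into the Taylor expansion produces $|\beta_1-\beta_2|\lesssim (t/t_0)\cdot t\sqrt{x}$ and $|(\xi_{+,1}-\alpha_1)-(\xi_{+,2}-\alpha_2)|\lesssim (t/t_0)\cdot x$. The $n^\epsilon$ buffer absorbs all implicit constants arising in the above derivative bounds, while the $n^{-D}$ tail is generated by truncating the Taylor expansion at a large order $K$: the $K$-th remainder has relative size $(|\xi-\xi_{+,i}|/t^2)^{K-2}\sim (\sqrt{x}/t)^{K-2}$, which under $x\leq \tau n^{-2\epsilon} t_0^2$ and $t\leq t_0 n^{-\epsilon_0}$ can be made $\leq n^{-D}$ for any prescribed $D$ by choosing $K$ large. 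The main obstacle I anticipate is not the leading-order analysis but the careful bookkeeping of the sub-leading contributions to $\Phi_i^{(k)}$ (products like $c_n t(m_i')^2\zeta$ and $(c_n t m_i)^j m_i^{(k-j)}$ that appear when differentiating the full formula \eqref{eq_subcompansion}): each of these has to be shown to carry a comparison error no worse than $O(t/t_0)$ without unexpected cancellations that would spoil the uniform control, which requires reapplying the $m_i^{(j)}$-comparison scheme simultaneously at several orders.
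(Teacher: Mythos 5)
Your argument is essentially the paper's proof in the regime $x \lesssim n^{-\delta_0} t^2$: there the paper also Taylor-expands $\Phi_i$ around $\xi_{+,i}$, proves the derivative comparison $|\Phi_1^{(k)}(\xi_{+,1})-\Phi_2^{(k)}(\xi_{+,2})| \lesssim (t/t_0)\, t^{-2(k-1)}$ using \eqref{eq_closerho} and Lemma \ref{lem edgemoving}, and obtains the $n^{-D}$ error by taking the expansion to high enough order. But your proposal has a genuine gap for the rest of the claimed range. The Taylor expansion around $\xi_{+,i}$ is only an asymptotic expansion with effective expansion parameter $|\xi_i-\xi_{+,i}|/t^2 \sim \sqrt{x}/t$, since $|\Phi_i^{(k)}(\xi_{+,i})| \sim t^{-2(k-1)}$. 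Under the hypotheses $t \le t_0 n^{-\epsilon_0}$ and $x \le \tau n^{-2\epsilon} t_0^2$, the ratio $\sqrt{x}/t$ can be as large as order $n^{\epsilon_0-\epsilon} \gg 1$, so your claim that the $K$-th remainder $(\sqrt{x}/t)^{K-2}$ ``can be made $\le n^{-D}$ by choosing $K$ large'' is exactly backwards: for $x \gg t^2$ the remainder grows with $K$, the back-substitution scheme does not converge, and the leading square-root approximation for $\beta_i$ and the linear-in-$x$ formula you use to prove \eqref{al-al} are simply not valid. This is precisely why the paper splits the proof at $x \sim n^{-\delta_0}t^2$ and treats $n^{-\delta_0}t^2 \le x \le \tau n^{-2\epsilon}t_0^2$ by a different mechanism.

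In that second regime the paper parameterizes $\beta_1$ and $\beta_2$ as functions of the common real part $\alpha$, subtracts the defining equation \eqref{eq_deterministicalphabeta} for the two measures to get the fixed-$\alpha$ comparison \eqref{eq_betabound}, and then converts between the parametrizations $E \mapsto \alpha_i(E)$ by proving the derivative matching \eqref{eq_derivativebound2} (via the comparison of $\Phi_1'(\zeta_1)$ and $\Phi_2'(\zeta_2)$ and the lower bound \eqref{rePhi2'} on $\re \Phi_2'$), which controls the offset $E_*-(\lambda_{2,t}-x)$ and yields \eqref{beta-beta}; the estimate \eqref{al-al} there comes from an inverse-function comparison, not from a Taylor coefficient. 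None of these ingredients appears in your proposal, so as written it only establishes the lemma for $x = \OO(t^2)$, a vanishing fraction of the stated range when $t \ll t_0$. To repair it you would need to add an argument of this second type (or some substitute that does not rely on expanding around $\xi_{+,i}$) for $x \gtrsim t^2$.
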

\begin{proof}
We divide the proof into two cases according to the value of $x$. 
 
 \vspace{5pt}

\noindent{\bf {Case 1}:} Consider the case where $0 \leq x \leq n^{-\delta_0 }t^2$ for a sufficiently small constant $0<\delta_0<\epsilon_0/100$. We shall use the Taylor expansion (\ref{eq_taylorbasic}). We claim that
\begin{equation}\label{claimPhi1-2}
\left|\Phi_1^{(k)}(\xi_{+,1})-\Phi_2^{(k)}(\xi_{+,2}) \right| \lesssim \frac{t}{t_0 }\frac1{t^{2(k-1)}}.
\end{equation} 
In fact with (\ref{eq_derivativePhi}) and (\ref{eq_edgecomparebound}), we can get that 
\begin{equation}\label{claimPhi1-21}
\left|\Phi_1^{(k)}(\xi_{+,1})-\Phi_1^{(k)}(\xi_{+,2}) \right| \lesssim \frac{t}{t_0 }\frac1{t^{2(k-1)}}.
\end{equation} 
On the other hand, using \eqref{eq_closerho} we can get that for $k\ge 1$,
\begin{align*}
\left|m_1^{(k)}(\xi_{+,2})-m_2^{(k)}(\xi_{+,2})\right|\lesssim 1 + \frac1{t_0^2}\int_0^{c_\psi t_0^2} \frac{x^{3/2}\dd x}{(x + t^2)^{k+1}} +\int_{c_\psi t_0^2}^{c_\psi} \frac{\sqrt{x}\dd x}{(x + t^2)^{k+1}} \lesssim \frac{1}{t_0^2 t^{2k-3}} + \frac{1}{t_0^{2k-1}},
\end{align*}
which further implies that
\be\label{claimPhi1-22}
 \left|\Phi_1^{(k)}(\xi_{+,2})-\Phi_2^{(k)}(\xi_{+,2})\right|\lesssim t\left|m_1^{(k+1)}(\xi_{+,2})-m_2^{(k+1)}(\xi_{+,2})\right| \le \frac{1}{t_0 t^{2k-3}} .
 \ee
Combining \eqref{claimPhi1-21} and \eqref{claimPhi1-22}, we conclude \eqref{claimPhi1-2}.

By (\ref{eq_taylorbasic}) and \eqref{eq_derivativePhi}, we have that for any fixed $\ell \in \N$, 
\begin{equation}\label{E-lambda=}
E-\lambda_{i,t}=\sum_{k=2}^\ell \frac{\Phi_i^{(k)}(\xi_{+,i})}{k!}(\xi_i(E)-\xi_{+,i})^k+\OO(|\xi_i(E)-\xi_{+,i}|^{\ell+1} t^{-2\ell}).
\end{equation}
Using repeated back-substitution as in the proof of Lemma \ref{lem_edgebehavior} and \eqref{claimPhi1-2}, we obtain that for $0\le x \le n^{-\delta_0}t^2$,
\begin{equation}\label{eq_alphabound}
\xi_{+,1}-\alpha_1(\lambda_{1,t}-x)=(\xi_{+,2}-\alpha_2(\lambda_{2,t}-x))\left[1+\OO(t/t_0+n^{-D})\right],
\end{equation} 
and
\begin{equation}\label{eq_imsmallpart}
\beta_1(\lambda_{1,t}-x)=\beta_2(\lambda_{2,t}-x)\left[1+\OO(t/t_0+n^{-D})\right],
\end{equation}
for any fixed $D>0$, as long as we choose $\ell$ large enough depending on $\delta_0$ and $D$. With \eqref{eq_contourproperty}, we see that \eqref{eq_alphabound} implies \eqref{al-al}, while (\ref{eq_imsmallpart}) implies (\ref{beta-beta}). 

\vspace{5pt}

\noindent{\bf {Case 2}:} Then we consider the case where $n^{-\delta_0}t^2 \leq x \leq \tau n^{-2\epsilon} t_0^2.$ 
In this case, we can check with (\ref{eq_reducedform}) that $\dd \alpha_{1,2} /\dd E>0$ for small enough $\delta_0$. Hence we can parameterize $\beta_1=\beta_1(\alpha)$ and $\beta_2=\beta_2(\al)$ for $\al \in \{\al_1(\lambda_{1,t}-x): n^{-\delta_0}t^2 \leq x \leq \tau n^{-2\epsilon} t_0^2\}\cup \{\al_2(\lambda_{2,t}-x): n^{-\delta_0}t^2 \leq x \leq \tau n^{-2\epsilon} t_0^2\}$. Since both $(\alpha, \beta_1(\al))$ and $(\alpha, \beta_2(\al))$ satisfy equation (\ref{eq_deterministicalphabeta}), we obtain that 
\begin{align}
 \int \frac{2x\rho_1(x) \dd x}{(x-\alpha-\psi)^2+\beta_1^2}-\int \frac{2x \rho_2(x)\dd x}{(x-\alpha-\psi)^2+\beta_2^2} =c_nt \left( \frac{1-c_n}{c_n}R_1+R_2+R_3 \right), \label{eq_compareone}
\end{align}
where $R_i, i=1,2,3,$ are defined as 
\begin{align*}
& R_1=\frac{ \im m_{1}(\zeta_1)}{ \beta_1} -\frac{\im m_{2}(\zeta_2)}{\beta_2}, \quad  R_2=(\im m_{2}(\zeta_2))^2-(\im m_{1}(\zeta_1))^2, \\
& R_3=\re m_{1}(\zeta_1) \int \frac{(x+\alpha+\psi)\rho_1(x)\dd x}{(x-\alpha-\psi)^2+\beta_1^2}- \re m_{2}(\zeta_2)\int \frac{(x+\alpha+\psi)\rho_2(x)\dd x}{(x-\alpha-\psi)^2+\beta_2^2} .
\end{align*} 
Here we denoted $\zeta_i=\alpha+\psi+\ii \beta_i(\al)$, $i=1,2.$

First, for the left-hand side of (\ref{eq_compareone}), we have
\begin{align}
 &R_0:=\int \frac{2x\rho_1(x) \dd x}{(x-\alpha-\psi)^2+\beta_1^2}-\int \frac{2x \rho_2(x)\dd x}{(x-\alpha-\psi)^2+\beta_2^2} \nonumber \\
 & =(\beta_2^2-\beta_1^2)\int \frac{2x\rho_1(x)\dd x}{\left[ (x-\alpha-\psi)^2+\beta_1^2 \right] \left[ (x-\alpha-\psi)^2+\beta_2^2 \right]} +\int \frac{2x(\rho_1(x)-\rho_2(x))\dd  x}{(x-\alpha-\psi)^2+\beta_2^2}. \label{eq_keykeydecomposition}
\end{align}
Using (\ref{eq_closerho}) and \eqref{eq_contourproperty}, we can bound
\begin{equation}\label{eq_b1use}
\left|\int \frac{2x(\rho_1(x)-\rho_2(x))\dd  x}{(x-\alpha-\psi)^2+\beta_2^2}  \right| \lesssim \frac{1}{t_0}. 
\end{equation}
Then we bound the first term in \eqref{eq_keykeydecomposition}. By (\ref{eq_contourproperty}), we have that $|\alpha-\xi_{+,i}| \gtrsim n^{-\delta_0} t^2$, $i=1,2$. Moreover, since $\delta_0<\epsilon_0/100,$ we get from (\ref{eq_edgecomparebound}) that $|\alpha-\xi_{+,i}|  \gg |\xi_{+,1}-\xi_{+,2}|.$ Together with (\ref{eq_contourproperty}), we obtain that
\begin{equation*}
|\alpha-\xi_{+,1}| = |\alpha-\xi_{+,2}| (1+\oo(1)), \quad  \beta_1 \sim \beta_2.
\end{equation*} 
Then using Lemma \ref{lem_immanalysis} and \eqref{eq_contourproperty}, we get the lower bound
\be
\begin{split}\label{eq_b2use}
 & \int \frac{2x\rho_1(x)\dd x}{\left[ (x-\alpha-\psi)^2+\beta_1^2 \right] \left[ (x-\alpha-\psi)^2+\beta_2^2 \right]} \\
 &\gtrsim   \frac{\mathbf{1}(\alpha < 0)\sqrt{|\alpha|+t|\alpha-\xi_{+,1}|^{1/2}}}{t^3 |\alpha-\xi_{+,1}|^{3/2}}+\frac{\mathbf{1}(\al\ge 0)}{(|\alpha|+t|\alpha-\xi_{+,1}|^{1/2})^{5/2}} =:\cal E_l ,
\end{split}
\ee
and the upper bound
\be
\begin{split}\label{eq_b2upper}
 & \int \frac{2x\rho_1(x)\dd x}{\left[ (x-\alpha-\psi)^2+\beta_1^2 \right] \left[ (x-\alpha-\psi)^2+\beta_2^2 \right]}  \lesssim   \frac{\sqrt{|\alpha|+t |\alpha-\xi_{+,1}|^{1/2}}}{t^3 |\alpha-\xi_{+,1}|^{3/2}} =:\cal E_{u} .
\end{split}
\ee
Note that we have that
\be\label{alalal}
\mathbf 1(\al\le 0) |\al - \xi_{+,1}| \sim \mathbf 1(\al\le 0) \left(|\al| + t |\alpha-\xi_{+,1}|^{1/2} \right),
\ee
and 
\be\label{alalal222}
\mathbf 1(\al\ge 0) |\al- \xi_{+,1}|\gtrsim n^{-\delta_0}\mathbf 1(\al\ge 0)( |\al | + t^2),
\ee
using the estimates $|\al|\lesssim t^2$ and $|\alpha-\xi_{+,1}| \gtrsim t^2 n^{-\delta_0}$ when $\al\ge 0$. 
In particular, from \eqref{alalal222} we get that
\be\label{Elu} \cal E_{u} \lesssim n^{3\delta_0/2}\cal E_l.\ee



Then we control the right-hand side of (\ref{eq_compareone}). First $R_1$ can be bounded in the same way as \eqref{eq_keykeydecomposition}:
\begin{equation}\label{eq_addin0}  
t|R_1|=t\left| \int \frac{\dd \mu_{1,0}(x)}{(x-\alpha-\psi)^2+\beta_1^2}-\int \frac{ \dd \mu_{2,0}(x)}{(x-\alpha-\psi)^2+\beta_2^2}\right|\lesssim  \frac{t}{t_0}+ t \cal E_{u} |\beta_2^2-\beta_1^2|.
\end{equation}
For $R_2$, we can simply use \eqref{eq_partialzmcontrolcontrol000} to bound it as
\be\label{eq_addin} 
t|R_2| \lesssim t. 
\ee
Finally, for term $R_3$ we have
\begin{align}
t|R_3| &\lesssim  t |\re m_1(\zeta_1)- \re m_2(\zeta_2)| \int \frac{ \rho_1(x)\dd x}{(x-\alpha-\psi)^2+\beta_1^2} +  t|\re m_2(\zeta_2)|(|R_0| + |{R}_1|) \nonumber\\
&\lesssim  \frac{ \sqrt{|\alpha|+t |\alpha-\xi_{+,1}|^{1/2}}}{ |\alpha-\xi_{+,1}|^{1/2}}  +  t (|R_0| + |{R}_1|)  \lesssim n^{\delta_0/2} +  t (|R_0| + |{R}_1|) ,\label{eq_addin1} 
\end{align}
where we used \eqref{eq_boundnear} in the second step, and \eqref{alalal} and \eqref{alalal222} in the last step. 
Combining \eqref{eq_addin0}-\eqref{eq_addin1}, we get that 
\be\label{eq_addin3}
\left|ct(R_1+R_2+R_3)\right| \le n^{-2\delta_0}\cdot \left(  \cal E_{\mu} |\beta_2^2-\beta_1^2| + t_0^{-1}\right)
\ee
as long as $\delta_0$ is small enough. Plugging \eqref{eq_keykeydecomposition}-\eqref{eq_b2upper}, \eqref{Elu} and \eqref{eq_addin3} into \eqref{eq_compareone}, we obtain that
\begin{align*} 
& |\beta_2^2-\beta_1^2| \cal E_l \lesssim n^{-2\delta_0} \cal E_{\mu} |\beta_2^2-\beta_1^2| + t_0^{-1} \Rightarrow |\beta_2-\beta_1| \lesssim  \frac{1}{t_0 |\beta_2+\beta_1|\cal E_l}.
\end{align*}
Then using \eqref{eq_contourproperty}, \eqref{alalal} and \eqref{alalal222}, we can simplify the bound as
\begin{align}
 |\beta_2-\beta_1|&\lesssim \mathbf{1}_{ \{\alpha < 0 \}} \frac{t^2 |\alpha-\xi_{+,1}|}{t_0\sqrt{|\alpha|+t|\alpha-\xi_{+,1}|^{1/2}}}+\mathbf{1}_{\al\ge 0}\frac{(|\alpha|+t|\alpha-\xi_{+,1}|^{1/2})^{5/2}}{t_0 t |\alpha-\xi_{+,1}|^{1/2}} \nonumber\\
 & \lesssim \mathbf{1}_{ \{\alpha < 0 \}} \frac{t}{t_0} t|\alpha-\xi_{+,1}|^{1/2}+n^{2\delta_0}\mathbf{1}_{\al\ge 0} \frac{t}{t_0} t|\alpha-\xi_{+,1}|^{1/2}\le \frac{n^{2\delta_0}t}{t_0}\left( t|\alpha-\xi_{+,1}|^{1/2}\right).\label{eq_betabound}
\end{align}
This bound shows that $\beta_1$ and $\beta_2$ are close to each other if taking the same variable $\al$. 



Now fix an $n^{-\delta_0}t^2 \leq x \leq \tau n^{-2\epsilon} t_0^2.$ We choose $E_*\equiv E_*(x)$ such that $\alpha_2(E_*)=\alpha_1(\lambda_{1,t}-x)=:\al_*$, and write 
\begin{equation}\label{decomposebeta}
\beta_1(\lambda_{1,t}-x)-\beta_2(\lambda_{2,t}-x)=\left( \beta_1(\lambda_{1,t}-x)-\beta_2(E_{*}) \right)+\left( \beta_2(E_*)-\beta_2(\lambda_{2,t}-x) \right).
\end{equation} 
We have shown that $\beta_1(\lambda_{1,t}-x)-\beta_2(E_{*})$ is small in \eqref{eq_betabound}. For the second term on the right-hand side, we need to control $E_*-(\lambda_{2,t}-x).$ 
Let $E_i\equiv E_i(\al)$ be the inverse function of $\al_i\equiv \al_i(E)$, $i=1,2$. Then from \eqref{E-lambda=} we get that for $y \le \tau_1 n^{-\delta_0 }t^2$ for some small enough constant $\tau_1>0$,
\begin{equation*}
\lambda_{1,t}-E_1(\zeta_{+,1}-y)=(\lambda_{2,t}-E_{2}(\zeta_{+,2}-y))(1+\OO(t/t_0+n^{-D})). 
\end{equation*}
Next we show that $\dd E_1/\dd \al$ and $\dd E_2/\dd \al$ are close to each other for $\al$ that is at least $ t^2 n^{-\delta_0}$ away from $\xi_{1,t}$. More precisely, we shall prove the bound
\begin{equation}\label{eq_derivativebound2}
\left| \frac{\dd E_1}{\dd \alpha} -\frac{\dd E_2}{\dd \alpha}\right| \lesssim \frac{n^{4 \delta_0}t}{t_0},\quad \text{for} \quad \xi_{+,1}-\al \gtrsim  n^{-\delta_0} t^2.
\end{equation}
If (\ref{eq_derivativebound2}) holds, then we get that for any $y \lesssim n^{-2\epsilon} t_0^2$,
\begin{equation}\label{eq_inversebound}
(\lambda_{1,t}-E_{1}(\zeta_{+,1}-y))=(\lambda_{2,t}-E_{2}(\zeta_{+,2}-y))(1+\OO(n^{4 \delta_0}t/t_0+n^{-D})).
\end{equation}
Taking $\zeta_{+,1}-y=\al_*$, we get from \eqref{eq_inversebound} that
\begin{align}\label{eq_inversebound2}
E_* - (\lambda_{2,t} -x)=   E_2(\al_*)-E_{2}(\al_* + \zeta_{+,2}-\zeta_{+,1})+\OO(n^{4 \delta_0}xt/t_0+xn^{-D}).
\end{align}
Using \eqref{eq_reducedform} and the arguments below it, we can check that for small enough $\delta_0$, 
\be\label{rePhi2'} 
\begin{split}
 \re\Phi_2'(\zeta)& \sim \int \frac{4c_n t \beta^2 x \rho_2(x)}{\left[ (x-\alpha-\lambda_+)^2+\beta^2 \right]^2} \dd x \\
&\sim  \frac{\mathbf{1}(\alpha < 0)\sqrt{|\alpha|+t|\alpha-\xi_{+,2}|^{1/2}}}{ |\alpha-\xi_{+,2}|^{1/2}}+\frac{\mathbf{1}(\al\ge 0) t^3 |\alpha-\xi_{+,2}| }{(|\alpha|+t|\alpha-\xi_{+,2}|^{1/2})^{5/2}} \gtrsim n^{-\delta_0},
\end{split}
\ee
where in the last step we used similar estimates as in \eqref{alalal} and \eqref{alalal222} with $\xi_{+,1}$ replaced by $\xi_{+,2}$. Combining \eqref{rePhi2'} with \eqref{Phit'}, we get
$$\frac{\dd E_2}{\dd \al}=\frac{|\Phi_2'(\zeta)|^2}{\re\Phi_2'(\zeta)} \lesssim n^{\delta_0}.$$ 
Hence applying mean value theorem to \eqref{eq_inversebound2}, we get that for any fixed $D>0$,
\begin{align*}
|E_* - (\lambda_{2,t} -x)| \lesssim  n^{\delta_0} |\zeta_{+,2}-\zeta_{+,1}|+ n^{5 \delta_0}x\frac{t}{t_0}+xn^{-D} \lesssim x\left(n^{5 \delta_0}\frac{t}{t_0}+n^{-D}\right),
\end{align*}
where we used \eqref{eq_edgecomparebound} and $x\ge n^{-\delta_0}t^2$ in the second step. Combining this estimate with  (\ref{eq_partialzmcontrolcontrol}) and $|\partial_E \beta_2|\lesssim t |\partial_E m_{2,t}|$, we get that
\be\label{eq_bbb}\left| \beta_2(E_*)-\beta_2(\lambda_{2,t}-x)\right| \lesssim t\sqrt{x}\left(  n^{5 \delta_0}\frac{t}{t_0}+n^{-D}\right).\ee
Finally, using (\ref{eq_bbb}) and (\ref{eq_betabound}), we conclude from \eqref{decomposebeta} that (\ref{beta-beta}) holds for Case 2.

Next we show that \eqref{eq_derivativebound2} implies  \eqref{al-al}. We introduce the functions
\begin{equation*}
f_i(x)=\xi_{+,i}-\alpha_i(\lambda_{i,t}-x),\quad i=1,2.
\end{equation*}  
By \eqref{eq_alphabound}, we have already seen that
\begin{align}\label{eq_boundf1f2smaller} 
f_1(x) & =f_2(x) (1+ \OO(t/t_0+n^{-D})),\quad x\le n^{-\delta_0} t^2.
\end{align}
It remains to consider the case $x \geq n^{-\delta_0} t^2. $ Both $f_1$ and $f_2$ are bijections on some intervals, and their inverses satisfy  
\begin{equation}\label{eq_inverinverbound}
f_1^{-1}(y)=f_2^{-1}(y)\left(1+\OO(n^{5 \delta_0}t/t_0)\right)
\end{equation}
by (\ref{eq_derivativebound2}). Thus we can write 
\begin{equation}\label{f1-f2}
f_1(x)-f_2(x)=\frac{f_2(f_2^{-1}(f_1(x)))-f_2(f_1^{-1}(f_1(x)))}{f_2^{-1}(f_1(x))-f_1^{-1}(f_1(x))} \left[f_2^{-1}(f_1(x))-f_1^{-1}(f_1(x))\right].
\end{equation}
By \eqref{rePhi2'}, we have $\re\Phi_2'(\zeta) \lesssim 1$, and by \eqref{Phit'}, we have $|\Phi_2'(\zeta)| \gtrsim n^{-\delta_0/2}$ for $x \ge n^{-\delta_0} t^2$. Thus we get
$$\frac{\dd \al}{\dd E_2}=\frac{\re\Phi_2'(\zeta)}{|\Phi_2'(\zeta)|^2} \lesssim n^{\delta_0},$$ 
which gives that
$$\left|\frac{f_2(f_2^{-1}(f_1(x)))-f_2(f_1^{-1}(f_1(x)))}{f_2^{-1}(f_1(x))-f_1^{-1}(f_1(x))} \right| \lesssim n^{\delta_0}.$$
Plugging it into \eqref{f1-f2} and using \eqref{eq_inverinverbound}, we obtain that
\begin{equation}\label{eq_boundf1f2larger}
f_1(x)=f_2(x)(1+\OO(n^{6\delta_0} t/t_0)),
\end{equation}
which concludes \eqref{al-al} for Case 2.

It still remains to prove the estimate \eqref{eq_derivativebound2}. With equation \eqref{Cauchyderiv}, we obtain that 
\be\label{Cauchyderiv222} 
\frac{\dd E_1}{\dd \al}- \frac{\dd E_2}{\dd \al}= \frac{|\Phi_1'(\zeta_1)|^2}{\re \Phi_1'(\zeta)} - \frac{|\Phi_2'(\zeta_2)|^2}{\re \Phi_2'(\zeta)}.
\ee
To control \eqref{Cauchyderiv222}, we need to bound $|\Phi_1'(\zeta_1)- \Phi_2'(\zeta_2)|$:  
\begin{align}
\Phi_1'(\zeta_1)- \Phi_2'(\zeta_2) &= c_n t (m_2(\zeta_2)-m_1(\zeta_1))(2-c_nt m_1(\zeta_1)-c_nt m_2(\zeta_2)) + c_n(1-c_n)t^2 (m_2'(\zeta_2)-m_1'(\zeta_1)) \nonumber\\
&+ 2c_nt \left[\zeta_2 m_2'(\zeta_2) (1-c_nt m_2(\zeta_2))- \zeta_1 m_1'(\zeta_1) (1-c_nt m_1(\zeta_1))\right]=:\cal K_1 + \cal K_2 + \cal K_3.\label{dE-dE}
\end{align}
First, with \eqref{eq_partialzmcontrolcontrol000} we can bound
\be\label{calK1}
|\cal K_1|\lesssim t.
\ee
For the terms $\cal K_2$ and $\cal K_3$, we need to bound $|m_2'(\zeta_2)-m_1'(\zeta_1)|$:
\begin{align*}
m_2'(\zeta_2)-m_1'(\zeta_1)= \int \frac{(\rho_2(x)-\rho_1(x))\dd x}{(x-\zeta_2)^2}+ \int \left[\frac{1}{(x-\zeta_2)^2}-\frac{1}{(x-\zeta_1)^2}\right]\rho_1(x)\dd x=:P_1+P_2.
\end{align*} 
As in \eqref{eq_b1use}, we can bound $|P_1| \lesssim t_0^{-1}$.
For the term $P_2$, we write it as
\begin{align*}
P_2= \ii (\beta_2-\beta_1)\int  \frac{(x-\zeta_1)+(x-\zeta_2)}{(x-\zeta_1)^2(x-\zeta_2)^2} \rho_1(x)\dd x.
\end{align*}
When $\al\ge 0$, we use (\ref{eq_betabound}) and \eqref{eq_boundfarawayregion} to bound
\begin{align*}|P_2|&\lesssim \frac{n^{2\delta_0}t}{t_0}t|\al-\zeta_{+,1}|^{1/2} \int \left( \frac{1}{|x-\zeta_1| |x-\zeta_2|^2}  + \frac{1}{|x-\zeta_1|^2 |x-\zeta_2|} \right)\rho_1(x)\dd x \\
&\lesssim \frac{n^{2\delta_0}}{t_0}\frac{t^2|\al-\zeta_{+,1}|^{1/2}}{(|\al| + t |\al-\zeta_{+,1}|^{1/2})^{3/2}} \lesssim \frac{n^{2\delta_0}}{t_0},
\end{align*}
where in the last step we used $|\al|\lesssim t^2$ and $|\al| + t |\al-\zeta_{+,1}|^{1/2}\gtrsim t^2$ for $\al\ge 0$. If $\alpha<0,$ with (\ref{eq_boundnear}) we get that 
 \begin{align*}
|P_2| \lesssim \frac{n^{2\delta_0}t}{t_0}t|\al-\zeta_{+,1}|^{1/2} \frac{\sqrt{|\alpha|+t|\alpha-\xi_{+,1}|^{1/2}}}{t^2|\alpha-\xi_{+,1}|} \lesssim \frac{n^{2\delta_0}}{t_0},
\end{align*}
where in the last step we used $|\alpha-\xi_{+,1}| \gtrsim |\al| + t^2$ for $\al\le 0$. With similar arguments, we can show that 
$$t|m_2'(\zeta_2)|+t|m_1'(\zeta_1)|\lesssim 1.$$
Together with the estimates on $P_1$ and $P_2$, we conclude 
\be\nonumber |m_2'(\zeta_2)-m_1'(\zeta_1)| \lesssim \frac{n^{2\delta_0}}{t_0},\ee
which implies
\be\label{calK23}
|\cal K_2|+|\cal K_3| \lesssim t|m_2(\zeta_2)-m_1(\zeta_1)|+t|m_2'(\zeta_2)-m_1'(\zeta_1)| \lesssim\frac{n^{2\delta_0}t}{t_0}.
\ee
Together with \eqref{calK1}, we obtain that
$$|\Phi_1'(\zeta_1)- \Phi_2'(\zeta_2)| \lesssim\frac{n^{2\delta_0}t}{t_0}.$$
Inserting it into \eqref{Cauchyderiv222} and using \eqref{rePhi2'}, we can get \eqref{eq_derivativebound2}.
 \end{proof}

 With Lemma \ref{lem_keydensitycontrol}, it is easy to prove the matching properties for the real and imaginary parts of $m_{1,t}$ and $m_{2,t}$. 


\begin{lemma}\label{lem comparison1}
Under the assumptions of Lemma \ref{lem_keydensitycontrol}, we have that for any fixed $D>0$,
\begin{equation}\label{eq_densitycomparison}
\rho_{1,t}(\lambda_{1,t}-x)=\rho_{2,t}(\lambda_{2,t}-x)\left(1+\OO\left(\frac{n^{\epsilon}t}{t_0}+n^{-D}\right)\right) ,
\end{equation}
and
\begin{equation} \label{eq_realpartinside}
 \left| \Re [m_{1,t}(\lambda_{1,t}-x)-m_{1,t}(\lambda_{1,t})]-\Re [m_{2,t}(\lambda_{2,t}-x)-m_{2,t}(\lambda_{2,t})] \right| \lesssim \left( \frac{n^{\epsilon}}{t_0}+\frac{n^{-D}}{t}\right)x.
\end{equation}
\end{lemma}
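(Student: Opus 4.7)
The plan is to combine the explicit formula \eqref{mwt_zetat} for $m_{i,t}(E)$ in terms of the subordination value $\zeta_i(E)=\alpha_i(E)+\ii\beta_i(E)$ with the matching estimates \eqref{beta-beta} and \eqref{al-al} from Lemma~\ref{lem_keydensitycontrol}. Write
\[
m_{i,t}(E)=\frac{t(1-c_n)+\sqrt{S_i(E)}}{2Ec_nt}-\frac{1}{c_nt},\qquad S_i(E):=t^2(1-c_n)^2+4\zeta_i(E)E=A_i(E)+\ii B_i(E),
\]
with $A_i(E):=t^2(1-c_n)^2+4\alpha_i(E)E$ and $B_i(E):=4E\beta_i(E)$. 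In the regime $|E-\psi|+|x|\lesssim t_0^2$, one has $A_i\sim\psi\lambda_{i,t}\sim 1$ since $\alpha_i=\psi+\xi_{+,i}-\Delta\alpha_i(x)$ with $\xi_{+,i},\Delta\alpha_i\lesssim t^2$, while \eqref{eq_contourproperty} gives $B_i\lesssim t\sqrt{x}$. Since $B_i/A_i=\oo(1)$ uniformly, the square root expands as $\sqrt{S_i}=\sqrt{A_i}+\ii B_i/(2\sqrt{A_i})-B_i^2/(8A_i^{3/2})+\OO(B_i^4/A_i^{7/2})$, giving explicit expressions for both $\Re m_{i,t}$ and $\Im m_{i,t}$.

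For the density estimate \eqref{eq_densitycomparison}, the leading term of $\pi\rho_{i,t}(E)=\Im m_{i,t}(E)$ is $\beta_i(E)/(c_n t\sqrt{A_i(E)})$, with relative correction $\OO(B_i^2/A_i^2)=\OO(t^2x)$. Comparison at $E=\lambda_{i,t}-x$ thus reduces to (i) the matching \eqref{beta-beta} of $\beta_1$ and $\beta_2$, yielding relative error $n^\epsilon t/t_0+n^{-D}$; and (ii) a comparison of the denominators $\sqrt{A_i(\lambda_{i,t}-x)}$. The latter is handled by first expanding $\lambda_{i,t}=\Phi_i(\zeta_{+,i})=\psi+(1-c_n)t+\OO(t^2)$, which gives $|\lambda_{1,t}-\lambda_{2,t}|\lesssim t^2$, and then combining with \eqref{eq_edgecomparebound} and \eqref{al-al} to bound $|A_1(\lambda_{1,t}-x)-A_2(\lambda_{2,t}-x)|\lesssim t^2+(n^\epsilon t/t_0+n^{-D})x$. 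Since $A_i\sim 1$, this translates into a relative error on $\sqrt{A_i}$ of the same order, and the bound $t^2\le n^\epsilon t/t_0$ (valid because $tt_0\le 1\le n^\epsilon$) absorbs the first term, yielding \eqref{eq_densitycomparison}.

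For the real-part estimate \eqref{eq_realpartinside}, the starting identity, using $B_i(\lambda_{i,t})=0$, is
\[
\Re[m_{i,t}(\lambda_{i,t}-x)-m_{i,t}(\lambda_{i,t})]=\frac{1}{2c_nt}\Bigg[\frac{t(1-c_n)+\Re\sqrt{S_i(\lambda_{i,t}-x)}}{\lambda_{i,t}-x}-\frac{t(1-c_n)+\sqrt{A_i(\lambda_{i,t})}}{\lambda_{i,t}}\Bigg].
\]
Expanding the bracketed quantity in $x$, the dominant $i$-dependent piece is controlled through the identity $A_i(\lambda_{i,t}-x)-A_i(\lambda_{i,t})=-4\Delta\alpha_i(x)(\lambda_{i,t}-x)-4(\psi+\xi_{+,i})x$. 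The difference across $i=1,2$ of this quantity is then bounded by $(n^\epsilon t/t_0+n^{-D})x+t^2x$, invoking \eqref{al-al} for $\Delta\alpha_1-\Delta\alpha_2$, \eqref{eq_edgecomparebound} for $\xi_{+,1}-\xi_{+,2}$, and $|\lambda_{1,t}-\lambda_{2,t}|\lesssim t^2$ for the shifts. After division by $c_n t$, this yields precisely $(n^\epsilon/t_0+n^{-D}/t)x$. The subleading $B_i^2/A_i^{3/2}$ correction in $\Re\sqrt{S_i}$ is of size $t^2x$, which is absorbed.

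The main obstacle will be the careful bookkeeping of the several error terms of comparable size, in particular establishing the refined bound $|\lambda_{1,t}-\lambda_{2,t}|\lesssim t^2$ by exploiting $\Phi_i'(\zeta_{+,i})=0$ together with the edge matching \eqref{eq_edgecomparebound}. It is also crucial to use Lemma~\ref{lem_keydensitycontrol} in its \emph{shifted} form (comparing $\xi_{+,i}-\alpha_i(\lambda_{i,t}-x)$ rather than $\alpha_i(\lambda_{i,t}-x)$ directly across $i=1,2$), since the absolute values $\alpha_1(\lambda_{1,t})$ and $\alpha_2(\lambda_{2,t})$ differ by $|\xi_{+,1}-\xi_{+,2}|\lesssim t^3/t_0$ and must always appear through their edge-centered differences.
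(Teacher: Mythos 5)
Your overall route is essentially the paper's own: both arguments rest on the subordination identity $\zeta_i=b_{i,t}^2z-b_{i,t}t(1-c_n)$ together with \eqref{beta-beta} and \eqref{al-al}. The paper takes real and imaginary parts of this relation directly (obtaining $\beta_i=[2E_i\re b_{i,t}(E_i)-(1-c_n)t]\pi c_n t\,\rho_{i,t}(E_i)$ and the analogue \eqref{eq_realexpansion} for $\al_i$, then bootstrapping via the a priori bound $|\re(m_{i,t}(\lambda_{i,t})-m_{i,t}(E_i))|\lesssim x/t$), while you invert it via the explicit formula \eqref{mwt_zetat} and expand the square root; these are two presentations of the same computation, and your error bookkeeping (the $B_i^2/A_i^{3/2}$ corrections of size $t^2x$, the prefactor comparisons) does close.

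The one genuine flaw is the sub-claim you single out as the ``main obstacle'': the refined bound $|\lambda_{1,t}-\lambda_{2,t}|\lesssim t^2$ obtained from the expansion $\lambda_{i,t}=\Phi_i(\zeta_{+,i})=\psi+(1-c_n)t+\OO(t^2)$. This expansion is wrong: writing it out, $\lambda_{i,t}=\psi+(1-c_n)t-2c_nt\,\psi\, m_i(\zeta_{+,i})+\OO(t^2)$, and the order-$t$ coefficient involves $m_i(\zeta_{+,i})=\int\rho_i(y)(y-\zeta_{+,i})^{-1}\dd y$, which is dominated by the bulk of $\rho_i$ away from $\psi$ where \eqref{eq_closerho} imposes no matching; hence $m_1(\zeta_{+,1})-m_2(\zeta_{+,2})$ is generically of order one and $\lambda_{1,t}-\lambda_{2,t}$ is generically of order $t$, not $t^2$ (so the route via $\Phi_i'(\zeta_{+,i})=0$ cannot rescue it either). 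Fortunately this refinement is not needed: the bound $|\lambda_{1,t}-\lambda_{2,t}|\lesssim t$, immediate from \eqref{eq_edgecomparebound2}, suffices in every place where you invoked $t^2$. Indeed, each occurrence of $\lambda_{1,t}-\lambda_{2,t}$ in your comparison of $A_1,A_2$, of the denominators $\lambda_{i,t}-x$, and of the shifts in \eqref{al-al} carries an extra factor of $x$ (or enters only as a relative error against quantities of size one), so it produces errors of size $\OO(tx)$ in $h_1-h_2$, respectively relative errors $\OO(t)$ in the density ratio; since $t_0\le 1$ one has $t\le n^{\epsilon}t/t_0$, and these are absorbed into the stated error $\OO(n^{\epsilon}t/t_0+n^{-D})$ exactly as your $t^2$ terms were. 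With that substitution your argument is complete, and it in fact mirrors the paper's proof, which also tolerates an $\OO(xt)$ slack (the $\OO(xt)$ term in \eqref{xi-alsimp2} divided by $c_nt$).
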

\begin{proof}
With \eqref{defnbit}, for $E_i:=\lambda_{i,t}-x$, $i=1,2,$ we get that
\begin{align}\label{beta=Im}
\beta_i(E_i)=\left[2E_i \re b_{i,t}(E_i)  - (1-c_n)t\right]\cdot \pi c_nt \rho_{i,t}(E_i) .
\end{align}
Using \eqref{eq_imasymptoics} and \eqref{eq_edgecomparebound2}, we get
$$ \frac{2E_1 \re b_{1,t}(E_1)  - (1-c_n)t}{2E_2 \re b_{2,t}(E_2)  - (1-c_n)t} = 1+\OO(t).$$
Together with \eqref{beta-beta} and the estimate $\beta_i\sim t\sqrt{x}$, we conclude \eqref{eq_densitycomparison} from \eqref{beta=Im}. Moreover, by \eqref{betasimal}, \eqref{alEsqrt} and \eqref{beta=Im}, we have that 
\be\label{density1+2}
\rho_i(\lambda_{i,t}-x) \sim \sqrt{x},\quad i=1,2, \quad \text{for} \ \ 0\le x\le \tau,
\ee
as long as the constant $\tau>0$ is sufficiently small. Now for (\ref{eq_realpartinside}), with \eqref{defnbit} we get that
\begin{align}\label{eq_realexpansion}
\alpha_{i}(E)+\psi  =E\left[ (\re b_{i,t}(E))^2 - \pi^2 c_n^2 t^2 (\rho_{i,t}(E))^2\right] - (1-c_n)t \re b_{i,t}(E). 
\end{align}
Using \eqref{eq_realexpansion} and $\rho_{i,t}(\lambda_{i,t})=0$, we obtain that 
\begin{align}
\xi_{+,i}-\al_i (E_i) &=x\left[ (\re b_{i,t}(E_i))^2 - \pi^2 c_n^2 t^2 (\rho_{i,t}(E_i))^2\right] - c_n(1-c_n)t^2 \re \left[m_{i,t}(\lambda_{i,t}) -m_{i,t}(E_i) \right] \nonumber\\
&+ \lambda_{i,t} \left[   \re  \left(b_{i,t}(\lambda_{i,t}) + b_{i,t}(E_i) \right)\cdot c_n t\re \left(m_{i,t}(\lambda_{i,t}) -m_{i,t}(E_i) \right) + \pi^2 c_n^2 t^2 (\rho_{i,t}(E_i))^2\right] \nonumber\\
 &=x  +  \left[ \lambda_{i,t} \re  \left(b_{i,t}(\lambda_{i,t}) + b_{i,t}(E_i) \right) - (1-c_n)t  \right]c_nt\re \left(m_{i,t}(\lambda_{i,t}) -m_{i,t}(E_i) \right) + \OO(xt), \label{xi-alsimp}
\end{align}
where we used \eqref{eq_imasymptoics} and \eqref{density1+2} in the second step. Now with \eqref{alEsqrt}, we can obtain from \eqref{xi-alsimp} that
\be\label{redensity1+2}
\left| \re \left(m_{i,t}(\lambda_{i,t}) -m_{i,t}(E_i) \right) \right|\lesssim \frac{x}{t}.
\ee
Inserting it back into \eqref{xi-alsimp} and using \eqref{eq_edgecomparebound2}, we get
\begin{align}
\xi_{+,i}-\al_i (E_i) = x +   2E_1 c_n t\re \left(m_{i,t}(\lambda_{i,t}) -m_{i,t}(E_i) \right) + \OO(xt), \quad i=1,2.\label{xi-alsimp2}
\end{align}
Combining \eqref{xi-alsimp2} with \eqref{al-al}, we conclude \eqref{eq_realpartinside}. 
\end{proof}

\begin{lemma} \label{lem comparison2}
Suppose the assumptions of Lemma \ref{lem_keydensitycontrol} hold. If $ 0\le x \le \tau n^{-2\epsilon} t_0 t $, then we have that for any constant $D>0$,
\begin{equation}\label{eq_realpartoutside}
\left| \Re \ [m_{1,t}(\lambda_{1,t}+x)-m_{1,t}(\lambda_{1,t})]-\Re \ [m_{2,t}(\lambda_{2,t}+x)-m_{2,t}(\lambda_{2,t})] \right| \lesssim \left(n^{\epsilon} \frac{t^{1/2} }{t_0^{1/2}} + n^{-D}\frac{t_0^{1/2}}{t^{1/2} }\right)x^{1/2}.
\end{equation}
\end{lemma}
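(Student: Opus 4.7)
The plan is to follow the same overall strategy as in the proof of Lemma \ref{lem comparison1}: derive an identity that relates $\al_i(\lambda_{i,t}+x)-\xi_{+,i}$ to the desired quantity $\re m_{i,t}(\lambda_{i,t}+x) - \re m_{i,t}(\lambda_{i,t})$, and then combine it with an ``outside'' version of the matching estimate \eqref{al-al} to translate control of the $\al$'s into control of the real parts of the Stieltjes transforms. The key simplification is that for $E_i = \lambda_{i,t}+x$ with $x>0$, we have $\rho_{i,t}(E_i)=0$, so $m_{i,t}$ is real there and $\re m_{i,t} = m_{i,t}$.

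For the identity, I would start from \eqref{eq_realexpansion} at $E=\lambda_{i,t}+x$ and $E=\lambda_{i,t}$, subtract, and drop the $\pi^2 c_n^2 t^2 \rho_{i,t}^2$ term (since $\rho_{i,t}(\lambda_{i,t}+x)=0$ and $\rho_{i,t}(\lambda_{i,t})=0$). Using $1+c_ntm_{i,t}(\lambda_{i,t})=1+\OO(t)$ and $\lambda_{i,t}=\psi+\OO(t)$ by \eqref{eq_edgecomparebound2}, the same algebra leading to \eqref{xi-alsimp2} yields
\begin{equation}\label{eq outside identity}
\al_i(\lambda_{i,t}+x)-\xi_{+,i} = x + 2\lambda_{i,t}c_nt\,\delta m_i\,(1+\OO(t)) + \OO(xt) + \OO(t^2|\delta m_i|^2),
\end{equation}
where $\delta m_i := m_{i,t}(\lambda_{i,t}+x)-m_{i,t}(\lambda_{i,t})$. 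Since $|\partial_E m_{i,t}(E)|\lesssim (\kappa_i+\eta)^{-1/2}$ by \eqref{eq_partialzmcontrolcontrol}, integration gives $|\delta m_i|\lesssim\sqrt{x}$, so $t^2|\delta m_i|^2 \lesssim t^2 x$ is negligible compared to $2\lambda_{i,t} c_n t \delta m_i$ once we know $|\delta m_i| \gtrsim \sqrt x/(\text{stuff})$.

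Next I would establish the outside analog of \eqref{al-al}, namely
\begin{equation}\label{eq outside al}
\bigl|[\al_1(\lambda_{1,t}+x)-\xi_{+,1}] - [\al_2(\lambda_{2,t}+x)-\xi_{+,2}]\bigr| \lesssim \bigl(n^\epsilon t/t_0 + n^{-D}\bigr)x .
\end{equation}
For $0\le x \le n^{-\delta_0}t^2$ this follows verbatim from Case 1 of the proof of Lemma \ref{lem_keydensitycontrol}, since the Taylor expansion \eqref{E-lambda=} of $\Phi_i$ around $\xi_{+,i}$ and the matching \eqref{claimPhi1-2} of its derivatives are insensitive to whether $\xi_i(E_i)-\xi_{+,i}$ is complex (inside) or real positive (outside). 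For the larger range $n^{-\delta_0}t^2\le x \le \tau n^{-2\epsilon}t_0 t$, I would use $d\al_i/dE_i = 1/\Phi_i'(\zeta_i)$ along the real axis outside $\supp\rho_{i,t}$ and compare: writing $\al_1-\al_2 - (\al_1-\al_2)|_{x=0}$ as an integral of $\Phi_1'(\zeta_1)^{-1} - \Phi_2'(\zeta_2)^{-1}$, and using the bound $|\Phi_1'(\zeta_1^*)-\Phi_2'(\zeta_2^*)|\lesssim n^{2\delta_0}t/t_0$ from the same analysis leading to \eqref{eq_derivativebound2} together with $|\Phi_i'(\zeta_i)|\sim 1$ in this regime (by \eqref{Phit'} since $\kappa_i\gtrsim t^2$), we pick up precisely the stated error.

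Combining \eqref{eq outside identity} and \eqref{eq outside al} (and absorbing the $\OO(xt)$ and $\OO(t^2|\delta m_i|^2)$ corrections into the error), division by $2\lambda_{i,t}c_n t$ gives
$$|\delta m_1 - \delta m_2| \lesssim \bigl(n^\epsilon/t_0 + n^{-D}/t\bigr)x.$$
The final cosmetic step is to use the hypothesis $x\le \tau n^{-2\epsilon}t_0 t$ to rewrite both terms in the required form: for the first term, $n^\epsilon x/t_0 = n^\epsilon (x/t_0)^{1/2}\cdot x^{1/2}/t_0^{1/2}\le n^\epsilon (t/t_0)^{1/2} x^{1/2}$ since $(x/t_0)^{1/2}\le(t)^{1/2}/ n^{\epsilon}$; for the second, $n^{-D}x/t \le n^{-D}(t_0/t)^{1/2}x^{1/2}$ since $(x/t)^{1/2}\le (t_0)^{1/2}$. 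The main technical obstacle is the intermediate/large regime in \eqref{eq outside al}, since straight Taylor expansion of $\Phi_i$ around $\xi_{+,i}$ breaks down once $|\zeta_i-\xi_{+,i}|\gtrsim t^2$; the derivative-matching/integration argument outlined above is the substitute, and it relies crucially on $\re\Phi_i'(\zeta_i)\sim 1$ outside the spectrum, which is exactly the content of Case 2 and Case 3 of the proof of Lemma \ref{lem_derivativecontrol}.
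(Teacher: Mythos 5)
Your route is genuinely different from the paper's, but as written it has a real gap, located in your outside analogue of \eqref{al-al}. You claim $\bigl|[\al_1(\lambda_{1,t}+x)-\xi_{+,1}]-[\al_2(\lambda_{2,t}+x)-\xi_{+,2}]\bigr|\lesssim (n^\epsilon t/t_0+n^{-D})x$ and say that for $x\le n^{-\delta_0}t^2$ this is ``verbatim'' Case~1 of Lemma \ref{lem_keydensitycontrol}. That is not so: inside the spectrum the leading square-root term $\sqrt{2(E-\lambda_{i,t})/\Phi_i''(\xi_{+,i})}$ is purely imaginary, so $\xi_{+,i}-\al_i$ is the \emph{subleading} real contribution of order $x$, and matching $\Phi_i''$ to relative accuracy $t/t_0$ via \eqref{claimPhi1-2} indeed yields the $(t/t_0)x$ bound of \eqref{al-al}. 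To the right of the edge the square-root term is real and is the \emph{leading} part of $\al_i-\xi_{+,i}$, of size $t\sqrt{x}$ (cf.\ \eqref{eq_approximate}); since $\Phi_1''(\xi_{+,1})$ and $\Phi_2''(\xi_{+,2})$ only agree up to $O\bigl((t/t_0)t^{-2}\bigr)$, the difference of the $\al$'s is generically of order $(t/t_0)\,t\sqrt{x}$, which is much larger than $(t/t_0)x$ when $x\ll t^2$. So the estimate you feed into the final division is false as stated. (The strategy is salvageable: the correct bound $(n^\epsilon t/t_0+n^{-D})\,t\sqrt{x}$, divided by the factor $\sim t$ in your identity, still gives \eqref{eq_realpartoutside} because $t\le t_0$; but your write-up asserts the stronger, wrong estimate and builds the last conversion on it.)

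The second problem is the range $n^{-\delta_0}t^2\le x\le \tau n^{-2\epsilon}t_0t$. There you assert $|\Phi_i'(\zeta_i)|\sim 1$ ``since $\kappa_i\gtrsim t^2$'', but at the lower end $\kappa_i=x$ can be as small as $n^{-\delta_0}t^2$, where \eqref{Phit'} only gives $|\Phi_i'|\sim\sqrt{x}/t$, possibly as small as $n^{-\delta_0/2}$; more seriously, the bound $|\Phi_1'(\zeta_1)-\Phi_2'(\zeta_2)|\lesssim n^{2\delta_0}t/t_0$ cannot simply be quoted from the analysis leading to \eqref{eq_derivativebound2}: that argument compares the two derivatives at points with a \emph{common} real part $\al$ and relies on the $\beta$-comparison \eqref{eq_betabound}, whereas outside the spectrum you need it at $\zeta_1(\lambda_{1,t}+s)$ and $\zeta_2(\lambda_{2,t}+s)$, whose distance is precisely what you are trying to control, so a separate bootstrap/continuity step is required and is missing. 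For contrast, the paper avoids all contour estimates here: it writes $\re[m_{i,t}(\lambda_{i,t}+x)-m_{i,t}(\lambda_{i,t})]$ as an integral of $\tfrac1E-\tfrac1{E+x}$ against $\rho_{i,t}(\lambda_{i,t}-E)$, splits at a scale $\eta$, bounds the part $E\ge\eta$ by $x/\sqrt{\eta}$ using the square-root decay of the densities, bounds the part $E\le\eta$ by $(n^\epsilon t/t_0+n^{-D})\sqrt{\eta}$ using the density comparison \eqref{eq_densitycomparison} you already have from Lemma \ref{lem comparison1}, and optimizes with $\eta=xt_0/t$ (the hypothesis $x\le\tau n^{-2\epsilon}t_0t$ is exactly what keeps $\eta$ admissible). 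That two-line argument is the intended proof and sidesteps both of the issues above.
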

 \begin{proof}
We fix a scale $\eta \le \tau n^{-2\epsilon} t_0^2$, and estimate that
\begin{align*}
& \left| \Re \ [m_{1,t}(\lambda_{1,t}+x)-m_{1,t}(\lambda_{1,t})]-\Re \ [m_{2,t}(\lambda_{2,t}+x)-m_{2,t}(\lambda_{2,t})]  \right| \\
& \leq \left| \int_{E \geq \eta} \left( \frac{1}{E}-\frac{1}{E+x} \right) \rho_{1,t}(\lambda_{1,t}-E) \dd E \right|+\left| \int_{E \geq \eta}\left( \frac{1}{E}-\frac{1}{E+x} \right) \rho_{2,t}(\lambda_{1,t}-E) \dd E \right| \\
& + \left| \int_{0 \leq E \leq \eta} \left[\rho_{2,t}(\lambda_{2,t}-E)-\rho_{1,t}(\lambda_{1,t}-E)\right]\left(\frac{1}{E}-\frac{1}{E+x}\right)\dd E \right|=: A_1+A_2+A_3.
\end{align*}  
By the square root behavior of $\rho_{i,t}$ around $\lambda_{i,t}$, we can get the bound 
\begin{equation*}
|A_1|+|A_2| \lesssim \frac{x}{\eta^{1/2}}. 
\end{equation*}
One the other hand, using (\ref{eq_densitycomparison}) we can bound that
\begin{equation*}
|A_3| \lesssim \left(n^{\epsilon} \frac{t}{t_0} + n^{-D}\right) \eta^{1/2}.
\end{equation*} 
Finally, we can conclude the proof by choosing $\eta=x(t_0/t).$ 
\end{proof}  

\bibliographystyle{abbrv}
\bibliographystyle{plain}
\bibliography{references,aniso_bib}

\end{document}